\setlist[enumerate]{itemsep=0mm,parsep=2mm,topsep=5pt,label=\textit{(\alph*)}}
\newcommand\myshade{85}
\colorlet{myurlcolor}{Aquamarine}
\crefname{equation}{}{}
\crefname{appsec}{Appendix}{Appendices}
\pgfplotsset{compat=1.17}
\definecolor{edgeblack}{rgb}{0.1,0.1,0.1}
\definecolor{vertexblack}{rgb}{0.05,0.05,0.05}
\theoremstyle{definition}
\newtheorem{theorem}{Theorem}[section]
\newtheorem{lemma}[theorem]{Lemma}
\newtheorem*{lemma*}{Lemma}
\newtheorem*{conjecture*}{Conjecture}
\newtheorem*{lemma''*}{``Lemma''}
\newtheorem*{claim*}{Claim}
\newtheorem{corollary}[theorem]{Corollary}
\newtheorem{conjecture}[theorem]{Conjecture}
\newcommand{\RR}{\mathbb{R}}
\newcommand{\QQ}{\mathbb{Q}}
\newcommand{\sep}[1]{R_{#1}}
\newcommand{\reflection}[1]{\mathcal{R}_{#1}}
\newcommand{\partialeq}{\sim}
\newcommand{\glclosure}{\text{cl}^*_d}
\def\mainclass[#1]{$#1$-joined}
\newcommand{\norm}[1]{\left\lVert#1\right\rVert}
\DeclareMathOperator{\dom}{dom}
\newcommand{\Rd}{\mathcal{R}_d}
\title{Partial reflections and globally linked pairs in rigid graphs}
\author[1,2]{Dániel Garamvölgyi} 
\author[2,3]{Tibor Jordán}
\affil[1]{HUN-REN Alfréd Rényi Institute of Mathematics, Reáltanoda utca 13-15, Budapest, 1053, Hungary}
\affil[2]{HUN-REN--ELTE Egerváry Research Group on Combinatorial Optimization, Pázmány~Péter~sétány~1/C, Budapest, 1117, Hungary}
\affil[3]{Department of Operations Research, ELTE Eötvös Loránd University, Pázmány~Péter sétány~1/C, Budapest, 1117, Hungary}
\affil[ ]{\footnotesize \textit{E-mail addresses:} {\tt \{daniel.garamvolgyi,tibor.jordan\}@ttk.elte.hu}}
\begin{document}

\maketitle

\begin{abstract}
A $d$-dimensional framework is a pair $(G,p)$, where $G$ is a graph and $p$ maps the vertices of $G$ to points in $\RR^d$. The edges of $G$ are mapped to the corresponding line segments. A graph $G$ is said to be globally rigid in $\RR^d$ if every generic $d$-dimensional framework $(G,p)$ is determined, up to congruence, by its edge lengths. A finer property
is global linkedness: we say that a vertex pair $\{u,v\}$ of $G$ is globally linked in $G$ in $\RR^d$ if in every generic $d$-dimensional framework $(G,p)$ the distance between $u$ and $v$ is uniquely determined by the edge lengths.

In this paper we investigate globally linked pairs in graphs in $\RR^d$. We give several characterizations of those rigid graphs $G$ in which a pair $\{u,v\}$ is globally linked if and only if there exist $d+1$ internally disjoint paths from $u$ to $v$ in $G$. We call these graphs $d$-joined. 
Among others, we show that $G$ is $d$-joined if and only if for each pair of generic frameworks of $G$ with the same edge lengths, one can be obtained from the other by a sequence of partial reflections along hyperplanes determined by $d$-separators of $G$. We also show that the family of $d$-joined graphs is closed under edge addition, as well as under gluing along $d$ or more vertices. As a key ingredient to our main results, we prove that rigid graphs in $\RR^d$ contain no crossing $d$-separators.

Our results give rise to new families of graphs for which global linkedness (and global rigidity) in $\RR^d$ can be tested in polynomial time. 
\end{abstract}

\section{Introduction}\label{section:introduction}

One of the major unsolved problems in rigidity theory is finding a combinatorial characterization for globally rigid graphs in $\RR^d$, for $d \geq 3$. We say that a graph $G$ is \textit{globally rigid in $\RR^d$} if every generic $d$-dimensional realization of $G$ is determined, up to congruence, by the Euclidean lengths of the edges in the realization. Here a \textit{realization} of $G$, or a (bar-and-joint) \textit{framework},  is a pair $(G,p)$, where $p: V(G) \rightarrow \RR^d$ is an embedding of the vertex set of $G$ into Euclidean space. The realization is \emph{generic} if the multiset of the $d \cdot |V(G)|$ coordinates of $\{p(v), v\in V\}$ is algebraically independent over $\QQ$. (We shall give detailed definitions and references in \cref{section:preliminaries}.) There has been much progress on globally rigid graphs in recent years, including a combinatorial characterization of the $d = 2$ case \cite{jackson.jordan_2005}, combinatorial characterizations for restricted classes of graphs and frameworks, e.g., \cite{jordan.etal_2016}, as well as a linear algebraic characterization for every $d \geq 1$ \cite{connelly_2005,gortler.etal_2010}. 
However, solving the general problem, or even formulating a plausible conjecture still seems challenging.

A promising avenue is the investigation of globally linked pairs. Following \cite{jackson.etal_2006}, we define a pair of vertices $\{u,v\}$ to be \emph{globally linked} in $G$ in $\RR^d$ if in every generic $d$-dimensional realization of $G$, the distance between $u$ and $v$ is uniquely determined by the edge lengths of the realization. That is, if $(G,p)$ is a generic realization and $(G,q)$ is another $d$-dimensional realization in which the length of each edge is the same, then the distance between $p(u)$ and $p(v)$ is equal to the distance between $q(u)$ and $q(v)$. It is immediate from the definitions that a graph is globally rigid in $\RR^d$ if and only if every pair of vertices is globally linked in the graph in $\RR^d$. In this sense the property of ``being globally linked'' gives a finer combinatorial structure than global rigidity. It is not surprising, then, that characterizing globally linked pairs seems even more difficult than characterizing global rigidity, and indeed, the former is open already in the $d=2$ case. However, apart from being an important research problem in its own right, we believe that even partial results may shed new light on the combinatorial nature of global rigidity.
Characterizations for some graph classes and a conjecture for the complete characterization are available in the $d=2$ case \cite{jackson.etal_2006,jackson.etal_2014,conferencepaper}. In higher dimensions the only result we are aware of is a
description of globally linked pairs in braced triangulations in $\RR^3$ \cite{jordan.tanigawa_2019}.

The above-mentioned linear algebraic characterization of global rigidity implies the fundamental fact that global rigidity is a ``generic property'': either every generic realization of the graph is uniquely determined by its edge lengths, or none of them are. In contrast, it is well-known that ``being globally linked in $\RR^d$'' is not a generic property in general, see \cite[Figs. 1 and 2]{jackson.etal_2006}; but interestingly, it is generic in almost every graph family where a characterization is known. This observation motivates the study of graphs $G$ that satisfy the following. 

\medskip
\noindent \textit{(a)} {\it ``Being globally linked in $G$ in $\RR^d$'' is a generic property.}
\medskip

In fact, in almost all known cases the following property characterizes globally linked pairs.

\medskip
\noindent \textit{(b)} {\it A nonadjacent pair of vertices $\{u,v\}$ is globally linked in $G$ in $\RR^d$ if and only if there are at least $d+1$ internally vertex-disjoint paths between $u$ and $v$ in $G$.}
\medskip

By Menger's theorem, if there are at most $d$ internally vertex-disjoint paths between $u$ and $v$, then there exists a separator $S$ of size at most $d$ in $G$ such that $u$ and $v$ are in different components $C_u$, $C_v$ of $G-S$.  It is a classical observation noted by Hendrickson\footnote{Hendrickson already refers to this as ``well-known'', but we are not aware of earlier references.} \cite{hendrickson_1992} that the existence of such a small separator lets us apply a \emph{partial reflection} to each generic realization of the graph. That is, given a generic realization $(G,p)$, we can reflect the points $p(w), w \in C_v$ in an affine hyperplane containing the points $p(s), s \in S$. In this way, we obtain another realization with the same edge lengths, but in which the distance between $u$ and $v$ is different. This shows that property \textit{(b)} implies \textit{(a)}. We note that the converse is not true: for example, it follows from \cite[Theorem 7.1]{jackson.etal_2006} that the complete bipartite graph $K_{3,3}$ satisfies \textit{(a)}, but not \textit{(b)}, in the two-dimensional case.

Our main goal in this paper, and the content of \cref{section:3char}, is the investigation of graphs with property \textit{(b)}. More precisely, we define a graph to be \mainclass[d] if it has at least $d+1$ vertices, it is rigid in $\RR^d$ and it satisfies \textit{(b)}. The condition on the number of vertices is there to exclude trivial cases, while the condition that the graph is rigid seems natural in the context of studying global rigidity. Nonetheless, we show that replacing rigidity by a weaker condition related to the structure of separators in the graph leads to the same graph family (\cref{corollary:weakerdef}).
Although finding a combinatorial characterization of \mainclass[d] graphs remains an open problem (apart from the $d=1$ case, which is easy), we give several geometric characterizations. In particular, we show that if $G$ is \mainclass[d] and $(G,p)$ is a generic $d$-dimensional realization, then every $d$-dimensional realization of $G$ with the same edge lengths as $(G,p)$ can be obtained from $(G,p)$ by a sequence of partial reflections, possibly followed by a congruence (\cref{theorem:partialreflectionchar}). Using this result we show that the family of \mainclass[d] graphs is closed under edge addition (\cref{theorem:monotonicity}), as well as under the operation of gluing along at least $d$ vertices (\cref{theorem:maingluing}). 

To prove these statements we need a careful analysis of the action of (sequences of) partial reflections on generic frameworks. We provide such an analysis in  \cref{section:partialreflections}, which may also be useful in other problems concerning rigidity and global rigidity of graphs and frameworks. We stress that most of our results regarding partial reflections are about generic frameworks, and that in the non-generic setting the action of partial reflections seems to be much more complicated, see, e.g., \cref{fig:nongeneric}. In order to apply our results to rigid graphs we also show, for all $d \geq 1$, that rigid graphs in $\RR^d$ contain no ``crossing $d$-separators'' (\cref{theorem:rigidnoncrossing}), extending a result of \cite{jackson.jordan_2005} from $d=2$ to the general case. 

Whenever we can show that a family of graphs is \mainclass[d], global linkedness and global rigidity 
become easy to verify and test by algorithms for computing local connectivity, not just for each graph in the family but also, by monotonicity, for their supergraphs.
To obtain a large class of \mainclass[d] graphs, we introduce the family of graphs with a {\it globally rigid gluing construction} in $\RR^d$. Roughly speaking, this notion means that the graph can be constructed from globally rigid graphs in $\RR^d$ by a sequence of gluing operations on pairs of graphs, where each operation
is performed along $d$ or more vertices.
We show that graphs that can be constructed in this way are \mainclass[d] (\cref{theorem:gluingconstruction}), and that
the family of these graphs is also closed under adding edges (\cref{lemma:gluingmonotonicity}). As an immediate corollary, we obtain that a graph that has a globally rigid gluing construction in $\RR^d$ is globally rigid in $\RR^d$ if and only if it is $(d+1)$-connected.

We also show that maximal outerplanar graphs, or more generally, $d$-connected 
chordal graphs possess a globally rigid gluing construction in $\RR^2$ (resp.\ $\RR^d$), so they are \mainclass[2] (resp.\ \mainclass[d]). Moreover, the monotonicity property implies that their ``braced'' versions, obtained by adding a set of new edges, are also \mainclass[2] (resp.\ \mainclass[d]). These corollaries extend the results of \cite{conferencepaper} and provide further classes of graphs in which the existence of a well-structured spanning ``backbone'' subgraph guarantees that property \textit{(b)} (and hence also \textit{(a)}) holds.

In Section \ref{section:concluding} we conclude the paper by formulating several conjectures regarding globally linked pairs. We also show, by giving an example, that 
being globally linked in $G$ in $\RR^3$ is not, in general, a generic property, even if $G$ has a connected rigidity matroid.
Finally, we discuss some algorithmic questions related to \mainclass[d] graphs.

\section{Preliminaries}\label{section:preliminaries}

For a graph $G = (V,E)$ and a set of vertices $X \subseteq V$, we let $G[X]$ denote the subgraph of $G$ induced by $X$. We follow the standard terminology and say that $X$ is a \emph{clique} in $G$ if $G[X]$ is a complete graph. We use $N_G(X)$ to denote the set of neighbors of $X \subseteq V$ in $G$. Occasionally we shall use the notation $V(G)$ and $E(G)$ to denote the vertex and edge sets, respectively, of a graph $G$.
We shall often write enumerations of the form ``for each $i \in \{k,\ldots,l\}$'', where potentially $l < k$. In this case $\{k,\ldots,l\}$ is understood to be the empty set and the statement in question is vacuous.


\subsection{Separators and fragments}\label{subsection:fragments}

Let $G = (V,E)$ be a connected graph. A set $S \subseteq V$ is a \emph{separator} of $G$ if $G-S$ is disconnected. If $|S| = d$, then $S$ is a \emph{$d$-separator}. 
We say that $S$ \emph{separates} the vertices $u,v \in V - S$, or that $S$ is \emph{$(u,v)$-separating}, if $u$ and $v$ lie in different components of $G-S$. 
The graph $G$ is \emph{$d$-connected} if it has at least $d+1$ vertices and it has no separators of size at most $d-1$.
The \emph{vertex connectivity} of $G$, denoted by $\kappa(G)$, is the largest $d$ for which $G$ is $d$-connected. For a pair of vertices $u,v \in V$, we use $\kappa(G,u,v)$ to denote the maximum number of internally vertex-disjoint paths from $u$ to $v$ in $G$. If $u$ and $v$ are nonadjacent in $G$, then by Menger's theorem this coincides with the minimum size of a $(u,v)$-separating vertex set in $G$. 
A \emph{minimum vertex cut} of $G$ is a separator of size $\kappa(G)$. Note that if $S$ is a minimum vertex cut, then every vertex in $S$ has a neighbor in every component of $G - S$. 

Throughout the rest of this subsection let $d \geq 1$ be an integer and let $G$ be a $d$-connected graph. Let $S,T \subseteq V$ be $d$-separators of $G$. We say that \emph{$S$ crosses $T$} if $S$ contains vertices from at least two components of $G-T$. The proof of the following (folklore) lemma, as well as the proofs of the subsequent lemmas in this subsection, are given in \cref{appendix:fragments}.
\begin{lemma}\label{lemma:crossingcuts}
Let $G = (V,E)$ be a $d$-connected graph and let $S,T \subseteq V$ be $d$-separators of $G$. Then the following are equivalent.
\begin{enumerate}[label=\textit{\alph*)}]
    \item $S$ crosses $T$.
    \item $T$ contains a vertex from each component of $G-S$.
    \item $T$ crosses $S$.
    \item $S$ contains a vertex from each component of $G-T$.
\end{enumerate}
\end{lemma}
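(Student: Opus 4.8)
The plan is to exploit the symmetry of the statement under interchanging $S$ and $T$, so that only a single nontrivial implication has to be proved by hand. Concretely, I would establish the cyclic chain a) $\Rightarrow$ b) $\Rightarrow$ c) $\Rightarrow$ d) $\Rightarrow$ a). Two of these steps are immediate: since any separator disconnects the graph, both $G-S$ and $G-T$ have at least two components, so ``$T$ meets every component of $G-S$'' trivially yields ``$T$ meets at least two components of $G-S$'' (that is, b) $\Rightarrow$ c)), and symmetrically d) $\Rightarrow$ a). Moreover, c) $\Rightarrow$ d) is exactly a) $\Rightarrow$ b) with the roles of $S$ and $T$ exchanged, since the whole setup is symmetric in the two separators. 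Thus everything reduces to the single implication a) $\Rightarrow$ b).

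Before attacking it I would record the standing observation that, because $G$ is $d$-connected and admits the $d$-separator $S$ (resp.\ $T$), we have $\kappa(G) = d$, so $S$ and $T$ are \emph{minimum} vertex cuts. By the fact noted in \cref{subsection:fragments}, every vertex of a minimum vertex cut has a neighbour in every component of its complement; this is the only place the connectivity hypothesis is used.

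To prove a) $\Rightarrow$ b), assume that $S$ crosses $T$ and suppose for contradiction that some component $C_1$ of $G-S$ avoids $T$, that is, $C_1 \cap T = \emptyset$. Then $G[C_1]$ is a connected subgraph of $G-T$, so $C_1$ lies inside a single component $D_1$ of $G-T$. Since $S$ crosses $T$, it meets at least two components of $G-T$, hence it meets some component $D_2 \neq D_1$, say at a vertex $s \in S \cap D_2$. As $S$ is a minimum vertex cut, $s$ has a neighbour $w$ in the component $C_1$ of $G-S$; but then $w \in C_1 \subseteq D_1$, and the edge $sw$ joins $D_2$ to $D_1$ with both endpoints outside $T$, contradicting that $D_1$ and $D_2$ are distinct components of $G-T$. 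Therefore $T$ meets every component of $G-S$, which is b).

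I do not expect a substantial obstacle here; the one point requiring care is the use of \emph{minimality} of the cut (rather than merely that $S$ is a $d$-separator) to force the ``stray'' vertex $s \in S$ lying in a far component $D_2$ to have a neighbour in $C_1$. Once this is in place the contradiction is immediate, and the symmetry reduction together with the two trivial implications completes the cycle.
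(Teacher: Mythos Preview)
Your proposal is correct and follows essentially the same approach as the paper: both reduce everything to a) $\Rightarrow$ b) by symmetry and both hinge on the fact that every vertex of a minimum cut has a neighbour in every component of its complement. The only cosmetic difference is that the paper argues directly---taking two vertices $s,s' \in S$ in different components of $G-T$ and building an $s$--$s'$ path through an arbitrary component $C$ of $G-S$, which $T$ must then intersect---whereas your contrapositive version uses a single vertex $s \in S$ lying in a ``far'' component $D_2$; this is a minor variant of the same idea.
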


By \cref{lemma:crossingcuts}, the relation of ``being crossing'' is symmetric. Thus we say that the $d$-separators $S$ and $T$ are \emph{crossing} if $S$ crosses $T$, or equivalently, if $T$ crosses $S$. Otherwise, we say that $S$ and $T$ are \emph{noncrossing}. This means that there is a single component $C$ of $G-S$ such that $T-S \subseteq C$, or equivalently, by \cref{lemma:crossingcuts}, there is a single component $C'$ of $G-T$ such that $S - T \subseteq C'$.

Let $u,v \in V$ be an ordered pair of vertices and let $S$ and $T$ be $(u,v)$-separating $d$-separators. We write $S \preceq_{u,v} T$ if $T-S$ and $v$ lie in the same component of $G-S$. In this case $S$ and $T$ are noncrossing, and by the symmetry of being noncrossing this is equivalent to the condition that $S-T$ and $u$ lie in the same component of $G-T$. 
The following lemma can be deduced from results in \cite{escalante_1972}.
In \cref{appendix:fragments}, we provide a proof which uses our terminology.

\begin{lemma}\label{minimumvertexcutorder}
Let $G$ be a $d$-connected graph. The relation $\preceq_{u,v}$ defines a partial order on the set of $(u,v)$-separating $d$-separators of $G$. Two $(u,v)$-separating $d$-separators are comparable in this order if and only if they are noncrossing.
\end{lemma}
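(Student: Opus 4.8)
The plan is to prove \cref{minimumvertexcutorder} by establishing three facts in sequence: that $\preceq_{u,v}$ is a partial order, that comparability implies noncrossing, and that noncrossing implies comparability. Throughout I would fix the $d$-connected graph $G$ and the ordered pair $u,v$, and abbreviate $\preceq_{u,v}$ as $\preceq$. The basic objects are $(u,v)$-separating $d$-separators; for such an $S$, let $C_v^S$ denote the component of $G-S$ containing $v$, so that by definition $S \preceq T$ means $(T-S) \cup \{v\} \subseteq C_v^S$. A useful reformulation I would record first is the symmetry already stated in the excerpt: $S \preceq T$ iff $(S-T)\cup\{u\}$ lies in one component of $G-T$; having both descriptions available makes the antisymmetry and transitivity arguments cleaner.

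For the partial order, reflexivity is immediate since $T-S = \emptyset$ when $T = S$. For antisymmetry, suppose $S \preceq T$ and $T \preceq S$; I would argue that $S \preceq T$ forces $S-T$ to lie on the $u$-side of $T$, while $T \preceq S$ (using its symmetric form) forces $S-T$ to lie on the $v$-side of $T$, so $S-T = \emptyset$, and symmetrically $T-S=\emptyset$, giving $S=T$. The key ingredient here is that $G-T$ genuinely separates $u$ from $v$ into distinct components, so the $u$-side and $v$-side are disjoint. For transitivity, assume $S \preceq T \preceq R$; I want $S \preceq R$, i.e.\ $(R-S)\cup\{v\} \subseteq C_v^S$. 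Since $v \in C_v^S$ by definition, the content is that $R-S$ avoids the non-$v$ components of $G-S$. The cleanest route is a connectivity argument: I would show that any vertex of $R$ can be joined to $v$ by a path in $G$ avoiding $S$, using that $R-T$ lies on the $v$-side of $T$ (from $T \preceq R$) together with $T-S$ lying on the $v$-side of $S$ (from $S \preceq T$), and patching paths through the common region. One must take care to handle vertices of $R$ that also lie in $T$ or in $S$.

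For the equivalence with noncrossing, one direction is asserted already: if $S \preceq T$ then $S,T$ are noncrossing, which I would verify directly from the definition of crossing together with \cref{lemma:crossingcuts} (if $T-S$ lies in the single component $C_v^S$, then $S$ does not contain a vertex from each component of $G-T$, so by condition \textit{d)} of \cref{lemma:crossingcuts} they are noncrossing). The substantive direction is that noncrossing $(u,v)$-separating $d$-separators are always comparable. Here I would use the noncrossing characterization from the excerpt: there is a single component $C$ of $G-S$ with $T-S \subseteq C$. The task is to show $C = C_v^S$ (giving $S \preceq T$) or else the symmetric conclusion $T \preceq S$. I expect \textbf{this to be the main obstacle}: one must rule out the possibility that $T-S$ sits entirely in a component of $G-S$ that contains \emph{neither} $u$ nor $v$, which would make $S$ and $T$ incomparable despite being noncrossing. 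The way to exclude this is to use that $S$ and $T$ are each $(u,v)$-separating of the \emph{same minimum} size $d$ together with the submodularity of the separating structure, or more directly a counting/Menger argument: since $G$ is $d$-connected and $|S|=|T|=d$, every vertex of $S$ has neighbours in every component of $G-S$ (as $S$ is a minimum vertex cut), and one can use this to force $T-S$ onto the side containing $u$ or the side containing $v$. I would make this rigorous by considering the component $C_u^S$ of $u$ and $C_v^S$ of $v$ and showing that if $T-S$ met some third component, then deleting $(S\cap T)\cup(T-S)$ would fail to separate $u$ from $v$, contradicting that $T$ is $(u,v)$-separating; the quantitative control comes from $|T|=d$ together with the fact that the $d$ internally disjoint $u$--$v$ paths guaranteed by $d$-connectivity must each meet $T$.
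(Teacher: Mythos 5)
Your plan is correct and follows essentially the same route as the paper's proof: the same use of the symmetric reformulation for antisymmetry, a transitivity argument that (once the path-patching is spelled out) amounts to the paper's observation that $S\cap C_v^T=\varnothing$, and the same key step that the component of $G-S$ containing $T-S$ must contain $u$ or $v$ because otherwise $T$ would fail to separate $u$ from $v$. The only variation is cosmetic: for that last step the paper implicitly uses the minimum-vertex-cut neighbour property (which you also mention), while your disjoint-paths alternative is an equally valid way to make it rigorous.
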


We shall use the following observation about chains of $(u,v)$-separating $d$-separators.
\begin{lemma}\label{lemma:chainofvertexcuts}
Let $G$ be a $d$-connected graph. Let $k \geq 2$ be an integer and let $S_1 \succeq_{u,v} S_2 \succeq_{u,v} \ldots \succeq_{u,v} S_k$ be a chain of $d$-separators of $G$. If $S_{k-1} \neq S_k$, then $S_k - \cup_{i = 1}^{k-1} S_i$ is nonempty.
\end{lemma}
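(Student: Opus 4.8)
The plan is to produce an explicit witness vertex of $S_k$ that avoids all the earlier separators. For a $(u,v)$-separating $d$-separator $S$, write $C_u(S)$ and $C_v(S)$ for the (distinct) components of $G-S$ containing $u$ and $v$, respectively. Since $S_{k-1}$ and $S_k$ both have size $d$ and $S_{k-1} \neq S_k$, we cannot have $S_k \subseteq S_{k-1}$, so $S_k - S_{k-1} \neq \emptyset$; I would fix an arbitrary $w \in S_k - S_{k-1}$. The claim is that already $w \in S_k - \bigcup_{i=1}^{k-1} S_i$, and the rest of the argument just verifies that $w \notin S_i$ for every $i \leq k-1$.

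The core of the proof is to locate $w$ relative to $S_{k-1}$ and then confine the earlier separators to the opposite side. The chain relation gives $S_k \preceq_{u,v} S_{k-1}$, and invoking the equivalent ($u$-side) description of $\preceq_{u,v}$ recorded before \cref{minimumvertexcutorder}, this reads $S_k - S_{k-1} \subseteq C_u(S_{k-1})$; in particular $w \in C_u(S_{k-1})$. For the other direction, I would use that $\preceq_{u,v}$ is a partial order (\cref{minimumvertexcutorder}): transitivity applied along $S_{k-1} \preceq_{u,v} S_{k-2} \preceq_{u,v} \cdots \preceq_{u,v} S_i$ yields $S_{k-1} \preceq_{u,v} S_i$ for every $i \leq k-1$. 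By the defining ($v$-side) description this means $S_i - S_{k-1} \subseteq C_v(S_{k-1})$, hence $S_i \subseteq S_{k-1} \cup C_v(S_{k-1})$, a set disjoint from $C_u(S_{k-1})$. Therefore $S_i \cap C_u(S_{k-1}) = \emptyset$ for all $i \leq k-1$, and since $w \in C_u(S_{k-1})$ we conclude $w \notin S_i$ for each such $i$, which is exactly what was required.

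The argument is short and I do not anticipate a genuine obstacle; it is essentially bookkeeping. The two points requiring care are invoking the correct one of the two equivalent formulations of $\preceq_{u,v}$ at each step---the $u$-side form to place the witness $w$, and the $v$-side form to confine $S_1, \ldots, S_{k-1}$ to $S_{k-1} \cup C_v(S_{k-1})$---and the elementary observation that equal cardinality turns the hypothesis $S_{k-1} \neq S_k$ into $S_k - S_{k-1} \neq \emptyset$, which is what makes a witness available in the first place.
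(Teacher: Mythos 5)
Your proof is correct and follows essentially the same route as the paper's: both place $S_k - S_{k-1}$ in the $u$-side component of $G - S_{k-1}$ via $S_k \preceq_{u,v} S_{k-1}$, and confine the earlier separators to $S_{k-1}$ together with the $v$-side component via (transitivity and) $S_{k-1} \preceq_{u,v} S_i$, concluding that $S_k - S_{k-1} = S_k - \bigcup_{i=1}^{k-1} S_i \neq \emptyset$. The only difference is presentational: you track an explicit witness $w$, while the paper phrases the same disjointness argument set-theoretically.
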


A \emph{fragment} of $G$ is a nonempty set of vertices $X \subseteq V$ such that $V-X-N_G(X)$ is nonempty. This means that $N_G(X)$ is a separator and $X$ is the union of some, but not all components of $G-N_G(X)$. The \emph{complement} of $X$, which we denote by $\overline{X}$, is $V - X - N_G(X)$. If $|N_G(X)| = d$, then we say that $X$ is a \emph{$d$-fragment}. For a pair of vertices $u$ and $v$, a fragment $X$ is said to be \emph{$(u,v)$-separating} if $u \in X$ and $v \in \overline{X}$ or $v \in X$ and $u \in \overline{X}$. 
We say that the $d$-fragments $X$ and $X'$ are \emph{crossing} (resp.\ \emph{noncrossing}) if $N_G(X)$ and $N_G(X')$ are crossing (resp.\ noncrossing) $d$-separators. 
We shall need the following result about noncrossing $d$-fragments.
\begin{lemma}\label{lemma:uxseparator}
    Let $G = (V,E)$ be a $d$-connected graph and $u,v,x \in V$, and let $X$ and $X'$ be noncrossing $d$-fragments with $x \in N_G(X)$. If $X$ is $(u,v)$-separating and $X'$ is $(u,x)$-separating, then $X'$ is also $(u,v)$-separating.
\end{lemma}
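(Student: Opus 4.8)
The plan is to work with the minimum vertex cuts $S := N_G(X)$ and $S' := N_G(X')$ and to pin down the side of $X'$ on which $v$ lies. First I would record that, since $G$ is $d$-connected and has the $d$-separators $S$ and $S'$, necessarily $\kappa(G)=d$, so both $S$ and $S'$ are minimum vertex cuts; hence every vertex of $S$ has a neighbour in every component of $G-S$, and likewise for $S'$. I would also use that a fragment and its complement have the same neighbourhood, i.e.\ $N_G(\overline X)=N_G(X)=S$, so that replacing $X$ by $\overline X$ (or $X'$ by $\overline{X'}$) preserves all the hypotheses and the conclusion. This lets me assume without loss of generality that $u\in X$, $v\in\overline X$, and $u\in X'$, $x\in\overline{X'}$---the last of these being available because $X'$ is $(u,x)$-separating, so $u$ and $x$ lie on opposite sides of $X'$.

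The crux, and what I expect to be the main obstacle, is to show that $S'\setminus S\subseteq X$; this is exactly where the hypothesis $x\in N_G(X)$ enters. Because $x\in S$ is a vertex of the minimum cut $S$, it has a neighbour in the component $C_u\subseteq X$ of $G-S$ containing $u$, so I can build a $u$--$x$ path whose internal vertices all lie in $X$. On the other hand, noncrossing of $S$ and $S'$ guarantees that $S'\setminus S$ lies in a single component of $G-S$, hence wholly inside $X$ or wholly inside $\overline X$. If it lay inside $\overline X$, then $S'$ would be disjoint from $X$, and the $u$--$x$ path just constructed would avoid $S'$ entirely---contradicting that $S'$ separates $u$ from $x$. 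Therefore $S'\setminus S\subseteq X$, and a one-line computation then yields $\overline X\cap S'=\emptyset$.

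Once $\overline X$ is known to miss $S'$, locating $v$ becomes routine. Let $C_v\subseteq\overline X$ be the component of $G-S$ containing $v$; by the minimum-cut property $x$ has a neighbour $w\in C_v$. Since $C_v$ is connected and disjoint from $S'$, the vertices $v$ and $w$ lie in the same component of $G-S'$, and as $wx$ is an edge with both endpoints outside $S'$, so does $x$. Thus $v$ lies in the component of $G-S'$ containing $x$, which is contained in $\overline{X'}$; hence $v\in\overline{X'}$. Together with $u\in X'$, this shows that $X'$ is $(u,v)$-separating, as desired. The only genuinely delicate point is the localization of $S'\setminus S$ in the second paragraph; the symmetry reductions in the first paragraph are what make it suffice to rule out the single "wrong" configuration there, and the final paragraph is a short connectivity argument.
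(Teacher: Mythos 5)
Your proof is correct and takes essentially the same route as the paper's: both arguments combine the minimum-cut property of $S = N_G(X)$ with the noncrossing hypothesis to place $S'\setminus S$ on the $u$-side of $S$, and then trace the component of $G-S'$ containing $x$ to conclude that $v\in\overline{X'}$. The only difference is one of bookkeeping --- the paper first establishes the two containments $V-(S'\cup C)\subseteq C'$ and $V-(S\cup C')\subseteq C$ and then reads off the locations of $u$, $x$ and $v$, whereas you argue via explicit paths through $C_u$ and $C_v$ --- but the underlying facts used are identical.
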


We say that a subset $V' \subseteq V$ of vertices is a \emph{$d$-block} of $G$ if for all $u,v \in V'$, either $uv \in E$ or $\kappa(G,u,v) \geq d$, and $V'$ is inclusion-wise maximal with respect to this property.

\begin{lemma}\label{lemma:dblock}
Let $G = (V,E)$ be a $d$-connected graph and let $X \subseteq V$ be a $d$-fragment of $G$. Suppose that $N_G(X)$ is a clique in $G$ and let $G'$ denote the subgraph of $G$ induced by $X \cup N_G(X)$. Then
\begin{enumerate}[label=\textit{\alph*)}]
    \item $\kappa(G',u,v) = \kappa(G,u,v)$ for all $u \in X$ and $v \in X \cup N_G(X)$.
    \item The $d$-separators of $G'$ are precisely the $d$-separators of $G$ that intersect $X$, as well as possibly $N_G(X)$.
    \item The $(d+1)$-blocks of $G'$ are precisely the $(d+1)$-blocks of $G$ that intersect $X$, as well as possibly $N_G(X)$.
\end{enumerate}
\end{lemma}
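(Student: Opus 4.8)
The plan is to write $S = N_G(X)$ and to use two structural facts throughout. First, since $G$ is $d$-connected and admits the $d$-separator $S$, we have $\kappa(G) = d$, so every $d$-separator is a minimum vertex cut and hence each of its vertices has a neighbour in every component of its deletion. Second, since $S = N_G(X)$ there are no edges between $X$ and $\overline{X}$, so $N_G(\overline{X}) \subseteq S$ and every walk from $X$ to $\overline{X}$ passes through $S$. I would prove (a) first, then deduce (b) from a crossing argument, and finally read off (c) from (a). For (a), the inequality $\kappa(G',u,v) \le \kappa(G,u,v)$ is immediate since $G'$ is a subgraph of $G$. For the reverse, I would take a family of $\kappa(G,u,v)$ internally disjoint $u$--$v$ paths in $G$ and shortcut each one into $G'$: every maximal subpath running inside $\overline{X}$ is entered and left through two \emph{distinct} vertices $a,b \in S$ (distinct because the path is simple, and in $S$ because $N_G(\overline{X}) \subseteq S$), and since $S$ is a clique I may replace that detour by the single edge $ab$. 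Because $u \in X$ and $v \in X \cup S$, no detour touches an endpoint, so the resulting paths $P_i'$ lie in $G'$ and their internal vertex sets are subsets of those of the $P_i$; hence they stay pairwise internally disjoint. The only way a shortcut could collapse an entire path to the bare edge $uv$ is if $P_i$ were already that edge, which occurs for at most one path; this is exactly where $u \in X$ is used, and it guarantees the $P_i'$ are genuinely distinct.

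The key step for (b) is that a clique $d$-separator cannot cross any $d$-separator: for any $d$-separator $T$, the set $S \setminus T$ is a clique and so lies in a single component of $G-T$, whence $S$ does not cross $T$, and by \cref{lemma:crossingcuts} the pair $S,T$ is noncrossing. Consequently, if $T \cap X \neq \emptyset$ then $T \setminus S$ lies in one component of $G-S$, which must be an $X$-component, forcing $T \subseteq X \cup S = V(G')$. Such a $T$ separates $G'$: picking $t \in T \cap X$, the minimum-cut property gives $t$ a neighbour in every component of $G-T$, and these neighbours lie in $N_G(t) \subseteq X \cup S$, so every component of $G-T$ meets $V(G') \setminus T$, and two of them already certify that $G'-T$ is disconnected. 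For the converse, if $T'$ is a $d$-separator of $G'$ with $T' \cap X \neq \emptyset$, then $S \setminus T' \neq \emptyset$ is a clique lying in one component $A$ of $G'-T'$; since $\overline{X}$ attaches to the rest of $G$ only through $S$, re-attaching $\overline{X}$ keeps any other component of $G'-T'$ separated from $A \cup \overline{X}$, so $T'$ also separates $G$. The remaining possibility is $T' = S$ (the case $T' \cap X = \emptyset$), and $S$ is a $d$-separator of $G'$ precisely when $G[X]$ is disconnected, which accounts for the ``possibly $N_G(X)$'' clause.

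Part (c) would follow from (a) by translating it into the linkedness relation ``$uv \in E$ or $\kappa(\cdot,u,v) \ge d+1$'', whose inclusion-wise maximal sets are the $(d+1)$-blocks. On $V(G') = X \cup S$ this relation agrees in $G$ and in $G'$: for pairs with an endpoint in $X$ this is (a) together with preservation of adjacency, and pairs inside $S$ are adjacent in both graphs. I would then observe that a $(d+1)$-block of $G$ meeting $X$ is contained in $V(G')$, because for $u \in X$ and $w \in \overline{X}$ the separator $S$ yields $\kappa(G,u,w) \le d$ while $uw \notin E$, so $u,w$ are not linked; maximality in $G$ and the agreement of the relation then make it a $(d+1)$-block of $G'$, and running the argument backwards gives the converse. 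The only block of $G'$ disjoint from $X$ is forced to be $S$ itself, being a maximal linked set contained in the clique $S$, which is exactly the ``possibly $N_G(X)$'' case.

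I expect the main obstacle to be the bookkeeping in the two surgery arguments: ensuring in (a) that the shortcut paths remain distinct and internally disjoint (where the hypothesis $u \in X$ is essential to rule out a whole path collapsing to a shared edge), and the converse direction of (b), where one must argue that re-attaching $\overline{X}$ through the clique $S$ cannot reconnect the pieces of $G'-T'$. Both arguments hinge on the clique hypothesis on $N_G(X)$, so the conceptual heart of the lemma is the noncrossing fact for clique separators.
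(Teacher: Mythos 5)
Your proof is correct and follows essentially the same route as the paper's: shortcutting paths through the clique $N_G(X)$ for (a), the observation that a clique $d$-separator crosses no $d$-separator for (b), and transferring the relation ``adjacent or $(d+1)$-connected'' between $G$ and $G'$ via (a) for (c). The only differences are cosmetic --- for instance, you certify that a separator $T$ with $T\cap X\neq\varnothing$ also separates $G'$ via the minimum-cut neighbour property, where the paper instead notes that $\overline{X}\cup(N_G(X)-T)$ induces a connected subgraph --- and your extra bookkeeping in (a) about shortened paths remaining distinct and internally disjoint is sound.
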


\begin{lemma}\label{lemma:inclusionwiseminimalfragment}
Let $G = (V,E)$ be a $d$-connected graph and let $X$ be an inclusion-wise minimal $d$-fragment of $G$. If there are no crossing $d$-separators in $G$, then $X \cup N_G(X)$ is a $(d+1)$-block of $G$.
\end{lemma}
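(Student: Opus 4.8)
The plan is to first reduce to the case where $X$ is a single component of $G - S$, writing $S := N_G(X)$. Since $G$ is $d$-connected and $S$ is a $d$-separator, $\kappa(G) = d$, so $S$ is a minimum vertex cut and every vertex of $S$ has a neighbour in each component of $G - S$. If $X$ were a union of two or more such components, then any single component $C$ of $G - S$ with $C \subseteq X$ would satisfy $N_G(C) = S$ and $\overline{C} \neq \emptyset$, making it a $d$-fragment properly contained in $X$ and contradicting minimality. Hence $X$ is a single connected component of $G - S$, a fact I will use throughout.

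Next I would verify the block property: for every nonadjacent pair $u,v \in X \cup S$, show $\kappa(G,u,v) \geq d+1$. As $\kappa(G) = d$, it is enough to exclude a $(u,v)$-separating $d$-separator $T$. By hypothesis $S$ and $T$ are noncrossing, so there is a single component $C_T$ of $G - S$ with $T - S \subseteq C_T$, and a single component $C_S$ of $G - T$ with $S - T \subseteq C_S$; these two facts drive the argument. If $u,v \in S$, then $u,v \in S - T \subseteq C_S$ places them in the same component of $G - T$, contradicting separation. Otherwise at least one endpoint lies in $X$; if moreover $C_T \neq X$, then $T \cap X = \emptyset$, so the connected set $X$ lies in a single component $D$ of $G - T$, and both endpoints end up in $D$ (directly if they lie in $X$, and a vertex of $S$ because it has a neighbour in $X \subseteq D$), again contradicting separation.

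The heart of the argument is the remaining case $C_T = X$, i.e.\ $T - S \subseteq X$. The two endpoints lie in distinct components of $G - T$, at most one of which is $C_S$; let $D$ be one of them with $D \neq C_S$. Then $D$ is disjoint from $S - T \subseteq C_S$ and from $T$, so it avoids $S$ entirely; being connected and containing a vertex of $X$, it must satisfy $D \subseteq X$. Moreover $N_G(D) = T \neq S = N_G(X)$ — the separators are distinct because $T$ separates a pair in $X \cup S$ that $S$ does not — whence $D \neq X$. Thus $D$ is a $d$-fragment properly contained in $X$, contradicting minimality. Extracting this strictly smaller $d$-fragment, by combining the noncrossing hypothesis with the minimality of $X$, is the step I expect to be the main obstacle.

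Finally I would check inclusion-wise maximality. Any vertex $w \notin X \cup S$ lies in $\overline{X}$, and for an arbitrary $u \in X$ the pair $\{w,u\}$ is nonadjacent (the two lie in different components of $G - S$) and is separated by $S$, so $\kappa(G,w,u) \leq d$. Hence $w$ cannot be added without destroying the block property, so $X \cup N_G(X)$ is a $(d+1)$-block, completing the proof.
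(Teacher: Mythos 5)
Your proof is correct and follows essentially the same strategy as the paper's: reduce to the case where $X$ is a single component of $G-S$, use the noncrossing hypothesis to force $T-S\subseteq X$ for any putative $(u,v)$-separating $d$-separator $T$, and then exhibit a component of $G-T$ that is a $d$-fragment properly contained in $X$, contradicting minimality. The only cosmetic difference is that the paper shows all but one component of $G-T$ lie inside $X$ (via connectivity of $G-(T\cup X)$), whereas you directly locate the one containing $u$ or $v$; both yield the same contradiction.
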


\subsection{Rational functions and rational maps}

We shall need some definitions from algebraic geometry, which we introduce adapted to the affine space $\RR^n$. For a thorough introduction to the notion of rational maps, see, e.g., \cite[Chapter 5]{cox.etal_2015}. 

A \emph{rational function} (over $\RR$, in the variables $x_1,\ldots,x_n$) is an element of the field of fractions of the polynomial ring $\RR[x_1,\ldots,x_n]$. A rational function $\varphi$ can be represented as a quotient $f/g$ of polynomials $f,g \in \RR[x_1,\ldots,x_n]$, where $g$ is not the zero polynomial. Two pairs $f_1/g_1$ and $f_2/g_2$ represent the same rational function if and only if $f_1g_2 - f_2g_1$ is the zero polynomial. The \emph{domain of definition} of a rational function $\varphi$ is the set \[\dom(\varphi) = \{x \in \RR^n: \varphi \text{ has a representative } f/g \text{ with } g(x) \neq 0\}.\] The \emph{value} $\varphi(x)$ of $\varphi$ at a point $x \in \dom(\varphi)$ is $f(x)/g(x)$, where $f/g$ is a representative of $\varphi$ with $g(x) \neq 0$. It is easy to see that this is well-defined.

A \emph{rational map} from $\RR^n$ to $\RR^m$, denoted by $F : \RR^n \dashrightarrow \RR^m$, is an $m$-tuple of rational functions $(\varphi_1,\ldots,\varphi_m)$ in $n$ variables. The \emph{domain of definition} of $F$ is $\dom(F) = \cap_{i=1}^m \dom(\varphi_i)$; this is a dense open subset of $\RR^n$. The \emph{value} of $F$ at a point $x \in \dom(F)$ is $F(x) = (\varphi_1(x),\ldots,\varphi_m(x)).$ Thus, we may view $F$ as a function from $\dom(F)$ to $\RR^m$. 
We say that $F$ has \emph{rational coefficients} if each $\varphi_i$ has a representative $f/g$ with $f$ and $g$ having rational coefficients.

We say that a rational map $F : \RR^n \dashrightarrow \RR^m$ is \emph{dominant} if there is no nonzero polynomial $f \in \RR[y_1,\ldots,y_m]$ such that $F(\dom(F)) \subseteq \{y \in \RR^m: f(y) = 0\}$. In particular, this holds if $F(\dom(F))$ is a dense subset of $\RR^m$.
If $F: \RR^n \dashrightarrow \RR^m$ and $G: \RR^m \dashrightarrow \RR^k$ are dominant rational maps with $F = (\varphi_1,\ldots,\varphi_m)$ and $G = (\psi_1,\ldots,\psi_k)$, then we can define their composition $G \circ F$ in the natural way by taking representatives $\varphi_i = f_i / g_i, i \in \{1,\ldots,m\}$ and $\psi_j = f'_j/g'_j, j \in \{1,\ldots,k\}$ and letting the $j$-th component of $G \circ F$ be represented by 
\[\Psi_j = \frac{f'_j(f_1/g_1,\ldots,f_m/g_m)}{g'_j(f_1/g_1,\ldots,f_m/g_m)},\]which is a rational function in the variables $x_1,\ldots,x_n$. This is well-defined, and the condition that $F$ is dominant ensures that the denominator in the above expression is not identically zero. The rational map $G \circ F: \RR^n \dashrightarrow \RR^k$ defined by $(\Psi_1,\ldots,\Psi_k)$ is again dominant, and if $F$ and $G$ have rational coefficients, then so does $G \circ F$. The domain of definition of $G \circ F$ includes (but may be larger than) $\{x \in \RR^n: x \in \dom(F) \text{ and } F(x) \in \dom(G)\}$, and at a member $x$ of the latter set we have $(G\circ F)(x) = G(F(x))$.

Let us say that a point $x \in \RR^n$ is \emph{generic} if its $n$ coordinates are algebraically independent over $\QQ$; that is, if there is no nonzero polynomial $f \in \QQ[x_1,\ldots,x_n]$ such that $f(x) = 0$. The following lemma captures the easy but useful observation that a rational map with rational coefficients is completely determined by its value at any generic point.

\begin{lemma}\label{lemma:rationalmapsgeneric}
Let $F,G: \RR^n \dashrightarrow \RR^m$ be rational maps with rational coefficients and let $x \in \RR^n$ be a generic point. Then we have $x \in \dom(F) \cap \dom(G)$. Furthermore, if $F(x) = G(x)$, then $F$ and $G$ are equal as rational maps.
\end{lemma}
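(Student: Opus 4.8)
The plan is to prove the two assertions of \cref{lemma:rationalmapsgeneric} in turn, relying throughout on the defining property of a generic point: no nonzero polynomial with rational coefficients vanishes on it.

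First I would establish that $x \in \dom(F) \cap \dom(G)$. It suffices to show $x \in \dom(\varphi)$ for an arbitrary component $\varphi$ of $F$ or $G$. Since $F$ and $G$ have rational coefficients, $\varphi$ has a representative $f/g$ with $f, g \in \QQ[x_1,\ldots,x_n]$ and $g$ not the zero polynomial. Because $x$ is generic and $g \in \QQ[x_1,\ldots,x_n]$ is nonzero, we have $g(x) \neq 0$ by the very definition of genericity. Hence $\varphi$ has a representative whose denominator does not vanish at $x$, so $x \in \dom(\varphi)$. Intersecting over the finitely many components gives $x \in \dom(F) \cap \dom(G)$.

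Next I would prove the rigidity statement: if $F(x) = G(x)$ then $F = G$ as rational maps. Write $F = (\varphi_1,\ldots,\varphi_m)$ and $G = (\psi_1,\ldots,\psi_m)$, and fix an index $i$. The goal is to show $\varphi_i = \psi_i$ as rational functions. Choose representatives $\varphi_i = f_i/g_i$ and $\psi_i = h_i/k_i$ with all four polynomials in $\QQ[x_1,\ldots,x_n]$ and $g_i, k_i$ nonzero; this is possible by the rational-coefficient hypothesis. Two rational functions are equal precisely when $f_i k_i - h_i g_i$ is the zero polynomial, so consider the polynomial
\[
P_i = f_i k_i - h_i g_i \in \QQ[x_1,\ldots,x_n].
\]
From $F(x) = G(x)$ we get $\varphi_i(x) = \psi_i(x)$, i.e.\ $f_i(x)/g_i(x) = h_i(x)/k_i(x)$; since $g_i(x) \neq 0$ and $k_i(x) \neq 0$ by the first part, clearing denominators yields $f_i(x) k_i(x) = h_i(x) g_i(x)$, that is $P_i(x) = 0$. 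But $P_i$ has rational coefficients and $x$ is generic, so $P_i$ must be the zero polynomial. Therefore $\varphi_i = \psi_i$ as rational functions, and as $i$ was arbitrary, $F = G$.

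I do not expect a genuine obstacle here; the lemma is a direct unwinding of the definitions, and the single point requiring care is to keep all chosen representatives within $\QQ[x_1,\ldots,x_n]$ so that genericity applies both to the denominators (for the domain claim) and to the combined polynomial $P_i$ (for the equality claim). The only subtlety worth stating explicitly is that genericity is a statement about polynomials with \emph{rational} coefficients, which is exactly why the rational-coefficient hypothesis on $F$ and $G$ cannot be dropped.
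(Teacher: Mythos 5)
Your proof is correct and follows essentially the same route as the paper's: choose rational-coefficient representatives, use genericity to see the denominators do not vanish at $x$ (giving the domain claim), and then clear denominators so that genericity forces the cross-difference polynomial $f_i k_i - h_i g_i$ to vanish identically. No gaps; nothing further to add.
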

\begin{proof}
    Let $(f_1/g_1,\ldots,f_m/g_m)$ and $(f'_1/g'_1,\ldots,f'_m/g'_m)$ be representations of $F$ and $G$, respectively, with rational coefficients. Since $x$ is generic, we have that $g_i(x) \neq 0$ and $g'_i(x) \neq 0$ for every $i \in \{1,\ldots,m\}$. It follows that $x \in \dom(F) \cap \dom(G)$. If $F(x) = G(x)$, then for each $i \in \{1,\ldots,m\}$ we have
    \[\frac{f_i(x)}{g_i(x)} = \frac{f'_i(x)}{g'_i(x)}.\]
    By rearranging we get $f_i(x)g'_i(x) - f'_i(x)g_i(x) = 0$. Since $f_ig'_i - f'_ig_i$ is a polynomial with rational coefficients and $x$ is generic, this means that $f_ig'_i - f'_ig_i$ is the zero polynomial, so $f_i/g_i$ and $f'_i/g'_i$ represent the same rational function. Since this holds for each index $i$, we have $F = G$, as claimed. 
\end{proof}

\subsection{Congruences and reflections}

A function $f: \RR^d \rightarrow \RR^d$ is a \emph{congruence} if it preserves distances, that is, if $\norm{x - y} = \norm{f(x) - f(y)}$ for every $x, y \in \RR^d$, where $\norm{\cdot}$ denotes the Euclidean norm. It is well-known that every congruence of $\RR^d$ has the form $x \mapsto Qx + b$, where $Q \in \RR^{d \times d}$ is an orthogonal matrix and $b \in \RR^d$ is a vector.
We shall use the following standard facts.

\begin{lemma}\cite[Theorem 4]{rees_1983}\label{lemma:congruence}
Let $\alpha : \RR^d \rightarrow \RR^d$ be a congruence. If $\alpha$ fixes $d$ affinely independent points $p_1,\ldots,p_d \in \RR^d$, then $\alpha$ is either the identity map, or the orthogonal reflection in the affine hyperplane spanned by $p_1,\ldots,p_d$. 
\end{lemma}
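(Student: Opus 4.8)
The plan is to prove \cref{lemma:congruence} by reducing to the linear case and then using a dimension argument on the fixed-point set. First I would write the congruence as $\alpha(x) = Qx + b$ with $Q$ orthogonal, which is the standard form quoted in the preceding paragraph. Since the $d$ affinely independent points $p_1,\ldots,p_d$ span an affine hyperplane $H$, I would translate the whole picture so that one of these points, say $p_1$, goes to the origin; concretely, I would study the conjugated map $\beta(x) = \alpha(x + p_1) - p_1$, which is again a congruence and fixes the $d$ points $0, p_2 - p_1, \ldots, p_d - p_1$. The vectors $q_i := p_i - p_1$ for $i \in \{2,\ldots,d\}$ are then $d-1$ linearly independent vectors (by affine independence of the $p_i$), and $\beta$ fixes the origin, so $\beta$ is a linear orthogonal map $x \mapsto Q'x$.

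Next I would exploit that $\beta$ fixes each $q_i$, i.e. $Q' q_i = q_i$ for $i \in \{2,\ldots,d\}$, so the $(d-1)$-dimensional subspace $W = \Span\{q_2,\ldots,q_d\}$ lies in the $+1$ eigenspace of $Q'$. Because $Q'$ is orthogonal, it preserves the orthogonal complement $W^\perp$, which is $1$-dimensional. An orthogonal map on a line is multiplication by $\pm 1$, so $Q'$ acts on $W^\perp$ either as the identity or as $-1$. In the first case $Q'$ is the identity on all of $\RR^d$, hence $\beta$, and therefore $\alpha$, is the identity map. In the second case $Q'$ fixes $W$ pointwise and negates $W^\perp$, which is exactly the orthogonal reflection in the hyperplane $W$; undoing the translation, $\alpha$ is the orthogonal reflection in the affine hyperplane $p_1 + W$, which is precisely the hyperplane spanned by $p_1,\ldots,p_d$.

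The only genuinely delicate point is verifying that $Q'$ cannot mix $W$ and $W^\perp$ in a nontrivial way, i.e. that fixing $W$ pointwise really does force the one remaining orthogonal degree of freedom to be a scalar $\pm 1$; this is where orthogonality is essential, since without it a map fixing $W$ could still shear the complementary line. I would handle this cleanly by noting that $Q'$ symmetric-or-not, an orthogonal transformation preserving a subspace also preserves its orthogonal complement, and then invoking that $O(1) = \{\pm 1\}$. Everything else is routine bookkeeping about translating affine frameworks back and forth, so I do not expect any obstacle beyond making sure the two cases ($+1$ and $-1$ on the line) are correctly identified with the identity and the claimed reflection, respectively.
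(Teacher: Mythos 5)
Your proof is correct. Note that the paper does not prove this lemma at all -- it is quoted directly from Rees \cite{rees_1983} -- so there is no in-paper argument to compare against; your reduction to a linear orthogonal map fixing a codimension-one subspace pointwise, followed by the observation that the induced map on the one-dimensional orthogonal complement must be $\pm 1$, is the standard and complete way to establish it.
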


\begin{lemma}\cite[(the proof of) Theorem 2]{rees_1983}\label{lemma:trilateration}
Let $S \subseteq \RR^d$ be a set of $d+1$ affinely independent points and let $p,p' \in \RR^d$ be a pair of points. If $\norm{p - s} = \norm{p' - s}$ for all $s \in S$, then $p = p'$.
\end{lemma}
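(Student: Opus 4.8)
The plan is to reduce the distance conditions to a homogeneous linear system in the entries of $p - p'$ and then invoke affine independence to conclude that this difference vanishes. First I would square each of the given equalities, which is harmless since $a = b$ implies $a^2 = b^2$, so that $\norm{p - s}^2 = \norm{p' - s}^2$ for every $s \in S$. Expanding both sides via $\norm{x - s}^2 = \norm{x}^2 - 2\langle x, s\rangle + \norm{s}^2$ and cancelling the common term $\norm{s}^2$ yields
\[\norm{p}^2 - 2\langle p, s\rangle = \norm{p'}^2 - 2\langle p', s\rangle,\]
which rearranges to $2\langle p - p', s\rangle = \norm{p}^2 - \norm{p'}^2$. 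The crucial observation is that the right-hand side is a constant not depending on $s$, so, writing $c = \norm{p}^2 - \norm{p'}^2$, the scalar equation $2\langle p - p', s\rangle = c$ holds simultaneously for all $d+1$ points $s \in S$.

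Next I would fix an enumeration $S = \{s_0, s_1, \ldots, s_d\}$ and subtract the equation for $s_0$ from the equation for each $s_i$, which eliminates the constant $c$ and gives
\[\langle p - p', s_i - s_0\rangle = 0 \quad \text{for all } i \in \{1, \ldots, d\}.\]
By hypothesis the $d+1$ points of $S$ are affinely independent, which by definition means that the $d$ difference vectors $s_1 - s_0, \ldots, s_d - s_0$ are linearly independent; being $d$ linearly independent vectors in $\RR^d$, they form a basis. Thus $p - p'$ is orthogonal to a spanning set of $\RR^d$, and since the only vector orthogonal to every element of a basis is the zero vector, we obtain $p - p' = 0$, i.e.\ $p = p'$.

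I do not anticipate any genuine obstacle, as this is a routine trilateration argument whose entire content is the elementary computation above. The only step warranting a moment's care is the passage from affine independence of the points $s_0, \ldots, s_d$ to linear independence of the difference vectors $s_i - s_0$; but this is precisely the definition of affine independence, so no separate justification is needed.
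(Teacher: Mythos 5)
Your proof is correct and complete. The paper does not prove this lemma itself but cites it as a known fact from Rees; your argument is the standard linearization of the squared-distance equations, and the one step that needs care (affine independence of $s_0,\ldots,s_d$ giving linear independence of the differences $s_i - s_0$, hence a basis of $\RR^d$ to which $p - p'$ is orthogonal) is handled correctly.
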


Let $X \subseteq (\RR^d)^{d+1}$ denote the set of vector tuples $\left(a_1,\ldots,a_d,x\right)$ such that $\{a_1,\ldots,a_d\}$ is affinely independent, and let $F : X \to \RR^d$ be the map that sends each tuple  $\left(a_1,\ldots,a_d,x\right)$ to the orthogonal reflection of $x$ in the affine hyperplane spanned by $\{a_1,\ldots,a_d\}$. This turns out to be a rational map with rational coefficients, a fact that we shall repeatedly exploit in \cref{section:partialreflections}.

\begin{lemma}\label{lemma:rationalmap}
The map $F$ defined above is (the function induced by) a rational map with rational coefficients.
\end{lemma}
\begin{proof}
    It is well-known that given a matrix $M \in \RR^{d \times (d-1)}$ of full column rank, the orthogonal projection onto the linear subspace of $\RR^d$ spanned by the columns of $M$ is given by $M(M^TM)^{-1}M^T$, see \cite[Section 5.13]{meyer_2000}. From this it is not difficult to derive that the orthogonal reflection of $x$ in the affine hyperplane generated by $\{a_1,\ldots,a_d\}$ is given by
    \[(I-2M(M^TM)^{-1}M^T)(x - a_1) + a_1,\]
    where $M$ is the matrix having $a_2-a_1,\ldots,a_d-a_1$ as its columns. Note that $\{a_1,\ldots,a_d\}$ being affinely independent implies that $M^TM$ is invertible.
    It is well-known (and follows from, e.g., Cramer's rule) that the map $X \mapsto X^{-1}$ corresponds to a rational map $\RR^{d \times d} \dashrightarrow \RR^{d \times d}$ with rational coefficients whose domain is the set of invertible matrices. It follows that the above expression for orthogonal reflections corresponds to a rational map with rational coefficients whose domain includes the set of vector tuples $\left(a_1,\ldots,a_d,x\right)$ with $\{a_1,\ldots,a_d\}$ affinely independent. 
\end{proof}

\subsection{Rigid and globally rigid graphs}

Let $G = (V,E)$ be a graph and let $d \geq 1$ be an integer. As defined in the introduction, a \emph{realization} of $G$ in $\RR^d$ is a pair $(G,p)$, where $p$ is a map from $V$ to $\RR^d$. We also say that the pair $(G,p)$ is a \emph{$d$-dimensional framework}. The \emph{length} of an edge $uv \in E$ in $(G,p)$ is $\norm{p(u) - p(v)}$.
Two frameworks $(G,p)$ and $(G,q)$ in $\RR^d$ are \emph{equivalent} if for every edge $uv \in E$, the length of $uv$ is the same in $(G,p)$ and $(G,q)$. The frameworks are \emph{congruent} if there is a congruence $\RR^d \rightarrow \RR^d$ that maps $p(v)$ to $q(v)$ for all $v \in V$. This is well-known to be equivalent to the condition that $\norm{p(u) - p(v)} = \norm{q(u) - q(v)}$ holds for every pair of vertices $u,v \in V$.

A $d$-dimensional framework $(G,p)$ is \emph{rigid} if there is some $\varepsilon > 0$ such that every equivalent framework $(G,q)$ with $\norm{p(v) - q(v)} < \varepsilon$ for all $v \in V$ is congruent to $(G,p)$. If this holds globally, that is, if every equivalent framework is congruent to $(G,p)$, then we say that $(G,p)$ is \emph{globally rigid}.

It turns out that the combinatorial structure of $G$ determines the (global) rigidity of $(G,p)$ for ``almost all'' $p$. To be more precise, we say that a $d$-dimensional framework $(G,p)$ is \emph{generic} if the multiset of the $d \cdot |V|$ coordinates of $\{p(v), v\in V\}$ is algebraically independent over $\QQ$; in other words, if $p$ is generic as a point in $\RR^{d|V|}$. It is known that if some generic $d$-dimensional realization of a graph is rigid (resp.\ globally rigid), then every generic $d$-dimensional realization of the graph is rigid (resp.\ globally rigid), see \cite{asimow.roth_1978,connelly_2005,gortler.etal_2010}.
Thus, we say that a graph $G$ is \emph{rigid in $\RR^d$} if its generic realizations in $\RR^d$ are rigid, and \emph{globally rigid in $\RR^d$} if its generic realizations in $\RR^d$ are globally rigid. 


It is well-known that if a graph on at least $d+1$ vertices is rigid in $\RR^d$, then it is $d$-connected. The next lemma describes some basic gluing results regarding graph rigidity. The first part can be found in, e.g., \cite[Lemma 11.1.9(a)]{whlong}, while the second is easy to deduce from the definitions.

\begin{lemma}\label{lemma:rigidgluing}
Let $G = (V,E)$ be the union of the graphs $G_1 = (V_1,E_1), G_2 = (V_2,E_2)$. If $G_1$ and $G_2$ are rigid in $\RR^d$ and $|V_1 \cap V_2| \geq d$, then $G$ is rigid in $\RR^d$. Conversely, if $G$ is rigid in $\RR^d$ and $V_1 \cap V_2$ is a clique in both $G_1$ and $G_2$, then $G_1$ and $G_2$ are rigid in $\RR^d$.
\end{lemma}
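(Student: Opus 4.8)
The plan is to prove both directions directly from the local definition of rigidity, reducing everything to the behaviour of congruences on the shared vertex set and invoking \cref{lemma:congruence}. In both cases I would fix a generic realization $(G,p)$ of $G$ and pass to its restrictions to $G_1$ and $G_2$, which are themselves generic realizations of $G_1$ and $G_2$.

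For the first (gluing) direction, I would take an equivalent framework $(G,q)$ with $\norm{p(v)-q(v)} < \varepsilon$ for all $v \in V$. Each $(G_i, q|_{V_i})$ is then equivalent to, and close to, the generic rigid framework $(G_i, p|_{V_i})$, so for $\varepsilon$ small there exist congruences $\alpha_1,\alpha_2$, each close to the identity, with $\alpha_i(p(v)) = q(v)$ for all $v \in V_i$. On $V_1 \cap V_2$ both congruences agree with $q$, so $\alpha_2^{-1}\alpha_1$ fixes every point $p(v), v \in V_1 \cap V_2$. Since $p$ is generic and $|V_1 \cap V_2| \geq d$, these include $d$ affinely independent points, so by \cref{lemma:congruence} the map $\alpha_2^{-1}\alpha_1$ is either the identity or the reflection in the hyperplane they span. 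The latter is a fixed isometry bounded away from the identity, whereas $\alpha_2^{-1}\alpha_1 \to \mathrm{id}$ as $\varepsilon \to 0$; hence for small $\varepsilon$ we must have $\alpha_1 = \alpha_2 =: \alpha$, and then $q = \alpha \circ p$ on all of $V = V_1 \cup V_2$, so $(G,q)$ is congruent to $(G,p)$. This shows $G$ is rigid.

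For the converse, I would argue by contrapositive: suppose $G$ is rigid but, say, $G_1$ is not, so $(G_1, p|_{V_1})$ admits a nontrivial flex $q_1(t)$ with $q_1(0) = p|_{V_1}$. Writing $K := V_1 \cap V_2$, the fact that $K$ is a clique in $G_1$ means all pairwise distances among $p(v), v \in K$ are edge lengths and hence are preserved by the flex; so the flex moves $K$ rigidly, and there is a continuous family of congruences $\beta_t$, with $\beta_0 = \mathrm{id}$, such that $q_1(t)|_K = \beta_t(p|_K)$. I would then define $q(t)$ on $V$ by $q(t)|_{V_1} = q_1(t)$ and $q(t)(v) = \beta_t(p(v))$ for $v \in V_2 \setminus V_1$; this is consistent on $K$. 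Edges of $G_1$ keep their lengths because $q_1(t)$ is a flex, and since $K$ is also a clique in $G_2$ we have $q(t)|_{V_2} = \beta_t(p|_{V_2})$, a congruent copy of $p|_{V_2}$, so edges of $G_2$ keep their lengths too. Thus $q(t)$ is an edge-length preserving motion of $G$ staying near $p$, and were it congruent to $p$ then its restriction $q_1(t)$ would be congruent to $p|_{V_1}$, contradicting nontriviality of the flex. This contradicts rigidity of $G$, so $G_1$ (and symmetrically $G_2$) is rigid.

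The main obstacle is the genuinely local nature of the gluing direction: rigidity of the pieces supplies congruences $\alpha_i$ only for frameworks already close to $p$, and one must rule out the reflection case of \cref{lemma:congruence}. The key observation that makes this work is that a reflection in a fixed hyperplane is bounded away from the identity, so it cannot arise as $\alpha_2^{-1}\alpha_1$ when both $\alpha_i$ are forced close to the identity; exactly $d$ shared generic vertices suffice to pin the two pieces together up to this discrete ambiguity, which is why $|V_1 \cap V_2| \geq d$ is the correct threshold. I would emphasize that this yields only local rigidity: the residual reflection freedom is precisely the phenomenon that global rigidity, and the partial reflections studied in the rest of the paper, must contend with.
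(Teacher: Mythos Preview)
The paper does not actually prove this lemma: it cites \cite[Lemma 11.1.9(a)]{whlong} for the gluing direction and simply declares the converse ``easy to deduce from the definitions.'' Your proposal supplies exactly such a deduction, working directly from the local definition of rigidity and \cref{lemma:congruence}, and the argument is essentially correct.

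Two small points deserve an extra sentence. First, in both directions you assert that the congruences ($\alpha_i$ in the gluing direction, $\beta_t$ in the converse) may be chosen close to the identity. Rigidity only hands you \emph{some} congruence; the refinement holds because a generic $p|_{V_i}$ contains $d+1$ affinely independent points (once $|V_i| \geq d+1$, the degenerate case $|V_i|=d$ being trivial since then $V_i = V_1\cap V_2$), and a congruence is then uniquely determined by, and depends continuously on, their images. Second, in the converse the clause ``since $K$ is also a clique in $G_2$'' is misplaced: the identity $q(t)|_{V_2} = \beta_t(p|_{V_2})$ follows directly from your construction of $q(t)$ and $\beta_t$, independently of any clique hypothesis on $G_2$. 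That hypothesis is used only when you run the symmetric argument to conclude that $G_2$ is rigid.

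For comparison, the usual textbook proof of the gluing direction (and presumably the one in the cited reference) proceeds via \cref{theorem:gluck} and the rigidity-matroid closure: since each $G_i$ is rigid, the closure of $E_i$ contains all of $K_{V_i}$, so it suffices to show $K_{V_1}\cup K_{V_2}$ is rigid, which follows by repeated $0$-extension over the $\geq d$ shared vertices. That route avoids the limiting arguments entirely but hides the geometry; your approach has the virtue of making the threshold $|V_1\cap V_2|\geq d$ transparent through \cref{lemma:congruence}, in exactly the spirit of the partial-reflection analysis in the rest of the paper.
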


Given a graph $G$, its \emph{cone graph} is obtained from $G$ by adding a new
vertex $v$ and new edges from $v$ to each vertex of $G$.

\begin{theorem}\cite{whiteley_cones}\label{theorem:coning}
A graph is rigid in $\RR^d$ if and only if its cone graph is rigid in $\RR^{d+1}$. 
\end{theorem}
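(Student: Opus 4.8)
The plan is to pass from continuous rigidity to infinitesimal rigidity and compare rigidity matrices. Since rigidity of a generic framework coincides with infinitesimal rigidity (as recorded after the definition of genericity, cf.\ \cite{asimow.roth_1978}), it suffices to show that the generic rank $r_{d+1}(G^+)$ of the rigidity matrix of the cone graph $G^+ = (V \cup \{v_0\},\, E \cup \{v_0u : u \in V\})$ in $\RR^{d+1}$ equals $n := |V|$ plus the generic rank $r_d(G)$ of the rigidity matrix of $G$ in $\RR^d$. I would use the \emph{coning construction}: given a realization $(G,p)$ in $\RR^d$, place the apex $v_0$ at the origin of $\RR^{d+1}$ and each original vertex $u$ at $\hat p(u) = (p(u),1)$, obtaining a realization $(G^+,\hat p)$. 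Throughout I write $R(G,p)$ for the rigidity matrix.

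The heart of the argument is a linear-algebraic identity relating the two rigidity matrices. Using the $(d+1)$-dimensional space of translations, every infinitesimal motion of $(G^+,\hat p)$ can be normalised so that the apex velocity $\dot v_0$ is zero, which accounts for a $(d+1)$-dimensional summand. Writing the remaining velocities as $\dot u = (a_u,b_u)$ with $a_u \in \RR^d$ and $b_u \in \RR$, the cone edge $v_0u$ contributes the constraint $\hat p(u)\cdot \dot u = 0$, i.e.\ $b_u = -p(u)\cdot a_u$, while an original edge $uw$ contributes $(p(u)-p(w))\cdot(a_u - a_w) = 0$. Thus the $a_u$ range exactly over the infinitesimal motions of $(G,p)$ in $\RR^d$, and the $b_u$ are then determined. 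Counting dimensions gives $\dim(\text{flexes of }(G^+,\hat p)) = (d+1) + \dim(\text{flexes of }(G,p))$, equivalently
\[\rk R(G^+,\hat p) = n + \rk R(G,p).\]
Taking $p$ generic in $\RR^d$ and noting that $(G^+,\hat p)$ is one realization of $G^+$ yields $r_{d+1}(G^+) \ge n + r_d(G)$.

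For the forward implication, suppose $G$ is rigid in $\RR^d$, so (assuming $n \ge d+1$; the finitely many smaller cases reduce to coning a complete graph and can be checked directly) $r_d(G) = dn - \binom{d+1}{2}$. Then the coned framework satisfies $\rk R(G^+,\hat p) = n + dn - \binom{d+1}{2} = (d+1)n - \binom{d+1}{2}$. Since $\binom{d+2}{2} = \binom{d+1}{2} + (d+1)$, this value is exactly the rank threshold $(d+1)(n+1) - \binom{d+2}{2}$ for rigidity of $G^+$ in $\RR^{d+1}$. As the $n+1$ points of $(G^+,\hat p)$ affinely span $\RR^{d+1}$, the rank of any realization is bounded above by this threshold; hence it is attained at $\hat p$, so $r_{d+1}(G^+)$ equals the threshold and $G^+$ is rigid in $\RR^{d+1}$.

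The reverse implication is the delicate one, and I expect it to be the main obstacle. Knowing only that $G^+$ is generically rigid does not immediately show that the \emph{special} coned framework $(G^+,\hat p)$ attains the generic rank, since $\hat p$ is non-generic; the identity above only gives $r_{d+1}(G^+) \ge n + r_d(G)$, not equality. To close the gap I would prove the matching upper bound, for which it suffices to establish that the rank identity $\rk R(G^+,q) = n + \rk R(G,\pi(q))$ persists for an arbitrary placement $q$ with the apex at the origin and no other vertex on the hyperplane $\{x_{d+1}=0\}$, where $\pi$ denotes central projection from the origin onto $\{x_{d+1}=1\}$. Granting this, choosing $q$ generic (with the apex translated to the origin) makes $\pi(q)$ generic in $\RR^d$, so $r_{d+1}(G^+) = n + r_d(G)$, and the same threshold computation yields the reverse implication. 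The hard part will be exactly this last identity: passing from the unit-hyperplane placement $\hat p$ to a general radial placement rescales each $q(u)$ by a scalar, which alters the edge constraints and does \emph{not} preserve infinitesimal motions term by term. Showing that it nonetheless preserves the rank is an instance of the projective invariance of infinitesimal rigidity, and I would carry it out by constructing an explicit scalar-weighted linear isomorphism between the flex spaces of $(G^+,q)$ and $(G^+,\hat p)$ induced by the differential of $\pi$; verifying that this map sends flexes to flexes bijectively is the technically demanding step, while everything else is bookkeeping with the rank identity and the relation $\binom{d+2}{2} = \binom{d+1}{2} + (d+1)$.
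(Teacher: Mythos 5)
The paper does not prove this statement; it is imported verbatim from Whiteley's coning paper, so there is no internal proof to compare against. Your argument is essentially a reconstruction of Whiteley's original proof, and it is correct in all of its main steps: the rank identity $\rk R(G^+,\hat p) = n + \rk R(G,p)$ for the unit-hyperplane coning is derived correctly (the decomposition of the flex space into translations plus apex-pinned flexes, and the identification of the apex-pinned flexes with pairs $(a_u,b_u)$ where $a$ is a flex of $(G,p)$ and $b$ is determined, are both right), and the threshold bookkeeping via \cref{theorem:gluck} together with $\binom{d+2}{2}=\binom{d+1}{2}+(d+1)$ is exactly how both implications close, once one knows that a generic placement of $G^+$ has rank $n+r_d(G)$.

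The one step you defer --- radial invariance of the cone's rank --- is genuinely the crux of the reverse implication, but it is easier than you fear: the ``scalar-weighted linear isomorphism'' you ask for is simply $m(u)\mapsto \lambda_u m(u)$ on apex-pinned flexes (no correction term along the rays is needed). Indeed, if $q'(u)=\lambda_u q(u)$ with the apex at the origin and $m$ is a flex of $(G^+,q)$ with $m(v_0)=0$, then the cone-edge conditions $q(u)\cdot m(u)=0$ are preserved trivially, and for an original edge $uw$ one computes
\[
\bigl(\lambda_u q(u)-\lambda_w q(w)\bigr)\cdot\bigl(\lambda_u m(u)-\lambda_w m(w)\bigr) = -\lambda_u\lambda_w\bigl(q(u)\cdot m(w)+q(w)\cdot m(u)\bigr) = 0,
\]
where the last equality follows by expanding the edge condition for $(G^+,q)$ and using the two cone-edge conditions. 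The map is visibly invertible, so the apex-pinned flex spaces of $(G^+,q)$ and $(G^+,q')$ have equal dimension and the ranks agree. With that lemma in hand, translating a generic $q$ so the apex sits at the origin (which leaves $R(G^+,q)$ unchanged) and rescaling onto the hyperplane $x_{d+1}=1$ gives $\rk R(G^+,q)=n+\rk R(G,\pi(q))\le n+r_d(G)$, which is all you need --- you do not even need $\pi(q)$ to be generic, only that its rank is at most the generic rank. Two trivial quibbles: \cref{theorem:gluck} as stated in the paper only requires $n\ge d$ vertices, so your side condition $n\ge d+1$ can be relaxed to the single degenerate regime $n<d$ (where both sides of the equivalence reduce to completeness); and the appeal to ``rigidity equals infinitesimal rigidity generically'' is subsumed by your actual use of the generic rank function $r_d$.
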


\subsection{The rigidity matrix and the rigidity matroid}

The rigidity matroid of a graph $G$ is a matroid defined on the edge set
of $G$ which reflects the rigidity properties of all generic realizations of
$G$. 
Let $(G,p)$ be a realization of a graph $G=(V,E)$ in $\RR^d$.
The {\it rigidity matrix} of the framework $(G,p)$
is the matrix $R(G,p)$ of size
$|E|\times d|V|$, where, for each edge $v_iv_j\in E$, in the row
corresponding to $v_iv_j$,
the entries in the $d$ columns corresponding to vertices $v_i$ and $v_j$ contain
the $d$ coordinates of
$(p(v_i)-p(v_j))$ and $(p(v_j)-p(v_i))$, respectively,
and the remaining entries
are zeros.
The rigidity matrix of $(G,p)$ defines
the {\it rigidity matroid}  of $(G,p)$ on the ground set $E$
by linear independence of rows. It is known that any pair of generic frameworks
$(G,p)$ and $(G,q)$ have the same rigidity matroid.
We call this the $d$-dimensional {\it rigidity matroid}
${\cal R}_d(G)=(E,r_d)$ of the graph $G$.

We denote the rank of ${\cal R}_d(G)$ by $r_d(G)$.
A graph $G=(V,E)$ is {\it $\Rd$-independent} if $r_d(G)=|E|$, and it is an {\it $\Rd$-circuit}  if it is not $\Rd$-independent, but every proper 
subgraph of $G$ is $\Rd$-independent. We note that in the literature such graphs are sometimes called $M$-independent in $\RR^d$ and $M$-circuits in $\RR^d$, respectively. 
A pair $\{u,v\}$ of nonadjacent vertices in $G$ is \emph{linked in $G$ in $\RR^d$} if $r_d(G) = r_d(G+uv)$. Equivalently, $\{u,v\}$ is linked in $G$ in $\RR^d$ if there is an $\Rd$-circuit in $G+uv$ that contains $uv$.

We shall use the following characterization of rigid graphs. 

\begin{theorem}\label{theorem:gluck}\cite{gluck}
Let $G = (V,E)$ be a graph on at least $d$ vertices. Then $G$ is rigid in $\RR^d$ if and only if $r_d(G) = d|V| - \binom{d+1}{2}$. 
\end{theorem}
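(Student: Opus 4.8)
The plan is to translate the statement into the language of the rigidity matrix and then invoke the equivalence, for generic frameworks, of continuous and infinitesimal rigidity. Throughout I would fix a generic realization $(G,p)$ of $G$ in $\RR^d$, so that $\operatorname{rank} R(G,p) = r_d(G)$ by the definition of the rigidity matroid. The relevant object is the kernel of $R(G,p)$, whose elements $q = (q_v)_{v \in V} \in \RR^{d|V|}$ are the \emph{infinitesimal flexes} of $(G,p)$, characterized by the equations $(p(u) - p(v)) \cdot (q_u - q_v) = 0$ for every edge $uv \in E$.

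The first step is to pin down the space $T$ of \emph{trivial} infinitesimal flexes, obtained by differentiating one-parameter families of congruences at $p$; concretely, $T$ is the image of the map sending a pair $(A,b)$, with $A \in \RR^{d \times d}$ skew-symmetric and $b \in \RR^d$, to the flex $(A\,p(v) + b)_{v \in V}$. Every such flex lies in $\ker R(G,p)$, since $(p(u)-p(v))^T A (p(u)-p(v)) = 0$ for skew-symmetric $A$. I would then show that $\dim T = \binom{d+1}{2}$ whenever $|V| \geq d$: the domain of the map has dimension $\binom{d}{2} + d = \binom{d+1}{2}$, and the map is injective because if $A\,p(v) + b = 0$ for all $v$ then $A$ annihilates every difference $p(v) - p(w)$, and these differences span a subspace of dimension at least $d-1$ for points in general position, forcing the skew-symmetric matrix $A$ (which has even rank) to vanish, and then $b = 0$ as well. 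This is precisely where the hypothesis $|V| \geq d$ is used.

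Next I would apply the Asimow--Roth theorem, one of the cited results guaranteeing that for a generic framework continuous rigidity coincides with infinitesimal rigidity. Thus $(G,p)$ is rigid if and only if its only infinitesimal flexes are the trivial ones, that is, $\ker R(G,p) = T$, or equivalently $\dim \ker R(G,p) = \binom{d+1}{2}$. The rank--nullity theorem gives $\operatorname{rank} R(G,p) = d|V| - \dim \ker R(G,p)$, so this condition is equivalent to $\operatorname{rank} R(G,p) = d|V| - \binom{d+1}{2}$. Since $\operatorname{rank} R(G,p) = r_d(G)$, this is exactly the asserted identity, completing the argument.

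The main obstacle is the very first equivalence invoked above, namely that generic continuous rigidity is the same as infinitesimal rigidity; this is the substantive geometric input (the Asimow--Roth theorem), and once granted the remainder is the linear-algebra bookkeeping sketched here. A secondary point demanding care is the boundary case $|V| = d$, where the generic configuration spans only a hyperplane rather than all of $\RR^d$; the dimension count for $T$ above was arranged to remain valid there, but one should check that the rigidity/infinitesimal-rigidity equivalence is also applicable (alternatively, for $|V| = d$ one argues directly that $G$ is rigid if and only if $G = K_d$, which matches $r_d(G) = \binom{d}{2} = d|V| - \binom{d+1}{2}$).
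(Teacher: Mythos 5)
The paper does not prove this statement; it is quoted as a known result with a citation to Gluck, so there is no internal proof to compare against. Your argument is the standard one and is correct: trivial flexes form a $\binom{d+1}{2}$-dimensional subspace of $\ker R(G,p)$ once $|V|\geq d$ (your even-rank trick for the skew-symmetric part is exactly what makes the injectivity work when the generic configuration spans only a hyperplane), and the Asimow--Roth equivalence of rigidity and infinitesimal rigidity for regular frameworks converts the rank--nullity computation into the stated identity $r_d(G)=d|V|-\binom{d+1}{2}$. You are also right to flag the boundary case $|V|=d$ as the one place needing extra care, since Asimow--Roth is usually stated for configurations affinely spanning $\RR^d$; your fallback (for $|V|=d$ the graph is rigid iff it is $K_d$, and $r_d(K_d)=\binom{d}{2}=d|V|-\binom{d+1}{2}$) closes that gap cleanly.
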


Let ${\cal M}$ be a matroid on ground set $E$ with rank function $r$. 
We can define a relation on the pairs of elements of $E$ by
saying that $e,f\in E$ are
equivalent if $e=f$ or there is a circuit $C$ of ${\cal M}$
with $\{e,f\}\subseteq C$.
This defines an equivalence relation. The equivalence classes are 
the {\it connected components} of ${\cal M}$.
The matroid is said to be {\it connected} if there is only one equivalence class.
Given a graph $G=(V,E)$, the subgraphs induced by the edge sets of the connected components
of ${\cal R}_d(G)$ are the
{\it $\Rd$-connected components} of $G$.
The graph is said to be {\it $\Rd$-connected} if ${\cal R}_d(G)$ is connected.

\subsection{Globally linked pairs}

Let $(G,p)$ be a framework in $\RR^d$. As a ``granular'' counterpart to global rigidity, we define a pair of vertices $\{u,v\}$ to be \emph{globally linked} in $(G,p)$ if for every equivalent framework $(G,q)$ in $\RR^d$, we have $\norm{p(u) - p(v)} = \norm{q(u) - q(v)}$. Thus, $(G,p)$ is globally rigid if and only if every pair of vertices is globally linked in $(G,p)$. In contrast with global rigidity, this is not, in general, a generic property: it may happen that (in a given dimension) $\{u,v\}$ is globally linked in some generic realizations and not globally linked in others, see \cite[Figs. 1 and 2]{jackson.etal_2006}. Nevertheless, we define the pair $\{u,v\}$ to be \emph{globally linked in $G$ in $\RR^d$} if $\{u,v\}$ is globally linked in every generic realization of $G$ in $\RR^d$.


We shall need the following ``trilateration'' result concerning globally linked pairs.

\begin{lemma}\label{lemma:globallylinkedtransitive}
Let $G = (V,E)$ be a graph and let $S \subseteq V$ be a clique of size $d+1$ in $G$. Let $u$ and $v$ be a pair of vertices in $G$. If $\{u,s\}$ and $\{v,s\}$ are globally linked in $G$ in $\RR^d$ for every $s \in S$, then $\{u,v\}$ is globally linked in $G$ in $\RR^d$.  
\end{lemma}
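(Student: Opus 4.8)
The plan is to fix a generic realization $(G,p)$ in $\RR^d$ and an arbitrary equivalent realization $(G,q)$, and to show that $\norm{p(u)-p(v)} = \norm{q(u)-q(v)}$. The key structural fact I would exploit is that $S$ is a clique of size $d+1$: in the generic framework $(G,p)$ the $d+1$ points $\{p(s) : s \in S\}$ are necessarily affinely independent (genericity rules out any affine dependence, since an affine dependence among $d+1$ points in $\RR^d$ is a nontrivial polynomial relation with rational coefficients vanishing at a generic point). Thus the positions $p(s), s\in S$, form a coordinate frame that determines every point of $\RR^d$ by its distances to them, via \cref{lemma:trilateration}.

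First I would record what the hypotheses give us. Since $S$ is a clique, all the pairwise distances $\norm{p(s)-p(s')}$, $s,s'\in S$, are edge lengths and hence are preserved in $(G,q)$; consequently the simplex $\{q(s):s\in S\}$ is congruent to $\{p(s):s\in S\}$, and in particular the points $q(s), s\in S$, are also affinely independent. By applying a suitable congruence to $(G,q)$ — which changes neither edge lengths nor the distance $\norm{q(u)-q(v)}$ — I may assume without loss of generality that $q(s)=p(s)$ for every $s\in S$. Next, the global linkedness hypotheses give, for each $s\in S$, that $\norm{p(u)-p(s)} = \norm{q(u)-q(s)}$ and $\norm{p(v)-p(s)} = \norm{q(v)-q(s)}$, because $\{u,s\}$ and $\{v,s\}$ are globally linked in $G$ and $(G,q)$ is equivalent to the generic framework $(G,p)$.

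Now I would invoke trilateration to pin down the images of $u$ and $v$. Using the normalization $q(s)=p(s)$, the equalities above say that $\norm{p(u)-p(s)} = \norm{q(u)-p(s)}$ for all $s\in S$, and likewise $\norm{p(v)-p(s)} = \norm{q(v)-p(s)}$ for all $s\in S$. Since $\{p(s):s\in S\}$ is a set of $d+1$ affinely independent points, \cref{lemma:trilateration} forces $q(u)=p(u)$ and $q(v)=p(v)$. Therefore $\norm{q(u)-q(v)} = \norm{p(u)-p(v)}$, as required, and since $(G,q)$ was an arbitrary equivalent framework this shows $\{u,v\}$ is globally linked in the generic framework $(G,p)$; as $(G,p)$ was an arbitrary generic realization, $\{u,v\}$ is globally linked in $G$ in $\RR^d$.

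The only genuinely delicate point, and the one I would be most careful about, is the normalization step $q(s)=p(s)$: I must argue that a single congruence can simultaneously align all $d+1$ points of the simplex, which relies on the congruence of the two simplices established from the preserved clique distances together with the characterization of congruent frameworks in terms of all pairwise distances. Everything else is a direct application of \cref{lemma:trilateration} and the definitions; the affine independence of the generic simplex is the enabling observation that makes trilateration applicable.
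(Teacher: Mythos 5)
Your proof is correct and follows essentially the same route as the paper's: normalize $(G,q)$ by a congruence so that $q(s)=p(s)$ on the clique $S$ (the paper phrases this via global rigidity of $G[S]$, you via preservation of all pairwise clique distances, which is the same fact), then apply \cref{lemma:trilateration} using genericity to conclude $q(u)=p(u)$ and $q(v)=p(v)$. The one point you flag as delicate is exactly handled as you suggest, so there is no gap.
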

\begin{proof}
Let $(G,p)$ be a generic $d$-dimensional realization of $G$ and $(G,q)$ an equivalent realization. Since $G[S]$ is complete, it is globally rigid in $\RR^d$, so there is a congruence that maps $(G[S],q)$ to $(G[S],p)$. By applying this congruence to all of $(G,q)$, we may suppose that $p(s) = q(s)$ for every $s \in S$.

Using the assumption that $\{u,s\}$ is globally linked in $G$ in $\RR^d$, we have $\norm{p(u) - p(s)} = \norm{q(u) - q(s)} = \norm{q(u) - p(s)}$ for every $s \in S$. Since $(G,p)$ is generic, $\{p(s), s \in S\}$ is affinely independent, so \cref{lemma:trilateration} implies $p(u) = q(u)$. By the same reasoning we also have $p(v) = q(v)$, so in particular $\norm{p(u) - p(v)} = \norm{q(u) - q(v)},$ as desired.
\end{proof}

Let $G$ be a graph. The \emph{globally linked closure of $G$ in $\RR^d$}, denoted by $\glclosure(G)$, is the graph obtained from $G$ by adding the edge $uv$ for every pair $\{u,v\}$ that is globally linked in $G$ in $\RR^d$. The following lemma, which is easy to deduce from the definitions, shows that $\glclosure$ is indeed a closure operator.
\begin{lemma} We have the following for all $d \geq 1$ and all graphs $G$.
\begin{itemize}
    \item $E(G) \subseteq E(\glclosure(G))$,
    \item $\glclosure(\glclosure(G)) = \glclosure(G)$, and
    \item for all graphs $G'$ with $E(G') \subseteq E(G)$, $E(\glclosure(G')) \subseteq E(\glclosure(G))$.
\end{itemize} 
\end{lemma}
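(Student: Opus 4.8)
The plan is to verify the three properties separately. Each follows by unwinding the definition of global linkedness, and the only conceptual point used throughout is that whether a framework $(H,p)$ is generic depends solely on the map $p$ and not on the edge set of $H$, since genericity is the algebraic independence over $\QQ$ of the $d\cdot|V|$ coordinates of $p$. The first property, $E(G) \subseteq E(\glclosure(G))$, is then immediate from the construction: by definition $\glclosure(G)$ is obtained from $G$ by \emph{adding} edges, so no edge of $G$ is lost.

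For the third property (monotonicity) I work in the natural setting of a common vertex set, so that $G'$ is a subgraph of $G$. Suppose $E(G') \subseteq E(G)$ and let $\{u,v\}$ be a pair that is globally linked in $G'$ in $\RR^d$; it suffices to show that $\{u,v\}$ is globally linked in $G$ in $\RR^d$, since the case where $\{u,v\}$ is merely an edge of $G'$ is already covered by the first property. I would take an arbitrary generic realization $(G,p)$ and an arbitrary equivalent realization $(G,q)$, and restrict both maps to $V(G')$. The restriction of $p$ is again generic, because a subset of an algebraically independent set is algebraically independent, and since $E(G') \subseteq E(G)$ the two restricted frameworks are equivalent. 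Global linkedness of $\{u,v\}$ in $G'$ then yields $\norm{p(u)-p(v)} = \norm{q(u)-q(v)}$, which is exactly what is needed.

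The second property (idempotence) is the only part that uses more than bookkeeping, and is where I expect the (still modest) crux to lie. Writing $H = \glclosure(G)$, the inclusion $E(H) \subseteq E(\glclosure(H))$ is just the first property applied to $H$, so it remains to show that every pair $\{u,v\}$ globally linked in $H$ is already an edge of $H$; equivalently, that such a pair is globally linked in $G$. The key observation is that $G$ and $H$ share the same vertex set, so a realization is generic for $G$ if and only if it is generic for $H$. Given a generic $(G,p)$ and an equivalent $(G,q)$, I would check that $(H,p)$ and $(H,q)$ are equivalent as well: on the edges of $G$ this is the hypothesis, while on each added edge $\{a,b\}$ — a pair globally linked in $G$ — the equality $\norm{p(a)-p(b)} = \norm{q(a)-q(b)}$ holds precisely because $(G,p)$ is generic and $\{a,b\}$ is globally linked in $G$. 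Thus the edges created in forming $H$ carry no new length information in the generic setting. Since $(H,p)$ is generic and $\{u,v\}$ is globally linked in $H$, we conclude $\norm{p(u)-p(v)} = \norm{q(u)-q(v)}$, so $\{u,v\}$ is globally linked in $(G,p)$; as $(G,p)$ was an arbitrary generic realization, $\{u,v\}$ is globally linked in $G$.

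The main (if small) obstacle is the equivalence step in idempotence, namely recognizing that passing from $G$ to $H$ does not enlarge the class of equivalent generic frameworks, which is exactly the content of global linkedness for the added edges; everything else is tracing definitions, with the only care needed being the harmless treatment of vertex sets in the monotonicity step.
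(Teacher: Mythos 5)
Your proof is correct, and it follows the route the paper intends: the paper leaves this lemma unproved (``easy to deduce from the definitions''), and your key step for idempotence --- that passing from $G$ to $\glclosure(G)$ preserves equivalence of a generic framework with any equivalent one, because the added edges are globally linked pairs --- is exactly the argument the paper gives for the immediately following \cref{lemma:equivclasses}. The treatment of the other two items (and of the vertex-set bookkeeping in the monotonicity step) is also fine.
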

The following observation captures a key property of taking the globally linked closure of a graph.

\begin{lemma}\label{lemma:equivclasses}
    Let $G = (V,E)$ be a graph and let $(G,p)$ and $(G,q)$ be $d$-dimensional realizations of $G$ with $(G,p)$ generic. Then $(G,p)$ and $(G,q)$ are equivalent if and only if $(\glclosure(G),p)$ and $(\glclosure(G),q)$ are equivalent. Consequently, $G$ is rigid (resp.\ globally rigid) in $\RR^d$ if and only if $\glclosure(G)$ is rigid (resp.\ globally rigid) in $\RR^d$.
\end{lemma}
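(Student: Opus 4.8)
The plan is to prove the statement in \cref{lemma:equivclasses} by establishing the ``if and only if'' for framework equivalence first, and then deducing the rigidity and global rigidity consequences as immediate corollaries. The central claim is that, for a generic framework $(G,p)$ and an arbitrary $(G,q)$, edge-length agreement on $G$ already forces edge-length agreement on the larger graph $\glclosure(G)$. Since $\glclosure(G)$ is obtained from $G$ by adding only edges $uv$ corresponding to globally linked pairs, this is essentially a restatement of the defining property of global linkedness, packaged to respect genericity.

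\textbf{The two directions of the equivalence.} The direction from $\glclosure(G)$ to $G$ is trivial: since $E(G) \subseteq E(\glclosure(G))$ by the preceding lemma, equivalence of $(\glclosure(G),p)$ and $(\glclosure(G),q)$ immediately implies equivalence of the frameworks restricted to the smaller edge set $E(G)$. For the nontrivial direction, I would assume $(G,p)$ and $(G,q)$ are equivalent and show that every edge of $\glclosure(G)$ has matching lengths in $p$ and $q$. Edges already in $G$ are covered by hypothesis. For a newly added edge $uv$, the pair $\{u,v\}$ is by construction globally linked in $G$ in $\RR^d$; since $(G,p)$ is generic and $(G,q)$ is an equivalent framework, the definition of globally linked in $G$ gives exactly $\norm{p(u)-p(v)} = \norm{q(u)-q(v)}$. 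Thus all edges of $\glclosure(G)$ agree in length, so $(\glclosure(G),p)$ and $(\glclosure(G),q)$ are equivalent, completing the iff.

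\textbf{The rigidity and global rigidity consequences.} For global rigidity, I would unwind the definitions using the equivalence just proved. Take a generic $(G,p)$; then $(\glclosure(G),p)$ is generic as well (the vertex sets and coordinates are unchanged). Global rigidity of $G$ means every framework equivalent to $(G,p)$ is congruent to it; global rigidity of $\glclosure(G)$ means the same with $\glclosure(G)$ in place of $G$. The key point is that a framework $(G,q)$ is equivalent to $(G,p)$ if and only if $(\glclosure(G),q)$ is equivalent to $(\glclosure(G),p)$, which is precisely the statement already established, and congruence depends only on the vertex placement $q$, not on the edge set. Hence the two global rigidity conditions coincide. For rigidity, I would argue similarly but should note that rigidity is a local (small-$\varepsilon$) condition; the same equivalence of frameworks applies verbatim on the $\varepsilon$-neighborhood, so the local statements match as well. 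Alternatively, one can cite \cref{theorem:gluck}: since $\glclosure(G)$ is obtained by adding only linked pairs, $r_d(G) = r_d(\glclosure(G))$ and $|V|$ is unchanged, so the rank criterion for rigidity is satisfied by one graph exactly when it is satisfied by the other.

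\textbf{Anticipated obstacle.} The main subtlety, and the step I would handle most carefully, is ensuring that the quantifier over \emph{all} equivalent frameworks $(G,q)$ in the global rigidity definition matches up cleanly under the closure operation. The definition of globally linked in $G$ quantifies over equivalent frameworks of $G$, whereas global rigidity quantifies over equivalent frameworks of $\glclosure(G)$; the bridge is exactly the framework-equivalence iff, so I must make sure to invoke it in both directions rather than only one. Once the equivalence of the two notions of ``equivalent framework'' is in hand, the rigidity and global rigidity statements follow with no further geometric input, so the genuine content is entirely in the short argument about edges of $\glclosure(G)$.
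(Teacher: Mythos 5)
Your proposal is correct and follows essentially the same route as the paper: the trivial direction from $E(G) \subseteq E(\glclosure(G))$, the nontrivial direction by applying the definition of global linkedness to each added edge at the generic $(G,p)$, and the rigidity/global rigidity consequences by observing that the two graphs have identical sets of equivalent configurations at any generic realization. (Your alternative justification of the rigidity part via \cref{theorem:gluck} quietly uses the fact that globally linked pairs are linked, which is true but not established in the paper, so the direct argument you give first is the safer one.)
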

\begin{proof}
    Clearly if $(\glclosure(G),p)$ and $(\glclosure(G),q)$ are equivalent, then so are $(G,p)$ and $(G,q)$. On the other hand, if $(G,p)$ and $(G,q)$ are equivalent, then by the definition of being globally linked we have that $\norm{p(u) - p(v)} = \norm{q(u) - q(v)}$ for any pair $u,v \in V$ for which $\{u,v\}$ is globally linked in $G$ in $\RR^d$. Since $uv$ is an edge of $\glclosure(G)$ if and only if $\{u,v\}$ is globally linked in $G$ in $\RR^d$, it follows that $(\glclosure(G),p)$ and $(\glclosure(G),q)$ are equivalent. 
    
    From this and the relevant definitions it is immediate that $(G,p)$ is rigid (resp.\ globally rigid) if and only if $(\glclosure(G),p)$ is rigid (resp.\ globally rigid). Since $(G,p)$ is generic, this means that $G$ is rigid (resp.\ globally rigid) in $\RR^d$ if and only if $\glclosure(G)$ is rigid (resp.\ globally rigid) in $\RR^d$.
\end{proof}

Following \cite{jackson.etal_2006}, we define a \emph{$d$-dimensional globally rigid cluster} 
of $G$ to be a maximal vertex set of $G$ in which each vertex pair is globally linked in $G$ in $\RR^d$. In other words, the $d$-dimensional globally rigid clusters of $G$ are the maximal cliques of $\glclosure(G)$.

The following simple observation is central to the paper.
\begin{lemma}\label{lemma:globallylinkedclosure}
Let $G = (V,E)$ be a graph and let $S \subseteq V$ be a set of at most $d$ vertices. If $S$ is a separator in $G$, then it is also a separator in $\glclosure(G)$, and moreover if $S$ is $(u,v)$-separating in $G$, then it is also $(u,v)$-separating in $\glclosure(G)$, for any pair of vertices $u,v \in V$. In particular, if $u,v \in V$ are nonadjacent and $\kappa(G,u,v) \leq d$, then $\{u,v\}$ is not globally linked in $G$ in $\RR^d$.
\end{lemma}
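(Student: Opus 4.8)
The plan is to reduce all three assertions to a single \emph{core claim}: if $a,b \in V - S$ lie in different components of $G - S$ and $|S| \le d$, then $\{a,b\}$ is not globally linked in $G$ in $\RR^d$, so that $ab \notin E(\glclosure(G))$. Granting this, the separator statements follow immediately. The claim says that no edge of $\glclosure(G)$ with both endpoints in $V-S$ joins two distinct components of $G - S$; since edges incident to $S$ are deleted in $\glclosure(G)-S$ and do not affect its components, each component $C$ of $G-S$ still induces a connected subgraph of $\glclosure(G)-S$ and no edges run between distinct such components. Hence the components of $\glclosure(G)-S$ coincide with those of $G-S$, so $\glclosure(G)-S$ is disconnected whenever $G-S$ is (the first assertion), and $u,v$ stay in distinct components whenever they started in distinct components (the $(u,v)$-separating statement). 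For the last assertion, if $u,v$ are nonadjacent with $\kappa(G,u,v) \le d$, then by Menger's theorem there is a $(u,v)$-separating set $S$ with $|S| \le d$; by the previous part $S$ is still $(u,v)$-separating in $\glclosure(G)$, so $uv \notin E(\glclosure(G))$, i.e.\ $\{u,v\}$ is not globally linked.

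To prove the core claim I would use a partial reflection, following Hendrickson. Fix a generic realization $(G,p)$ and let $C_b$ be the component of $G - S$ containing $b$. I would choose an affine hyperplane $H$ containing $\{p(s) : s \in S\}$ but neither $p(a)$ nor $p(b)$, let $\sigma$ be the orthogonal reflection across $H$, and define $(G,q)$ by setting $q(w) = \sigma(p(w))$ for $w \in C_b$ and $q(w) = p(w)$ otherwise. The reflection $\sigma$ is an isometry fixing $H$ pointwise (and, as recorded in \cref{lemma:rationalmap}, a rational map with rational coefficients). Every edge of $G$ either lies inside a single component of $G-S$, lies inside $V - C_b$, or joins $C_b$ to $S$; thus edges inside $C_b$ are carried isometrically, edges avoiding $C_b$ are untouched, and for an edge $sw$ with $s \in S$ (so $p(s) \in H$) and $w \in C_b$ we have $\norm{p(s) - \sigma(p(w))} = \norm{\sigma(p(s)) - \sigma(p(w))} = \norm{p(s) - p(w)}$. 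Hence $(G,q)$ is equivalent to $(G,p)$. On the other hand $q(a) = p(a)$ while $q(b) = \sigma(p(b))$, and the perpendicular bisector of the segment $[p(b), \sigma(p(b))]$ is exactly $H$ (the segment is normal to $H$ with midpoint on $H$); therefore $\norm{p(a) - p(b)} = \norm{p(a) - \sigma(p(b))}$ would force $p(a) \in H$. Since $p(a) \notin H$ and $p(b) \notin H$ (the latter ensures $b$ is genuinely moved), the $a$--$b$ distance changes, witnessing that $\{a,b\}$ is not globally linked.

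The step I expect to require the most care is the existence of a suitable hyperplane $H$, which is exactly where genericity is used. Because $|S| \le d$ and $p$ is generic, the points $\{p(s) : s \in S\}$ are affinely independent and span an affine subspace of dimension $|S| - 1 \le d-1$, and neither $p(a)$ nor $p(b)$ lies in this span, since an incidence among these algebraically independent coordinates would contradict genericity. When $|S| = d$ the hyperplane $H$ through $\{p(s)\}$ is unique, and genericity guarantees directly that $p(a), p(b) \notin H$; when $|S| < d$ there is a positive-dimensional family of hyperplanes containing the span of $\{p(s)\}$, from which one avoiding the two points can be chosen. Carrying out this case analysis cleanly is the only genuinely technical point; the equivalence of $(G,p)$ and $(G,q)$ and the change of the $a$--$b$ distance are then routine.
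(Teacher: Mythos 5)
Your proposal is correct and follows essentially the same route as the paper: a partial reflection of one side of the cut in an affine hyperplane through $p(S)$, chosen (using genericity) to avoid the remaining points, which yields an equivalent realization in which the $a$--$b$ distance changes. The only cosmetic difference is that you reflect a single component $C_b$ whereas the paper reflects the complement of a fragment, and you spell out the reduction of the separator statements to the core claim slightly more explicitly; both are immaterial.
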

\begin{proof}
Let $u,v \in V$ be a pair of vertices such that $S$ is $(u,v)$-separating in $G$, and let $X$ be a $(u,v)$-separating fragment such that $N_G(X) = S$. Let $(G,p)$ be a generic realization of $G$ in $\RR^d$. 
For any generic realization $(G,p)$, we can reflect the points $p(x),x \in \overline{X}$ in a hyperplane $H$ containing $\{p(s), s \in S\}$ and leave the rest of the points in place to obtain an equivalent noncongruent realization $(G,q)$. Since $(G,p)$ is generic, we can choose $H$ in such a way that it avoids the points $\{p(v), v \in V - S\}$. With this choice we have $\norm{p(u') - p(v')} \neq \norm{q(u') - q(v')}$ for all pairs $u' \in X$ and $v' \in \overline{X}$. This shows that all such pairs $\{u',v'\}$ are not globally linked in $G$ in $\RR^d$, and thus there are no edges in $\glclosure(G)$ going between $X$ and $\overline{X}$. It follows that $S$ is still $(u,v)$-separating in $\glclosure(G)$, as required.
\end{proof}

\begin{corollary}\label{corollary:globallylinkedclosure}
    Let $G = (V,E)$ be a graph. The $d$-separators, the $d$-fragments and the $(d+1)$-blocks of $G$ and $\glclosure(G)$ coincide.
\end{corollary}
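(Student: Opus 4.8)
The plan is to derive all three statements from \cref{lemma:globallylinkedclosure} together with the trivial inclusion $E(G) \subseteq E(\glclosure(G))$. That inclusion already gives two monotonicity facts for free: every separator of $\glclosure(G)$ is a separator of $G$ of the same size, and $\kappa(G,u,v) \le \kappa(\glclosure(G),u,v)$ for all $u,v \in V$. The additional input from \cref{lemma:globallylinkedclosure} is the reverse direction for small separators, together with the key fact (its ``in particular'' clause, read contrapositively) that every edge $uv \in E(\glclosure(G)) \setminus E(G)$ satisfies $\kappa(G,u,v) \ge d+1$.

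The single observation driving everything is: \emph{for every vertex set $S$ with $|S| = d$, the connected components of $G - S$ and of $\glclosure(G) - S$ coincide.} Indeed, $G - S$ is a spanning subgraph of $\glclosure(G) - S$, and any edge of $\glclosure(G) \setminus G$ joins a pair $u,v$ with $\kappa(G,u,v) \ge d+1$, so $u$ and $v$ cannot be separated by the $d$-element set $S$; hence every new edge stays inside a single component of $G - S$, and no components get merged. The $d$-separator claim is then immediate: a set $S$ with $|S| = d$ disconnects $G$ if and only if it disconnects $\glclosure(G)$.

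For $d$-fragments I would fix a candidate $X$ and its separator $S$ and compare $N_G(X)$ with $N_{\glclosure(G)}(X)$. By the component observation, $X$ is a union of some but not all components of $G - S$ exactly when the same holds in $\glclosure(G) - S$, and in either graph every neighbour of $X$ outside $X$ lies in $S$, so $N_G(X) \subseteq N_{\glclosure(G)}(X) \subseteq S$. The only thing left is that these neighbourhoods both have size $d$, and I expect this to be the main obstacle, since neighbourhoods are graph-dependent and a priori a vertex of $S$ could acquire all its neighbours in $X$ only after taking the closure. To rule this out, suppose some $s \in S$ has a neighbour $x \in X$ in $\glclosure(G)$ but none in $G$; then $N_G(X) =: S'$ is a proper subset of $S$ of size at most $d-1$, and $S'$ separates $s$ from $x$ in $G$, giving $\kappa(G,s,x) \le d-1$, contradicting the fact that the new edge $sx$ forces $\kappa(G,s,x) \ge d+1$. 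Hence $N_G(X) = S = N_{\glclosure(G)}(X)$, and the $d$-fragments coincide.

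For $(d+1)$-blocks it suffices to show that the relation ``$uv \in E$ or $\kappa(\cdot,u,v) \ge d+1$'' is identical for $G$ and $\glclosure(G)$, since a $(d+1)$-block is precisely an inclusion-maximal set of pairwise related vertices. The forward implication is the two monotonicity facts above. For the reverse, if $uv \in E(\glclosure(G))$ it is either already an edge of $G$ or a new edge, which again forces $\kappa(G,u,v) \ge d+1$; and if instead $\kappa(\glclosure(G),u,v) \ge d+1$ while $uv \notin E(\glclosure(G))$, then any $(u,v)$-separator of size at most $d$ in $G$ would, by \cref{lemma:globallylinkedclosure}, survive in $\glclosure(G)$ and cap its connectivity at $d$, so no such separator exists and $\kappa(G,u,v) \ge d+1$. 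The two relations thus agree, and the $(d+1)$-blocks coincide, completing the proof.
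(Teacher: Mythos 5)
Your proof is correct, and it is exactly the deduction the paper intends: the corollary is stated without proof as an immediate consequence of \cref{lemma:globallylinkedclosure}, and your argument simply makes explicit why it is immediate, using the lemma's ``in particular'' clause (every new edge $uv$ of $\glclosure(G)$ has $\kappa(G,u,v)\geq d+1$) to show that sets of size $d$ induce the same components in $G$ and $\glclosure(G)$, and then handling the one genuinely non-trivial point, namely that $N_G(X)=N_{\glclosure(G)}(X)$ for a $d$-fragment $X$ of $\glclosure(G)$. No gaps; this matches the paper's approach.
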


In the preceding proof if $|S| = d$, then the hyperplane along which we reflect is just the affine span of $\{p(x), x \in S\}$. Intuitively, $(G,q)$ is a ``partial reflection'' of $(G,p)$ through this hyperplane. Our first goal is to gain a deeper understanding of the operation of taking such partial reflections.





\section{Sequences of partial reflections}\label{section:partialreflections}

In this section we carefully define and analyze the operation of taking partial reflections. Although the notion of (sequences of) partial reflections is fairly intuitive, their analysis turns out to be rather technical and notation-heavy. Apart from the definition of a reduced sequence of partial reflections, which is central to our investigations, we shall only use the results of this section at a few key points in the next section. Nonetheless, we believe that this analysis may also be useful in the context of other problems involving the global rigidity of frameworks.

\subsection{Basic definitions}

Let $G = (V,E)$ be a graph, $X$ a $d$-fragment of $G$ and $S = N_G(X)$. We say that a $d$-dimensional framework $(G,p)$ is \emph{$\sep{X}$-admissible} if $p(S) = \{p(v) : v \in S\} \subseteq \RR^d$ is affinely independent. Given an $\sep{X}$-admissible framework $(G,p)$, we define the \emph{partial reflection of $(G,p)$ corresponding to $X$} to be the framework $(G,\sep{X}(p))$ given by

\begin{equation*}
\sep{X}(p)(v) = \begin{cases}
p(v) & v \in X \cup N_G(X),\\
\reflection{p(S)}(p(v)) & v \in \overline{X},
\end{cases}
\end{equation*}
where $\reflection{p(S)}$ denotes the orthogonal reflection in the affine hyperplane spanned by $p(S)$.
The following two lemmas record basic properties of partial reflections which we shall use without reference throughout the paper.

\begin{lemma}
Let $X$ be a $d$-fragment of $G$ and let $(G,p)$ be an $\sep{X}$-admissible framework in $\RR^d$. Then $(G,p)$ is equivalent to $(G,\sep{X}(p))$. Moreover, if there are vertices $u \in X$ and $v \in \overline{X}$ such that $p(u)$ and $p(v)$ are not in the affine hyperplane spanned by $p(S)$, then $(G,p)$ and $(G,\sep{X}(p))$ are not congruent. 
\end{lemma}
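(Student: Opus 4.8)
The plan is to verify the two assertions separately, using throughout that $S = N_G(X)$ separates $X$ from $\overline{X}$ (so $G$ has no edge with one endpoint in $X$ and the other in $\overline{X}$), and that $\reflection{p(S)}$ is an isometry of $\RR^d$ that fixes the affine hyperplane $H$ spanned by $p(S)$ pointwise; in particular it fixes $p(s)$ for every $s \in S$.

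For equivalence, I would partition the edges of $G$ according to the location of their endpoints. Every edge $ab \in E$ falls into one of three cases: both endpoints lie in $X \cup N_G(X)$; both endpoints lie in $\overline{X}$; or one endpoint lies in $\overline{X}$ and the other in $S$ (the case of one endpoint in $X$ and the other in $\overline{X}$ cannot occur). In the first case both $p(a)$ and $p(b)$ are left fixed by $\sep{X}$, so the length is unchanged. In the second case both endpoints are moved by $\reflection{p(S)}$, and since this map is an isometry the length is preserved. In the third case, say $a \in \overline{X}$ and $b = s \in S$, we have $\sep{X}(p)(a) = \reflection{p(S)}(p(a))$ while $\sep{X}(p)(s) = p(s) = \reflection{p(S)}(p(s))$ because $p(s) \in H$; applying the isometry $\reflection{p(S)}$ once more gives $\norm{\sep{X}(p)(a) - \sep{X}(p)(s)} = \norm{p(a) - p(s)}$. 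Hence every edge length is preserved and the two frameworks are equivalent.

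For non-congruence, I would use that $(G,p)$ and $(G,\sep{X}(p))$ are congruent if and only if $\norm{p(a) - p(b)} = \norm{\sep{X}(p)(a) - \sep{X}(p)(b)}$ holds for every pair $a,b \in V$, and then exhibit a single pair whose distance changes. Take the vertices $u \in X$ and $v \in \overline{X}$ from the hypothesis, for which $p(u), p(v) \notin H$. Since $u \in X$ and $v \in \overline{X}$, we have $\sep{X}(p)(u) = p(u)$ and $\sep{X}(p)(v) = \reflection{p(S)}(p(v))$, so it suffices to show $\norm{p(u) - p(v)} \neq \norm{p(u) - \reflection{p(S)}(p(v))}$. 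The key geometric point is that, since $p(v) \notin H$, its reflection $\reflection{p(S)}(p(v))$ is distinct from $p(v)$, and the segment joining the two is perpendicular to $H$ with midpoint on $H$; hence $H$ is exactly the perpendicular bisector hyperplane of $p(v)$ and $\reflection{p(S)}(p(v))$. A point is equidistant from $p(v)$ and its reflection precisely when it lies on this bisector, i.e.\ on $H$. As $p(u) \notin H$, the two distances differ, so the pair $\{u,v\}$ witnesses that the frameworks are not congruent.

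I expect the only real subtlety to lie in the second part, namely in choosing the right characterization of congruence: rather than reasoning about putative congruence maps, I would invoke the pairwise-distance criterion so that a single changed distance immediately rules out congruence. The perpendicular-bisector observation then makes the required inequality transparent, using nothing beyond the fact that $p(u)$ and $p(v)$ lie off the mirror hyperplane $H$.
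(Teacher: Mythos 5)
Your proof is correct and complete: the edge classification (using that $S=N_G(X)$ separates $X$ from $\overline{X}$, so no edge crosses between them) establishes equivalence, and the perpendicular-bisector observation cleanly yields the changed distance $\norm{p(u)-p(v)}\neq\norm{p(u)-\reflection{p(S)}(p(v))}$ that rules out congruence via the pairwise-distance criterion. The paper states this lemma without proof as a basic property to be used without reference, and your argument is exactly the standard one it leaves implicit.
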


\begin{lemma}\label{lemma:congruentpartialreflection}
Let $X$ be a $d$-fragment of $G$. Consider an $\sep{X}$-admissible framework $(G,p)$ in $\RR^d$ and let $(G,q)$ be a congruent framework. Then $(G,q)$ is also $\sep{X}$-admissible and $(G,\sep{X}(p))$ and $(G,\sep{X}(q))$ are congruent.
\end{lemma}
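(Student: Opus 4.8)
The plan is to use that the congruence of $(G,p)$ and $(G,q)$ is witnessed by a single global isometry $\alpha \colon \RR^d \to \RR^d$ with $q(v) = \alpha(p(v))$ for all $v \in V$, and to show that this \emph{same} $\alpha$ also witnesses the congruence of the two partial reflections. First I would dispatch the admissibility claim. Every congruence of $\RR^d$ is an invertible affine map, hence preserves affine independence; since $p(S)$ is affinely independent by hypothesis, so is $q(S) = \alpha(p(S))$, which is exactly the statement that $(G,q)$ is $\sep{X}$-admissible. In particular $q(S)$ spans a well-defined affine hyperplane, so $\reflection{q(S)}$ makes sense.

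The crux of the argument is the conjugation identity
\[\reflection{q(S)} = \alpha \circ \reflection{p(S)} \circ \alpha^{-1}.\]
To prove it I would set $\beta = \alpha \circ \reflection{p(S)} \circ \alpha^{-1}$ and check that $\beta$ is a congruence fixing the $d$ affinely independent points $q(s)$, $s \in S$. Indeed, for each $s \in S$ the point $\alpha^{-1}(q(s)) = p(s)$ lies in the hyperplane spanned by $p(S)$ and is therefore fixed by $\reflection{p(S)}$, whence $\beta(q(s)) = \alpha(\reflection{p(S)}(p(s))) = \alpha(p(s)) = q(s)$. Since $\beta$ is a nonidentity congruence (it is the conjugate of the nonidentity congruence $\reflection{p(S)}$ by the invertible map $\alpha$), \cref{lemma:congruence} forces $\beta$ to equal the orthogonal reflection in the hyperplane spanned by $q(S)$, that is, $\beta = \reflection{q(S)}$. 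I expect this to be the main (though still routine) step, as it is where the geometry enters; everything else is bookkeeping.

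With the identity in hand I would finish by verifying that $\sep{X}(q)(v) = \alpha(\sep{X}(p)(v))$ for every $v \in V$, splitting into the two cases from the definition of a partial reflection. For $v \in X \cup N_G(X)$ both sides equal $q(v) = \alpha(p(v))$. For $v \in \overline{X}$, using $\alpha^{-1}(q(v)) = p(v)$ together with the conjugation identity,
\[\sep{X}(q)(v) = \reflection{q(S)}(q(v)) = \alpha\bigl(\reflection{p(S)}(\alpha^{-1}(q(v)))\bigr) = \alpha\bigl(\reflection{p(S)}(p(v))\bigr) = \alpha(\sep{X}(p)(v)).\]
Thus the single congruence $\alpha$ maps $\sep{X}(p)(v)$ to $\sep{X}(q)(v)$ for all $v \in V$, which by definition means that $(G,\sep{X}(p))$ and $(G,\sep{X}(q))$ are congruent, as required.
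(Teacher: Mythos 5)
Your proof is correct. The paper actually omits a proof of this lemma entirely (it is listed among the ``basic properties of partial reflections'' to be used without reference), and your argument --- transporting the reflection by the conjugation identity $\reflection{q(S)} = \alpha \circ \reflection{p(S)} \circ \alpha^{-1}$, justified via \cref{lemma:congruence}, and then checking the two cases of the definition --- is exactly the routine verification the authors are implicitly relying on.
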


It follows from \cref{lemma:rationalmap} that the mapping $p \mapsto \sep{X}(p)$ corresponds to a rational map $\RR^{d|V|} \dashrightarrow \RR^{d|V|}$ with rational coefficients whose domain of definition is the set \[\dom(\sep{X}) = \{p \in \RR^{d|V|}: (G,p) \text{ is } \sep{X}\text{-admissible}\}.\] We call this rational map the \emph{partial $d$-reflection corresponding to $X$}, and by a slight abuse of notation we also denote it by $\sep{X}$. This map is dominant: indeed, its image is also $\dom(\sep{X})$, a dense open subset of $\RR^{d|V|}$.
For convenience, we also define $\sep{X}$ to be the identity map of $\RR^{d|V|}$ whenever $X \subseteq V$ is not a $d$-fragment of $G$. In particular, we shall use this notation when $X = \varnothing$ and when $X = V-S$ for some $d$-separator $S$ of $G$.

A \emph{sequence of partial $d$-reflections} of $G$ is a tuple $F = (\sep{X_1},\ldots,\sep{X_k})$, where $X_1,\ldots,X_k$ are $d$-fragments of $G$. 
A framework $(G,p)$ is \emph{$F$-admissible} if $(G,p)$ is $X_1$-admissible and $(G,(\sep{X_{i-1}} \circ \ldots \circ \sep{X_1})(p))$ is $\sep{X_i}$-admissible for each $i \in \{2,\ldots,k\}$. 
For an $F$-admissible framework $(G,p)$ we define $F(p) = \sep{X_k} \circ \ldots \circ \sep{X_1}(p)$. In this way, we associate to each sequence of partial $d$-reflections $F$ a dominant rational map with rational coefficients whose domain of definition includes (but may be larger than) the set of $F$-admissible frameworks. Again, for convenience we shall use a slight abuse of notation and say that $F$ itself is a rational map. 

We also consider the empty sequence to be a sequence of partial $d$-reflections, consisting of zero reflections, and corresponding to the identity map on $\RR^{d|V|}$. We refer to this as the \emph{trivial} sequence of partial $d$-reflections. When defining a sequence of partial $d$-reflections $F = (\sep{X_1},\ldots,\sep{X_k})$, we shall usually allow the possibility that $k=0$ with the understanding that $F$ is the trivial sequence of partial $d$-reflections in this case.  We also consider a single partial $d$-reflection $\sep{X}$ to be a sequence of partial reflections, and we shall continue to write $\sep{X}$ instead of $(\sep{X})$ in this case. 
For a pair of sequences of partial $d$-reflections $F = (\sep{X_1},\ldots,\sep{X_k}), F' = (\sep{X_{k+1}},\ldots,\sep{X_l})$, we let $F' \circ F$ denote the sequence $(\sep{X_1},\ldots,\sep{X_k},\sep{X_{k+1}},\ldots,\sep{X_l})$ of partial $d$-reflections.

A generic framework in $\RR^d$ on at least $d+1$ vertices is in general position, and in particular $\sep{X}$-admissible for any partial $d$-reflection $\sep{X}$. The following lemma shows that partial reflections preserve genericity.

\begin{lemma}
Let $X$ be a $d$-fragment of $G = (V,E)$ and let $(G,p)$ be an $\sep{X}$-admissible framework in $\RR^d$. Then $(G,p)$ is generic if and only if $(G,\sep{X}(p))$ is generic.
\end{lemma}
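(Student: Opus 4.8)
The plan is to exploit two facts about the partial $d$-reflection $\sep{X}$ that are already available in the excerpt: that it is a \emph{dominant} rational map with rational coefficients, and that, restricted to $\sep{X}$-admissible frameworks, it is an \emph{involution}. Combined with \cref{lemma:rationalmapsgeneric}, the forward implication becomes almost immediate, and the involution property then yields the converse for free.

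For the forward direction, suppose $(G,p)$ is generic, and I would show that the $d|V|$ coordinates of $\sep{X}(p)$ are algebraically independent over $\QQ$. Assume for contradiction that some nonzero polynomial $f \in \QQ[y_1,\ldots,y_{d|V|}]$ satisfies $f(\sep{X}(p)) = 0$. Viewing $f$ as a rational map $\RR^{d|V|} \dashrightarrow \RR$ with rational coefficients and composing it with the dominant map $\sep{X}$, we obtain a rational function $f \circ \sep{X}$ with rational coefficients; the composition is well-defined precisely because $\sep{X}$ is dominant, as explained in \cref{section:preliminaries}. This rational function vanishes at the generic point $p$, so by \cref{lemma:rationalmapsgeneric} (applied to $f \circ \sep{X}$ and the zero map) it is identically zero as a rational function. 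Hence $f(\sep{X}(x)) = 0$ for every $x \in \dom(\sep{X})$, that is, $\sep{X}(\dom(\sep{X})) \subseteq \{y : f(y) = 0\}$, contradicting the dominance of $\sep{X}$ unless $f$ is the zero polynomial. Therefore no such $f$ exists and $(G,\sep{X}(p))$ is generic.

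For the converse, I would use that $\sep{X}$ is an involution on admissible frameworks. Since $\sep{X}(p)$ agrees with $p$ on $S = N_G(X)$, the affine hyperplane spanned by $\sep{X}(p)(S)$ coincides with the one spanned by $p(S)$; reflecting the points of $\overline{X}$ a second time in this hyperplane returns them to their original positions, so $\sep{X}(\sep{X}(p)) = p$. In particular $(G,\sep{X}(p))$ is again $\sep{X}$-admissible. Thus if $(G,\sep{X}(p))$ is generic, applying the forward direction to the generic, $\sep{X}$-admissible framework $\sep{X}(p)$ shows that $\sep{X}(\sep{X}(p)) = p$ is generic as well.

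I do not expect a serious obstacle: the only points needing care are checking that the composition $f \circ \sep{X}$ is legitimately a rational function with rational coefficients (which is exactly what dominance of $\sep{X}$ guarantees) and that $\sep{X}$-admissibility is preserved under the reflection, both of which are handled above. An essentially equivalent alternative would replace the converse by a transcendence-degree computation: since $\sep{X}$ has rational coefficients, the coordinates of $\sep{X}(p)$ lie in the field generated over $\QQ$ by the coordinates of $p$, while the involution property gives the reverse inclusion, so the two coordinate fields coincide and have equal transcendence degree over $\QQ$; as both tuples have length $d|V|$, one is algebraically independent if and only if the other is.
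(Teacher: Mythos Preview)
Your proof is correct. The paper's argument is exactly your ``essentially equivalent alternative'': it shows $\QQ(\sep{X}(p)) \subseteq \QQ(p)$ because $\sep{X}$ has rational coefficients, and the reverse inclusion from the involution identity $\sep{X}(\sep{X}(p)) = p$, concluding that the two coordinate fields coincide and hence have the same transcendence degree over $\QQ$.

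Your primary route differs in the forward direction: instead of the field inclusion, you invoke dominance of $\sep{X}$ together with \cref{lemma:rationalmapsgeneric} to rule out any nontrivial algebraic relation among the coordinates of $\sep{X}(p)$. This is perfectly valid and uses only facts already recorded in the paper. The field-equality argument is marginally more economical, since it dispatches both directions simultaneously and does not need dominance at all; your dominance-based argument, on the other hand, would generalize to any dominant rational map with rational coefficients (not just an involution), at the cost of needing a separate step for the converse. Either way the proofs are short and the core observation---that $\sep{X}$ is a rational involution with rational coefficients---is the same.
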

\begin{proof}
$(G,p)$ is generic if and only if the field $\QQ(p)$ generated by the coordinates of $p$ over $\QQ$ has transcendence degree $d|V|$ over $\QQ$. Since $\sep{X}$ is a rational map with rational coefficients, we can take representatives $f_1/g_1,\ldots,f_{d|V|}/g_{d|V|}$ for its coordinate functions, where each $f_i$ and $g_i$ is a polynomial in $d|V|$ variables and with rational coefficients. Note that since $p$ is generic, we have $g_i(p) \neq 0$ for $i = 1,\ldots,d|V|$. It follows that \[\sep{X}(p) = \left(\frac{f_1(p)}{g_1(p)},\ldots,\frac{f_{d|V|}(p)}{g_{d|V|}(p)}\right).\]Now $f_i(p)/g_i(p) \in \QQ(p)$ for $i \in \{1,\ldots,d|V|\}$, and consequently $\QQ(\sep{X}(p)) \subseteq \QQ(p)$. But  $\sep{X}(\sep{X}(p)) = p$, so by symmetry we also have $\QQ(\sep{X}(p)) \supseteq \QQ(p)$ and thus $\QQ(\sep{X}(p)) = \QQ(p)$. This shows that if one of the frameworks is generic, then so is the other.
\end{proof}

It follows that generic frameworks are $F$-admissible for every sequence of partial $d$-reflections $F$.

\subsection{Equivalent and reduced sequences}

From now on, we concentrate on the effect of (sequences of) partial $d$-reflections on generic frameworks. To this end, we introduce the following notion.
We say that two sequences of partial $d$-reflections $F_1$ and $F_2$ of $G$ are \emph{equivalent} (denoted by $F_1 \partialeq F_2$) if for every generic framework $(G,p)$ in $\RR^d$, the frameworks $(G,F_1(p))$ and $(G,F_2(p))$ are congruent. As the name suggests, this defines an equivalence relation on the set of sequences of partial $d$-reflections of $G$. The following lemma shows that equivalence of sequences of partial reflections is, in a sense, a ``generic'' property.

\begin{lemma}\label{lemma:equivgeneric}
Let $G$ be a graph and let $F_1,F_2$ be sequences of partial $d$-reflections. The following are equivalent. 
\begin{enumerate}
    \item $(G,F_1(p))$ and $(G,F_2(p))$ are congruent for some generic $d$-dimensional realization $(G,p)$.
    \item $F_1 \partialeq F_2$.
    \item $(G,F_1(p))$ and $(G,F_2(p))$ are congruent for every $d$-dimensional framework $(G,p)$ that is both $F_1$-admissible and $F_2$-admissible. 
\end{enumerate}
\end{lemma}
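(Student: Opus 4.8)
The plan is to prove the cyclic chain of implications $(1) \Rightarrow (2) \Rightarrow (3) \Rightarrow (1)$. The implications $(2) \Rightarrow (3)$ and $(3) \Rightarrow (1)$ are essentially free: $(2) \Rightarrow (3)$ is just spelling out the definition of $\partialeq$ restricted to those admissible frameworks that happen to be $F_1$- and $F_2$-admissible (and since the congruence condition holds for \emph{every} generic framework, it holds in particular on this subclass, once we note generic frameworks lie in it), while $(3) \Rightarrow (1)$ follows because any generic framework is both $F_1$- and $F_2$-admissible (as recorded in the remark immediately before this lemma), so the condition in $(3)$ applies to it. The real content is the implication $(1) \Rightarrow (2)$: upgrading congruence at a \emph{single} generic point to congruence at \emph{every} generic point.

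\medskip

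For $(1) \Rightarrow (2)$ the key idea is to encode congruence as an algebraic condition and apply \cref{lemma:rationalmapsgeneric}. First I would recall that $F_1$ and $F_2$ are dominant rational maps $\RR^{d|V|} \dashrightarrow \RR^{d|V|}$ with rational coefficients, so each coordinate is a rational function with rational coefficients. Congruence of $(G,F_1(p))$ and $(G,F_2(p))$ is equivalent to the condition $\norm{F_1(p)(u) - F_1(p)(v)} = \norm{F_2(p)(u) - F_2(p)(v)}$ for every pair $u,v \in V$, or equivalently, working with squared distances to stay polynomial, $\norm{F_1(p)(u) - F_1(p)(v)}^2 = \norm{F_2(p)(u) - F_2(p)(v)}^2$ for all pairs. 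For each fixed pair $\{u,v\}$, both sides are rational functions of $p$ with rational coefficients (being polynomial combinations of the coordinate functions of $F_1$, resp.\ $F_2$). Assembling these over all pairs, I would package the two families of squared distances as rational maps $D_1, D_2 : \RR^{d|V|} \dashrightarrow \RR^{\binom{|V|}{2}}$ with rational coefficients, where the coordinate of $D_i$ indexed by $\{u,v\}$ is the squared distance $\norm{F_i(p)(u) - F_i(p)(v)}^2$.

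\medskip

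Now the hypothesis $(1)$ gives a generic point $p_0$ at which $(G, F_1(p_0))$ and $(G, F_2(p_0))$ are congruent, i.e.\ $D_1(p_0) = D_2(p_0)$. Since $D_1$ and $D_2$ are rational maps with rational coefficients and $p_0$ is generic, \cref{lemma:rationalmapsgeneric} immediately yields $D_1 = D_2$ as rational maps. Consequently, for \emph{every} generic point $p$ we have $p \in \dom(D_1) \cap \dom(D_2)$ and $D_1(p) = D_2(p)$, which says precisely that all pairwise squared distances agree between $(G, F_1(p))$ and $(G, F_2(p))$. By the standard fact (recorded in the Preliminaries) that two frameworks are congruent iff all pairwise distances coincide, this means $(G, F_1(p))$ and $(G, F_2(p))$ are congruent for every generic $p$, which is exactly $F_1 \partialeq F_2$.

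\medskip

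The main obstacle, and the point that requires a little care rather than being purely routine, is the passage from the single squared-distance equality to the genericity-based rigidity argument: one must verify that the squared-distance functions really are \emph{honest} rational functions with \emph{rational} coefficients. This hinges on the fact that composing a coordinate-projection-and-difference (an operation with integer coefficients) with the rational maps $F_1, F_2$ preserves both rationality of the coefficients and the rational-function structure; the paper's earlier discussion of composition of dominant rational maps with rational coefficients, together with \cref{lemma:rationalmap}, supplies exactly this. Once that bookkeeping is in place, \cref{lemma:rationalmapsgeneric} does all the heavy lifting, and the rest is definitional unwinding.
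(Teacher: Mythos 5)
Your core argument --- packaging the pairwise squared distances of $(G,F_i(p))$ into rational maps $D_i$ with rational coefficients and invoking \cref{lemma:rationalmapsgeneric} at the generic point supplied by (1) --- is exactly the paper's argument, and it is correct. The problem is the logical structure you wrap around it. You claim that (2) $\Rightarrow$ (3) is ``just spelling out the definition of $\partialeq$ restricted'' to the doubly admissible frameworks, because ``generic frameworks lie in'' that class. This has the containment backwards: the class of frameworks that are both $F_1$-admissible and $F_2$-admissible is a \emph{superset} of the generic frameworks, not a subset (see \cref{fig:nongeneric} for genuinely non-generic admissible frameworks). Statement (3) asserts congruence on this larger class, so it is strictly stronger than (2), and cannot be obtained from (2) by restriction. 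As written, your chain $(1)\Rightarrow(2)\Rightarrow(3)\Rightarrow(1)$ therefore has a genuine gap at the middle arrow; the two arrows you call ``essentially free'' should instead be $(3)\Rightarrow(2)\Rightarrow(1)$, which really are free.

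The repair is already contained in your own machinery, and is what the paper does: once \cref{lemma:rationalmapsgeneric} gives $D_1 = D_2$ as rational maps, do not stop at generic points --- the identity holds at \emph{every} point of $\dom(D_1)\cap\dom(D_2)$, and every doubly admissible framework lies in this common domain (admissibility is precisely what guarantees that each successive reflection, and hence the composed rational map, is defined there). Evaluating the identity at such a point yields equality of all pairwise distances, hence congruence. This proves $(1)\Rightarrow(3)$ directly, and together with the trivial implications $(3)\Rightarrow(2)\Rightarrow(1)$ closes the cycle.
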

\begin{proof}
The implications \textit{(c)} $\Rightarrow$ \textit{(b)} $\Rightarrow$ \textit{(a)} are trivial, so it suffices to show \textit{(a)} $\Rightarrow$ \textit{(c)}.
Let $u$ and $v$ be a pair of vertices of $G$ and consider the condition
\begin{equation} \label{eq:equivalentsequences}
\norm{F_1(p)(u) - F_1(p)(v)}^2 = \norm{F_2(p)(u) - F_2(p)(v)}^2    
\end{equation}
for a realization $(G,p)$. Both sides of this equation are rational maps with rational coefficients in the coordinates of $p$. By \cref{lemma:rationalmapsgeneric} if equality holds for some generic $p$, then the two sides are equal as rational maps, and in that case equality must hold for any framework that is in a domain of both; in particular, whenever $(G,p)$ is both $F_1$-admissible and $F_2$-admissible. Applying this argument for every pair $u,v$ finishes the proof. 
\end{proof}

The next lemma, which is an immediate consequence of \cref{lemma:congruentpartialreflection}, shows that equivalence behaves well under composition. We shall use this observation without reference in the following.

\begin{lemma}\label{lemma:compositionequivalence}
Let $G$ be a graph and let $F_1,F_2,F_1',F_2'$ be sequences of partial $d$-reflections of $G$ with $F_1 \partialeq F_1'$ and $F_2 \partialeq F_2'$. Then $F_2 \circ F_1 \partialeq F_2' \circ F_1'$.
\end{lemma}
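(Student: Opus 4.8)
The plan is to reduce the claim to the single–partial–reflection case, which is exactly the content of \cref{lemma:congruentpartialreflection}, and then to bootstrap it along the composition using the characterization of equivalence furnished by \cref{lemma:equivgeneric}. The key conceptual point is that it suffices to exhibit \emph{one} generic framework on which the two composite sequences agree up to congruence, since by \cref{lemma:equivgeneric} (the implication \emph{(a)} $\Rightarrow$ \emph{(b)}) congruence on a single generic framework already forces equivalence of the whole sequences. So I will pick a generic $(G,p)$ and track the two composite maps through the reflections, comparing them stage by stage.

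First I would fix a generic $d$-dimensional realization $(G,p)$; by the remark preceding this lemma (generic frameworks are $F$-admissible for every sequence), $(G,p)$ is admissible for both $F_2 \circ F_1$ and $F_2' \circ F_1'$, so all intermediate frameworks are defined. Write $F_1 = (\sep{X_1},\ldots,\sep{X_k})$ and $F_2 = (\sep{Y_1},\ldots,\sep{Y_m})$, and similarly for the primed sequences. The statements $F_1 \partialeq F_1'$ and $F_2 \partialeq F_2'$ give, by definition, that $(G,F_1(p))$ is congruent to $(G,F_1'(p))$ and that $(G,F_2(q))$ is congruent to $(G,F_2'(q))$ for every generic $q$. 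The idea is to apply $F_2$ after $F_1$: since $F_1$ preserves genericity (established earlier in the section), $q := F_1(p)$ is itself generic.

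The core step is a two-stage comparison. On one hand, $(F_2 \circ F_1)(p) = F_2(q)$, and by $F_2 \partialeq F_2'$ this is congruent to $F_2'(q)$. On the other hand, I must relate $F_2'(q) = F_2'(F_1(p))$ to $F_2'(F_1'(p)) = (F_2' \circ F_1')(p)$. Here is where \cref{lemma:congruentpartialreflection} does the work: since $F_1 \partialeq F_1'$, the frameworks $(G,F_1(p))$ and $(G,F_1'(p))$ are congruent; applying \cref{lemma:congruentpartialreflection} to the first partial reflection $\sep{Y_1'}$ of $F_2'$ shows the two results are again congruent and that admissibility is preserved, and iterating this lemma through each reflection $\sep{Y_1'},\ldots,\sep{Y_{m'}'}$ of $F_2'$ shows that $(G,F_2'(F_1(p)))$ and $(G,F_2'(F_1'(p)))$ are congruent. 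Chaining the two congruences yields that $(G,(F_2 \circ F_1)(p))$ is congruent to $(G,(F_2' \circ F_1')(p))$ for our one generic $p$, and then \cref{lemma:equivgeneric} upgrades this to $F_2 \circ F_1 \partialeq F_2' \circ F_1'$.

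The main obstacle I anticipate is purely bookkeeping: propagating the congruence through the successive reflections of $F_2'$ requires knowing that each intermediate framework remains admissible, so that \cref{lemma:congruentpartialreflection} can be applied at every stage. This is precisely the ``admissibility is preserved under congruence'' clause built into \cref{lemma:congruentpartialreflection}, so the induction goes through cleanly; but one must be careful to invoke it for both the primed sequence applied to $F_1(p)$ and to $F_1'(p)$ in lockstep. Given that \cref{lemma:equivgeneric} lets us verify everything on a single generic point, no delicate analysis of the non-generic locus is needed, which is what keeps the argument short.
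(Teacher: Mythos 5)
Your proof is correct and follows exactly the route the paper intends: the paper gives no explicit proof but declares the lemma "an immediate consequence of \cref{lemma:congruentpartialreflection}", and your argument — chaining $(F_2\circ F_1)(p)\cong F_2'(F_1(p))\cong F_2'(F_1'(p))$ via $F_2\partialeq F_2'$ at the generic point $F_1(p)$ and then iterating \cref{lemma:congruentpartialreflection} along $F_2'$ — is precisely that consequence spelled out. The final appeal to \cref{lemma:equivgeneric} is harmless but not needed, since the same chain works verbatim for every generic $p$.
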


The next three lemmas describe examples of equivalent sequences of partial $d$-reflections. They will be useful for ``simplifying'' sequences of partial $d$-reflections, up to equivalence.

\begin{lemma}\label{lemma:equivcomplement}
Let $G = (V,E)$ be a graph and let $X$ be a $d$-fragment of $G$. 
Then $\sep{X} \partialeq \sep{\overline{X}}$.
\end{lemma}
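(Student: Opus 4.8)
The plan is to produce an explicit congruence that carries $\sep{X}(p)$ to $\sep{\overline{X}}(p)$ for a generic realization $(G,p)$; by the definition of $\partialeq$ (or, if one prefers, by \cref{lemma:equivgeneric}, for which a single generic framework suffices) this immediately yields $\sep{X} \partialeq \sep{\overline{X}}$. The congruence will simply be the \emph{global} orthogonal reflection $\reflection{p(S)}$ in the hyperplane spanned by $p(S)$, where $S := N_G(X)$.

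First I would record the combinatorial fact that $\overline{X}$ is a $d$-fragment whose neighbourhood is again $S$, so that $\sep{\overline{X}}$ is well defined and acts in the expected way. Since $X$ and $\overline{X}$ are unions of \emph{distinct} components of $G - S$, there are no edges between them, so every neighbour of $\overline{X}$ lying outside $\overline{X}$ must lie in $S$; that is, $N_G(\overline{X}) \subseteq S$. As $\overline{X}$ is a $d$-fragment we have $|N_G(\overline{X})| = d = |S|$, which forces $N_G(\overline{X}) = S$. In particular $\overline{\overline{X}} = V - \overline{X} - S = X$, so by definition $\sep{\overline{X}}$ fixes the points indexed by $\overline{X} \cup S$ and reflects the points indexed by $X$ in the affine hyperplane spanned by $p(S)$.

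Now for the geometric core. Fix any generic realization $(G,p)$; since $p(S)$ is affinely independent, $(G,p)$ is both $\sep{X}$- and $\sep{\overline{X}}$-admissible. Let $H$ be the affine hyperplane spanned by $p(S)$ and let $\rho = \reflection{p(S)}$ be the reflection of $\RR^d$ in $H$. The three facts I would use are that $\rho$ is a congruence, that it fixes $p(S)$ pointwise, and that $\rho \circ \rho$ is the identity. Comparing $\rho \circ \sep{X}(p)$ with $\sep{\overline{X}}(p)$ vertex by vertex then gives equality: for $v \in X$ both send $v$ to $\rho(p(v))$; for $v \in S$ both fix $p(v)$ (because $\rho$ fixes $H$); and for $v \in \overline{X}$ both send $v$ to $p(v)$, since $\rho(\sep{X}(p)(v)) = \rho(\rho(p(v))) = p(v)$ by involutivity. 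Hence $\sep{\overline{X}}(p) = \rho \circ \sep{X}(p)$, and because $\rho$ is a congruence the frameworks $(G, \sep{X}(p))$ and $(G, \sep{\overline{X}}(p))$ are congruent. As this holds for every generic $(G,p)$, we conclude $\sep{X} \partialeq \sep{\overline{X}}$. There is no substantive obstacle in this argument: the only step demanding a moment's care is the combinatorial identification $N_G(\overline{X}) = S$ (equivalently $\overline{\overline{X}} = X$), after which the congruence is forced by $\rho$ being an involution that fixes $H$ pointwise.
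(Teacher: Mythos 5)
Your proof is correct and takes exactly the same route as the paper's: the paper's entire argument is the one-line observation that the global reflection $\reflection{p(S)}$ carries $(G,\sep{X}(p))$ to $(G,\sep{\overline{X}}(p))$, which is precisely the congruence you construct and verify vertex by vertex. The extra care you take with $N_G(\overline{X}) = S$ is a reasonable (if implicit in the paper) sanity check, but adds nothing essentially new.
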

\begin{proof}
Let $S = N_G(X)$ be the separator corresponding to $X$. Then for any $\sep{X}$-admissible framework $(G,p)$, the orthogonal reflection $\reflection{p(S)}$ maps $(G,\sep{X}(p))$ to $(G,\sep{\overline{X}}(p))$, so the two frameworks are indeed congruent.
\end{proof}

Recall that a $d$-fragment of $G$ can be viewed as the union of some, but not all components of the graph obtained by deleting a $d$-separator from $G$. This shows that if $X$ and $X'$ are $d$-fragments with $N_G(X) = N_G(X')$, then $X \Delta X'$ is either a $d$-fragment, the empty set, all of $G - N_G(X)$. (Here $X \Delta X'$ denotes the symmetric difference of $X$ and $X'$.)
The next lemma is immediate from the definition of a partial reflection and our convention that $\sep{\varnothing}$ and $\sep{V - N_G(X)}$ both correspond to the identity map. 

\begin{lemma}\label{lemma:equivdelta}
Let $G = (V,E)$ be a graph and let $X,X'$ be a pair of $d$-fragments with $N_G(X) = N_G(X')$. 
Then $\sep{X} \circ \sep{X'} \partialeq \sep{X \Delta X'}$.
\end{lemma}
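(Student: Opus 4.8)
The plan is to exploit the fact that, because the first partial reflection fixes the common separator pointwise, both reflections appearing in the composition $\sep{X} \circ \sep{X'}$ are carried out in the \emph{same} affine hyperplane. Concretely, I would write $S = N_G(X) = N_G(X')$ and fix a generic framework $(G,p)$; then $p(S)$ is affinely independent, so $(G,p)$ is $\sep{X'}$-admissible, and I set $R := \reflection{p(S)}$, the orthogonal reflection in the hyperplane spanned by $p(S)$, which is an involution fixing that hyperplane (in particular $p(S)$) pointwise. The first step is to observe that since $S \subseteq X' \cup N_G(X')$, the framework $q := \sep{X'}(p)$ agrees with $p$ on $S$, so $q(S) = p(S)$ remains affinely independent, $(G,q)$ is $\sep{X}$-admissible, and the second reflection $\sep{X}$ is again taken in the hyperplane spanned by $p(S)$. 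Thus both $\sep{X'}$ and $\sep{X}$ act on their reflected vertices by the single involution $R$.

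With this in hand, the second step is a direct bookkeeping computation. For $v \in V - S$, the point $p(v)$ is moved by $R$ in $\sep{X'}$ exactly when $v \in \overline{X'}$, and is moved again by $R$ in $\sep{X}$ exactly when $v \in \overline{X}$; since $R$ is an involution, the net effect is that $p(v)$ is reflected precisely when $v$ lies in exactly one of $\overline{X'}, \overline{X}$, i.e.\ when $v \in \overline{X} \Delta \overline{X'}$, while every vertex of $S$ is fixed. As symmetric difference is invariant under complementation within the fixed universe $V - S$ (and $X, X' \subseteq V - S$), we have $\overline{X} \Delta \overline{X'} = X \Delta X'$. Hence $(\sep{X} \circ \sep{X'})(p)$ fixes $p(S)$ and reflects by $R$ exactly those $p(v)$ with $v \in X \Delta X'$.

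The third step compares this with $\sep{X \Delta X'}(p)$, which by definition fixes $(X \Delta X') \cup S$ and reflects $\overline{X \Delta X'}$ — that is, it reflects exactly the \emph{complementary} set of vertices within $V - S$. Applying the involution $R$ once more toggles the reflection status of every vertex of $V - S$ and fixes $p(S)$, so that $R\bigl((\sep{X} \circ \sep{X'})(p)\bigr) = \sep{X \Delta X'}(p)$. Since $R$ is a congruence, the frameworks $(G,(\sep{X} \circ \sep{X'})(p))$ and $(G,\sep{X \Delta X'}(p))$ are congruent; as $(G,p)$ was an arbitrary generic framework, this is exactly $\sep{X} \circ \sep{X'} \partialeq \sep{X \Delta X'}$ by the definition of $\partialeq$.

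Finally, I would dispose of the degenerate cases in which $X \Delta X'$ is not a $d$-fragment: $X = X'$ gives $X \Delta X' = \varnothing$, and $X' = \overline{X}$ gives $X \Delta X' = V - S$. In both the composition reflects either none or all of the vertices of $V - S$, and reflecting all of $V - S$ coincides with applying $R$ to the whole framework (as $R$ fixes $p(S)$), hence yields a framework congruent to $(G,p)$; this matches the convention that $\sep{\varnothing}$, and likewise the reflection of all of $V-S$, correspond up to congruence to the identity map, so $\partialeq$ again holds. The only genuine idea here is the shared-hyperplane observation of the first step; everything afterwards is the symmetric-difference bookkeeping, which is precisely why the statement is essentially immediate.
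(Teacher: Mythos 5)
Your proof is correct and is exactly the verification the paper has in mind: the paper states this lemma without proof as ``immediate from the definition of a partial reflection and our convention that $\sep{\varnothing}$ and $\sep{V}$ both correspond to the identity map,'' and your shared-hyperplane observation plus the symmetric-difference bookkeeping (including the final application of the global reflection $R$ to pass from reflecting $X \Delta X'$ to reflecting its complement, which is the content of \cref{lemma:equivcomplement}) is precisely that verification, degenerate cases included.
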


Finally, we show that partial reflections corresponding to noncrossing $d$-fragments commute with each other, up to equivalence. 

\begin{lemma}\label{lemma:equivnoncrossing}
Let $G = (V,E)$ be a $d$-connected graph and let $X,X'$ be a pair of noncrossing $d$-fragments. 
Then $\sep{X} \circ \sep{X'} \partialeq \sep{X'} \circ \sep{X}$.
\end{lemma}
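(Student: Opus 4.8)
The plan is to reduce the statement to a purely combinatorial claim about the reflected vertex sets and then observe that the geometric conclusion becomes trivial. By \cref{lemma:equivgeneric} it suffices to exhibit a single generic framework on which the two sequences agree up to congruence; in fact I aim to show that after a suitable normalization they produce the \emph{same} framework. The first move is to use the freedom granted by \cref{lemma:equivcomplement} and \cref{lemma:compositionequivalence}: since $\sep{X} \partialeq \sep{\overline{X}}$, we may replace $X$ by $\overline{X}$ and $X'$ by $\overline{X'}$ independently without changing the equivalence class of either composition. Writing $S = N_G(X)$ and $S' = N_G(X')$, the noncrossing hypothesis (via \cref{lemma:crossingcuts}) says that $S' - S$ lies in a single component of $G - S$, hence entirely in $X$ or entirely in $\overline{X}$; choosing the appropriate representative we may assume $S' - S \subseteq X$, and symmetrically $S - S' \subseteq X'$. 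The degenerate case $S = S'$ is separate and immediate: then $\sep{X}\circ\sep{X'} \partialeq \sep{X \Delta X'} = \sep{X' \Delta X} \partialeq \sep{X'}\circ\sep{X}$ by \cref{lemma:equivdelta}, so I assume $S \neq S'$ from now on.

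The key combinatorial claim is that, under this normalization, the two reflected sets are disjoint, namely $\overline{X} \subseteq X'$ (and symmetrically $\overline{X'} \subseteq X$). First note that $S' \subseteq X \cup S$ gives $\overline{X} \cap S' = \varnothing$, so every component $D$ of $G - S$ that makes up $\overline{X}$ avoids $S'$ and therefore lies inside a single component of $G - S'$. Since $S \neq S'$ and $|S| = |S'| = d$, the set $S - S'$ is nonempty, and $S - S' \subseteq X'$. As $G$ is $d$-connected, $S$ is a minimum vertex cut, so each $s \in S - S'$ has a neighbour $w$ in $D$; this $w$ satisfies $w \notin S'$, and the edge $sw$ with $s \in X'$ places $w$ in the same component of $G - S'$ as $s$, i.e.\ in $X'$. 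Hence the component of $G - S'$ containing $D$ meets $X'$ and so lies in $X'$, giving $D \subseteq X'$. Running this over all components comprising $\overline{X}$ yields $\overline{X} \subseteq X'$, and by symmetry $\overline{X'} \subseteq X$, whence $\overline{X} \cap \overline{X'} = \varnothing$.

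With disjointness in hand the conclusion is immediate. The inclusion $S' \subseteq X \cup S$ means $\sep{X}$ fixes $p(S')$, so the reflecting hyperplane $\mathrm{aff}(p(S'))$ used by $\sep{X'}$ is the same whether or not $\sep{X}$ is applied first; symmetrically $\mathrm{aff}(p(S))$ is unaffected by $\sep{X'}$. Thus for a generic $(G,p)$ both $\sep{X'}\circ\sep{X}$ and $\sep{X}\circ\sep{X'}$ send each vertex of $\overline{X}$ to its reflection in $\mathrm{aff}(p(S))$, each vertex of $\overline{X'}$ to its reflection in $\mathrm{aff}(p(S'))$, and fix every other vertex; because $\overline{X}$ and $\overline{X'}$ are disjoint, these prescriptions are consistent and order-independent, so the two compositions yield the \emph{identical} framework. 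Applying \cref{lemma:equivgeneric} then gives $\sep{X}\circ\sep{X'} \partialeq \sep{X'}\circ\sep{X}$.

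I expect the main obstacle to be the combinatorial disjointness claim $\overline{X}\subseteq X'$: the geometry collapses to a triviality once the reflected sets are disjoint and the hyperplanes are seen to be order-invariant, but establishing disjointness requires the careful connectivity argument above, using both the noncrossing structure and the minimum-cut property that every vertex of $S$ has a neighbour in every component of $G - S$. The one subtlety to handle with care is ensuring the two normalizations (for $S'-S$ relative to $X$ and for $S-S'$ relative to $X'$) can be imposed simultaneously, which holds because the choices of representative for $X$ and for $X'$ are independent.
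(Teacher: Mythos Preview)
Your proof is correct and follows essentially the same route as the paper: normalize via \cref{lemma:equivcomplement} so that $S' \subseteq X \cup S$ and $S \subseteq X' \cup S'$, then observe that both compositions produce the identical framework on a generic realization. The paper's proof compresses the final step into a single ``This implies\ldots'', whereas you spell out the combinatorial disjointness $\overline{X} \cap \overline{X'} = \varnothing$ (using the minimum-cut property of $S$) that makes the order-independence genuinely trivial; this is a worthwhile elaboration, since without disjointness the two hyperplane reflections would not commute pointwise.
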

\begin{proof}
Let $S = N_G(X)$ and $T = N_G(X')$ be the $d$-separators corresponding to $X$ and $X'$. By assumption, $S$ and $T$ are noncrossing, so there is a component $C$ of $G - S$ such that $T - S \subseteq C$, and a component $C'$ of $G - T$ such that $S - T \subseteq C'$. By possibly replacing $X$ with $\overline{X}$ and $X'$ with $\overline{X'}$ and using \cref{lemma:equivcomplement} we may suppose that $C \subseteq X$ and $C' \subseteq X'$. With this assumption, for any generic framework $(G,p)$ in $\RR^d$ and any vertex $v \in S \cup T$ we have $p(v) = \sep{X}(p)(v) = \sep{X'}(p)(v)$. In other words, both $\sep{X}$ and $\sep{X'}$ leave the vertices in $S \cup T$ in place. This implies $\sep{X} \circ \sep{X'}(p)(v) = \sep{X'} \circ \sep{X}(p)(v)$ for every vertex $v \in V$, so $\sep{X} \circ \sep{X'} \partialeq \sep{X'} \circ \sep{X}$ indeed holds.
\end{proof}

\cref{lemma:equivdelta,lemma:equivnoncrossing} allow us to define a ``canonical form'' for sequences of partial $d$-reflections, at least when the fragments involved are noncrossing.
Let $G$ be a $d$-connected graph. A sequence $F$ of partial $d$-reflections of $G$ is \emph{reduced} if either it is the trivial sequence, or $F = (\sep{X_1},\ldots,\sep{X_k})$ where $X_1,\ldots,X_k$ are pairwise noncrossing $d$-fragments and $N_G(X_1),\ldots,N_G(X_k)$ are pairwise different. 

\begin{corollary}\label{corollary:reducedsequence}
Let $G = (V,E)$ be a $d$-connected graph and suppose that $G$ has no crossing $d$-separators. Then every sequence of partial $d$-reflections of $G$ is equivalent to a reduced sequence of partial $d$-reflections of $G$.
\end{corollary}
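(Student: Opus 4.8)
The plan is to take an arbitrary sequence of partial $d$-reflections $F = (\sep{X_1},\ldots,\sep{X_k})$ and repeatedly apply the three ``simplification'' lemmas (\cref{lemma:equivcomplement,lemma:equivdelta,lemma:equivnoncrossing}) to transform it, up to equivalence, into a reduced sequence. The key structural input is the hypothesis that $G$ has no crossing $d$-separators: this means any two of the $d$-fragments $X_i$ are automatically noncrossing, so by \cref{lemma:equivnoncrossing} the partial reflections $\sep{X_i}$ all commute up to equivalence. The whole argument is therefore essentially a ``bubble-sort followed by cancellation'' carried out in the commutative-up-to-$\partialeq$ setting, using \cref{lemma:compositionequivalence} to justify manipulating individual factors inside the sequence.

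First I would dispose of trivial cases: if $k=0$ the sequence is already reduced, so assume $k \geq 1$. The central observation is that, because there are no crossing $d$-separators, the relation ``noncrossing'' holds for every pair among $X_1,\ldots,X_k$, and in particular the corresponding separators $N_G(X_i)$ fall into finitely many distinct values. I would argue by induction on $k$ (or, equivalently, by a suitable potential function such as the number of factors). Given $F$, group the factors by the value of their associated separator $N_G(X_i)$: using commutativity (\cref{lemma:equivnoncrossing}) together with \cref{lemma:compositionequivalence}, I can rearrange $F$, up to $\partialeq$, so that all factors sharing a common separator $S$ are adjacent. For each such block of factors $\sep{X_{i_1}},\ldots,\sep{X_{i_m}}$ with $N_G(X_{i_j}) = S$ for all $j$, repeated application of \cref{lemma:equivdelta} collapses the block into a single partial reflection $\sep{X_{i_1} \Delta \cdots \Delta X_{i_m}}$ (with the convention that $\sep{\varnothing}$ and $\sep{V}$ are the identity, so a block collapsing to $\varnothing$ or $V$ simply disappears). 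After this collapsing, distinct surviving factors have pairwise distinct separators, and all the fragments remain pairwise noncrossing, which is exactly the definition of a reduced sequence.

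There is one technical point to check carefully, and I expect it to be the main obstacle: after collapsing a block via \cref{lemma:equivdelta}, the resulting set $X_{i_1} \Delta \cdots \Delta X_{i_m}$ is a $d$-fragment only when it is neither empty nor all of $V$, and I must confirm that the ``noncrossing'' property is preserved under this symmetric-difference operation. The first issue is handled by the convention that $\sep{\varnothing}$ and $\sep{V}$ are the identity map, so any block collapsing to $\varnothing$ or $V$ can simply be deleted from the sequence without changing the equivalence class. For the second issue, I would note that $X_{i_1} \Delta \cdots \Delta X_{i_m}$, when it is a genuine $d$-fragment, has separator $S$ as well (since it is a union of some components of $G - S$), and so its noncrossing relationship with any other surviving fragment $X'$ is governed entirely by the noncrossing relationship of $S$ with $N_G(X')$, which we already know holds by the no-crossing-$d$-separators hypothesis. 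Thus noncrossingness is inherited, and the procedure terminates with a genuinely reduced sequence equivalent to $F$, completing the proof.
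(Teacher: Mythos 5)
Your proof is correct and follows exactly the route the paper takes: the paper's proof is the one-line remark that the corollary ``follows from repeated applications of \cref{lemma:equivdelta,lemma:equivnoncrossing}'', and your commute-then-collapse procedure (justified via \cref{lemma:compositionequivalence}, with the $\varnothing$/$V$ convention and $d$-connectivity guaranteeing the collapsed block still has separator $S$) is precisely the intended elaboration of that remark.
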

\begin{proof}
This follows from repeated applications of \cref{lemma:equivdelta,lemma:equivnoncrossing}.
\end{proof}

We note that in the case of a nontrivial reduced sequence $F$, the $F$-admissible frameworks are precisely those frameworks $(G,p)$ in which $\{p(x), x \in N_G(X_i)\}$ is affinely independent for each $d$-fragment $X_i$ appearing in $F$. This follows from the observation that if $X_i$ and $X_j$ are noncrossing $d$-fragments, then $\sep{X_j}$ acts as a congruence (either as the identity or as a reflection) on $\{p(x), x \in N_G(X_i)\}$.

The following two results show that reduced sequences act, in a sense, as freely as possible on generic frameworks.

\begin{lemma}\label{lemma:mainsimple}
Let $G = (V,E)$ be a $d$-connected graph and let $F = (\sep{X_1},\ldots,\sep{X_k})$ be a nontrivial reduced sequence of partial $d$-reflections. Let $u,v \in V$ be a pair of vertices and suppose that there is an index $i \in \{1,\ldots,k\}$ for which $X_i$ is $(u,v)$-separating. Then for every generic realization $(G,p)$ in $\RR^d$ we have $\norm{p(u) - p(v)} \neq \norm{F(p)(u) - F(p)(v)}$.
\end{lemma}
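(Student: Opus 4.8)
The plan is to show that the two squared-distance functions $p \mapsto \norm{p(u)-p(v)}^2$ and $p \mapsto \norm{F(p)(u)-F(p)(v)}^2$, which are rational maps $\RR^{d|V|} \dashrightarrow \RR$ with rational coefficients (recall $F$ is such a map), are \emph{different}. By \cref{lemma:rationalmapsgeneric} this suffices: if they agreed at the generic point $p$ they would agree as rational maps, so their disagreement as maps forces $\norm{p(u)-p(v)} \neq \norm{F(p)(u)-F(p)(v)}$ at every generic $p$. Writing $\Phi$ for the difference of the two functions, it is therefore enough to exhibit \emph{a single} choice of coordinates in the common domain at which $\Phi$ does not vanish.

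First I would put $F$ into a convenient equivalent form. Using \cref{lemma:equivcomplement} I replace each $X_i$ by its complement where necessary so that $u \in X_i \cup N_G(X_i)$ for all $i$; then $u$ is fixed by every reflection, so $F(p)(u)=p(u)$. Call a fragment \emph{active} if $v \in \overline{X_i}$ (these are exactly the reflections that move $v$); an active fragment is $(u,v)$-separating precisely when $u \in X_i$, and otherwise has $u \in N_G(X_i)$. Since $F$ is reduced the fragments are pairwise noncrossing, so by \cref{lemma:equivnoncrossing} I may reorder them freely. I reorder so that the active $(u,v)$-separating fragments come first, then the remaining active fragments, then the inactive ones; using that the active $(u,v)$-separating separators are pairwise comparable in $\preceq_{u,v}$ (\cref{minimumvertexcutorder}), I list the first block in decreasing $\preceq_{u,v}$-order (closest to $v$ first). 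The inactive reflections fix $v$, so $F(p)(v) = \rho(p(v))$, where $\rho$ is the composition of the active reflections and, crucially, depends only on the coordinates $\theta$ of $u$ and of $\bigcup_i N_G(X_i)$ (none of which is $v$), not on $p(v)$.

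Two features drive the argument. Because a $\preceq_{u,v}$-smaller separator lies, together with $u$, on the $u$-side of every larger one, the reflections applied earlier (across the larger separators) fix its defining points; hence each hyperplane of the first block is \emph{static}, and that block realizes an honest product $\rho_A$ of reflections in the original hyperplanes $\mathrm{aff}(p(N_G(X_i)))$. Moreover, every active fragment with $u \in N_G(X_i)$ has $u$ on its (possibly moved) reflecting hyperplane, since $u$ never moves, so such a reflection fixes $p(u)$. Writing $\rho = \rho_B \circ \rho_A$ accordingly, $\rho_B$ fixes $p(u)$, so $\rho(p(u)) = p(u)$ iff $\rho_A(p(u)) = p(u)$. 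A short computation shows that $\Phi$, as a function of the free variable $y = p(v)$ alone, is affine and vanishes identically in $y$ exactly when $\rho$ fixes $p(u)$ (indeed $\rho$ preserves all distances from $p(u)$ precisely when it fixes $p(u)$). Hence $\Phi \not\equiv 0$ as soon as I can choose $\theta$ with $\rho_A(p(u)) \neq p(u)$. Since $u \notin N_G(X_i)$ for the fragments constituting $\rho_A$, the point $p(u)$ is independent of the hyperplanes defining $\rho_A$, so $\rho_A(p(u)) = p(u)$ for generic $\theta$ would force $\rho_A = \mathrm{id}$; and $\rho_A$ is a \emph{nonempty} product of reflections precisely because of the hypothesis that some $X_i$ is $(u,v)$-separating.

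The main obstacle is therefore the nondegeneracy statement that this product $\rho_A$ of reflections in distinct hyperplanes is not the identity for a suitable (hence generic) placement of the separator vertices — delicate only because distinct $(u,v)$-separating separators may share vertices, constraining the hyperplanes. I would handle it by placing all the relevant separators on hyperplanes through a common point $O$ (a system of linear conditions leaving enough freedom), so that $\rho_A$ becomes a product of linear reflections about $O$; such a product is the identity only on a proper subvariety of normal directions (for an odd number of reflections this is immediate from the determinant being $-1$), so a generic admissible placement gives $\rho_A \neq \mathrm{id}$. Choosing $p(u)$ off the fixed-point locus of $\rho_A$ then produces a point with $\Phi \neq 0$. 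The remaining work is routine bookkeeping: checking that the reorderings preserve the equivalence class of $F$ (so the distance $\norm{F(p)(u)-F(p)(v)}$ is unchanged), that the indicated hyperplanes are genuinely static, and that $\rho$ is independent of $p(v)$.
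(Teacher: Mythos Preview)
Your overall strategy---reduce via \cref{lemma:rationalmapsgeneric} to exhibiting a single admissible configuration where the two squared distances differ---is exactly the paper's. The paper, however, takes a shorter path: it first observes that any non-$(u,v)$-separating reflection preserves $\norm{p(u)-p(v)}$ (if $u$ and $v$ lie on the same side of the separator, or one of them is in the separator, the reflection acts as an isometry on the pair), so all such reflections are discarded outright. This removes your $\rho_B$ and the active/inactive bookkeeping entirely. After ordering the remaining separators as a $\preceq_{u,v}$-chain, the paper invokes \cref{lemma:chainofvertexcuts} to find a vertex $x\in S_k\setminus\bigcup_{i<k}S_i$ and then builds an explicit framework: $q(u)=q(v)=0$, $q(x)=(1,0,\ldots,0)$, all other vertices placed relatively generically in the hyperplane $\{x_1=0\}$. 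Then $S_1,\ldots,S_{k-1}$ all span $\{x_1=0\}$, so their reflections fix the origin, while the affine span of $q(S_k)$ misses the origin; hence $F(q)(v)\neq 0=F(q)(u)$, the desired witness.

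The genuine gap in your argument is the final construction. Placing all separating hyperplanes through a common point $O$ does not, by itself, yield $\rho_A\neq\mathrm{id}$: already in $d=1$ this means placing every (single-vertex) separator at $O$, and for an even number $m$ of reflections about the same point you get $\rho_A=\mathrm{id}$. In $d=2$ the same collapse can be forced by shared vertices: if consecutive separators in the chain share a vertex, forcing each line through a fixed $O$ can make all the lines coincide, again giving $\rho_A=\mathrm{id}$ for even $m$. Your appeal to a ``proper subvariety of normal directions'' presupposes that the normals can be varied independently, which is precisely what the shared-vertex constraints obstruct. The missing ingredient is exactly \cref{lemma:chainofvertexcuts}: it guarantees a vertex of $S_k$ not lying in any earlier separator, and this freedom is what lets the paper peel off one hyperplane from the rest. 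Once you use it, the paper's explicit framework is both simpler and avoids the parity issue altogether.
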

\begin{proof}
We start by showing that we may assume that $X_1,\ldots,X_k$ are all $(u,v)$-separating in $G$. Indeed, assuming otherwise, by \cref{lemma:equivnoncrossing} we may reorder $\sep{X_1},\ldots,\sep{X_k}$ so that $X_1,\ldots,X_{k_0}$ are $(u,v)$-separating and $X_{k_0+1},\ldots,X_k$ are not $(u,v)$-separating, for some $k_0 \in \{1,\ldots,k-1\}$. Consider $F' = (\sep{X_{k_0+1}},\ldots,\sep{X_k})$. Since none of the fragments in $F'$ are $(u,v)$-separating, the distance between $u$ and $v$ is equal in $(G,q)$ and $(G,F'(q))$ for any $F'$-admissible framework $(G,q)$. It follows that we may disregard these partial reflections altogether and assume that $X_1,\ldots,X_k$ are all $(u,v)$-separating in $G$.

Let us consider the $d$-separators $S_i = N_{G}(X_i)$ for each  $i \in \{1,\ldots,k\}$.
We may assume, using \cref{minimumvertexcutorder} and \cref{lemma:equivnoncrossing} again, that  $S_k \preceq_{u,v} \ldots \preceq_{u,v} S_1$. Finally, by \cref{lemma:equivcomplement} we can assume that $u \in X_i$ for $i \in \{1,\ldots,k\}.$ Note that these assumptions ensure that for any pair of indices $1 \leq i < j \leq k$ we have $S_j \subseteq X_i \cup S_i$, so that  $\sep{X_i}$ fixes the image of $S_j$ in any $F$-admissible framework.

Suppose for a contradiction that 
\[\norm{p(u)-p(v)} = \norm{F(p)(u) - F(p)(v)}\] holds for some generic realization $(G,p)$. 
After squaring both sides, they become rational maps with rational coefficients in the coordinates of $p$. By \cref{lemma:rationalmapsgeneric}, if equality holds for the generic configuration $p$, then it holds for any configuration that is in the domain of both sides. This implies that for any $F$-admissible framework $(G,q)$ we have 
\[\norm{q(u)-q(v)} = \norm{F(q)(u) - F(q)(v)}.\]

\begin{figure}[t]
    \centering
    \begin{subfigure}[b]{0.49\linewidth}
    \centering
                \includegraphics[]{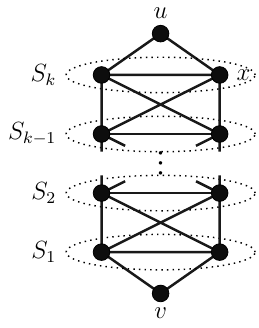}
        \caption{}
    \end{subfigure}
    \begin{subfigure}[b]{0.49\linewidth}
        \centering
                   \includegraphics[]{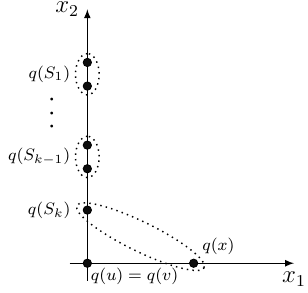}
        \caption{}
    \end{subfigure}
    \caption{The construction in the proof of \cref{lemma:mainsimple}. \textit{(a)} The graph $G$ and the $d$-separators $S_1,\ldots,S_k$. Here $d = 2$. \textit{(b)} The framework $(G,q)$ (the edges are not depicted) is $F$-admissible and is constructed in such a way that apart from $S_k$, the image of every $d$-separator corresponding to the fragments in $F$ lies in the $x_1 = 0$ hyperplane.}
    \label{fig:case1}
\end{figure}

We contradict this by giving an explicit counterexample. See \cref{fig:case1} for an example of the construction in the $d = 2$ case. From \cref{lemma:chainofvertexcuts} we have that $S_k - \cup_{i=1}^{k-1}S_i$ is nonempty, so let us fix a vertex $x$ from it. We define an $F$-admissible framework $(G,q)$ as follows. Let $q(u) = q(v) = (0,\ldots,0), q(x) = (1,0,\ldots,0),$ and let us choose the values $q(z), z \in V - \{u,v,x\}$ so that each point lies in the $x_1 = 0$ hyperplane, but the placement is relatively generic.\footnote{By being ``relatively generic'' we mean that the coordinates of $q$ that are unspecified form an algebraically independent set over $\QQ$.} Now $\{q(s), s \in S_i\}$ is contained in the $x_1 = 0$ hyperplane for each $i \in \{1, \ldots, k-1\}$, and thus by the relatively generic choice of coordinates its affine span is precisely the $x_1 = 0$ hyperplane.  On the other hand, $\{q(s), s \in S_k\}$ is not contained in this hyperplane, and by the choice of coordinates its affine span does not contain the origin. It follows that each partial reflection $\sep{X_1},\ldots,\sep{X_{k-1}}$ leaves $q(v)$ in place but $\sep{X_k}$ does not. Hence $F(q)(v) \neq (0,\ldots,0)$, while $F(q)(u) = (0,\ldots,0)$. This shows that \[\norm{F(q)(u) - F(q)(v)} \neq 0 = \norm{q(u) - q(v)},\]a contradiction. 
\end{proof}

We note that it can happen that $\norm{F(p)(u) - F(p)(v)} = \norm{p(u) - p(v)}$ if $(G,p)$ is a non-generic (but $F$-admissible) framework, see \cref{fig:nongeneric}.

\begin{figure}[t]
    \centering
        \includegraphics[]{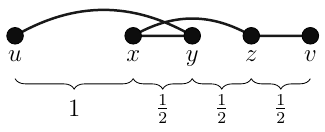}
        \caption{An example showing that the analogue of \cref{lemma:mainsimple} for non-generic frameworks is false. Here $d = 1$ and $G$ is the path of length four. Let $X_1 = \{u,x,y\}, X_2 = \{u,y\}$ and $X_3 = \{u\}$ be $1$-fragments of $G$ and let $F = (\sep{X_1},\sep{X_2}, \sep{X_3})$. The one-dimensional framework $(G,p)$ depicted is $F$-admissible and satisfies $\norm{F(p)(u) - F(p)(v)} = \norm{p(u) - p(v)}$.}
        \label{fig:nongeneric}
\end{figure}

\begin{corollary}\label{corollary:noncongruentreflections}
Let $G = (V,E)$ be a $d$-connected graph in which there are no crossing $d$-separators and let $F_1 = (\sep{X_1},\ldots,\sep{X_k})$ and $F_2 = (\sep{Y_1},\ldots,\sep{Y_l})$ be (possibly trivial) reduced sequences of partial $d$-reflections of $G$. Then $F_1 \partialeq F_2$ if and only if either both $F_1$ and $F_2$ are trivial, or $k = l$ and there is a permutation $\varphi$ of $\{1,\ldots,k\}$ such that $X_i = Y_{\varphi(i)}$ or $X_i = \overline{Y_{\varphi(i)}}$ for all $i \in \{1,\ldots,k\}.$ 
\end{corollary}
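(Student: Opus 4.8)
The plan is to prove both directions, with the backward (``if'') direction routine and the forward (``only if'') direction requiring the reduction machinery together with \cref{lemma:mainsimple}.

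For the if direction, if both sequences are trivial there is nothing to prove. Otherwise $k = l$ and there is a permutation $\varphi$ with $X_i \in \{Y_{\varphi(i)}, \overline{Y_{\varphi(i)}}\}$ for each $i$, so by \cref{lemma:equivcomplement} we have $\sep{X_i} \partialeq \sep{Y_{\varphi(i)}}$ in either case. Since $G$ has no crossing $d$-separators, all the fragments involved are pairwise noncrossing, so \cref{lemma:equivnoncrossing} lets us reorder the factors of $F_2$ into the order prescribed by $\varphi$ without changing its equivalence class; applying \cref{lemma:compositionequivalence} factor-by-factor then gives $F_1 \partialeq F_2$.

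For the only-if direction, the first key step is the observation that a nontrivial reduced sequence is never equivalent to the trivial one: if $H' = (\sep{Z_1},\ldots,\sep{Z_m})$ is reduced with $m \geq 1$, pick $u \in Z_1$ and $v \in \overline{Z_1}$ (both nonempty, as $Z_1$ is a fragment); then $Z_1$ is $(u,v)$-separating, so \cref{lemma:mainsimple} gives $\norm{p(u)-p(v)} \neq \norm{H'(p)(u) - H'(p)(v)}$ for generic $p$, which is incompatible with $(G,H'(p))$ being congruent to $(G,p)$. Next, starting from $F_1 \partialeq F_2$, I compose on the left with the reversal $\tilde F_2 = (\sep{Y_l},\ldots,\sep{Y_1})$ of $F_2$. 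Because each partial reflection is an involution, $\tilde F_2 \circ F_2$ applies $F_2$ and then undoes it factor by factor, so it acts as the identity on generic frameworks; hence by \cref{lemma:compositionequivalence} the sequence $H := \tilde F_2 \circ F_1 = (\sep{X_1},\ldots,\sep{X_k},\sep{Y_l},\ldots,\sep{Y_1})$ satisfies $H \partialeq \tilde F_2 \circ F_2 \partialeq$ (trivial). By \cref{corollary:reducedsequence}, $H$ is equivalent to a reduced sequence $H'$, and the observation above forces $H'$ to be trivial.

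The heart of the argument, and the step I expect to be most delicate, is extracting the claimed matching from the triviality of $H'$ by tracking the reduction of $H$ to $H'$. Since every pair of fragments is noncrossing, \cref{lemma:equivnoncrossing} lets me reorder the factors so that fragments sharing a common neighbourhood become adjacent; grouping by the separator value $S = N_G(\cdot)$, each group contains at most one fragment $A$ from $F_1$ and at most one fragment $B$ from $F_2$ (as $F_1$ and $F_2$, being reduced, use each separator at most once). Collapsing each group with \cref{lemma:equivdelta} replaces it by a single reflection along $A \Delta B$ (or along the lone fragment, if only one of $A,B$ is present); since $A, B \subseteq V - S$ are unions of components of $G-S$, the set $A \Delta B$ is again a fragment with neighbourhood $S$ unless it is trivial, and triviality happens exactly when $A = B$ or $A = \overline{B}$. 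The surviving reflections have pairwise distinct, hence noncrossing, separators, so they constitute the reduced sequence $H'$. Therefore $H'$ is trivial precisely when every separator used by $F_1$ is used by $F_2$ and vice versa --- giving $k = l$ and a bijection $\varphi$ with $N_G(X_i) = N_G(Y_{\varphi(i)})$ --- and when, for each matched pair, $X_i = Y_{\varphi(i)}$ or $X_i = \overline{Y_{\varphi(i)}}$, which is exactly the desired conclusion. The main obstacle is the careful bookkeeping of the complement ambiguity ($\sep{Y} \partialeq \sep{\overline{Y}}$) throughout the grouping, and checking that each collapsed fragment genuinely retains neighbourhood $S$, so that $H'$ is indeed reduced and the preceding observation applies.
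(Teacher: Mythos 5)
Your proof is correct and follows essentially the same route as the paper: sufficiency via \cref{lemma:equivcomplement,lemma:equivnoncrossing}, and necessity by composing $F_1$ with (an inverse of) $F_2$, reducing the concatenation by pairing fragments with equal separators via \cref{lemma:equivdelta,lemma:equivnoncrossing}, and invoking \cref{lemma:mainsimple} to rule out a nontrivial reduced residue. The only cosmetic difference is that you compose with the reversal $(\sep{Y_l},\ldots,\sep{Y_1})$, whereas the paper composes with $F_2$ itself, which works equally well since the noncrossing hypothesis makes the order immaterial up to equivalence.
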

\begin{proof}
Sufficiency follows from \cref{lemma:equivcomplement,lemma:equivnoncrossing}. For necessity, note that $F_1 \partialeq F_2$ if and only if $F_1 \circ F_2 \partialeq \text{id}$, and that since there are no crossing $d$-separators in $G$, the fragments appearing in $F_1$ and $F_2$ are all noncrossing. We can obtain a reduced sequence $F$ equivalent to $F_1 \circ F_2$ by deleting $\sep{X_i}$ and $\sep{Y_j}$ and adding $\sep{X_i \Delta Y_j}$ for all pairs $X_i, Y_j$ such that $N_G(X_i) = N_G(Y_j)$. (That the sequence obtained in this way is equivalent to $F_1 \circ F_2$ follows from \cref{lemma:equivdelta,lemma:equivnoncrossing}.) The sequence $F$ obtained in this way is trivial if and only if it satisfies the condition in the statement. If it is nontrivial, then it contains a reflection corresponding to a $(u,v)$-separating fragment for some pair $u,v \in V$, and in this case \cref{lemma:mainsimple} implies that $F$ (and thus $F_1 \circ F_2$) is not equivalent to the trivial sequence.
\end{proof}

\subsection{Partial reflections and gluing}\label{subsection:gluing}

Our goal in this section is to understand how the sequences of partial $d$-reflections of two graphs with nonempty intersection ``fit together''. That is, given a pair of $d$-connected graphs $G_1$ and $G_2$ and (reduced) sequences of partial $d$-reflections $F_1$ and $F_2$, we would like to decide whether $F_1$ and $F_2$ are ``compatible'', in the sense that both ``come from'' a sequence $F$ of partial $d$-reflections of $G_1 \cup G_2$. It turns out that finding the right definitions for the phrases in quotation marks is nontrivial. We note that we will refer to the definitions and results of this (rather technical) subsection only once in the rest of the paper, namely in the proof of \cref{theorem:gluingkappa} (which is one of our main results). The proofs of the lemmas given in this section can be found in \cref{appendix:gluing}.   

We start by highlighting a notational issue. When discussing fragments, the complement of the fragment, as well as whether the fragment separates a pair of vertices, depend not only on the fragment itself, but on the underlying graph as well. This can cause ambiguities when we are considering a fragment $X$ in a subgraph $G_1$ of a graph $G$. In this case we shall sometimes explicitly write that we are taking the complement of $X$ in $G_1$, or that $X$ is $(u,v)$-separating in $G_1$. If not stated explicitly, then we shall always mean these statements with respect to the graph that was referenced at the definition of the fragment; e.g., if $X$ was introduced as a fragment of $G_1$, we shall implicitly take complements within $G_1$.

Let $G$ be the union of the graphs $G_1 = (V_1,E_1)$ and $G_2 = (V_2,E_2)$ and let $X$ be a $d$-fragment of $G_1$ and $Y$ a $d$-fragment of $G_2$. We say that $X$ and $Y$ are \emph{compatible} if there exists a $d$-fragment $Z$ of $G$ such that $X = Z \cap V_1$ or $\overline{X} = Z \cap V_1$ and $Y = Z \cap V_2$ or $\overline{Y} = Z \cap V_2$, where complements are taken in $G_1$ and $G_2$, respectively.
The following lemma gives an equivalent description of compatible pairs of $d$-fragments.

\begin{lemma}\label{lemma:compatiblefragmentchar}
    Let $G = (V,E)$ be the union of the graphs $G_1 = (V_1,E_1), G_2 = (V_2,E_2)$. Let $X$ be a $d$-fragment of $G_1$ and $Y$ a $d$-fragment of $G_2$. The following are equivalent. 
    \begin{enumerate}
        \item $X$ and $Y$ are compatible.
        \item We have $N_{G_1}(X) = N_{G_2}(Y)$, and for every pair of vertices $u,v \in V_1 \cap V_2$, $X$ is $(u,v)$-separating (in $G_1$) if and only if $Y$ is $(u,v)$-separating (in $G_2$).
    \end{enumerate}
    Moreover, if $X$ and $Y$ are compatible and $Z$ is as in the definition of compatibility, then $N_G(Z) = N_{G_1}(X)$.
\end{lemma}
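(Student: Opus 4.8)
The plan is to prove the two implications separately, with the ``moreover'' clause dropping out of the proof of \textit{(a)} $\Rightarrow$ \textit{(b)}. Throughout I will use the observation that both statement \textit{(b)} and the notion of compatibility are unchanged if we replace $X$ by $\overline{X}$ or $Y$ by $\overline{Y}$: indeed $N_{G_1}(\overline{X}) = N_{G_1}(X)$, the fragment $\overline{X}$ is $(u,v)$-separating exactly when $X$ is, and the definition of compatibility already permits either $X$ or $\overline{X}$ (resp.\ $Y$ or $\overline{Y}$) as the trace of $Z$. This lets me fix, without loss of generality, which of the two options holds at each step.

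For \textit{(a)} $\Rightarrow$ \textit{(b)}, let $Z$ be a $d$-fragment of $G$ as in the definition; after replacing $X,Y$ by their complements if necessary I may assume $X = Z \cap V_1$ and $Y = Z \cap V_2$. First I would show $N_{G_1}(X) = N_G(Z)$. The inclusion $N_{G_1}(X) \subseteq N_G(Z)$ follows by tracing a witnessing edge: a vertex $w \in N_{G_1}(X)$ lies in $V_1 - X$, hence outside $Z$ (as $Z \cap V_1 = X$), and it has a neighbour in $X \subseteq Z$ through an edge of $G_1 \subseteq G$, so $w \in N_G(Z)$. Since $|N_{G_1}(X)| = |N_G(Z)| = d$, equality follows; the symmetric argument gives $N_{G_2}(Y) = N_G(Z)$, so $N_{G_1}(X) = N_{G_2}(Y)$ and the ``moreover'' clause holds. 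For the separation condition, take $u,v \in V_1 \cap V_2$ with $X$ being $(u,v)$-separating in $G_1$, say $u \in X$, $v \in \overline{X}$. Then $u \in Z$, while $v \in V_1 - X - N_{G_1}(X)$ gives $v \notin Z$ and $v \notin N_G(Z)$, so $Z$ is $(u,v)$-separating in $G$; restricting back to $V_2$ shows $Y$ is $(u,v)$-separating in $G_2$. By symmetry the converse holds, proving \textit{(b)}.

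The substance of the proof is \textit{(b)} $\Rightarrow$ \textit{(a)}. Write $S := N_{G_1}(X) = N_{G_2}(Y)$ (so $|S| = d$ and $S \subseteq V_1 \cap V_2 =: W$) and consider the set $W - S$, which is partitioned by $X$ into the blocks $X \cap W$ and $\overline{X} \cap W$, and by $Y$ into $Y \cap W$ and $\overline{Y} \cap W$. The separation hypothesis says precisely that two vertices of $W - S$ lie in different blocks of the first partition if and only if they lie in different blocks of the second; hence these two (two-block, possibly with an empty block) partitions of $W - S$ induce the same equivalence relation and are therefore equal as unordered partitions. Replacing $Y$ by $\overline{Y}$ if necessary, I may thus assume $X \cap W = Y \cap W$, i.e.\ $X$ and $Y$ agree on the overlap. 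I would then set $Z := X \cup Y$ and verify that it witnesses compatibility: a short computation gives $Z \cap V_1 = X$ and $Z \cap V_2 = Y$ (using $Y \cap V_1 = Y \cap W = X \cap W \subseteq X$); edge-tracing as above gives $N_G(Z) \subseteq S$, while every $s \in S = N_{G_1}(X)$ has a neighbour in $X \subseteq Z$ and lies outside $Z$, so $S \subseteq N_G(Z)$ and hence $N_G(Z) = S$ has size $d$; finally $Z \supseteq X \neq \varnothing$ and $\overline{Z} \supseteq \overline{X} \neq \varnothing$, so $Z$ is a genuine $d$-fragment of $G$.

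I expect the main obstacle to be the alignment step in the last paragraph, namely that the separation hypothesis forces the traces of $X$ and $Y$ on $W$ to coincide up to swapping $Y$ with $\overline{Y}$. The clean statement behind it — that two partitions of a set into at most two blocks are equal as soon as they induce the same ``separated'' relation — requires some care in the degenerate cases where $W - S$ is small or one of the blocks is empty, so that the separation condition is vacuous and the matching of blocks is not canonical; these are exactly the cases resolved by the freedom to replace $Y$ by $\overline{Y}$. The remaining verifications are routine definition-chasing, with the only recurring subtlety being to take each complement and each ``$(u,v)$-separating'' relation with respect to the correct graph among $G_1$, $G_2$, and $G$.
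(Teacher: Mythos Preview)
Your proposal is correct and follows essentially the same route as the paper's proof: reduce via complements to $X = Z \cap V_1$, $Y = Z \cap V_2$, use the cardinality argument to get $N_{G_1}(X) = N_G(Z) = N_{G_2}(Y)$, and for the converse use the separation hypothesis to align $X \cap (V_1 \cap V_2) = Y \cap (V_1 \cap V_2)$ (after possibly replacing $Y$ by $\overline{Y}$) and then take $Z = X \cup Y$. Your write-up is in fact more detailed than the paper's, which dispatches the verification that $N_G(X \cup Y) = N_{G_1}(X)$ and that $Z$ is a genuine $d$-fragment as ``a routine inspection''; your edge-tracing and the check that $s \in S$ lies outside $Z$ (because $S = N_{G_2}(Y)$ is disjoint from $Y$) are exactly the missing details.
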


Now let $G$ be the union of the $d$-connected graphs $G_1$ and $G_2$ and let $F_1 = (\sep{X_1},\ldots,\sep{X_k})$ and $F_2 = (\sep{Y_1},\ldots,\sep{Y_l})$ be reduced sequences of partial $d$-reflections of $G_1$ and $G_2$, respectively. We say that $F_1$ and $F_2$ are \emph{compatible} if there is an index $0 \leq r \leq \min(k,l)$ for which the following holds, after possibly reordering $X_1,\ldots,X_k$ and $Y_1,\ldots,Y_l$ and taking complements in $G_1$ and $G_2$, respectively.
\begin{itemize}
    \item $X_i$ and $Y_i$ are compatible for every $i \in \{1,\ldots,r\}$.
    \item $X_i$ is disjoint from $V_2$ for every $i \in \{r+1,\ldots,k\}$.
    \item $Y_j$ is disjoint from $V_1$ for every $j \in \{r+1,\ldots,l\}$.
\end{itemize}
Note that if either of $F_1$ and $F_2$ is trivial, then necessarily $r = 0$ and the first condition is vacuous.

Let $u,v \in V_1 \cap V_2$. We say that the pair $\{u,v\}$ \emph{strongly separates} $F_1$ and $F_2$ if it satisfies the following.
\begin{itemize}
    \item There is at least one fragment among $X_1,\ldots,X_k$ that is $(u,v)$-separating in $G_1$, or at least one fragment among $Y_1,\ldots,Y_l$ that is $(u,v)$-separating in $G_2$. 
    \item If $X_i$ and $Y_j$ are both $(u,v)$-separating, then $N_{G_1}(X_i) \neq N_{G_2}(Y_j)$, for every $i \in \{1,\ldots,k\}$ and $j \in \{1,\ldots,l\}$.
\end{itemize}
See \cref{figure:stronglyseparating} for an example demonstrating this notion.

\begin{figure}[t]
    \centering
        \includegraphics[]{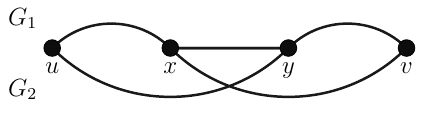}
        \caption{An example demonstrating the notion of strongly separating vertex pairs. Here $d = 1,$ $V(G_1) = V(G_2) = \{u,x,y,v\}$, $E(G_1) = \{ux,xy,yv\} $ and $E(G_2) = \{uy,xy,xv\}$. Let $X_1 = \{y,v\}, Y_1 = \{x,v\}$ and $X_2 = Y_2 = \{v\},$ and let $F_1 = (\sep{X_1}, \sep{X_2})$ and $F_2 = (\sep{Y_1},\sep{Y_2})$. Then the vertex pair $\{u,y\}$ strongly separates $F_1$ and $F_2$, since $X_1$ is $(u,y)$-separating (in $G_1$), while the fragments $X_2,Y_1,Y_2$ are not $(u,y)$-separating. In contrast, $\{u,v\}$ does not strongly separate $F_1$ and $F_2$, since $X_1$ and $Y_2$ are both $(u,v)$-separating with $N_{G_1}(X_1) = N_{G_2}(Y_2) = \{x\}$.}
        \label{figure:stronglyseparating}
\end{figure}

The following three lemmas clarify the reason for defining compatible pairs of sequences and strongly separating pairs of vertices. \cref{lemma:compatiblesequencechar} states that two sequences are either compatible, or there is a pair of vertices that strongly separates them. \cref{lemma:compatiblesequences} says, roughly, that if $F_1$ and $F_2$ are compatible, then they ``come from'' some sequence of partial $d$-reflections $F$ of $G_1 \cup G_2$. Finally, \cref{lemma:stronglyseparating} states that if the pair $\{u,v\}$ strongly separates $F_1$ and $F_2$, then $F_1$ and $F_2$ ``act differently'' on $\{u,v\}$, and thus they cannot come from some sequence of partial $d$-reflections of $G_1 \cup G_2$.

\begin{lemma}\label{lemma:compatiblesequencechar}
    Let $G = (V,E)$ be the union of the $d$-connected graphs $G_1 = (V_1,E_1), G_2 = (V_2,E_2)$ and let $F_1 = (\sep{X_1},\ldots,\sep{X_k})$ and $F_2 = (\sep{Y_1},\ldots,\sep{Y_l})$ be (possibly trivial) reduced sequences of partial $d$-reflections of $G_1$ and $G_2$, respectively. Then
    $F_1$ and $F_2$ are compatible if and only if there is no pair of vertices in $V_1 \cap V_2$ that strongly separates them.
\end{lemma}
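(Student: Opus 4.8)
The plan is to prove the two directions of the biconditional separately, treating the characterization of compatible fragment pairs in \cref{lemma:compatiblefragmentchar} as the main technical tool. The statement says that the reduced sequences $F_1$ and $F_2$ fail to be compatible precisely when some vertex pair $\{u,v\} \subseteq V_1 \cap V_2$ strongly separates them, so I would establish the contrapositive form in both directions: compatible $\Rightarrow$ no strongly separating pair, and not compatible $\Rightarrow$ some strongly separating pair.

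First I would handle the easier direction, namely that if $F_1$ and $F_2$ are compatible then no pair of vertices in $V_1 \cap V_2$ strongly separates them. Assuming compatibility, fix the index $r$ and the reordering/complementing from the definition, so that $X_i$ and $Y_i$ are compatible for $i \in \{1,\ldots,r\}$, while the remaining fragments $X_{r+1},\ldots,X_k$ are disjoint from $V_2$ and $Y_{r+1},\ldots,Y_l$ are disjoint from $V_1$. Now suppose for contradiction that some $\{u,v\} \subseteq V_1 \cap V_2$ strongly separates them. The first bullet of strong separation supplies a $(u,v)$-separating fragment; I would argue that the ``tail'' fragments (those disjoint from $V_2$ or $V_1$) cannot separate a pair of vertices both lying in $V_1 \cap V_2$, since their neighbourhood separator would have to contain both $u$ and $v$ on opposite sides of a cut within one factor yet meet $V_1 \cap V_2$ trivially — more precisely, a $d$-fragment of $G_1$ disjoint from $V_2$ has both $u,v$ in its complement or both outside it, so it is not $(u,v)$-separating. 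Hence the separating fragment must be some $X_i$ or $Y_i$ with $i \leq r$. But \cref{lemma:compatiblefragmentchar} tells us that for a compatible pair $X_i, Y_i$ we have $N_{G_1}(X_i) = N_{G_2}(Y_i)$ and that $X_i$ is $(u,v)$-separating iff $Y_i$ is; so if $X_i$ separates $\{u,v\}$, then so does $Y_i$, and they share the same separator, directly contradicting the second bullet of strong separation.

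For the harder direction I would prove the contrapositive: if no pair in $V_1 \cap V_2$ strongly separates $F_1$ and $F_2$, then $F_1$ and $F_2$ are compatible. The strategy is to build the required matching between the $X$'s and $Y$'s. For each $i$, I distinguish whether $N_{G_1}(X_i)$ meets $V_1 \cap V_2$ in a way that lets $X_i$ ``see'' the other factor; the natural candidates for the first group are those $X_i$ whose separator is a genuine separator reaching into $V_1 \cap V_2$. The core step is to match each such $X_i$ with a unique $Y_j$ so that they become compatible, using the criterion in part (b) of \cref{lemma:compatiblefragmentchar}: I need $N_{G_1}(X_i) = N_{G_2}(Y_j)$ and agreement on which pairs of vertices of $V_1 \cap V_2$ they separate. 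The failure of strong separation is exactly what forces this matching to exist: for any separating fragment $X_i$, I would consider the vertex pairs it separates within $V_1 \cap V_2$; if no $Y_j$ with the same separator separated the same pairs, I could exhibit a strongly separating pair, contradicting the hypothesis. The remaining unmatched fragments on each side should then be shown to be disjoint from the other factor, completing the three bullet conditions of compatibility.

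I expect the main obstacle to be the bookkeeping in the harder direction, specifically establishing that the matching $X_i \leftrightarrow Y_j$ is well-defined and a bijection onto an initial segment, rather than merely a relation. The subtlety is that distinct reduced fragments within one sequence have distinct separators (by the definition of \emph{reduced}), which should guarantee uniqueness of the partner and hence injectivity; but I must also rule out the degenerate possibility that an $X_i$ reaching into $V_1 \cap V_2$ has no counterpart $Y_j$ at all, and conversely. Here the precise interplay between ``$X_i$ is $(u,v)$-separating in $G_1$'' versus ``in $G$'' (the notational caveat raised before \cref{lemma:compatiblefragmentchar}) must be handled carefully, and I anticipate needing to invoke $d$-connectivity of $G_1$ and $G_2$ to ensure that separators genuinely reflect the internally-disjoint-path structure on the shared vertex set $V_1 \cap V_2$. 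Once the matching is pinned down, verifying the three compatibility bullets is routine via \cref{lemma:compatiblefragmentchar}.
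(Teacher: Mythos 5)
Your easy direction (compatible $\Rightarrow$ no strongly separating pair) is fine, and in fact more explicit than what the paper bothers to write down. The genuine gap is in the hard direction, at the sentence ``if no $Y_j$ with the same separator separated the same pairs, I could exhibit a strongly separating pair, contradicting the hypothesis.'' This does not follow from the definitions. Suppose $X_i$ is $(u,v)$-separating and no $Y_j$ with $N_{G_2}(Y_j) = N_{G_1}(X_i)$ is $(u,v)$-separating. The pair $\{u,v\}$ itself need \emph{not} strongly separate $F_1$ and $F_2$, because the second bullet of the definition quantifies over \emph{all} pairs of indices: there may be other indices $i_0, j_0$ such that $X_{i_0}$ and $Y_{j_0}$ are both $(u,v)$-separating with $N_{G_1}(X_{i_0}) = N_{G_2}(Y_{j_0})$, and any single such coincidence disqualifies $\{u,v\}$. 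So ``some $(u,v)$-separating fragment of $F_1$ has no partner'' is strictly weaker than ``$\{u,v\}$ strongly separates $F_1$ and $F_2$,'' and your proposal offers no mechanism for upgrading the former to the latter.

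This upgrade is exactly where the paper's proof invests its effort. It chooses, among all pairs admitting an unmatched separating fragment $X_i$, a pair $\{u,v\}$ minimizing the number of $(u,v)$-separating fragments among $X_1,\ldots,X_k$, and shows the minimizer is strongly separating: if some matched coincidence $S_{i_0} = T_{j_0}$ with both fragments $(u,v)$-separating survived, one fixes $x \in S_{i_0} - S_i$ and passes to $\{u,x\}$ (or $\{x,v\}$); two applications of \cref{lemma:uxseparator} show that every $(u,x)$-separating fragment among the $X$'s is also $(u,v)$-separating and that the unmatched property persists for $\{u,x\}$, while $X_{i_0}$ is $(u,v)$-separating but not $(u,x)$-separating since $x \in S_{i_0}$ --- contradicting minimality. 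Note that this step relies on \cref{lemma:uxseparator} and on the pairwise noncrossing structure guaranteed by the sequences being reduced, neither of which your proposal invokes. The remainder of your plan --- matching via \cref{lemma:compatiblefragmentchar}, injectivity from the distinctness of separators in a reduced sequence, and complementing the unmatched fragments so that they avoid the other factor --- is sound and agrees with the paper once this missing step is supplied.
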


\begin{lemma}\label{lemma:compatiblesequences}
    Let $G = (V,E)$ be the union of the $d$-connected graphs $G_1 = (V_1,E_1), G_2 = (V_2,E_2)$ and let $F_1 = (\sep{X_1},\ldots,\sep{X_k})$ and $F_2 = (\sep{Y_1},\ldots,\sep{Y_l})$ be (possibly trivial) reduced sequences of partial $d$-reflections of $G_1$ and $G_2$, respectively. Let $(G,p)$ be a generic realization in $\RR^d$ and let $(G_1,p_1)$ and $(G_2,p_2)$ denote the respective subframeworks. If $F_1$ and $F_2$ are compatible, then there exists a sequence of partial $d$-reflections $F = (\sep{Z_1},\ldots,\sep{Z_t})$ of $G$ such that the subframework of $(G,F(p))$ corresponding to $G_i$ is congruent to $(G_i,F_i(p_i))$, for each $i \in \{1,2\}$. 
\end{lemma}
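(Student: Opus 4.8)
The plan is to build $F$ explicitly from the fragments appearing in $F_1$ and $F_2$, exploiting the freedom to reorder and complement, which changes a sequence only up to equivalence by \cref{lemma:equivcomplement,lemma:equivnoncrossing,lemma:compositionequivalence}. First I would use this freedom, together with the definition of compatibility, to put $F_1$ and $F_2$ into a normal form: there is an index $0 \le r \le \min(k,l)$ such that for each $i \le r$ the fragments $X_i$ and $Y_i$ are compatible via a common $d$-fragment $Z_i$ of $G$, while $X_i$ is disjoint from $V_2$ for $i > r$ and $Y_j$ is disjoint from $V_1$ for $j > r$. Since I may \emph{independently} replace $\sep{X_i}$ by $\sep{\overline{X_i}}$ and $\sep{Y_i}$ by $\sep{\overline{Y_i}}$ (again only up to equivalence), I can arrange $Z_i \cap V_1 = X_i$ and $Z_i \cap V_2 = Y_i$ for every $i \le r$; by \cref{lemma:compatiblefragmentchar} these $Z_i$ also satisfy $N_G(Z_i) = N_{G_1}(X_i) = N_{G_2}(Y_i)$. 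Because $p$, and hence $p_1,p_2$, is generic, passing to these equivalent normal forms changes $F_i(p_i)$ only up to congruence, so it suffices to prove the statement for the normal forms.

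Next I would produce the remaining fragments of $G$. For $i > r$, since $X_i \subseteq V_1 \setminus V_2$ and the edges of $G_2$ lie inside $V_2$, no edge of $G_2$ meets $X_i$, so $N_G(X_i) = N_{G_1}(X_i)$ has size $d$ and $V \setminus (X_i \cup N_G(X_i))$ contains the nonempty set $V_2 \setminus N_G(X_i)$ (nonempty as $|V_2| \ge d+1$); hence $Z_i := X_i$ is a $d$-fragment of $G$ with $N_G(Z_i) = N_{G_1}(X_i)$. Symmetrically $W_j := Y_j$ is a $d$-fragment of $G$ with $N_G(W_j) = N_{G_2}(Y_j)$ for $j > r$. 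I then set $F := (\sep{Z_1},\ldots,\sep{Z_k},\sep{W_{r+1}},\ldots,\sep{W_l})$. The key computation is to identify the restriction of each $\sep{Z_i}$ and $\sep{W_j}$ to the $V_1$- and $V_2$-coordinates. Because $N_G(Z_i) \subseteq V_1$ and $Z_i \cap V_1 = X_i$ for all $i \le k$, the map $\sep{Z_i}$ restricted to $V_1$ is exactly $\sep{X_i}$ as a partial reflection of $G_1$, and this restriction depends only on the $V_1$-part of the current framework; symmetrically $\sep{Z_i}$ restricts to $\sep{Y_i}$ on $V_2$ for $i \le r$, and $\sep{W_j}$ restricts to $\sep{Y_j}$ on $V_2$ for $j > r$. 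For the ``cross'' terms I would observe that a reflection fixes exactly those vertices of the other side that lie in its separator, and these sit on the reflection hyperplane, so the restriction to that side is the ambient reflection applied to \emph{all} of its points, i.e.\ a congruence; concretely $\sep{W_j}$ ($j > r$) restricts to a congruence on $V_1$, and $\sep{Z_i}$ ($i > r$) restricts to a congruence on $V_2$.

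With these identifications the conclusion follows by reading off the two restrictions of $F(p)$, using that $(G,p)$ is generic so every intermediate framework is generic and all affine-independence conditions hold. On $V_1$ the partial reflections $\sep{X_1},\ldots,\sep{X_k}$ are applied in order, followed by the congruences coming from $\sep{W_{r+1}},\ldots,\sep{W_l}$; as the congruences come last, the $V_1$-subframework of $(G,F(p))$ equals $\Gamma\bigl(F_1(p_1)\bigr)$ for a single congruence $\Gamma$, hence is congruent to $(G_1,F_1(p_1))$. On $V_2$ the reflections $\sep{Y_1},\ldots,\sep{Y_r}$ are applied first, then the congruences $\delta_{r+1},\ldots,\delta_k$ coming from $\sep{Z_{r+1}},\ldots,\sep{Z_k}$, then $\sep{Y_{r+1}},\ldots,\sep{Y_l}$. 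Here the congruences sit in the middle, so I would absorb them via \cref{lemma:congruentpartialreflection}: writing the combined middle congruence as $\Delta$, the framework $\Delta\bigl(\sep{Y_r}\circ\cdots\circ\sep{Y_1}(p_2)\bigr)$ is congruent to $\sep{Y_r}\circ\cdots\circ\sep{Y_1}(p_2)$, and applying the remaining partial reflections $\sep{Y_{r+1}},\ldots,\sep{Y_l}$ to congruent frameworks yields congruent frameworks by induction from \cref{lemma:congruentpartialreflection}; thus the $V_2$-subframework is congruent to $\sep{Y_l}\circ\cdots\circ\sep{Y_1}(p_2) = F_2(p_2)$.

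The step I expect to be the main obstacle is the normal-form reduction together with the clean verification that each ``cross'' restriction is genuinely a congruence: one must track complements on $V_1$ and $V_2$ carefully and confirm that the normal form is reachable while preserving reducedness and equivalence. Once the restriction identities $\sep{Z_i}|_{V_1} = \sep{X_i}$ (and its analogues) and the congruence-absorption via \cref{lemma:congruentpartialreflection} are in place, the remainder is a direct read-off.
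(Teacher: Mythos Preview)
Your proposal is correct and follows essentially the same approach as the paper: both reduce to the same normal form, define the identical sequence $F = (\sep{Z_1},\ldots,\sep{Z_r},\sep{X_{r+1}},\ldots,\sep{X_k},\sep{Y_{r+1}},\ldots,\sep{Y_l})$, and verify the restrictions in the same way. Your treatment of the $V_2$-side is actually more explicit than the paper's, which simply appeals to ``analogous reasoning'' without addressing the fact that the congruences from $\sep{X_{r+1}},\ldots,\sep{X_k}$ sit in the middle rather than at the end; your invocation of \cref{lemma:congruentpartialreflection} to absorb them is exactly the right justification for that step.
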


The following is our main technical lemma.

\begin{lemma}\label{lemma:stronglyseparating}
Let $G = (V,E)$ be the union of the $d$-connected graphs $G_1 = (V_1,E_1), G_2 = (V_2,E_2)$ and let $F_1 = (\sep{X_1},\ldots,\sep{X_k})$ and $F_2 = (\sep{Y_1},\ldots,\sep{Y_l})$ be (possibly trivial) reduced sequences of partial $d$-reflections of $G_1$ and $G_2$, respectively. Let $u,v \in V_1 \cap V_2$ be a pair of vertices such that $\{u,v\}$ strongly separates $F_1$ and $F_2$.
Let $(G,p)$ be a generic realization in $\RR^d$ and let $(G_1,p_1)$ and $(G_2,p_2)$ denote the respective subframeworks. Then $\norm{F_1(p_1)(u) - F_1(p_1)(v)} \neq \norm{F_2(p_2)(u) - F_2(p_2)(v)}$.
\end{lemma}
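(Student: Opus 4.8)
The plan is to reduce the desired inequality of distances to the non-vanishing of a rational function, and then to certify this non-vanishing by exhibiting a single carefully chosen admissible framework. First I would discard the fragments that do not separate $u$ and $v$: using \cref{lemma:equivnoncrossing} I reorder each reduced sequence so that its $(u,v)$-separating fragments come first, and observe that the remaining partial reflections leave the distance of $u$ and $v$ unchanged, since a reflection across a hyperplane whose defining separator does not separate $u$ and $v$ either fixes both images, reflects them by a common isometry, or fixes one of them on the mirror. Thus I may assume every fragment of $F_1$ and of $F_2$ is $(u,v)$-separating; by the definition of strong separation at least one of the sequences is then nontrivial and all the separators $N_{G_1}(X_i), N_{G_2}(Y_j)$ are pairwise distinct. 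If one sequence, say $F_2$, is trivial, then $F_1$ is nontrivial with only $(u,v)$-separating fragments, and since $(G_1,p_1)$ is generic, \cref{lemma:mainsimple} gives $\norm{F_1(p_1)(u)-F_1(p_1)(v)} \neq \norm{p(u)-p(v)} = \norm{F_2(p_2)(u)-F_2(p_2)(v)}$, as wanted.

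So assume both sequences are nontrivial. Using \cref{minimumvertexcutorder}, \cref{lemma:equivnoncrossing} and \cref{lemma:equivcomplement} I would put them into normal form: writing $A_i = N_{G_1}(X_i)$ and $B_j = N_{G_2}(Y_j)$, reorder and pass to complements so that $A_k \preceq_{u,v}^{G_1} \cdots \preceq_{u,v}^{G_1} A_1$ and $B_l \preceq_{u,v}^{G_2} \cdots \preceq_{u,v}^{G_2} B_1$, with $u$ contained in every fragment. As in the proof of \cref{lemma:mainsimple}, this ensures that each partial reflection fixes the images of the separators corresponding to later fragments, so that for any admissible framework $q$ the vertex $u$ is fixed by both sequences, while $F_1(q_1)(v)$ and $F_2(q_2)(v)$ are obtained from $q(v)$ by composing the orthogonal reflections in the (unmoved) affine hyperplanes spanned by $q(A_1),\dots,q(A_k)$ and by $q(B_1),\dots,q(B_l)$, respectively. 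After squaring, both $\norm{F_1(q_1)(u)-F_1(q_1)(v)}^2$ and $\norm{F_2(q_2)(u)-F_2(q_2)(v)}^2$ are rational functions of $q$ with rational coefficients, so by \cref{lemma:rationalmapsgeneric} it suffices to exhibit a single admissible framework $q$ at which these two values differ.

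To build such a $q$ I would place all separator vertices in one hyperplane and put $u,v$ off it. Concretely, fix a hyperplane $\Pi$, place $q(w)$ for $w \in \bigcup_i A_i \cup \bigcup_j B_j$ generically in $\Pi$ (so each separator spans $\Pi$, ensuring admissibility), and place $q(u), q(v)$ generically off $\Pi$. Then every reflection above equals the reflection $\rho$ in $\Pi$, so $F_1$ and $F_2$ send $q(v)$ to $\rho^{k} q(v)$ and $\rho^{l} q(v)$ while fixing $q(u)$. If $k \not\equiv l \pmod 2$, these two images are $\rho\, q(v)$ and $q(v)$ in some order, and since the perpendicular bisector of $q(v)$ and $\rho\, q(v)$ is exactly $\Pi$ while $q(u) \notin \Pi$, we get $\norm{q(u)-\rho\, q(v)} \neq \norm{q(u)- q(v)}$, which settles this case. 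The remaining and genuinely delicate case is $k \equiv l \pmod 2$, where the symmetric placement yields equal distances and must be broken by perturbing the positions; here I would invoke the distinctness of the innermost separators $A_k \neq B_l$ together with \cref{lemma:chainofvertexcuts} (which supplies a vertex of $A_k$ lying in no other $A_i$, and symmetrically for $B_l$) to tilt the two innermost hyperplanes out of $\Pi$ by controlled, unequal amounts, so that the two composite images of $q(v)$ land at different distances from $q(u)$.

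The main obstacle is precisely this perturbation step. The innermost separators may share vertices both with each other and with the other separators in the two chains, so moving a single vertex off $\Pi$ can disturb several mirrors simultaneously, and in general one cannot isolate a vertex whose perturbation affects only $A_k$. The worst situation is $\bigcup_i A_i = \bigcup_j B_j$, where tilting along any fixed direction fails to separate the two chains to first order. I expect to resolve this by a more careful relatively generic perturbation of the shared separator vertices within a neighbourhood of $\Pi$, designed so that the two innermost mirrors acquire different distances from $q(u)$ while the earlier mirrors continue to act compatibly; proving that such a perturbation exists, using the chain orderings and the fact that $A_k$ and $B_l$ are distinct $d$-element sets, is the technical heart of the argument and the reason this is the main technical lemma of the section.
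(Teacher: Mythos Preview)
Your reduction (discarding the non-separating fragments, ordering the remaining ones into a chain, normalizing so $u$ lies in every fragment, and passing via \cref{lemma:rationalmapsgeneric} to an arbitrary doubly admissible framework) is exactly what the paper does, and your observation that the trivial-$F_2$ case is just \cref{lemma:mainsimple} applied to $G_1$ is correct. The divergence is in the counterexample framework.

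Your construction places all separator vertices generically in a hyperplane $\Pi$ and $u,v$ off it, so that both sequences act on $q(v)$ as powers of the reflection $\rho$ in $\Pi$. This cleanly handles $k\not\equiv l\pmod 2$, but for $k\equiv l\pmod 2$ you are left with a perturbation argument that you yourself flag as the hard part and do not complete. The obstacle you identify is real: tilting one mirror generically disturbs several others, and in the worst case $\bigcup_i A_i = \bigcup_j B_j$ there is no vertex whose perturbation touches only one chain. Nothing in the proposal shows that a suitable perturbation exists, so the argument as written has a genuine gap.

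The paper avoids this difficulty with a different construction idea: instead of placing $u$ and $v$ in general position and trying to make two nonzero distances differ, it \emph{collapses} $q(u)=q(v)$ to the origin and engineers the placement so that $F_1$ fixes $q(v)$ (distance $0$) while $F_2$ does not (distance $>0$). Concretely, use \cref{lemma:chainofvertexcuts} to pick $x\in A_k\setminus\bigcup_{i<k}A_i$. If $x$ lies in no $B_j$, set $q(u)=q(v)=0$, $q(x)=(1,0,\dots,0)$, and put every other vertex generically in the hyperplane $\{x_1=0\}$; then every $q(B_j)$ spans $\{x_1=0\}$ (so $F_2$ fixes the origin), every $q(A_i)$ with $i<k$ spans $\{x_1=0\}$, but $q(A_k)$ spans a hyperplane missing the origin, so $F_1$ moves $q(v)$. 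If $x\in B_{j_0}$ for some (maximal) $j_0$, then $A_k\neq B_{j_0}$ yields $w\in A_k\setminus B_{j_0}$; now set $q(u)=q(v)=q(w)=0$, $q(x)=(1,0,\dots,0)$, and place the remaining separator vertices in $\{x_1=0\}$, choosing one distinguished vertex of $B_{j_0}$ carefully so that the composite $F_2^{j_0}$ moves $q(v)$ off the origin. Every $q(A_i)$ then affinely spans the origin (either via $w$ or because it sits in $\{x_1=0\}$), so $F_1$ fixes $q(v)$, whereas $F_2$ does not. This ``make one side exactly zero'' trick is what eliminates the delicate comparison you were facing.
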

In the case when $G = G_1 = G_2$ and $F_2$ is trivial, we recover \cref{lemma:mainsimple}. In fact, in this case our proof (given in \cref{appendix:gluing}) reduces exactly to the proof we gave for \cref{lemma:mainsimple}.

\section{Local connectivity and globally linked pairs}
\label{section:3char}

Our main goal in this paper is the investigation of the following graph class. We say that a graph $G = (V,E)$ is \emph{\mainclass[d]} if $|V| \geq d+1$, $G$ is rigid in $\RR^d$ and $G$ satisfies the following property:
\begin{equation}\label{eq:kappa}\tag{$*$}
    \{u,v\} \text{ is globally linked in } G \text{ in } \RR^d \Longleftrightarrow uv \in E \text{ or } \kappa(G,u,v) \geq d+1, \hspace{1em} \forall u,v \in V.
\end{equation}
Note that the ``$\Rightarrow$'' direction in \cref{eq:kappa} always holds by \cref{lemma:globallylinkedclosure}. Also note that \cref{eq:kappa} is equivalent to the condition that the globally rigid clusters of $G$ in $\RR^d$ are precisely the $(d+1)$-blocks of $G$. Since $G$ and  $\glclosure(G)$ have the same $(d+1)$-blocks (by \cref{corollary:globallylinkedclosure}) and the same globally rigid clusters in $\RR^d$, we obtain the following result which will be useful throughout this section.
\begin{lemma}\label{lemma:mainclassglobalclosure}
    A graph $G$ is \mainclass[d] if and only if $\glclosure(G)$ is \mainclass[d].
\end{lemma}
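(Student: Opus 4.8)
The plan is to reduce everything to the defining property \eqref{eq:kappa} together with the two structural facts already established: first, that $G$ and $\glclosure(G)$ have the same vertex set, the same $(d+1)$-blocks, and the same local connectivities $\kappa(\cdot,u,v)$; and second, that taking the globally linked closure is idempotent, i.e.\ $\glclosure(\glclosure(G)) = \glclosure(G)$. The statement is an ``if and only if'', so I would prove the two implications by showing that each of the three conditions in the definition of \mainclass[d]---having at least $d+1$ vertices, being rigid in $\RR^d$, and satisfying \eqref{eq:kappa}---holds for $G$ exactly when it holds for $\glclosure(G)$.

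First I would dispatch the two easy conditions. The vertex-count condition is immediate since $V(G) = V(\glclosure(G))$. Rigidity transfers in both directions by \cref{lemma:equivclasses}, which states precisely that $G$ is rigid in $\RR^d$ if and only if $\glclosure(G)$ is rigid in $\RR^d$. So both of these conditions hold for $G$ if and only if they hold for $\glclosure(G)$, and it remains to handle \eqref{eq:kappa}.

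For the property \eqref{eq:kappa} itself, the key observation is that the globally linked pairs of $G$ and of $\glclosure(G)$ coincide: by the idempotence of the closure operator, $\{u,v\}$ is globally linked in $G$ in $\RR^d$ if and only if it is an edge of $\glclosure(G)$, which holds if and only if $\{u,v\}$ is globally linked in $\glclosure(G)$ in $\RR^d$ (the last equivalence being that adding already-globally-linked edges does not change the closure). Thus the left-hand side of \eqref{eq:kappa} is the same predicate for $G$ and for $\glclosure(G)$. On the right-hand side, by \cref{corollary:globallylinkedclosure} the $(d+1)$-blocks of $G$ and $\glclosure(G)$ coincide, and equivalently $\kappa(G,u,v) \ge d+1$ exactly when $\kappa(\glclosure(G),u,v) \ge d+1$; the adjacency clause ``$uv \in E$'' is absorbed into the edge set of $\glclosure(G)$, but since being an edge of $\glclosure(G)$ is the same as being globally linked, this causes no discrepancy once the statement is read as ``globally linked $\iff$ in the same $(d+1)$-block,'' which is the reformulation of \eqref{eq:kappa} noted just before the lemma. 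Hence \eqref{eq:kappa} holds for $G$ if and only if it holds for $\glclosure(G)$.

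I expect the main subtlety---rather than a genuine obstacle---to be bookkeeping around the adjacency term ``$uv \in E$'' on the right-hand side of \eqref{eq:kappa}, since $G$ and $\glclosure(G)$ have different edge sets. The clean way to avoid confusion is to use the reformulation already given in the text, namely that \eqref{eq:kappa} says exactly that the globally linked clusters are the $(d+1)$-blocks; phrased this way, both the ``globally linked'' side and the ``$(d+1)$-block'' side are manifestly invariant under passing from $G$ to $\glclosure(G)$, so the equivalence is immediate and the edge-set discrepancy never needs to be examined directly.
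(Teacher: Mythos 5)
Your proposal is correct and follows essentially the same route as the paper: the paper derives the lemma from the observation that \eqref{eq:kappa} is equivalent to saying the globally linked clusters are exactly the $(d+1)$-blocks, both of which are invariant under passing to $\glclosure(G)$ (by \cref{corollary:globallylinkedclosure} and the idempotence of the closure), with rigidity transferring via \cref{lemma:equivclasses}. You merely spell out the rigidity and vertex-count conditions more explicitly than the paper's one-sentence justification does.
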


If a graph is globally rigid in $\RR^d$ on at least $d+1$ vertices, then it is \mainclass[d]; in fact, a \mainclass[d] graph on at least $d+2$ vertices is globally rigid if and only if it is $(d+1)$-connected. But not all \mainclass[d] graphs are globally rigid in $\RR^d$, as shown by \cref{figure:mainclassexample}. 
It is not difficult to construct graphs that satisfy \cref{eq:kappa} but are nonrigid, even if we further assume $d$-connectivity; for example, cycles of length at least four have this property in the $d = 2$ case. 

\begin{figure}[h]
    \centering
    \begin{subfigure}[b]{0.49\linewidth}
    \centering
        \includegraphics[]{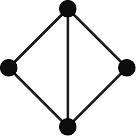}
        \caption{}
    \end{subfigure}
    \begin{subfigure}[b]{0.49\linewidth}
        \centering
        \includegraphics[]{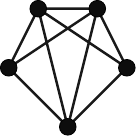}
        \caption{}
    \end{subfigure}
    \caption{The graph obtained from $K_{d+2}$ by deleting an edge is \mainclass[d], but not globally rigid in $\RR^d$. Here \textit{(a)} $d = 2$, \textit{(b)} $d = 3$.}
    \label{figure:mainclassexample}
\end{figure}

An immediate consequence of the definition of being \mainclass[d]\ is that the property of ``being globally linked'' is generic in \mainclass[d]\ graphs, in the sense that if there is some generic $d$-dimensional realization of $G$ in which the pair $\{u,v\}$ is globally linked, then $\{u,v\}$ is globally linked in every generic $d$-dimensional realization.

We close this subsection by observing that a graph on at least two vertices is \mainclass[1] if and only if it is connected. Indeed, it is well-known that in dimension one every graph satisfies \cref{eq:kappa}, and a graph is rigid in $\RR^1$ if and only if it is connected.

\subsection{Characterizations}

We start by giving three characterizations of \mainclass[d] graphs: one involving the globally linked closure of the graph, another in terms of the realizations equivalent to a generic realization in $\RR^d$, and a third in terms of the number of such realizations. We shall need the following structural result about rigid graphs in $\RR^d$, which we believe is interesting on its own right. The special case when $d=2$ was verified in \cite[Lemma 3.6]{jackson.jordan_2005}.

\begin{theorem}\label{theorem:rigidnoncrossing}
Let $G=(V,E)$ be a graph on at least $d+1$ vertices. If $G$ is rigid in $\RR^d$, then the $d$-separators of $G$ are pairwise noncrossing.
\end{theorem}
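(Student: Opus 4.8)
The plan is to argue by contradiction: assume that $G$ is rigid in $\RR^d$ but contains crossing $d$-separators $S$ and $T$, and derive that the rank $r_d(G)$ is strictly smaller than $d|V|-\binom{d+1}{2}$, contradicting \cref{theorem:gluck}. First I would record the (folklore) fact that a rigid graph on at least $d+1$ vertices is $d$-connected: if $U$ were a separator with $|U|\le d-1$, then in a generic realization $p(U)$ spans an affine subspace of dimension at most $d-2$, so some nontrivial rotation of $\RR^d$ fixes $\operatorname{aff}(p(U))$ pointwise; applying it to one component of $G-U$ produces an equivalent, non-congruent framework, contradicting rigidity. Thus $G$ is $d$-connected and the crossing machinery of \cref{subsection:fragments} applies. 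Writing $I=S\cap T$, $P=S-T$, $Q=T-S$, crossing forces $|I|\le d-2$, and (by \cref{lemma:crossingcuts}) both $P$ and $Q$ meet at least two components of $G-T$, resp.\ $G-S$; vertices of $P$ lying in different components of $G-T$ are non-adjacent, and symmetrically for $Q$.

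The key reduction is to pass to the induced subgraph on $S\cup T$ and then to its densest admissible version. The trivial bound obtained by deleting the $d(|V|-|S\cup T|)$ columns of the rigidity matrix corresponding to $V-(S\cup T)$ gives
\[
r_d(G)\ \le\ r_d\bigl(G[S\cup T]\bigr)\ +\ d\bigl(|V|-|S\cup T|\bigr),
\]
so it suffices to prove that $G[S\cup T]$ is \emph{not} rigid, i.e.\ $r_d(G[S\cup T])<d|S\cup T|-\binom{d+1}{2}$. In $G[S\cup T]$ the sets $S$ and $T$ are still crossing $d$-separators (the relevant non-edges survive). Since rigidity is monotone under edge addition, $G[S\cup T]$ is a subgraph of the maximal graph $H^*$ on $S\cup T$ that keeps $S,T$ as crossing separators: group the components into two on each side to get partitions $P=P_1\sqcup P_2$, $Q=Q_1\sqcup Q_2$ (all parts nonempty), and let $H^*$ be the complete graph on $S\cup T$ with the edges between $P_1$ and $P_2$ and between $Q_1$ and $Q_2$ deleted. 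As a supergraph of $G[S\cup T]$, it is enough to show that $H^*$ is not rigid in $\RR^d$.

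To bound $r_d(H^*)$ I would decompose $H^*$ along $S$. Because $Q_1$ and $Q_2$ are non-adjacent, $H^*$ is the union of $H^*[S\cup Q_1]$ and $H^*[S\cup Q_2]$ sharing exactly $H^*[S]$, and submodularity of the matroid rank gives $r_d(H^*)\le r_d(H^*[S\cup Q_1])+r_d(H^*[S\cup Q_2])-r_d(H^*[S])$. Here $H^*[S]=K_d$ minus the complete bipartite graph between $P_1$ and $P_2$; since $K_d$ is $\Rd$-independent, $r_d(H^*[S])=\binom{d}{2}-|P_1||P_2|$ exactly. Each piece $H^*[S\cup Q_j]$ is a clique sum: it equals $K_{I\cup P_1\cup Q_j}\cup K_{I\cup P_2\cup Q_j}$ glued along the clique $I\cup Q_j$, whose size $|I|+|Q_j|$ is strictly less than $d$ (as $Q$ is split into two nonempty parts). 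Applying submodularity again to each clique sum and using $r_d(K_m)=dm-\binom{d+1}{2}$ for $m\ge d$ and $r_d(K_m)=\binom{m}{2}$ for $m\le d+1$ bounds every term by an explicit complete-graph rank.

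Substituting these bounds and simplifying with the identity $\binom{d+1}{2}+\binom{d}{2}=d^2$, the leading terms cancel and the gap between $d|S\cup T|-\binom{d+1}{2}$ and my upper bound reduces to a small expression in the part sizes; a direct computation shows this expression is strictly positive for all $d$, all $|I|=a$, and all admissible splits (for instance, when $a=0$ and all parts equal $d/2$ the gap is exactly $d/2$). This yields $r_d(H^*)<d|S\cup T|-\binom{d+1}{2}$, hence $H^*$ is not rigid, and unwinding the two reductions contradicts the rigidity of $G$. I expect the genuine obstacle to be precisely this last step: verifying \emph{uniformly} in $d$, in the overlap $a$, and in the sizes of $P_1,P_2,Q_1,Q_2$ that the cancellation always leaves a strictly positive deficiency (the crude bounds I first tried fail for $d\ge 3$, and one really needs the exact ranks of the small clique pieces, which is why the reduction to the densest graph $H^*$ — where these ranks become computable — is essential).
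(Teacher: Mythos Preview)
Your overall strategy matches the paper's: reduce to the induced subgraph on $S\cup T$, replace it by the densest graph in which $S$ and $T$ remain crossing $d$-separators, and then show this dense graph is not rigid by a rank computation. However, there are two genuine gaps.

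\medskip
\textbf{The reduction to $G[S\cup T]$ is wrong as stated.} Your ``trivial bound''
\[
r_d(G)\ \le\ r_d(G[S\cup T]) + d\bigl(|V|-|S\cup T|\bigr)
\]
does \emph{not} follow from deleting columns of the rigidity matrix. Deleting the columns indexed by $V-(S\cup T)$ bounds $r_d(G)$ by the rank of the remaining column submatrix plus $d(|V|-|S\cup T|)$, but that submatrix still contains the (nonzero) rows for edges with exactly one endpoint in $S\cup T$, so its rank can exceed $r_d(G[S\cup T])$. In fact the inequality is false in general: take $G=K_4$ minus an edge in $\RR^2$ (rigid, $r_2(G)=5$) and $W$ the three vertices inducing a path ($r_2(G[W])=2$); your bound would give $5\le 2+2$. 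The paper repairs this step differently: it first adds edges so that each of the four ``corner'' induced subgraphs on $S\cup T$ is complete (this can only help rigidity), and then observes that every component of $G-(S\cup T)$ is attached along a clique; the second part of \cref{lemma:rigidgluing} then forces $G[S\cup T]$ to be rigid whenever $G$ is. This is exactly the missing ingredient in your reduction.

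\medskip
\textbf{The final rank computation is not carried out.} You correctly anticipate that this is where the work lies, but you do not verify that the submodular bounds produce a strictly positive deficiency for all parameter values $|I|,|P_1|,|P_2|,|Q_1|,|Q_2|$; the case split over whether each clique $K_{|I|+|P_i|+|Q_j|}$ has size $\ge d$ or $<d$ makes this nontrivial. The paper avoids this by an extra reduction you do not use: since every vertex of $I=S\cap T$ is adjacent to all of $S\cup T$ in the dense graph, repeated application of the coning theorem (\cref{theorem:coning}) lets one delete $I$ and drop the dimension to $d'=d-|I|$, reducing to the case $S\cap T=\varnothing$. After that, a short two-case computation (depending on whether the two smallest parts have equal size) finishes the proof. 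Without the coning step your computation is substantially messier, and as written it is only a plan, not a proof.
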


\begin{proof}
Recall that rigid graphs in $\RR^d$ on at least $d+1$ vertices are $d$-connected. Suppose, for a contradiction, that $G$ contains a pair $S_1,S_2$ of crossing $d$-separators. We show that this implies $r_d(G)< d|V|-\binom{d+1}{2}$, so $G$ is not rigid in $\RR^d$ by \cref{theorem:gluck}.
Since $S_i$ is a $d$-separator, there is a partition of $V$ into three nonempty sets $A_i,S_i,B_i$ such that there are no edges from $A_i$ to $B_i$, for $i \in \{1,2\}$. Moreover, since $S_1$ and $S_2$ are crossing, we may assume that the sets $X = S_1 \cap A_2$, $Y=S_1\cap B_2$, $P=S_2\cap A_1$, $Q=S_2\cap B_1$ are all nonempty. Let $C=S_1\cap S_2$ and let us denote the cardinality of these sets by $x=|X|, y=|Y|, p=|P|, q=|Q|$, and $c=|C|$. See \cref{figure:rigidnoncrossing}(a).

\begin{figure}[t]
    \centering
    \captionsetup[subfigure]{oneside,margin={0.6cm,0cm}}
    \begin{subfigure}[b]{0.49\linewidth}
    \centering
         \includegraphics[]{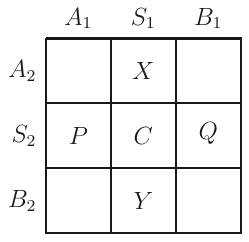}
        \caption{}  
    \end{subfigure}
    \captionsetup[subfigure]{oneside,margin={0.0cm,0cm}}
    \begin{subfigure}[b]{0.49\linewidth}
        \centering
        \includegraphics[]{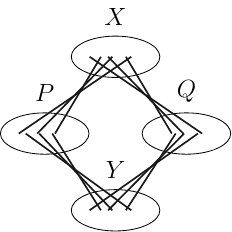}
        \caption{}
    \end{subfigure}
    \caption{An illustration of the different vertex sets appearing in the proof of \cref{theorem:rigidnoncrossing}.}
    \label{figure:rigidnoncrossing}
\end{figure}

We start by simplifying the problem in three steps. First, we may assume that the subgraphs of $G$ induced by $X\cup C\cup P$, $P\cup C\cup Y$, $Y\cup C\cup Q$, and $Q\cup C\cup X$, respectively, are complete. Indeed, adding new edges to these subgraphs preserves $d$-connectivity and the property that $S_1,S_2$ are crossing $d$-separators. Moreover, it cannot decrease the rank of the graph.
Then we observe that it suffices to show that $G[S_1\cup S_2]$ is not rigid in $\RR^d$, since each component of $G-(S_1\cup S_2)$ is attached to the rest of the graph along a complete subgraph, so \cref{lemma:rigidgluing} implies that if $G$ was rigid in $\RR^d$, then so would be $G[S_1 \cup S_2]$.
Thus, we may assume that $V = S_1 \cup S_2$. Finally, let $d'=d-c$. We can observe that it suffices to show that $G-C$ is nonrigid in $\RR^{d'}$. Indeed, $G-C$ is $d'$-connected, and
$S_1-C$ and $S_2-C$ are crossing $d'$-separators in $G-C$. Furthermore, since the
vertices of $C$ are connected to every other vertex in $G$, \cref{theorem:coning} implies that $G-C$ is nonrigid in $\RR^{d'}$ if and only if $G$ is nonrigid in $\RR^d$. Hence we may assume that $C=\varnothing$ and $d = d'$. See \cref{figure:rigidnoncrossing}(b).

By symmetry, we may assume that $x\geq y,$ $q\geq p$ and $p\geq y$.
This gives $x+q\geq x + y = d$, and thus \cref{theorem:gluck} implies that $r_d(G[X\cup Q]) = d(x+q)-\binom{d+1}{2}$.

We consider two cases in the rest of the proof. First suppose that $p=y$. Then we have $q = x = d-y$, and thus
\begin{align*}
|E| - |E(G[X\cup Q])| =  px + py + yq + \binom{p}{2} + \binom{y}{2}
&= y(2d-y)+y(y-1)=y(2d-1),
\end{align*}
which implies
\begin{align*}
r_d(G) &\leq r_d(G[X\cup Q]) + |E| - |E(G[X\cup Q])| = r_d(G[X\cup Q])+y(2d-1) \\ &= d(x+q) - \binom{d+1}{2}+ d(p+y) -y 
= d|V|-\binom{d+1}{2}-y,
\end{align*}
where the first inequality follows from the submodularity of $r_d$.
Since $y\geq 1$, this implies that $G$ is not rigid in $\RR^d$.

Next, suppose that $p \geq y+1$. Observe that $G[X\cup P]$ is a complete graph on at least $d+1$ vertices. Let $K$ be a complete subgraph of $G[X \cup P]$ on $d+1$ vertices that contains $X$. Now $G[X\cup P]$ has a rigid spanning subgraph that can be obtained from $K$ by adding  $x+p-d-1=p-y-1$ vertices of degree $d$.\footnote{It is well-known that adding a new vertex of degree $d$ preserves rigidity in $\RR^d$.} Therefore the rest of the edges in $G[X\cup P]$ can be deleted without decreasing the rank of $G$. After deleting these edges we have \[|E(G[X \cup P])| = \binom{x}{2} + x(d + 1 - x) + \binom{d+1-x}{2} + d(p-y-1),\]
where the first three terms count the number of edges in $K$. Hence we have
\begin{align*}
r_d(G) - r_d(G[X\cup Q]) &\leq  |E| - |E(G[X \cup Q])| \\ &= x(d + 1 - x) + \binom{d+1-x}{2} + d(p-y-1) + (p+q)y + \binom{y}{2} \\ &=  (d-y)(y+1) + \binom{y+1}{2} + d(p-y-1) + dy + \binom{y}{2} \\ &= d(p+y) + \binom{y}{2} + \binom{y+1}{2} - y(y+1) = d(p+y) - y.
\end{align*}
Since $y \geq 1$, it follows that
\[r_d(G) < r_d(G[X\cup Q])+d(p+y) = d(x+q) - \binom{d+1}{2}+ d(p+y) = d|V|-\binom{d+1}{2},\]so $G$ is not rigid in $\RR^d$.
\end{proof}

\begin{theorem}\label{theorem:closurechar}
A graph $G = (V,E)$ is \mainclass[d] if and only if $\glclosure(G)$ is $d$-connected, every $d$-separator of $\glclosure(G)$ is a clique, and every $(d+1)$-block of $\glclosure(G)$ is a clique. 
\end{theorem}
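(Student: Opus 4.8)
The plan is to reduce to the closed case $G = \glclosure(G)$ and then treat the two implications separately, isolating the rigidity statement of the backward direction as a purely combinatorial claim.

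\emph{Reduction.} Since $\glclosure$ is idempotent, the three conditions on the right-hand side depend only on $\glclosure(G)$, and by \cref{lemma:mainclassglobalclosure} $G$ is \mainclass[d] if and only if $\glclosure(G)$ is; hence it suffices to prove the equivalence when $G = \glclosure(G)$. For such $G$ a pair is globally linked in $G$ precisely when it is an edge, so \cref{eq:kappa} becomes the combinatorial statement that $\kappa(G,u,v) \ge d+1$ implies $uv \in E$. Extending any pair with $\kappa \ge d+1$ to a maximal set with the $(d+1)$-block property shows that this is in turn equivalent to the condition that every $(d+1)$-block of $G$ is a clique. Thus, after the reduction, being \mainclass[d] means ``$|V| \ge d+1$, $G$ is rigid in $\RR^d$, and every $(d+1)$-block of $G$ is a clique'', and the theorem becomes the assertion that this is equivalent to ``$G$ is $d$-connected and every $d$-separator and every $(d+1)$-block of $G$ is a clique''.

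\emph{Backward direction.} The crux is the claim that if $G$ is $d$-connected, all its $d$-separators are cliques, and all its $(d+1)$-blocks are cliques, then $G$ is rigid in $\RR^d$; I would prove this by induction on $|V|$. If $G$ has no $d$-separator then either $|V| = d+1$ and $G = K_{d+1}$, or $G$ is $(d+1)$-connected, in which case $V$ is a single $(d+1)$-block and hence a clique; either way $G$ is complete and thus rigid. Otherwise choose a $d$-separator $S$ (a clique) and a component $C$ of $G-S$, and set $G_1 = G[C \cup S]$ and $G_2 = G[\overline{C} \cup S]$ with $\overline{C} = V - C - S$. As $G$ is $d$-connected and $|S| = d$, $S$ is a minimum vertex cut, so $N_G(C) = N_G(\overline{C}) = S$ and \cref{lemma:dblock} applies to both fragments. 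Its parts (b) and (c) show that every $d$-separator and every $(d+1)$-block of $G_1$ (and of $G_2$) is either inherited from $G$ or equals $S$, hence a clique, while part (a) shows that every nonadjacent pair of $G_1$ meets $C$ and therefore retains connectivity at least $d$, so $G_1$ and $G_2$ are $d$-connected. By induction $G_1$ and $G_2$ are rigid, and since they intersect in the clique $S$ of size $d$, \cref{lemma:rigidgluing} gives that $G = G_1 \cup G_2$ is rigid. Granting the claim, the three conditions yield $|V| \ge d+1$ and rigidity, while ``every $(d+1)$-block is a clique'' is exactly \cref{eq:kappa}; hence $G$ is \mainclass[d].

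\emph{Forward direction.} Suppose $G = \glclosure(G)$ is \mainclass[d]. The block condition is again the reformulation of \cref{eq:kappa}. For $d$-connectivity I would use the standard fact that a rigid graph on at least $d+1$ vertices is $d$-connected: a separator of size at most $d-1$ spans an affine subspace of codimension at least two, and rotating one side about it yields a nontrivial flex. Finally, let $S$ be a $d$-separator; I must show $S$ is a clique. By \cref{theorem:rigidnoncrossing} the $d$-separators of the rigid graph $G$ are pairwise noncrossing. For distinct $u,v \in S$ I claim $uv \in E$ or $\kappa(G,u,v) \ge d+1$: otherwise $\kappa(G,u,v) = d$, and by Menger some $d$-separator $T$ separates $u$ from $v$, placing $u,v \in S \setminus T$ in different components of $G-T$, so that $S$ crosses $T$ --- contradicting \cref{theorem:rigidnoncrossing}. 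Hence all pairs in $S$ have the $(d+1)$-block property, so $S$ is contained in some $(d+1)$-block, which is a clique; therefore $S$ is a clique.

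\emph{Main obstacle.} The technical core is the inductive rigidity claim, and in particular the verification that the three combinatorial hypotheses descend to the two sides of a clique $d$-separator; this is precisely what \cref{lemma:dblock} is engineered to supply, after which the gluing is immediate from \cref{lemma:rigidgluing}. The forward direction is shorter but depends essentially on \cref{theorem:rigidnoncrossing} to turn ``$u,v$ lie in a common $d$-separator'' into ``$\kappa(G,u,v) \ge d+1$''.
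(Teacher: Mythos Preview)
Your proposal is correct and follows essentially the same approach as the paper: reduce to $G = \glclosure(G)$ via \cref{lemma:mainclassglobalclosure}, use \cref{theorem:rigidnoncrossing} in the forward direction to force $d$-separators to be cliques, and in the backward direction prove rigidity by induction on $|V|$, splitting along a clique $d$-separator and invoking \cref{lemma:dblock} and \cref{lemma:rigidgluing}. The only difference is presentational: you carry out the reduction upfront for both directions and spell out the induction that the paper merely sketches (``We omit the details''), but the logical structure is identical.
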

\begin{proof}
Let us first assume that $G$ is \mainclass[d]. Since $G$ is rigid, it is $d$-connected, and thus so is $\glclosure(G)$ by \cref{corollary:globallylinkedclosure}. Also, if $S$ is a $d$-separator of $G$ and $u,v \in S$ is a nonadjacent pair of vertices, then $\kappa(G,u,v) \geq d+1$, for otherwise there would be a $d$-separator in $G$ that crosses $S$, contradicting \cref{theorem:rigidnoncrossing}. 
Now \cref{eq:kappa} implies
that every $d$-separator of $\glclosure(G)$ is a clique. It is also immediate from \cref{eq:kappa} that every $(d+1)$-block of $\glclosure(G)$ is a clique.

Now we prove the converse. By \cref{lemma:mainclassglobalclosure} it is enough to prove that $\glclosure(G)$ is \mainclass[d], so we may assume that $G = \glclosure(G)$. Since $G$ is $d$-connected, it has at least $d+1$ vertices. The $(d+1)$-blocks of $G$ are all complete, so $G$ trivially satisfies \cref{eq:kappa}. All that is left to show is that $G$ is rigid in $\RR^d$. This is clear if $G$ is complete, while otherwise we can proceed by induction on the number of vertices by cutting along a $d$-separator and using \cref{lemma:rigidgluing}; note that if $X$ is a $d$-fragment of $G$, then by \cref{lemma:dblock} the subgraphs $G[X \cup N_G(X)]$ and $G[V-X]$ also satisfy the assumptions made on $G$, so we can indeed use induction. We omit the details.
\end{proof}

The proof of \cref{theorem:closurechar} also shows that in the definition of \mainclass[d] graphs, the condition that the graph is rigid in $\RR^d$ may be weakened.
\begin{corollary}\label{corollary:weakerdef}
A graph $G$ is \mainclass[d] if and only if it is $d$-connected, its $d$-separators are pairwise noncrossing, and it satisfies \cref{eq:kappa}.    
\end{corollary}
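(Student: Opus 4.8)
The plan is to deduce both directions from \cref{theorem:closurechar}, which characterises \mainclass[d] graphs through the globally linked closure, together with \cref{lemma:mainclassglobalclosure}, which lets us pass freely between $G$ and $\glclosure(G)$. For the forward implication, suppose $G$ is \mainclass[d]. By definition $G$ is rigid in $\RR^d$ and satisfies \cref{eq:kappa}, so it only remains to record that rigidity forces the two connectivity conditions: a rigid graph on at least $d+1$ vertices is $d$-connected (as already used in the proof of \cref{theorem:closurechar}), and its $d$-separators are pairwise noncrossing by \cref{theorem:rigidnoncrossing}. This direction therefore requires no new work.

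For the converse, assume $G$ is $d$-connected, has pairwise noncrossing $d$-separators, and satisfies \cref{eq:kappa}. By \cref{lemma:mainclassglobalclosure} it suffices to show that $\glclosure(G)$ is \mainclass[d], and by \cref{theorem:closurechar} this reduces to verifying three things about $\glclosure(G)$: that it is $d$-connected, that each of its $d$-separators is a clique, and that each of its $(d+1)$-blocks is a clique. The first is immediate, since adding edges cannot create a separator of size at most $d-1$, so $\glclosure(G)$, which contains $G$, inherits $d$-connectivity. For the $(d+1)$-blocks, I would invoke \cref{corollary:globallylinkedclosure} to identify the $(d+1)$-blocks of $G$ and of $\glclosure(G)$; since \cref{eq:kappa} says that any two vertices $u,v$ lying in a common $(d+1)$-block (so that $uv \in E$ or $\kappa(G,u,v) \geq d+1$) form a globally linked pair, every such block is a clique of $\glclosure(G)$.

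The substantive step is showing that each $d$-separator of $\glclosure(G)$ --- equivalently, by \cref{corollary:globallylinkedclosure}, each $d$-separator $S$ of $G$ --- is a clique, and this is exactly where the noncrossing hypothesis is used. Given a pair $u,v \in S$ that is nonadjacent in $G$, the key claim is that $\kappa(G,u,v) \geq d+1$. I would argue by contradiction: $d$-connectivity gives $\kappa(G,u,v) \geq d$, so if the claim fails then $\kappa(G,u,v) = d$, and Menger's theorem produces a $(u,v)$-separating $d$-separator $T$. But then $u$ and $v$, which both lie in $S$ and avoid $T$, sit in distinct components of $G - T$, so $S$ contains vertices of at least two components of $G - T$; that is, $S$ crosses $T$, contradicting the noncrossing assumption. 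Hence $\kappa(G,u,v) \geq d+1$, and \cref{eq:kappa} makes $\{u,v\}$ globally linked, i.e.\ an edge of $\glclosure(G)$, so $S$ is a clique. With all three conditions of \cref{theorem:closurechar} verified, $\glclosure(G)$ is \mainclass[d] and the converse follows. The only real obstacle is this last clique argument; everything else is a direct translation through the already-established closure machinery, and I expect no hidden difficulties there.
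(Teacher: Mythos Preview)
Your proposal is correct and follows essentially the same approach as the paper: for necessity you invoke \cref{theorem:rigidnoncrossing}, and for sufficiency you verify the three hypotheses of \cref{theorem:closurechar} for $\glclosure(G)$, with the noncrossing assumption replacing the appeal to \cref{theorem:rigidnoncrossing} in the $d$-separator clique argument. The only cosmetic difference is that you route through \cref{lemma:mainclassglobalclosure} before invoking \cref{theorem:closurechar}, whereas the paper applies \cref{theorem:closurechar} directly to $G$; since $\glclosure(\glclosure(G)) = \glclosure(G)$, this detour is harmless.
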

\begin{proof}
    Necessity follows from \cref{theorem:rigidnoncrossing}. To prove sufficiency we can repeat the first paragraph of the preceding proof to obtain that $\glclosure(G)$ is $d$-connected, every $d$-separator of $\glclosure(G)$ is a clique, and every $(d+1)$-block of $\glclosure(G)$ is a clique. By \cref{theorem:closurechar} this means that $G$ is \mainclass[d].
\end{proof}

\begin{theorem}\label{theorem:partialreflectionchar}
A graph $G = (V,E)$ on at least $d+1$ vertices is \mainclass[d] if and only if for every generic realization $(G,p)$ in $\RR^d$, every equivalent realization $(G,q)$ is congruent to $(G,F(p))$ for some reduced sequence of partial $d$-reflections $F$ of $G$.
\end{theorem}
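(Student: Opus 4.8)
The plan is to prove both implications, handling the (easier) ``if'' direction first and reserving most effort for the ``only if'' direction. Throughout I would use that it suffices to treat the case $G = \glclosure(G)$: by \cref{corollary:globallylinkedclosure} the $d$-fragments of $G$ and $\glclosure(G)$ coincide, so the two graphs have literally the same partial $d$-reflections and the same reduced sequences, while by \cref{lemma:equivclasses} a framework $(G,q)$ is equivalent to a generic $(G,p)$ if and only if $(\glclosure(G),q)$ is equivalent to $(\glclosure(G),p)$; since $(G,F(p))$ and $(\glclosure(G),F(p))$ share the same underlying configuration, the statement for $G$ is equivalent to that for $\glclosure(G)$, which is $d$-joined by \cref{lemma:mainclassglobalclosure}. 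So I would assume $G = \glclosure(G)$; by \cref{theorem:closurechar} the graph $G$ is then $d$-connected, its $d$-separators are cliques, and its $(d+1)$-blocks are cliques, and by \cref{theorem:rigidnoncrossing} its $d$-separators are pairwise noncrossing.

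For the ``if'' direction, assume the reflection condition holds. I would first observe that there are only finitely many reduced sequences of partial $d$-reflections of $G$ (a reduced sequence uses pairwise distinct $d$-separators, of which there are finitely many, and each separator carries only finitely many fragments), so up to congruence there are only finitely many frameworks $(G,F(p))$ with $F$ reduced. Hence a generic $(G,p)$ has only finitely many equivalent frameworks up to congruence, which forces $G$ to be rigid in $\RR^d$ \cite{asimow.roth_1978}. It then remains to verify \cref{eq:kappa}, whose ``$\Rightarrow$'' direction is automatic by \cref{lemma:globallylinkedclosure}. For ``$\Leftarrow$'', I would take a nonadjacent pair $u,v$ with $\kappa(G,u,v)\geq d+1$ and suppose, for contradiction, that it is not globally linked, witnessed by a generic $(G,p)$ and an equivalent $(G,q)$ with $\norm{p(u)-p(v)}\neq\norm{q(u)-q(v)}$. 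By hypothesis $(G,q)$ is congruent to $(G,F(p))$ for a reduced $F=(\sep{X_1},\ldots,\sep{X_k})$, so $\norm{F(p)(u)-F(p)(v)}\neq\norm{p(u)-p(v)}$. But if no $X_i$ were $(u,v)$-separating, then at every step $u$ and $v$ would lie on the same side of the reflecting hyperplane and each $\sep{X_i}$ would act as an isometry on the pair $\{u,v\}$, preserving the distance. Hence some $X_i$ is $(u,v)$-separating, so $N_G(X_i)$ is a $(u,v)$-separating $d$-separator and $\kappa(G,u,v)\leq d$, a contradiction.

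For the ``only if'' direction I would argue by induction on $|V|$. Let $(G,p)$ be generic and $(G,q)$ equivalent. If $G$ is complete it is globally rigid and the trivial sequence works; otherwise I would pick an inclusion-wise minimal $d$-fragment $X$, set $S=N_G(X)$ and $B=X\cup S$. By \cref{lemma:inclusionwiseminimalfragment} the set $B$ is a $(d+1)$-block, hence a clique, so $G[B]$ is globally rigid, and after applying a congruence to $(G,q)$ I may assume $q|_B=p|_B$. Writing $G_2=G[V-X]$, \cref{lemma:rigidgluing} (applied to $G=G[B]\cup G_2$ glued along the clique $S$) shows $G_2$ is rigid, and \cref{lemma:dblock} identifies the $d$-separators and $(d+1)$-blocks of $G_2$ with those of $G$ meeting $\overline X$, together with $S$; these are all cliques, so $G_2$ is $d$-joined by \cref{theorem:closurechar}. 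By the induction hypothesis there is a reduced sequence $F_2=(\sep{Y_1},\ldots,\sep{Y_m})$ of $G_2$ with $(G_2,F_2(p))\cong(G_2,q)$.

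The technical heart, and the step I expect to be the main obstacle, is lifting $F_2$ to a sequence of partial $d$-reflections of $G$ that realizes $q$. Each $T_i=N_{G_2}(Y_i)$ is a $d$-separator of $G$ that is noncrossing with $S$ by \cref{theorem:rigidnoncrossing}, so $X$ lies on a well-defined side of $T_i$, and I would extend $Y_i$ to a $d$-fragment $\hat Y_i$ of $G$ with $N_G(\hat Y_i)=T_i$ by absorbing $X$ into $\hat Y_i$ exactly when $S-T_i$ lies on the $Y_i$-side, and leaving $X$ in the complement otherwise. This is arranged so that $F=(\sep{\hat Y_1},\ldots,\sep{\hat Y_m})$ acts on the $G_2$-part precisely as $F_2$ does, while acting on $B$ as a single congruence $\gamma$ -- the same composite of reflections seen by $S$ and by $X$. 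Comparing $(G,F(p))$ with $(G,q)$ through the congruence $\beta$ supplied by the induction hypothesis, the map $\beta\gamma$ fixes the $d$ affinely independent points $p(S)$, so by \cref{lemma:congruence} it is either the identity or the reflection in $\mathrm{aff}(p(S))$; in the first case $\beta\circ F(p)=q$, and in the second case prepending the single reflection $\sep{\overline X}$ (which reflects $X$ across $p(S)$ and fixes $G_2$) corrects the orientation of $X$ without disturbing the $G_2$-part. Finally, since $G$ has no crossing $d$-separators, \cref{corollary:reducedsequence} replaces the constructed sequence by an equivalent reduced one, closing the induction. I anticipate that the bookkeeping here -- checking that each $\hat Y_i$ is a genuine $d$-fragment with neighbourhood $T_i$, that $F$ and $F_2$ agree on $G_2$ at every step, and that $X$ and $S$ undergo the same congruence $\gamma$ -- will be the most delicate part, with the orientation dichotomy resolved by \cref{lemma:congruence} as the conceptual crux that makes one extra reflection suffice.
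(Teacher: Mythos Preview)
Your proposal is correct and follows essentially the same route as the paper's proof: the ``if'' direction is the same argument (you phrase it directly while the paper phrases it contrapositively, but the content---finitely many reduced sequences forces rigidity, and a pair with $\kappa\ge d+1$ cannot have its distance changed by any partial $d$-reflection---is identical), and the ``only if'' direction is the same induction via an inclusion-wise minimal $d$-fragment, with the same lifting of the inductive sequence and the same use of \cref{lemma:congruence} to resolve the final orientation. Two small remarks: your invocation of \cref{theorem:closurechar} and \cref{theorem:rigidnoncrossing} should be placed only in the ``only if'' direction (in the ``if'' direction you have not yet established that $G$ is $d$-joined, though your actual argument there does not use those consequences), and your explicit appeal to \cref{corollary:reducedsequence} at the end to make the constructed sequence reduced is a step the paper leaves implicit.
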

\begin{proof}
We first show that if $G$ is not \mainclass[d], then there is a generic framework $(G,p)$ and an equivalent framework $(G,q)$ that is not congruent to any framework of the form $(G,F(p))$. If $G$ is not rigid in $\RR^d$, then by a result of Asimow and Roth \cite[Proposition 1]{asimow.roth_1978} any generic $(G,p)$ can be continuously flexed, so there are continuum-many congruence classes of frameworks equivalent to $(G,p)$. On the other hand, there is only a countable number of sequences of partial $d$-reflections. It follows that there is an equivalent framework $(G,q)$ that is not congruent to $(G,F(p))$ for any sequence of partial $d$-reflections $F$. If $G$ is rigid but not \mainclass[d], then there is a pair of vertices $u,v \in V$ such that $\kappa(G,u,v) \geq d+1$ but $\{u,v\}$ is not globally linked in $G$ in $\RR^d$. This means that there is a generic framework $(G,p)$ and an equivalent framework $(G,q)$ with $\norm{p(u) - p(v)} \neq \norm{q(u) - q(v)}$. Since there are no $(u,v)$-separating $d$-fragments in $G$, $\norm{p(u) - p(v)} = \norm{F(p)(u) - F(p)(v)}$ for any sequence of partial $d$-reflections $F$, so $(G,q)$ cannot be congruent to $(G,F(p))$.

Now let $G$ be \mainclass[d]. We may replace $G$ by $\glclosure(G)$ since the two graphs have the same equivalence classes of realizations by \cref{lemma:equivclasses}, as well as the same $d$-separators (and thus sequences of partial $d$-reflections) by \cref{corollary:globallylinkedclosure}.
So let us assume $G = \glclosure(G)$. Note that by \cref{theorem:closurechar} every $d$-separator and every $(d+1)$-block of $G$ is a clique. We proceed by induction on the number of vertices. If $G$ is globally rigid in $\RR^d,$ then we are done. In particular, this holds if $|V| = d+1$. Now let us suppose that $|V| > d+1$ and $G$ is not globally rigid in $\RR^d$. Since $G$ is \mainclass[d], this means that it is not $(d+1)$-connected. Let $X$ be an inclusion-wise minimal $d$-fragment of $G$, let $S = N_G(X)$ be the corresponding $d$-separator, and let $G_1$ and $G_2$ denote the subgraphs induced by $X \cup S$ and $G-X$, respectively. On the one hand, by \cref{corollary:weakerdef} there are no crossing $d$-separators in $G$, 
so by \cref{lemma:inclusionwiseminimalfragment} $X \cup S$ is a $(d+1)$-block of $G$. Hence $X \cup S$ is a clique in $G$, and in particular $G_1$ is globally rigid in $\RR^d$. On the other hand, since $S$ is a clique in $G$, we can apply \cref{lemma:dblock} to deduce that $G_2$ is $d$-connected, and every $d$-separator and every $(d+1)$-block of $G_2$ is a clique in $G_2$. Thus by \cref{theorem:closurechar} we have that $G_2$ is \mainclass[d].

Consider a generic realization $(G,p)$ in $\RR^d$ and an equivalent realization $(G,q)$, and let $(G_i,p_i), (G_i,q_i)$ denote the respective subframeworks for $i \in \{1,2\}$. Since $G_1$ is globally rigid in $\RR^d$, $(G_1,q_1)$ is congruent to $(G_1,p_1)$. By applying a congruence sending $q_1$ to $p_1$ to all of $(G,q)$ we may suppose that $q_1=p_1$. 
From the induction hypothesis it follows that $(G_2,q_2)$ is congruent to a framework $(G_2,F'(p_2))$, where $F' = (\sep{X'_1},\ldots,\sep{X'_k})$ is a (possibly trivial) sequence of partial $d$-reflections of $G_2$. Since $S$ is a clique in $G_2$, $S - N_{G_2}(X_i')$ is contained in either $X_i'$ or $\overline{X'_i}$, for each $i \in \{1,\ldots,k\}$. By \cref{lemma:equivcomplement}, we may freely replace some of the fragments $X'_i$ by their complement $\overline{X'_i}$ in $G_2$; in this way, we obtain a framework that is congruent to $(G_2,F'(p_2))$. It follows that we may assume $S \subseteq X'_i \cup N_{G_2}(X'_i)$ for $i \in \{1,\ldots,k\}$, and thus $F'(p_2)(s) = p_2(s) = q_2(s)$ for every $s \in S$. 

Now the congruence $\alpha : \RR^d \rightarrow \RR^d$ between $(G_2,F'(p_2))$ and $(G_2,q_2)$ fixes the points $p(s), s \in S$. Since $(G,p)$ is generic, this is a set of $d$ affinely independent points, and thus by \cref{lemma:congruence}, $\alpha$ is either the identity map or the orthogonal reflection $\reflection{p(S)}$.
For each $i \in \{1,\ldots,k\}$ let $X_i$ be the fragment of $G$ defined by
\begin{equation*}
X_i = \begin{cases}
X'_i & N_{G_2}(X'_i) \neq S,\\
X'_i \cup X & N_{G_2}(X'_i) = S,
\end{cases}
\end{equation*}
and consider the sequence of partial $d$-reflections $F = (\sep{X_1},\ldots,\sep{X_k})$ of $G$. Then either $q = F(p)$ (if $\alpha$ is the identity), or $q = (\sep{X} \circ F)(p)$ (if $\alpha = \reflection{p(S)}$). In both cases $(G,q)$ is congruent (in fact, equal, under the assumptions made during the proof) to a framework that can be obtained from $(G,p)$ by a sequence of partial $d$-reflections of $G$.
\end{proof}

For a set of vertices $S \subseteq V$, let $b_G(S)$ denote the number of components of $G - S$ and let \[c_d(G) = \sum_{S \subseteq V, |S| = d}(b_G(S) - 1).\] For a generic $d$-dimensional realization $(G,p)$, let $c(G,p)$ denote the maximum number of pairwise noncongruent realizations that are equivalent to $(G,p)$. It is well-known that this number is finite if and only if $G$ is rigid in $\RR^d$.

Our final characterization of \mainclass[d] graphs is in terms of the value of $c(G,p)$ for every generic $d$-dimensional realization $(G,p)$. This generalizes \cite[Theorem 8.2]{jackson.etal_2006} and \cite[Theorem 15(ii)]{conferencepaper}, the analogous results in the case of $\mathcal{R}_2$-connected graphs and braced maximal outerplanar graphs, respectively. It is known that the graphs belonging to either of these families are \mainclass[2]; we shall return to these examples in \cref{subsection:examples}.

\begin{theorem}\label{theorem:numberofrealizations}
Let $G = (V,E)$ be a graph on at least $d+1$ vertices. For every $d$-dimensional generic realization of $G$ we have $c(G,p) \geq 2^{c_d(G)}$. Furthermore, $G$ is \mainclass[d] if and only if $c(G,p) = 2^{c_d(G)}$ holds for every generic $d$-dimensional realization $(G,p)$. 
\end{theorem}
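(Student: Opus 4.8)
The plan is to reduce the whole statement to a single combinatorial count: the number $N(G)$ of congruence classes of frameworks that can be reached from a fixed generic realization $(G,p)$ by partial $d$-reflections. I will show that $N(G)=2^{c_d(G)}$, and then read off both claims from this together with \cref{theorem:partialreflectionchar}. First I would dispose of the non-rigid case for the inequality: if $G$ is not rigid then $c(G,p)=\infty$ (recall $c(G,p)$ is finite exactly when $G$ is rigid), and the bound $c(G,p)\ge 2^{c_d(G)}$ is immediate. So assume $G$ is rigid; we may take $|V|\ge d+1$, the smaller cases being trivial. A rigid graph on at least $d+1$ vertices is $d$-connected (otherwise a separator of size at most $d-1$ would let us rotate one side about the low-dimensional affine span of its image, producing a flex), so $\kappa(G)\ge d$. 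If $\kappa(G)>d$ then $G$ has no $d$-separator, $c_d(G)=0$, and the only sequence of partial $d$-reflections is trivial, giving $N(G)=1=2^{0}$. Otherwise $\kappa(G)=d$, so every $d$-separator $S$ is a minimum vertex cut; hence, as noted in \cref{subsection:fragments}, each vertex of $S$ has a neighbour in every component of $G-S$, and consequently every proper nonempty union of components of $G-S$ is a $d$-fragment with neighbourhood $S$. By \cref{theorem:rigidnoncrossing} the $d$-separators of $G$ are moreover pairwise noncrossing.

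The core step is then the count $N(G)=2^{c_d(G)}$. By \cref{corollary:reducedsequence} every sequence of partial $d$-reflections is equivalent to a reduced one, and by \cref{corollary:noncongruentreflections} two reduced sequences give congruent frameworks at the generic $p$ precisely when they agree up to reordering and replacement of fragments by their complements. Thus an equivalence class of reduced sequences amounts to choosing, for each $d$-separator $S$, at most one fragment with neighbourhood $S$, taken up to complementation: the ``distinct neighbourhoods'' requirement forbids using $S$ twice, while \cref{lemma:equivdelta} shows that repeated use along $S$ collapses under symmetric difference. For a fixed $S$ the proper nonempty unions of its $b_G(S)$ components, taken up to complement, number $2^{b_G(S)-1}-1$, so allowing also ``no reflection along $S$'' yields $2^{b_G(S)-1}$ options. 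Since all $d$-separators are pairwise noncrossing, the choices across different separators are independent and combine into valid reduced sequences that remain pairwise inequivalent, so $N(G)=\prod_{S}2^{b_G(S)-1}=2^{c_d(G)}$. As every reachable framework is equivalent to $(G,p)$, this already gives $c(G,p)\ge 2^{c_d(G)}$, proving the first assertion.

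For the characterization, suppose $G$ is \mainclass[d]. Then \cref{theorem:partialreflectionchar} states that every realization equivalent to a generic $(G,p)$ is congruent to $(G,F(p))$ for some reduced sequence $F$, so $c(G,p)\le N(G)=2^{c_d(G)}$; combined with the lower bound this gives $c(G,p)=2^{c_d(G)}$ for every generic $p$. Conversely, assume $c(G,p)=2^{c_d(G)}$ for every generic $p$. Finiteness forces $G$ to be rigid, so the count above applies. If $G$ were not \mainclass[d], then by \cref{theorem:partialreflectionchar} there would be a generic $(G,p_0)$ admitting an equivalent realization congruent to no $(G,F(p_0))$; this realization lies outside the $2^{c_d(G)}$ reflection classes, so $c(G,p_0)>2^{c_d(G)}$, contradicting the assumption. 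Hence $G$ is \mainclass[d].

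I expect the main obstacle to be the exact count in the second paragraph: one must check that equivalence classes of reduced sequences are in genuine bijection with the reflection patterns enumerated by $c_d(G)$. This rests on two ingredients working together — the minimum-vertex-cut observation, which guarantees that \emph{arbitrary} proper nonempty unions of components are bona fide $d$-fragments (so that the per-separator factor really is $2^{b_G(S)-1}$ and not smaller), and \cref{corollary:noncongruentreflections}, which upgrades inequivalence of reduced sequences to genuine noncongruence of the resulting frameworks. By comparison, the reductions to the rigid, $\kappa(G)=d$ case, and the two deductions from \cref{theorem:partialreflectionchar}, are routine.
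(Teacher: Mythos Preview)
Your proposal is correct and follows essentially the same approach as the paper's proof: handle the non-rigid case trivially, invoke \cref{theorem:rigidnoncrossing} to get pairwise noncrossing $d$-separators, use \cref{corollary:reducedsequence} and \cref{corollary:noncongruentreflections} to count congruence classes of partial-reflection images as $2^{c_d(G)}$, and finish with \cref{theorem:partialreflectionchar}. The only cosmetic difference is that the paper parametrizes the per-separator choices by fixing a vertex $v_i\notin S_i$ and listing the fragments containing $v_i$ together with the option $X_i=V$, whereas you count fragments up to complementation and add a ``no reflection'' option; both give the factor $2^{b_G(S)-1}$. One small point: when you say ``two reduced sequences give congruent frameworks at the generic $p$ precisely when \ldots'', you are implicitly using \cref{lemma:equivgeneric} to pass from congruence at a single generic $p$ to the relation $\partialeq$ characterized in \cref{corollary:noncongruentreflections}; the paper makes this citation explicit.
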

\begin{proof}
If $G$ is not rigid in $\RR^d$, then $c(G,p)$ is infinite for every generic realization of $G$. Thus, let us assume that $G$ is rigid in $\RR^d$. In this case $G$ has no crossing $d$-separators by \cref{theorem:rigidnoncrossing}. Let us choose an arbitrary ordering $S_1,\ldots,S_k$ of the $d$-separators of $G$ and let us fix vertices $v_1,\ldots,v_k$ with $v_i \notin S_i$ but otherwise arbitrarily. Note that $2^{b_G(S_i)}$ is the number of $d$-fragments $X_i$ of $G$ with $N_G(X_i) = S_i$, plus $2$ (corresponding to the ``trivial $d$-fragments'' $X_i = \varnothing$ and $X_i = V - S_i$).  It follows that $2^{c_d(G)}$ counts the sequences $(X_1,\ldots,X_k)$ of sets such that either $X_i = \varnothing$ or $X_i$ is a $d$-fragment with $v_i \in X_i$ and $N_G(X_i) = S_i$. Every such sequence $\mathcal{S}$ corresponds uniquely to a reduced sequence of partial $d$-reflections $F_{\mathcal{S}}$ by deleting the occurrences of $\varnothing$ from the sequence, and every reduced sequence of partial $d$-reflections is equivalent to a sequence of this form by reordering and possibly taking the complements of some of the fragments included (here we use \cref{lemma:equivcomplement,lemma:equivnoncrossing}). Furthermore,  \cref{corollary:noncongruentreflections} implies that for different sequences $\mathcal{S}_1,\mathcal{S}_2$, we have $F_{\mathcal{S}_1} \not \partialeq F_{\mathcal{S}_2}$. Combined with \cref{lemma:equivgeneric}, we obtain that $c(G,p) \geq 2^{c_d(G)}$ holds for any generic realization $(G,p)$ of $G$, while \cref{theorem:partialreflectionchar} implies that equality holds if and only if $G$ is \mainclass[d]. 
\end{proof}

\subsection{Constructions} 

In this subsection we verify that two operations preserve the property of being \mainclass[d]: the first is edge addition, and the second is gluing along at least $d$ vertices.

\begin{theorem}\label{theorem:monotonicity}
Let $G = (V,E)$ be a graph and let $u$ and $v$ be a pair of nonadjacent vertices in $G$. If $G$ is \mainclass[d], then so is $G+uv$.
\end{theorem}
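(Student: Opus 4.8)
The plan is to verify the partial-reflection characterization of \cref{theorem:partialreflectionchar} directly for $H := G+uv$. First I would record the easy structural facts: since rigidity in $\RR^d$ is preserved under edge addition, $H$ is rigid, hence $d$-connected and, by \cref{theorem:rigidnoncrossing}, free of crossing $d$-separators; moreover $|V(H)| = |V| \geq d+1$. These are exactly the hypotheses needed later to invoke \cref{corollary:reducedsequence} for $H$.

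Next, I would take an arbitrary generic realization $(H,p)$ and an equivalent realization $(H,q)$, and forget the edge $uv$: then $(G,p)$ is generic and $(G,q)$ is equivalent to it, so the characterization of \cref{theorem:partialreflectionchar} applied to $G$ yields a reduced sequence of partial $d$-reflections $F = (\sep{X_1},\ldots,\sep{X_k})$ of $G$ with $(G,q)$ congruent to $(G,F(p))$. Because the edge $uv$ is present in $H$, its length is preserved, so $\norm{F(p)(u)-F(p)(v)} = \norm{p(u)-p(v)}$; by the contrapositive of \cref{lemma:mainsimple} this forces none of the fragments $X_1,\ldots,X_k$ to be $(u,v)$-separating in $G$.

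The key combinatorial step, and the part I expect to require the most care, is to check that each such non-$(u,v)$-separating $d$-fragment $X_i$ of $G$ remains a $d$-fragment of $H$ with $N_H(X_i) = N_G(X_i)$ and the same complement. Writing $S_i = N_G(X_i)$, the only configuration excluded by ``$X_i$ is not $(u,v)$-separating'' is having one of $u,v$ in $X_i$ and the other in $\overline{X_i}$; in every remaining case both endpoints of the new edge lie in $X_i \cup S_i$ or both lie in $\overline{X_i} \cup S_i$, so adding $uv$ creates no edge between $X_i$ and $\overline{X_i}$ and does not enlarge $N(X_i)$. Consequently $X_i$, $S_i$ and $\overline{X_i}$ are unchanged as vertex sets, and the rational map $\sep{X_i}$ has literally the same action whether read in $G$ or in $H$. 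Thus $F$ is a sequence of partial $d$-reflections of $H$ with the same $F(p)$, and the congruence between $(G,q)$ and $(G,F(p))$ --- being a statement about point configurations --- also certifies that $(H,q)$ is congruent to $(H,F(p))$.

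Finally, since $H$ is $d$-connected with no crossing $d$-separators, \cref{corollary:reducedsequence} lets me replace $F$ by an equivalent reduced sequence $F'$ of $H$, so that $(H,q)$ is congruent to $(H,F'(p))$. As $(H,p)$ and $(H,q)$ were arbitrary, $H$ satisfies the condition of \cref{theorem:partialreflectionchar} and is therefore \mainclass[d]. The remaining bookkeeping --- that congruences preserve the displayed length equality, and that generic frameworks are admissible for every sequence --- is routine.
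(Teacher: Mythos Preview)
Your argument is correct. Both your proof and the paper's rest on the same core: pass from $(H,p),(H,q)$ to $(G,p),(G,q)$, invoke \cref{theorem:partialreflectionchar} for $G$ to get a reduced sequence $F$, and use \cref{lemma:mainsimple} together with the preservation of the edge $uv$ to rule out any $(u,v)$-separating fragment in $F$. The difference lies only in what is concluded from this. The paper verifies \cref{eq:kappa} for $H$ directly: for a nonadjacent pair $\{x,y\}$ with $\kappa(H,x,y)\geq d+1$, every $(x,y)$-separating $d$-fragment of $G$ must also be $(u,v)$-separating (else it would survive in $H$), so no such fragment occurs in $F$ and the $x$--$y$ distance is preserved. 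You instead observe that the absence of $(u,v)$-separating fragments means every $X_i$ is still a $d$-fragment of $H$, turn $F$ into a sequence of partial $d$-reflections of $H$, reduce it via \cref{corollary:reducedsequence}, and apply \cref{theorem:partialreflectionchar} in the converse direction. Your route is a touch longer (it needs the extra bookkeeping that $X_i$, $N(X_i)$, $\overline{X_i}$ are unchanged in $H$, plus the reduction step), but it has the pleasant feature of establishing the partial-reflection description of equivalent realizations of $H$ along the way, rather than only \cref{eq:kappa}.
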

\begin{proof}
Since rigidity is preserved by edge addition, we only need to show that for any pair $\{x,y\}$ of nonadjacent vertices in $G+uv$, if $\kappa(G+uv,x,y) \geq d+1$, then $\{x,y\}$ is globally linked in $G+uv$ in $\RR^d$. If $\kappa(G,x,y) \geq d+1$ holds, then $\{x,y\}$ is globally linked in $G$ in $\RR^d$, and thus it is globally linked in $G+uv$ in $\RR^d$ as well. Thus, let us assume that $\kappa(G,x,y) \leq d $. This implies that every $(x,y)$-separating $d$-fragment in $G$ is also $(u,v)$-separating. Let $(G+uv,p)$ be a generic realization in $\RR^d$ and consider an equivalent framework $(G+uv,q)$. Now $(G,p)$ and $(G,q)$ are also equivalent, so by \cref{theorem:partialreflectionchar} there is a reduced sequence of partial $d$-reflections $F$ such that $(G,q)$ is congruent to $(G,F(p))$. For a contradiction, suppose that $F$ contains at least one reflection onto an $(x,y)$-separating fragment. This fragment is also $(u,v)$-separating, so by \cref{lemma:mainsimple} we have that \[\norm{p(u)-p(v)} \neq \norm{F(p)(u) - F(p)(v)} = \norm{q(u) - q(v)},\]contradicting the fact that $(G+uv,p)$ and $(G+uv,q)$ are equivalent. This shows that $F$ cannot contain reflections onto $(x,y)$-separating fragments, so we have $\norm{p(x) - p(y)} = \norm{q(x) - q(y)}$, as desired.
\end{proof}

\begin{theorem}\label{theorem:gluingkappa}
    Let $G = (V,E)$ be the union of the graphs $G_1 = (V_1,E_1)$ and $G_2 = (V_2,E_2)$. If $G_1$ and $G_2$ are \mainclass[d], then $G$ satisfies \cref{eq:kappa}. 
\end{theorem}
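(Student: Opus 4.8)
The plan is to combine the partial-reflection characterization of \mainclass[d]\ graphs (\cref{theorem:partialreflectionchar}) with the gluing machinery of \cref{subsection:gluing}. The ``$\Rightarrow$'' direction of \cref{eq:kappa} always holds by \cref{lemma:globallylinkedclosure}, and adjacent pairs are trivially globally linked, so it suffices to fix a nonadjacent pair $u,v \in V$ with $\kappa(G,u,v) \geq d+1$ and show that $\{u,v\}$ is globally linked in $G$. To this end I would fix a generic realization $(G,p)$ and an arbitrary equivalent realization $(G,q)$, and aim to prove $\norm{p(u)-p(v)} = \norm{q(u)-q(v)}$. Since $G_1$ and $G_2$ are rigid they are $d$-connected, and the subframeworks $(G_i,p_i)$, $(G_i,q_i)$ are equivalent with $(G_i,p_i)$ generic; hence \cref{theorem:partialreflectionchar} yields reduced sequences of partial $d$-reflections $F_1$ of $G_1$ and $F_2$ of $G_2$ with $(G_i,q_i)$ congruent to $(G_i,F_i(p_i))$ for $i \in \{1,2\}$.

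The first key step is to show that $F_1$ and $F_2$ are compatible. By \cref{lemma:compatiblesequencechar} it is enough to verify that no pair of vertices in $V_1 \cap V_2$ strongly separates $F_1$ and $F_2$. If such a pair $\{a,b\} \subseteq V_1 \cap V_2$ existed, then \cref{lemma:stronglyseparating} would give $\norm{F_1(p_1)(a) - F_1(p_1)(b)} \neq \norm{F_2(p_2)(a) - F_2(p_2)(b)}$; but because $a,b \in V_1 \cap V_2$ and $(G_i,q_i)$ is congruent to $(G_i,F_i(p_i))$, both sides equal $\norm{q(a)-q(b)}$, a contradiction. Thus $F_1$ and $F_2$ are compatible, and \cref{lemma:compatiblesequences} produces a sequence of partial $d$-reflections $F = (\sep{Z_1},\ldots,\sep{Z_t})$ of $G$ such that the subframework of $(G,F(p))$ corresponding to $G_i$ is congruent to $(G_i,F_i(p_i))$, hence to $(G_i,q_i)$, for each $i$.

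Next I would compute the $u$--$v$ distance in $(G,F(p))$. Since $\kappa(G,u,v) \geq d+1$, Menger's theorem shows that $G$ has no $(u,v)$-separating set of size at most $d$, so none of the $d$-fragments $Z_1,\ldots,Z_t$ is $(u,v)$-separating in $G$. Consequently each partial reflection $\sep{Z_j}$ acts as an isometry on the images of $u$ and $v$ (both lie in $Z_j \cup N_G(Z_j)$, or both in $\overline{Z_j}\cup N_G(Z_j)$), so $\norm{F(p)(u)-F(p)(v)} = \norm{p(u)-p(v)}$; note this uses only that no $Z_j$ separates $u$ from $v$, not that $F$ is reduced. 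It then remains to transfer this equality from $F(p)$ to $q$, and here I would split on the location of $u$ and $v$. If they lie in a common $V_i$ (say $V_1$, by symmetry), then the congruence realizing $(G,F(p))|_{V_1} \cong (G,q)|_{V_1}$ immediately gives $\norm{q(u)-q(v)} = \norm{F(p)(u)-F(p)(v)} = \norm{p(u)-p(v)}$.

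The remaining case, which I expect to be the main obstacle, is when $u \in V_1 \setminus V_2$ and $v \in V_2 \setminus V_1$ (or vice versa). Here one first observes that no edge of $G$ joins $V_1 \setminus V_2$ to $V_2 \setminus V_1$, so $V_1 \cap V_2$ separates $u$ from $v$; since $\kappa(G,u,v) \geq d+1$, this forces $|V_1 \cap V_2| \geq d+1$. I would then argue that the two congruences realizing $(G,F(p))|_{V_i} \cong (G,q)|_{V_i}$ must agree on the overlap and so combine into a single congruence of $(G,F(p))$ onto $(G,q)$: because partial reflections preserve genericity, $F(p)$ is generic, hence $\{F(p)(w) : w \in V_1 \cap V_2\}$ contains $d+1$ affinely independent points, and both congruences send these same points to $\{q(w) : w \in V_1 \cap V_2\}$, so by \cref{lemma:trilateration} (or \cref{lemma:congruence}) they coincide. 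Thus $(G,q)$ is congruent to $(G,F(p))$, giving $\norm{q(u)-q(v)} = \norm{F(p)(u)-F(p)(v)} = \norm{p(u)-p(v)}$. In all cases $\{u,v\}$ is globally linked, which proves \cref{eq:kappa} for $G$. The genuinely delicate points are establishing compatibility—so that the two reduced sequences glue into a single sequence $F$ on $G$ where the global hypothesis $\kappa(G,u,v)\geq d+1$ can be exploited—and, in the split case, verifying that the overlap is large enough to force the two local congruences to agree.
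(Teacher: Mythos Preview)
Your proposal is correct and follows essentially the same approach as the paper's own proof: apply \cref{theorem:partialreflectionchar} to each $G_i$, use \cref{lemma:compatiblesequencechar} and \cref{lemma:stronglyseparating} to establish compatibility of the resulting reduced sequences via the common distance $\norm{q(a)-q(b)}$, invoke \cref{lemma:compatiblesequences} to glue them into a single sequence $F$ on $G$, and then split into the two cases based on whether $u,v$ lie in a common $V_i$, using genericity of $F(p)$ and $|V_1\cap V_2|\geq d+1$ to merge the two congruences in the split case. Your write-up even makes explicit a couple of points (the $d$-connectedness of the $G_i$, and the separate naming of the strongly separating pair) that the paper leaves implicit.
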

\begin{proof}
    Let us fix a pair of nonadjacent vertices $u,v \in V$. As we noted before, one direction of the equivalence in \cref{eq:kappa} is always satisfied. Thus we only need to show that if $\kappa(G,u,v) \geq d+1$, then $\{u,v\}$ is globally linked in $G$ in $\RR^d$.
    Let $(G,p)$ be a generic framework in $\RR^d$, let $(G,q)$ be an equivalent framework, and for each $i \in \{1,2\}$ let $(G_i,p_i)$ and $(G_i,q_i)$ denote the subframeworks of $(G,p)$ and $(G,q)$, respectively, corresponding to $G_i$. Since $G_i$ is \mainclass[d], \cref{theorem:partialreflectionchar} implies that there is a reduced sequence of partial $d$-reflections $F_i$ of $G_i$ such that $(G_i,q_i)$ and $(G_i,F_i(p_i))$ are congruent, for $i \in \{1,2\}$. 

    We claim that $F_1$ and $F_2$ are compatible. For if they are not, then \cref{lemma:compatiblesequencechar} implies that there is a pair of vertices $u,v \in V_1 \cap V_2$ that strongly separates $F_1$ and $F_2$, and hence by \cref{lemma:stronglyseparating} we have $\norm{F_1(p_1)(u) - F_1(p_1)(v)} \neq \norm{F_2(p_2)(u) - F_2(p_2)(v)}$. But this contradicts
    \begin{align*}
    \norm{F_1(p_1)(u) - F_1(p_1)(v)} &= \norm{q_1(u) - q_1(v)} \\ &= \norm{q(u) - q(v)} \\ &= \norm{q_2(u) - q_2(v)} = \norm{F_2(p_2)(u) - F_2(p_2)(v)}.
    \end{align*}
    It follows that $F_1$ and $F_2$ are compatible.

    By \cref{lemma:compatiblesequences} there is a sequence $F$ of partial $d$-reflections of $G$ such that the subframework of $(G,F(p))$ corresponding to $G_i$ is congruent to $(G_i,F_i(p_i))$, and thus to $(G_i,q_i)$, for each $i \in \{1,2\}$. Note that since $\kappa(G,u,v) \geq d+1$, there are no $(u,v)$-separating fragments in $F$ and hence $\norm{p(u) - p(v)} = \norm{F(p)(u) - F(p)(v)}$. If $u,v \in V_i$ for some $i \in \{1,2\},$ then we further have 
    \[\norm{p(u) - p(v)} = \norm{F(p)(u) - F(p)(v)} = \norm{q_i(u) - q_i(v)} = \norm{q(u) - q(v)},\]
    which (by the arbitrary choice of $(G,p)$ and $(G,q)$) shows that $\{u,v\}$ is globally linked in $G$ in $\RR^d$.

    The only remaining case is when $u \in V_1 - V_2$ and $v \in V_2 - V_1$, or $u \in V_2 - V_1$ and $v \in V_1 - V_2$; by symmetry, we may assume that it is the former. In this case $\kappa(G,u,v) \geq d+1$ implies that $|V_1 \cap V_2| \geq d+1$. Let $\alpha_i, i \in \{1,2\}$ denote the congruence that maps the subframework of $(G,F(p))$ corresponding to $G_i$ to $(G_i,q_i)$. Note that $\alpha_1$ and $\alpha_2$ agree on the points $\{F(p)(x), x \in V_1 \cap V_2\}$. Since $(G,p)$ is generic, so is $(G,F(p))$ by \cref{lemma:equivgeneric}, and hence $\{F(p)(x), x \in V_1 \cap V_2\}$ contains $d+1$ affinely independent points. It follows from \cref{lemma:congruence} that $\alpha_1 = \alpha_2$, and thus $(G,F(p))$ is congruent to $(G,q)$. Hence we have \[\norm{p(u) - p(v)} = \norm{F(p)(u) - F(p)(v)} = \norm{q(u) - q(v)},\] as desired.
\end{proof}

\begin{corollary}\label{theorem:maingluing}
    Let $G$ be the union of the graphs $G_1$ and $G_2$. If $G_1$ and $G_2$ are \mainclass[d] and $|V_1 \cap V_2| \geq d$, then $G$ is \mainclass[d].
\end{corollary}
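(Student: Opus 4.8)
The plan is to verify directly the three defining conditions of a \mainclass[d] graph for $G$, assembling each from results already in hand; since the substantive work was carried out in \cref{theorem:gluingkappa}, the corollary should follow quickly. Recall that $G$ is \mainclass[d] precisely when it has the required number of vertices, is rigid in $\RR^d$, and satisfies \cref{eq:kappa}.

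First I would dispose of the vertex count: because $G_1$ is \mainclass[d] it meets the lower bound on $|V_1|$, and since $V_1 \subseteq V$ the graph $G$ inherits this bound. Next, for rigidity I would invoke \cref{lemma:rigidgluing}. Both $G_1$ and $G_2$ are rigid in $\RR^d$ by the definition of being \mainclass[d], and by hypothesis $|V_1 \cap V_2| \geq d$; the gluing direction of \cref{lemma:rigidgluing} then gives that $G = G_1 \cup G_2$ is rigid in $\RR^d$. Finally, property \cref{eq:kappa} for $G$ is exactly the conclusion of \cref{theorem:gluingkappa}, which applies since $G_1$ and $G_2$ are \mainclass[d].

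Combining these three facts shows that $G$ is \mainclass[d]. There is no genuine obstacle remaining at this stage: the entire difficulty is concentrated in \cref{theorem:gluingkappa}, and through it in the compatibility analysis of sequences of partial $d$-reflections developed in \cref{subsection:gluing}. In this sense the corollary is simply the packaging of \cref{theorem:gluingkappa} together with the elementary gluing lemma for rigidity, so I expect the write-up to be only a few lines.
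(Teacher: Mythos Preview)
Your proposal is correct and mirrors the paper's own proof essentially verbatim: verify the vertex count via $V_1 \subseteq V$, obtain rigidity of $G$ from \cref{lemma:rigidgluing} using $|V_1 \cap V_2| \geq d$, and invoke \cref{theorem:gluingkappa} for \cref{eq:kappa}. There is nothing to add or change.
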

\begin{proof}
    By \cref{theorem:gluingkappa}, $G$ satisfies \cref{eq:kappa}. Since $G_1$ and $G_2$ are \mainclass[d], they are rigid, and thus so is $G$ by \cref{lemma:rigidgluing}. Finally, since $G_1$ has at least $d+1$ vertices, so does $G$.
\end{proof}

\subsection{Examples}
\label{subsection:examples}

As we noted before, globally rigid graphs in $\RR^d$ are (trivially) \mainclass[d]. Using this fact and \cref{theorem:maingluing} we can show that the members of the following, substantially larger family of graphs are \mainclass[d] as well. Let $G$ be a graph on at least $d+1$ vertices. A \emph{(globally rigid) gluing construction (in $\RR^d$)} of $G$ is a sequence of graphs $G_1,\ldots,G_k = G$ such that for every $i \in \{1,\ldots,k\}$, either $G_i$ is a globally rigid graph in $\RR^d$ on at least $d+1$ vertices, or there are indices $1 \leq j < l < i$ such that $G_i$ is the union of $G_j$ and $G_l$ with $V(G_j) \neq V(G_i) \neq V(G_l)$ and $|V(G_j) \cap V(G_l)| \geq d$. When the dimension $d$ is clear from the context, we shall sometimes simply write that $G_1,\ldots,G_k$ is a gluing construction of $G$.

Now we can quickly prove one of our main results.
\begin{theorem}\label{theorem:gluingconstruction}
    Let $G = (V,E)$ be a graph that has a globally rigid gluing construction in $\RR^d$. Then $G$ is \mainclass[d]. In particular, $G$ is globally rigid in $\RR^d$ if and only if either $G = K_{d+1}$, or $G$ is $(d+1)$-connected.
\end{theorem}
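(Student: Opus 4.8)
The plan is to establish the first claim---that $G$ is \mainclass[d]---by induction along the given gluing construction $G_1,\ldots,G_k = G$, and then to read off the ``in particular'' statement from the general equivalence, recorded just after the definition of \mainclass[d] graphs, that a \mainclass[d] graph is globally rigid in $\RR^d$ if and only if it is $(d+1)$-connected.

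First I would show, by strong induction on $i$, that every graph $G_i$ in the construction is \mainclass[d]. By the definition of a gluing construction there are two cases. If $G_i$ is a globally rigid graph in $\RR^d$ on at least $d+1$ vertices, then it is \mainclass[d], as already observed: every pair of its vertices is globally linked, so \cref{eq:kappa} holds. Otherwise $G_i$ is the union of $G_j$ and $G_l$ for indices $j,l < i$ with $|V(G_j) \cap V(G_l)| \geq d$; by the induction hypothesis $G_j$ and $G_l$ are both \mainclass[d], so \cref{theorem:maingluing} applies verbatim and yields that $G_i$ is \mainclass[d]. Taking $i = k$ shows that $G = G_k$ is \mainclass[d].

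For the second statement I would invoke the equivalence valid for every \mainclass[d] graph: global rigidity in $\RR^d$ coincides with $(d+1)$-connectivity. The reason is that, since $G$ is globally rigid precisely when every vertex pair is globally linked, \cref{eq:kappa} shows this happens exactly when every nonadjacent pair $\{u,v\}$ satisfies $\kappa(G,u,v) \geq d+1$; by Menger's theorem this is the same as the absence of a $(u,v)$-separating set of size at most $d$, i.e.\ $(d+1)$-connectivity.

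I do not anticipate a genuine obstacle, since the substantive work is already contained in \cref{theorem:maingluing}. The only points meriting a line of care are bookkeeping: verifying that each $G_i$ falls into exactly one of the two cases of the construction, and that in the gluing case the constituents $G_j,G_l$ carry strictly smaller indices, so that the induction hypothesis is available for both.
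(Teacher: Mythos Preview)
Your proposal is correct and follows essentially the same approach as the paper: an induction that reduces to the base case of globally rigid graphs and invokes \cref{theorem:maingluing} in the gluing step, with the ``in particular'' clause read off from the general fact that a \mainclass[d] graph is globally rigid if and only if it is $(d+1)$-connected. The only cosmetic difference is that the paper inducts on $|V|$ (using $V(G_j) \neq V(G_i) \neq V(G_l)$ to ensure the inductive hypothesis applies) whereas you induct on the index $i$ in the construction; both work equally well.
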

\begin{proof}
    We prove by induction on $|V|$. The only graph on $d+1$ vertices that has a globally rigid gluing construction is the complete graph, which is \mainclass[d]. Thus, we may assume that $|V| \geq d+2$. Let $G_1,\ldots,G_k = G$ be a gluing construction of $G$. Again, if $G$ is globally rigid in $\RR^d$, then it is \mainclass[d]. Otherwise there are indices $1 \leq j < l < k$ such that $G$ is the union of $G_j$ and $G_l$, with $|V(G_j) \cap V(G_l)| \geq d$ and both $|V(G_j)|$ and $|V(G_l)|$ are smaller than $|V|$. Now $G_j$ and $G_l$ each have a globally rigid gluing construction in $\RR^d$, so by the induction hypothesis they are both \mainclass[d]. It follows by \cref{theorem:maingluing} that $G$ is also \mainclass[d].
\end{proof}

We can use \cref{theorem:gluingconstruction} to prove that the family of graphs that have a globally rigid gluing construction in $\RR^d$ is closed under edge addition, as well as under separating along a $d$-separator.

\begin{lemma}\label{lemma:gluingmonotonicity} Suppose that the graph $G = (V,E)$ has a globally rigid gluing construction in $\RR^d$.
\begin{enumerate}
    \item If $X$ is a $d$-fragment of $G$, then the subgraph of $G$ induced by $X \cup N_G(X)$ also has a globally rigid gluing construction in $\RR^d$.
    \item If $\{u,v\}$ is a pair of nonadjacent vertices in $G$, then $G+uv$ also has a globally rigid gluing construction in $\RR^d$.
\end{enumerate}
\end{lemma}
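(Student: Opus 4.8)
The plan is to prove both parts simultaneously by induction on $|V|$. By \cref{theorem:gluingconstruction} every graph with a globally rigid gluing construction is \mainclass[d], hence rigid, hence has no crossing $d$-separators by \cref{theorem:rigidnoncrossing}; I will use this throughout. If $G$ is itself globally rigid in $\RR^d$, then part (a) is vacuous (a globally rigid graph on at least $d+1$ vertices has no $d$-separators, and hence no $d$-fragments), while for part (b) the graph $G+uv$ is again globally rigid, so its one-term sequence is a gluing construction. Otherwise I look at the last step $G = G_1 \cup G_2$ of a gluing construction of $G$, so that $G_1$ and $G_2$ have (shorter) gluing constructions, $W := V_1 \cap V_2$ satisfies $|W| \geq d$, and $V_1, V_2 \subsetneq V$; then $W$ separates the nonempty sets $V_1 \setminus W$ and $V_2 \setminus W$, and the induction hypothesis applies to $G_1$ and $G_2$.

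For part (a), set $A_i := (X \cup N_G(X)) \cap V_i$ and note that $G[X \cup N_G(X)] = G_1[A_1] \cup G_2[A_2]$, since every edge of $G$ lies inside $G_1$ or $G_2$. The plan is to show, by a case analysis on the position of $S := N_G(X)$ relative to $W$, that each $A_i$ is either all of $V_i$ or of the form $X_i \cup N_{G_i}(X_i)$ for a $d$-fragment $X_i$ of $G_i$, and that either $G[X \cup N_G(X)]$ is already contained in one of $G_1, G_2$ (whence we recurse into that side) or $|A_1 \cap A_2| \geq d$. Granting this, the induction hypothesis for part (a) furnishes gluing constructions for $G_1[A_1]$ and $G_2[A_2]$, and gluing them along $A_1 \cap A_2$ yields one for $G[X \cup N_G(X)]$. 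The case analysis rests on the fact that, when $|W| = d$, the separators $S$ and $W$ are noncrossing (\cref{lemma:crossingcuts,minimumvertexcutorder}), which forces $W - S$ to lie on a single side of $S$ and thereby pins down how $X$ meets each $V_i$; the case $|W| > d$ is reduced to this by controlling the cuts of $G_i$ inside $A_i$ via \cref{lemma:dblock}. Verifying the overlap condition $|A_1 \cap A_2| \geq d$ (or else detecting the degenerate, one-sided case) is the technical heart of this part.

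For part (b), if $u$ and $v$ lie in a common $V_i$, the induction hypothesis for part (b) gives a gluing construction of $G_i + uv$, and $G + uv = (G_i + uv) \cup G_{3-i}$ is still a valid gluing along $W$ (adding an edge changes no vertex set); so we may assume $u \in V_1 \setminus V_2$ and $v \in V_2 \setminus V_1$, the principal case, in which the new edge crosses $W$. Using part (a) to peel off the pieces of $G$ not lying between $u$ and $v$, I would reduce to the situation where $u$ and $v$ are joined through a chain of globally rigid pieces glued successively along $(u,v)$-separating $d$-separators. Writing $P$ for the union of these pieces, the plan is to show that $P + uv$ is globally rigid and then to reattach the peeled-off pieces — which meet $P$ in at least $d$ vertices — exactly as in the same-side case, producing a gluing construction of $G + uv$.

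The main obstacle is proving that $P + uv$ is globally rigid, and this is where the analysis of \cref{section:partialreflections} is needed. Since $P$ is itself a gluing of globally rigid pieces it is \mainclass[d], so given a generic realization $(P,p)$ and any equivalent realization $(P,q)$, \cref{theorem:partialreflectionchar} shows that $q$ arises from $p$ by a reduced sequence of partial $d$-reflections of $P$. Every cut occurring in the chain separates $u$ from $v$, so \cref{lemma:mainsimple} guarantees that any nontrivial such sequence alters the distance of $u$ and $v$; since in $P + uv$ the edge $uv$ is present and its length is preserved, the sequence must be trivial, whence $q$ is congruent to $p$ and $P + uv$ is globally rigid. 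The delicate points — which I expect to absorb most of the work — are isolating this chain of pieces cleanly (via \cref{lemma:inclusionwiseminimalfragment} and the block structure of \mainclass[d] graphs from \cref{theorem:closurechar}), and ensuring that the leftover pieces still overlap $P$ in at least $d$ vertices so that the gluing definition and \cref{lemma:rigidgluing} apply. I note that this reflection argument is what makes the higher-dimensional case go through, since there $(d+1)$-connectivity and redundant rigidity alone would not certify global rigidity.
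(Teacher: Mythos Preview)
Your overall induction scheme matches the paper's, but there is a real gap in part \textit{(a)} and an unnecessary detour in part \textit{(b)}.

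\medskip

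For part \textit{(a)}, your dichotomy ``each $A_i$ is either all of $V_i$ or of the form $X_i \cup N_{G_i}(X_i)$ for a $d$-fragment $X_i$'' fails in the degenerate case. Suppose $X \cap V_1 = \varnothing$. Then every $s \in S = N_G(X)$ has a neighbour in $X \subseteq V_2$, so $S \subseteq V_2$ and hence $X \cup S \subseteq V_2$; thus $A_1 = S \cap V_1 \subseteq W$ has at most $d$ vertices and is neither all of $V_1$ nor a fragment-with-boundary. More importantly, your fallback claim that $G[X \cup S]$ is then ``contained in one of $G_1, G_2$'' is false: the induced subgraph $G[X \cup S]$ may have edges of $G_1$ lying inside $S \cap W$ that are absent from $G_2$. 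Concretely, take $d = 2$, $W = \{a,b\}$, $G_1$ globally rigid with $ab \in E_1$, and $G_2$ globally rigid with $ab \notin E_2$; with $X = V_2 - W$ one gets $G[X \cup S] = G_2 + ab$, which is a subgraph of neither $G_1$ nor $G_2$. The paper's remedy is precisely to invoke part \textit{(b)} of the induction hypothesis here: $G_2[X \cup S]$ has a gluing construction (by induction for \textit{(a)}, or because it equals $G_2$), it has fewer vertices than $G$, and the missing edges can be added one at a time. So the two parts genuinely interlock, and your plan for \textit{(a)} must call \textit{(b)}.

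\medskip

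For part \textit{(b)}, your route works in principle but is far more elaborate than needed. The paper argues as follows: by \cref{theorem:gluingconstruction} $G$ is \mainclass[d], and by \cref{theorem:monotonicity} so is $G+uv$; hence if $G+uv$ is $(d+1)$-connected it is globally rigid and we are done. Otherwise take any $d$-fragment $X$ of $G+uv$; it is also a $d$-fragment of $G$ (with the same boundary), so part \textit{(a)} gives gluing constructions of $G[X \cup N_G(X)]$ and $G[V - X]$, one of which contains both $u$ and $v$; apply induction for \textit{(b)} to that side, then glue. There is no need to locate a ``chain'' $P$, nor to invoke \cref{theorem:partialreflectionchar} and \cref{lemma:mainsimple} directly --- that machinery is already packaged into \cref{theorem:monotonicity}. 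Your same-side subcase and the first peeling step are in fact exactly this argument; the subsequent reflection analysis is redundant, and the assertion that after peeling ``every cut occurring in the chain separates $u$ from $v$'' would require additional justification (peeling can expose $N_G(X)$ as a new separator of the remaining graph).
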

\begin{proof}
    We prove the two claims together by induction on $|V|$.  If $|V| = d+1$, then $G$ is necessarily a complete graph and both statements are vacuously true. Thus, let us assume that $|V| \geq d+2$. We first prove part \textit{(a)}. Let us use the notation $S = N_G(X)$ and $G' = G[X \cup S]$. Let $G_1, \ldots, G_k = G$ be a gluing construction of $G$. Since $G$ is not $(d+1)$-connected, it is not globally rigid in $\RR^d$, so there are indices $j,l \in \{1,\ldots, k\}$ such that $G = G_j \cup G_l$, where $|V(G_j) \cap V(G_l)| \geq d$ and $|V(G_j)|,|V(G_l)| < |V|$. By possibly swapping $j$ and $l$, we may assume that $V(G_l) \cap X$ is nonempty. 
    
    Consider $G'_l = G_l[V(G_l) \cap (X \cup S)]$. If $V(G_l) \subseteq X \cup S$, then $G'_l = G_l$, and in particular $G'_l$ has a gluing construction. On the other hand, if $V(G_l) \cap \overline{X}$ is nonempty, then since $G_l$ is $d$-connected we must have $S \subseteq V(G_l)$, and thus $X \cap V(G_l)$ is a $d$-fragment of $G_l$ with $N_{G_l}(X \cap V(G_l)) = S$. By induction, $G'_l$ has a gluing construction in this case as well.

    Now let us consider $G_j$. If $V(G_j) \cap X$ is nonempty, then by the same reasoning as in the previous paragraph we obtain that $G'_j = G_j[V(G_j) \cap (X \cup S)]$ has a gluing construction. Since $G' = G'_j \cup G'_l$, $G'$ also has a rigid gluing construction in this case. Thus, the only case left to consider is when $V(G_j) \cap X$ is empty. Since $|V(G_j) \cap V(G_l)| \geq d$, either we have $V(G_j) \cap V(G_l) = S$, or $V(G_l) \cap \overline{X}$ is nonempty. In both cases $S \subseteq V(G_l)$ holds. It follows that $G'_l$ is a spanning subgraph of $G'$, i.e., $G'$ can be obtained from $G'_l$ using edge additions. By using part \textit{(b)} of the inductive hypothesis it follows that $G'$ has a gluing construction, as desired.

    Next, we prove part \textit{(b)}. Note that $G$ is $d$-connected. By \cref{theorem:gluingconstruction}, $G$ is \mainclass[d], and thus by \cref{theorem:monotonicity} so is $G+uv$. It follows that if $G+uv$ is $(d+1)$-connected, then it is globally rigid in $\RR^d$, so in particular it has a gluing construction. Let us thus suppose that $G+uv$ is not $(d+1)$-connected, and let $X$ be a $d$-fragment of $G+uv$. Now $X$ is also a $d$-fragment of $G$, so by part \textit{(a)}, both $G[X \cup N_G(X)]$ and $G[V-X]$ have a gluing construction. Let $G_1$ and $G_2$ be the graphs obtained from $G[X \cup N_G(X)]$ and $G[V-X]$, respectively, by adding the edge $uv$ if the corresponding graph spans both $u$ and $v$ (and doing nothing otherwise). By induction, both $G_1$ and $G_2$ have a gluing construction. Since $G+uv$ arises by gluing $G_1$ and $G_2$ along $N_G(X)$, $G+uv$ also has a gluing construction, as claimed. 
\end{proof}

As a subexample we consider chordal graphs. A graph is \emph{chordal} if it contains no induced cycle of length at least four. It is well-known that in a chordal graph, every minimum vertex cut (in fact, every minimal separator) is a clique. 

\begin{theorem}\label{theorem:chordalgendtree}
    Let $G = (V,E)$ be a graph. If $G$ is $d$-connected and chordal, then $G$ has a globally rigid gluing construction in $\RR^d$. Consequently, $G$ is \mainclass[d].
\end{theorem}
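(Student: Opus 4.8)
The plan is to prove the statement by induction on $|V|$, producing an explicit globally rigid gluing construction of $G$ from the clique-separator structure of a chordal graph. The base case is when $G$ has no $d$-separator, i.e., $G$ is $(d+1)$-connected; a $(d+1)$-connected chordal graph is globally rigid in $\RR^d$ (indeed, such graphs admit a construction from $K_{d+1}$ by repeatedly adding vertices joined to a clique of size $\geq d+1$, which preserves global rigidity), so it trivially has a globally rigid gluing construction. For the inductive step I would assume $G$ is $d$-connected and chordal but not $(d+1)$-connected, so it has a $d$-separator $S$. Since $G$ is chordal, $S$ can be chosen to be a \emph{clique}: every minimal separator in a chordal graph is a clique, and a minimum vertex cut is in particular a minimal separator.

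The key step is to cut along this clique $d$-separator. Let $X$ be a $d$-fragment with $N_G(X) = S$, and set $G_1 = G[X \cup S]$ and $G_2 = G[V - X] = G[\overline{X} \cup S]$. First I would verify that $G = G_1 \cup G_2$ with $V(G_1) \cap V(G_2) = S$, so $|V(G_1) \cap V(G_2)| = |S| = d$, and that $V(G_1), V(G_2)$ are both proper subsets of $V$ (using that $G$ is connected and $X, \overline{X}$ are both nonempty). Next, both $G_1$ and $G_2$ are again chordal (induced subgraphs of a chordal graph are chordal) and, crucially, both are $d$-connected: since $S$ is a clique, \cref{lemma:dblock} guarantees that $G_1 = G[X \cup N_G(X)]$ is $d$-connected, and applying the same lemma to the fragment $\overline{X}$ (whose neighbourhood is also $S$) yields the $d$-connectivity of $G_2$. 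Thus by the induction hypothesis each of $G_1$ and $G_2$ has a globally rigid gluing construction in $\RR^d$. Concatenating these two constructions and appending the gluing step $G = G_1 \cup G_2$ (which is a valid gluing step because $|V(G_1) \cap V(G_2)| = d \geq d$ and the two vertex sets are distinct from $V$) produces a globally rigid gluing construction of $G$. The final sentence, that $G$ is therefore \mainclass[d], is then immediate from \cref{theorem:gluingconstruction}.

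The main obstacle I anticipate is the careful bookkeeping needed to guarantee that the chosen $d$-separator $S$ is a clique \emph{and} that the two resulting pieces are genuinely smaller and each remain $d$-connected chordal graphs eligible for the inductive hypothesis; in particular, one must make sure that cutting along $S$ does not accidentally produce a piece on only $d$ vertices (which would fail the ``at least $d+1$ vertices'' requirement) and that the gluing step respects the condition $V(G_j) \neq V(G_i) \neq V(G_l)$. The clique property of $S$ is exactly what makes \cref{lemma:dblock} applicable, which is the linchpin that both inherits $d$-connectivity to the pieces and keeps the induction running; without it the separator structure of $G_1$ and $G_2$ could differ from that of $G$. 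Everything else reduces to the stated closure and gluing results, so the real content is this clique-separator decomposition of chordal graphs.
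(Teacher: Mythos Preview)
Your proposal is correct and follows essentially the same inductive clique-separator decomposition as the paper. The one structural difference is in the base case: you take all $(d+1)$-connected chordal graphs as the base (and justify their global rigidity via simplicial extensions), whereas the paper takes only $K_{d+1}$ as the base and, in the inductive step, cuts along an arbitrary \emph{minimum} vertex cut (which is still a clique in a chordal graph, but may have size larger than $d$). This lets the paper handle $(d+1)$-connected chordal graphs within the same induction, recovering Alfakih's result as a corollary rather than needing it as an input; your route is equally valid but front-loads that fact. Your use of \cref{lemma:dblock} to verify $d$-connectivity of the pieces is exactly right for a $d$-separator, and your worries about pieces on only $d$ vertices are unfounded: since $X$ and $\overline{X}$ are nonempty, both $G_1$ and $G_2$ have at least $d+1$ vertices.
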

\begin{proof}
    We prove by induction on $|V|$. If $|V| = d+1$, then $G$ is a complete graph, so it has a (trivial) gluing construction. Thus, we may assume that $|V| \geq d+2$. Let $X$ be a fragment corresponding to a minimum vertex cut of $G$ and let us define $G_1 = G[X \cup N_G(X)]$ and $G_2 = G[V - X]$. Since $G$ is chordal and $N_G(X)$ is a minimum vertex cut, it is a clique in $G$, and this implies that $G_1$ and $G_2$ are also $d$-connected and chordal. By induction, both $G_1$ and $G_2$ have a gluing construction. Since $G$ arises by gluing $G_1$ and $G_2$ along $N_G(X)$, and the latter set has size at least $d$, $G$ also has a gluing construction. 
\end{proof}

We note that \cref{theorem:chordalgendtree} recovers the result of Alfakih \cite{alfakih_2012} stating that $(d+1)$-connected chordal graphs are globally rigid in $\RR^d$.
In \cite{conferencepaper} we showed, using our current terminology, that graphs containing a maximal outerplanar spanning subgraph are \mainclass[2]. Since maximal outerplanar graphs are $2$-connected chordal graphs, by combining \cref{theorem:chordalgendtree} and \cref{theorem:monotonicity} we recover (and substantially generalize) this result. We note that 
the families of maximal outerplanar graphs and $d$-connected chordal graphs are not closed under edge addition. 

In \cite{cruickshank.etal_2022}, the authors introduce graphs with the \emph{strong cleavage property} with respect to some fixed dimension $d \geq 3$. When $d \geq 4$, the definition is that the graph is $d$-connected, its $d$-separators are cliques and its $(d+1)$-blocks globally rigid in $\RR^d$.\footnote{The authors use a different definition for $(d+1)$-blocks, but for $d$-connected graphs in which the $d$-separators are cliques the two notions coincide.} The authors also show that for each $d$, this graph family is closed under edge addition, as well as under gluing along at least $d$ vertices, provided that among the identified vertices there is a set of size $d$ that induces a clique in both graphs. It is not difficult to see that for $d \geq 4$, graphs with the strong cleavage property have a globally rigid gluing construction in $\RR^d$, and thus are \mainclass[d]. Using this and \cref{theorem:maingluing} we can slightly strengthen their gluing result by showing that gluing two graphs with the strong cleavage property along at least $d+1$ vertices (or along $d$ vertices that induce a clique in both graphs) results in a graph with the strong cleavage property.

Our other main example of \mainclass[d] graphs, in the $d=2$ case, are $\mathcal{R}_2$-connected graphs. This is based on the following result.

\begin{theorem}\cite[Theorem 5.7]{jackson.etal_2006}\label{theorem:jacksonjordanszabadka}
    Let $G = (V,E)$ be an $\mathcal{R}_2$-connected graph and let $u,v \in V$ be a pair of vertices. The pair $\{u,v\}$ is globally linked in $G$ in $\RR^2$ if and only if $\kappa(G,u,v) \geq 3$.
\end{theorem}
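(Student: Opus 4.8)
The plan is to prove the two directions separately, with the forward (necessity) direction being immediate and the reverse (sufficiency) direction carrying all the weight. For necessity, suppose $\{u,v\}$ is globally linked in $G$ in $\RR^2$. If $u$ and $v$ are nonadjacent and $\kappa(G,u,v)\le 2$, then $G$ has a $(u,v)$-separating set of size at most $2$, so by \cref{lemma:globallylinkedclosure} the pair $\{u,v\}$ is not globally linked, a contradiction; hence $\kappa(G,u,v)\ge 3$. If $u$ and $v$ are adjacent, I would observe that $\kappa(G,u,v)\ge 3$ holds automatically in any $\mathcal{R}_2$-connected graph: recall that such graphs on at least three vertices are rigid and redundantly rigid, so $G-uv$ is still rigid and therefore $2$-connected, yielding two internally disjoint $u$--$v$ paths in $G-uv$ which together with the edge $uv$ give three internally disjoint paths in $G$. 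Thus the right-hand side holds in both cases.

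For sufficiency I would prove the stronger statement that every $\mathcal{R}_2$-connected graph is \mainclass[2], which immediately yields the theorem, since being \mainclass[2] is exactly condition~\cref{eq:kappa} and, as just noted, adjacent pairs automatically satisfy $\kappa\ge 3$ here. The argument is an induction on $|V|$. Since $G$ is rigid, \cref{theorem:rigidnoncrossing} guarantees that its $2$-separators are pairwise noncrossing, so the gluing machinery applies. If $G$ is $3$-connected, then, being also redundantly rigid, it is globally rigid in $\RR^2$ by the Jackson--Jord\'an characterization \cite{jackson.jordan_2005}, and a globally rigid graph on at least three vertices is trivially \mainclass[2]. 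Otherwise $G$ has a $2$-separator $\{a,b\}$; I would split $G$ along it into its two sides $H_1,H_2$ and set $G_i=H_i+ab$. The key structural input is the rigidity-matroid $2$-sum: $\mathcal{R}_2(G+ab)$ is the $2$-sum of $\mathcal{R}_2(G_1)$ and $\mathcal{R}_2(G_2)$ along the element $ab$, from which $G_1$ and $G_2$ inherit $\mathcal{R}_2$-connectedness while having strictly fewer vertices. By the induction hypothesis $G_1$ and $G_2$ are \mainclass[2], so \cref{theorem:maingluing} (applied with $d=2$ and $|V(G_1)\cap V(G_2)|=2$) shows that $G+ab$ is \mainclass[2].

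It remains to pass from $G+ab$ back to $G$, and this is where I expect the main obstacle to lie. Being \mainclass[2] is preserved under adding edges (\cref{theorem:monotonicity}) but not under deleting them, so the conclusion for $G+ab$ does not transfer to $G$ for free when $ab\notin E(G)$. The clean way around this is to show that the separating pair $\{a,b\}$ is itself globally linked in $G$: once this is known, every realization equivalent to a generic $(G,p)$ automatically preserves the $a$--$b$ distance, hence is also an equivalent realization of $G+ab$, and the \mainclass[2] property of $G+ab$ certifies every required pair of $G$ verbatim. If $ab\in E(G)$ this is trivial. If $ab\notin E(G)$, I would split into two cases. Either $\kappa(G,a,b)\ge 3$, in which case at least one side, say $H_1$, carries two internally disjoint $a$--$b$ paths, so $\{a,b\}$ is an adjacent, hence globally linked, pair of the smaller \mainclass[2] graph $G_1$, and a trilateration argument in the spirit of \cref{lemma:globallylinkedtransitive} transfers this to $G$; or else $\kappa(G,a,b)=2$, in which case each side contains a cut vertex $w_1,w_2$ separating $a$ from $b$, the set $\{w_1,w_2\}$ is a $2$-separator that does not separate the target pair, and I would restart the decomposition along $\{w_1,w_2\}$ instead.

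Iterating this choice of separator (equivalently, carrying out the whole induction at the level of the globally linked closure via \cref{lemma:mainclassglobalclosure,theorem:closurechar}, and checking that the clique structure of the $2$-separators and $3$-blocks is inherited through the $2$-sum) eliminates the virtual-edge difficulty and closes the induction. The two facts I would most want to nail down carefully are the $\mathcal{R}_2$-connectedness of the $2$-sum pieces and this transfer step from $G+ab$ to $G$; everything else follows from \cref{theorem:maingluing,theorem:rigidnoncrossing} and the basic properties of the globally linked closure already established in the paper.
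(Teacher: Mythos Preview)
The paper does not itself prove this theorem; it is quoted from \cite{jackson.etal_2006}. Your plan to deduce it by showing that every $\mathcal{R}_2$-connected graph is \mainclass[2] via induction and \cref{theorem:maingluing} is natural, and the scaffolding is largely sound: the $2$-sum pieces $G_1,G_2$ are again $\mathcal{R}_2$-connected, induction plus \cref{theorem:maingluing} make $G+ab$ \mainclass[2], and the necessity direction is fine.

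The genuine gap is exactly the step you flag, and your proposed repairs do not close it. To descend from $G+ab$ to $G$ you need $\{a,b\}$ to be globally linked in $G$. In your case $\kappa(G,a,b)\ge 3$, the sentence ``$\{a,b\}$ is globally linked in $G_1$ and trilateration transfers this to $G$'' fails twice: $G_1=H_1+ab$ is not a subgraph of $G$, so global linkedness in $G_1$ says nothing about $G$; and \cref{lemma:globallylinkedtransitive} needs a clique of size $d+1=3$, not a pair. In your case $\kappa(G,a,b)=2$, the set $\{w_1,w_2\}$ you construct is a $2$-separator that \emph{does} separate $a$ from $b$, and ``restarting'' along it merely moves the same obstruction to the new pair, with no termination argument. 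What is missing is precisely the fact that every $2$-separator of an $\mathcal{R}_2$-circuit (equivalently, of an $\mathcal{R}_2$-connected graph) is a globally linked pair. The paper records this as \cref{theorem:Mcircuit2separator} and explicitly identifies it as the key step in the original proof; its demonstration in \cite{jackson.etal_2006} is an independent argument that is not a consequence of the partial-reflection and gluing machinery developed here. Without that input (or the equivalent \cref{theorem:linkedgluing}), your induction cannot close.
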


It is well-known that $\mathcal{R}_2$-connected graphs are (redundantly) rigid in $\RR^2$ (see, e.g., \cite{jackson.jordan_2005}). Thus, \cref{theorem:jacksonjordanszabadka} implies that $\mathcal{R}_2$-connected graphs are \mainclass[2]. The analogous statement is not true in $d \geq 3$ dimensions: in this case, $\mathcal{R}_d$-connected graphs can be nonrigid in $\RR^d$, as demonstrated by the well-known ``double banana'' graph, see \cref{figure:doublebanana}(a). In the next section, we shall also give an example of an $\mathcal{R}_3$-connected graph that is rigid in $\RR^3$ but is not \mainclass[3].

\begin{figure}[h]
    \centering
    \begin{subfigure}[b]{0.48\linewidth}
    \centering
                \includegraphics[]{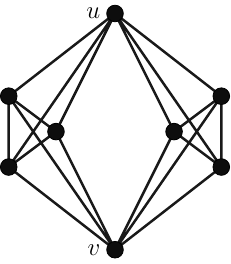}
    \caption{}
    \end{subfigure}
    \begin{subfigure}[b]{0.48\linewidth}
        \centering
                   \includegraphics[]{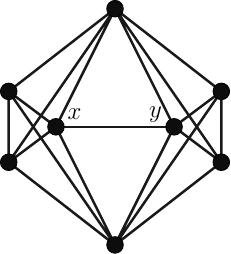}
        \caption{}
    \end{subfigure}
    \caption{\textit{(a)} The ``double banana'' graph $B$, and \textit{(b)} the graph $B' = B + xy.$}
    \label{figure:doublebanana}
\end{figure}

Finally, let us consider the double banana graph $B$ in some more detail. This graph is the union of two subgraphs $G_1 = (V_1,E_1),G_2 = (V_2,E_2)$, each isomorphic to $K_5$ minus an edge. Let $V_1 \cap V_2 = \{u,v\}$ denote the ``hinge vertices'' in $B$. Both $G_1$ and $G_2$ are chordal and $3$-connected, so they are \mainclass[3] by \cref{theorem:chordalgendtree}. Thus \cref{theorem:gluingkappa} applies, implying that $B$ satisfies \cref{eq:kappa}, and in particular that $\{u,v\}$ is globally linked in $B$ in $\RR^3$. However, $B$ is not \mainclass[3], since it is nonrigid in $\RR^3$.
Adding an edge $xy$ to $B$ other than $uv$ results in a rigid graph $B'$ (\cref{figure:doublebanana}(b)). It is not difficult to see that $\text{cl}^*_3(B') = B' + uv$, and the latter is a $3$-connected chordal graph. Thus \cref{theorem:chordalgendtree} implies that $B'$ is \mainclass[3].

\section{Concluding remarks}\label{section:concluding}

\subsection{Partial reflections and noncrossing separators}

In \cref{section:partialreflections} we introduced sequences of partial reflections and analyzed them in some detail. We showed that in $d$-connected graphs with no crossing $d$-separators, partial reflections have a simple structure: every sequence of partial $d$-reflections is equivalent to a reduced sequence of partial $d$-reflections (\cref{corollary:reducedsequence}) and the reduced sequences are pairwise nonequivalent, apart from trivialities (\cref{corollary:noncongruentreflections}).

If we allow crossing $d$-separators, the situation becomes more complicated. Although we did not pursue this direction, we believe that the following is true: if $X$ and $Y$ are crossing $d$-fragments in a $d$-connected graph, then the sequences of partial reflections $\mathcal{R}_X, \mathcal{R}_X \circ \mathcal{R}_Y, \mathcal{R}_X \circ \mathcal{R}_Y \circ \mathcal{R}_X, \ldots,$ are pairwise nonequivalent. Note that this would lead to an alternative, geometric proof of \cref{theorem:rigidnoncrossing}, since given any generic realization of a rigid graph, the maximum number of pairwise noncongruent, equivalent realizations is finite.

In fact, this is known in the $d=2$ case.
Partial $2$-reflections of the four-cycle $C_4$ have been studied (under a different name) in the context of \emph{Darboux' porism}. As a consequence, for each reduced sequence of partial $2$-reflections $F$ of $C_4$, a characterization is available for the space of realizations $(C_4,p)$ that are mapped to a congruent copy by $F$. See \cite{izmestiev_2020} and references therein.

\subsection{Globally linked pairs and \texorpdfstring{$\mathcal{R}_d$}{Rd}-connectivity}

Jackson, Jordán and Szabadka conjectured that \cref{theorem:jacksonjordanszabadka} gives a complete characterization of globally linked pairs in $\RR^2$ in the following sense.

\begin{conjecture}\cite[Conjecture 5.9]{jackson.etal_2006}\label{conjecture:jacksonjordanszabadka} The pair of vertices $\{u, v\}$ is globally linked in a graph $G = (V, E)$ in $\RR^2$ if and
only if either $uv \in E$ or there is an $\mathcal{R}_2$-component $H = (V',E')$ of $G$ with $u,v \in V'$ and $\kappa(H,u,v) \geq 3$.
\end{conjecture}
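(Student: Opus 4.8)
The plan is to treat the two directions separately, obtaining the ``if'' direction from the results already in hand and identifying the ``only if'' direction as the substantive part. For ``if'', suppose either $uv \in E$ (in which case $\{u,v\}$ is trivially globally linked) or some $\mathcal{R}_2$-component $H$ of $G$ contains both $u$ and $v$ with $\kappa(H,u,v) \ge 3$. In the latter case $H$ is $\mathcal{R}_2$-connected, so \cref{theorem:jacksonjordanszabadka} shows that $\{u,v\}$ is globally linked in $H$ in $\RR^2$. Global linkedness passes to supergraphs---every generic realization of $G$ restricts to a generic realization of $H$, and every realization of $G$ equivalent to it restricts to an equivalent realization of $H$---so $\{u,v\}$ is globally linked in $G$ as well.

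For ``only if'' I would argue by contraposition: assume $uv \notin E$ and that $\kappa(H,u,v) \le 2$ for every $\mathcal{R}_2$-component $H$ containing both $u$ and $v$, and construct a generic realization $(G,p)$ together with an equivalent $(G,q)$ with $\norm{p(u)-p(v)} \ne \norm{q(u)-q(v)}$. I would first dispose of the case in which no $\mathcal{R}_2$-component contains both vertices. Here $\{u,v\}$ is not even linked---a nonadjacent pair is linked in $\RR^2$ precisely when some $\mathcal{R}_2$-component contains both vertices, as this is exactly the condition for $uv$ to lie in the $\mathcal{R}_2$-closure of $E$---so adding $uv$ raises the rank of $\mathcal{R}_2(G)$ and a generic framework admits an edge-length-preserving flex that varies the distance of $u$ and $v$; thus $\{u,v\}$ is not globally linked. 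This leaves the main case, where $u$ and $v$ lie in a common $\mathcal{R}_2$-component $H$ and every such $H$ has $\kappa(H,u,v) \le 2$. Fixing one such $H$, which is redundantly rigid and hence $2$-joined, \cref{theorem:jacksonjordanszabadka} shows that $\{u,v\}$ is not globally linked in $H$; concretely, there is a $(u,v)$-separating $2$-fragment $X$ of $H$, and by \cref{lemma:mainsimple} the partial reflection $\sep{X}$ changes the distance of $u$ and $v$ in a generic realization of $H$. The remaining, decisive task is to promote this distance-changing reflection of $H$ to an equivalent realization of all of $G$, which I would attempt by induction on the number of $\mathcal{R}_2$-components: peeling off the components attached to $H$ one at a time and, for each, invoking the gluing analysis of \cref{subsection:gluing}. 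Since a neighbouring component meets $H$ in at most the two vertices of $N_H(X)$, one is exactly in the setting of \cref{lemma:compatiblesequences,lemma:stronglyseparating}, which govern when a reflection on one side extends compatibly across a shared vertex set.

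The hard part---and the reason the statement is still only a conjecture---is precisely this propagation step, which the present techniques do not control, for two related reasons. First, $G$ need not be $2$-joined, so the clean description of equivalent realizations by reduced sequences of partial reflections (\cref{theorem:partialreflectionchar}) is unavailable; global linkedness in $\RR^2$ is not a generic property, and one must reason about the entire, possibly positive-dimensional and non-generic, realization variety of $G$ rather than a generic partial-reflection orbit. Second, a $2$-fragment $X$ of $H$ separating $u$ from $v$ need not give a separator of $G$: other $\mathcal{R}_2$-components may bridge the two sides of $N_H(X)$, and a reflection that changes the distance of $u$ and $v$ inside $H$ can impose constraints on these bridging components that are not obviously satisfiable. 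Completing the induction would require showing that, whenever $u$ and $v$ are $2$-separated inside every common $\mathcal{R}_2$-component, this bridging structure can always absorb the reflection---a global, non-generic assertion that the partial-reflection framework of \cref{section:partialreflections} currently secures only in the rigid, and hence crossing-free, setting.
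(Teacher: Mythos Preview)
This statement is presented in the paper as an open conjecture from \cite{jackson.etal_2006}; the paper does not prove it, and indeed no proof is known. Your proposal is accordingly not a proof but a discussion, and you are upfront about this. Your treatment of the ``if'' direction is correct and matches what is standard: it is a direct consequence of \cref{theorem:jacksonjordanszabadka} together with the monotonicity of global linkedness under passing to supergraphs. Your identification of the ``only if'' direction as the open part, and your explanation of why the partial-reflection machinery of \cref{section:partialreflections} does not resolve it, are accurate and well put.

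One small correction to your discussion of the obstacles: you write that a neighbouring $\mathcal{R}_2$-component meets $H$ ``in at most the two vertices of $N_H(X)$'', but this is not true in general---distinct $\mathcal{R}_2$-components can share many vertices (they are edge-disjoint, not vertex-disjoint), and the overlap need not be contained in any particular $2$-separator of $H$. This only reinforces your main point that the bridging structure is hard to control, but the specific picture you sketch of the gluing situation is too optimistic.
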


The analogues of \cref{theorem:jacksonjordanszabadka} and \cref{conjecture:jacksonjordanszabadka} in $d \geq 3$ dimensions are not true, as shown by \cref{figure:6ring}. However, it is still possible that the ``necessity'' part of their conjecture holds in these dimensions.
\begin{conjecture}\label{conjecture:globallylinked}
Let $d \geq 1$ be an integer and let $G = (V,E)$ be a graph. If a pair of vertices $\{u,v\}$ is globally linked in $G$ in $\RR^d$, then either $uv \in E$ or there is an $\mathcal{R}_d$-component $H = (V',E')$ of $G$ with $u,v \in V'$ and $\kappa(H,u,v) \geq d+1$.
\end{conjecture}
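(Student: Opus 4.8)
Since the statement is the ``necessity'' half of a characterization that we can only conjecture, the plan is to describe the most promising route and to isolate the step that we cannot presently carry out. I would argue the contrapositive: assuming $uv \notin E$ and that no $\mathcal{R}_d$-component $H$ of $G$ with $u,v \in V(H)$ satisfies $\kappa(H,u,v) \geq d+1$, I would construct, for a generic realization $(G,p)$, an equivalent realization $(G,q)$ with $\norm{p(u)-p(v)} \neq \norm{q(u)-q(v)}$. Two easy regimes fall away first. If $\{u,v\}$ is not linked, i.e. $r_d(G+uv)=r_d(G)+1$, then $G$ is non-rigid and $\norm{p(u)-p(v)}^2$ is independent of the edge functions at the generic (hence regular) point $p$, so the flexibility of regular frameworks \cite{asimow.roth_1978} yields a finite flex of $(G,p)$ along which this length varies, and $\{u,v\}$ is not globally linked. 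If instead $\kappa(G,u,v) \leq d$, then \cref{lemma:globallylinkedclosure} already gives the conclusion. Thus the essential case is the one in which $\{u,v\}$ is linked and $\kappa(G,u,v) \geq d+1$, yet every rigidity component containing both $u$ and $v$ separates them by at most $d$ vertices.

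In this regime, linkedness provides an $\mathcal{R}_d$-circuit $C$ of $G+uv$ through $uv$, from which I would first try to extract a rigidity component $H$ of $G$ containing both $u$ and $v$. The hypothesis then supplies a $(u,v)$-separating set $S$ of size at most $d$ in $H$, hence a $(u,v)$-separating fragment $X$ of $H$ with $N_H(X)=S$, and reflecting the image of $\overline{X}$ across the affine span of $p(S)$ gives an equivalent realization of $H$ whose $u$--$v$ distance strictly changes, exactly as in the proof of \cref{lemma:mainsimple}. The whole task is to promote this local reflection to an equivalent realization of all of $G$: the only edges that can be disturbed are those lying in other rigidity components that join $X$ to $\overline{X}$. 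To neutralize them I would appeal to the gluing analysis of \cref{subsection:gluing}, repeatedly writing $G$ as a union of two subgraphs along their shared vertices, choosing in each bridging piece a compatible reflecting fragment, and assembling a single sequence of partial $d$-reflections of $G$ via \cref{lemma:compatiblesequences}, with \cref{lemma:stronglyseparating} certifying that the global reflection still changes $\norm{p(u)-p(v)}$. An induction that peels off one rigidity component at a time seems the natural way to organize this.

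The main obstacle --- and, I believe, the reason the statement is only a conjecture --- is precisely this promotion step, together with the prior task of even guaranteeing a common rigidity component from global linkedness. The structural tool behind the \mainclass[d] theory, \cref{theorem:rigidnoncrossing}, controls $d$-separators only inside a single rigid graph; it says nothing about how the $d$-separators of distinct $\mathcal{R}_d$-components of a non-rigid $G$ interact, and in general these may cross or fail to share a common boundary, so there need not exist any compatible family of reflecting fragments. Changing the distance in one component can then be forced to preserve it in another, and the criterion of \cref{lemma:compatiblesequencechar} may obstruct a global reflection outright; moreover the gluing lemmas assume $d$-connectivity, which rigidity components need not satisfy. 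I would therefore expect the genuinely new ingredient to be a cross-component analogue of \cref{theorem:rigidnoncrossing}, describing the interaction of $d$-separators across the rigidity-matroid decomposition of an arbitrary graph, after which the machinery of \cref{section:partialreflections} should close the argument. The two-dimensional instance of exactly this difficulty is what \cref{conjecture:jacksonjordanszabadka} encapsulates.
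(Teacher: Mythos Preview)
The statement you were given is \cref{conjecture:globallylinked}, which the paper presents as an open conjecture; there is no proof in the paper to compare your proposal against. You recognized this correctly and framed your write-up as an outline of a plausible attack together with an honest identification of the missing ingredient, which is the appropriate response.

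Your analysis of the easy cases is sound: globally linked implies linked (via the flex argument you sketch), and $\kappa(G,u,v)\le d$ is handled by \cref{lemma:globallylinkedclosure}. Your observation that linkedness yields a common $\mathcal{R}_d$-component $H$ containing both $u$ and $v$ is also correct, since the edges of an $\mathcal{R}_d$-circuit through $uv$ minus $uv$ lie in a single component of the matroid. Where you locate the genuine obstruction---promoting a partial reflection inside one rigidity component to an equivalent realization of the whole of $G$---is exactly right, and your diagnosis that \cref{theorem:rigidnoncrossing} gives no control over how $d$-separators of distinct $\mathcal{R}_d$-components interact is the heart of the matter. The paper's own discussion surrounding the conjecture (the $\mathcal{R}_3$-connected example in \cref{figure:Mconnectedexample} and the construction following \cref{lemma:openball}) reinforces this: even within a single $\mathcal{R}_d$-connected graph the interaction between rigid pieces joined along fewer than $d$ vertices can be delicate enough that ``being globally linked'' fails to be generic. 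One minor caution: the gluing machinery of \cref{subsection:gluing} requires not just $d$-connectivity but the absence of crossing $d$-separators in each piece, and $\mathcal{R}_d$-components need not be $d$-connected at all for $d\ge 3$, so invoking \cref{lemma:compatiblesequences,lemma:stronglyseparating} directly is more optimistic than your phrasing suggests.
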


\begin{figure}[t]
    \centering
    \includegraphics[]{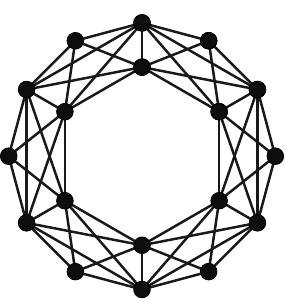}
        \caption{A graph $G$ that is $4$-connected and $\mathcal{R}_3$-connected, but not globally rigid in $\RR^3$ (see \cite{jordan.etal_2016}). It follows that some pair of vertices $\{u,v\}$ is not globally linked in $G$ in $\RR^3$, even though $\kappa(G,u,v) \geq 4$ holds.}
    \label{figure:6ring}
\end{figure}



An immediate consequence of \cref{theorem:jacksonjordanszabadka} is that ``being globally linked in $\RR^2$'' is a generic property in $\mathcal{R}_2$-connected graphs: for such a graph $G$ and a pair of vertices $u,v$ in $G$, $\{u,v\}$ is either globally linked in every generic realization of $G$ in $\RR^2$, or in none of them. The next example shows that this fails for $d = 3$.

Let $G$ be obtained from two copies of $K_5$ by gluing them along
three vertices. Let $S=\{s_1,s_2,s_3\}$ denote the set of identified vertices in $G$.
Now $S$ is a separator of size three in $G$ for which $G-S$ has two components
whose vertices we label $\{a_1,a_2\}$ and $\{b_1,b_2\}$, respectively.
Since $K_5$ is an $\mathcal{R}_3$-circuit, $G$ is $\mathcal{R}_3$-connected. Furthermore, every generic realization
$(G,p)$ in $\RR^3$ has exactly one other equivalent realization $(G,q)$ up to congruence, obtained by a partial
reflection corresponding to the $3$-fragment $\{a_1,a_2\}$.

\begin{figure}[t]
        \centering
\includegraphics[]{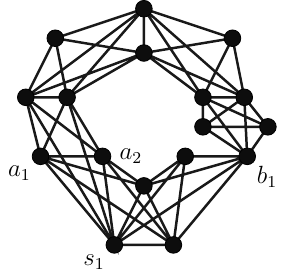}
        \caption{A graph that is $\mathcal{R}_3$-connected (and rigid in $\RR^3$) but in which ``being globally linked in $\RR^3$'' is not a generic property. By cutting along $a_1,a_2,s_1$ and $b_1$ we obtain the graphs $H$ and $G$, where $G$ is \mainclass[3] and $H$ is a chain of four copies of $K_5$, and thus it has three degrees of freedom in $\RR^3$.}
        \label{figure:Mconnectedexample}
\end{figure}

Let $H$ be the graph obtained from four copies of $K_5$, denoted by $H_1,\ldots,H_4$, by gluing $H_i$ and $H_{i+1}$ along two vertices for $i \in \{1,2,3\}$
so that the three vertex pairs of the resulting ``hinges'' are pairwise disjoint.
Since $K_5$ is an $\mathcal{R}_3$-circuit, $H$ is $\mathcal{R}_3$-connected, and furthermore it has three degrees of freedom.
Let $\{x,y,z\}$ be the three vertices of $H_1$ that are disjoint from the hinges and
let $w$ be a vertex in $H_4$ that is  
also disjoint from the hinges.
The key observation is the following. It can be shown using standard techniques from differential topology and the observation that the graph $H' = H+xw+yw+zw$ is rigid in $\RR^3$ (and in particular $r_3(H') = r_3(H) + 3$). 
\begin{lemma}\label{lemma:openball}
    Let $(H,p)$ be a generic realization in $\RR^3$ of the graph $H$ described above. There is a positive number $\varepsilon$ (depending on $(H,p)$) such that for every point $q_w \in \RR^3$ with $\norm{p(w) - q_w} < \varepsilon$ there is a realization $(H,q)$, equivalent to $(H,p)$, with $q(w) = q_w$ and $q(x) = p(x),q(y)=p(y),q(z)=p(z)$.
\end{lemma}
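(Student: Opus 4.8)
The plan is to phrase the statement as a local surjectivity result for the ``position of $w$'' map, restricted to the set of frameworks equivalent to $(H,p)$ in which $x,y,z$ are pinned, and to prove it by the inverse function theorem. Write $n=|V(H)|$ and let $f:\RR^{3n}\to\RR^{|E(H)|}$ be the squared-edge-length map of $H$, so that its differential at $p$ is (twice) the rigidity matrix $R(H,p)$. Since $p$ is generic, $p$ is a regular point of $f$, and the fibre $W=f^{-1}(f(p))$ is, near $p$, a smooth manifold of dimension $3n-r_3(H)=9$ with tangent space $K:=\ker R(H,p)$ (the standard Asimow--Roth picture; the number $9=3+6$ reflects the three degrees of freedom plus the six-dimensional space of rigid motions). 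Inside $W$ I would consider the pinning map $F:W\to\RR^9$, $F(q)=(q(x)-p(x),q(y)-p(y),q(z)-p(z))$, whose zero set $M=F^{-1}(0)$ is exactly the set of frameworks equivalent to $(H,p)$ that fix $x,y,z$, together with the evaluation map $\psi:M\to\RR^3$, $\psi(q)=q(w)$. The goal becomes to show that $\psi$ is a local diffeomorphism at $p$, whence every $q_w$ in a small ball around $p(w)$ has a preimage, giving the required $\varepsilon$.

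The first half of the work is linear algebra at $p$. Let $T\subseteq K$ denote the six-dimensional space of infinitesimal rigid motions and let $L=\{\dot q:\dot q(x)=\dot q(y)=\dot q(z)=0\}$. Because $x,y,z$ are affinely independent (being generic) and an infinitesimal rigid motion vanishing at three affinely independent points is zero, we have $T\cap L=0$. The key local feature is that $x,y,z$ all lie in the clique $H_1\cong K_5$, so $xy,yz,zx\in E(H)$; hence every $\dot q\in K$ preserves the triangle on $x,y,z$ infinitesimally, which forces the velocity triple $(\dot q(x),\dot q(y),\dot q(z))$ to be realizable by a unique infinitesimal rigid motion. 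This shows simultaneously that the image of $dF_p$ is exactly the six-dimensional space of triangle-compatible velocity triples and that $F$ has rank $6$ at $p$; moreover the same triangle constraints bound the rank by $6$ everywhere near $p$ in $W$, while lower semicontinuity of the rank forbids it from dropping below $6$, so $F$ has constant rank $6$ near $p$. By the constant rank theorem $M$ is a smooth $3$-manifold near $p$ with $T_pM=\ker dF_p=K\cap L$, and in particular $\dim(K\cap L)=3$.

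The crucial input is the rigidity of $H'=H+xw+yw+zw$, which enters through the differential of $\psi$. Writing $\rho=d\psi_p:K\cap L\to\RR^3$, $\dot q\mapsto\dot q(w)$, I would argue that $\rho$ is injective as follows. The three extra rows of $R(H',p)$ define functionals $\phi_{xw},\phi_{yw},\phi_{zw}$ on $\RR^{3n}$, with $\phi_{xw}(\dot q)=\langle p(x)-p(w),\,\dot q(x)-\dot q(w)\rangle$ and analogously for the others; since $(H',p)$ is generic and $H'$ is rigid, $\ker R(H',p)=T$, so these functionals restricted to $K$ have common kernel exactly $T$. Now if $\dot q\in K\cap L$ satisfies $\dot q(w)=0$, then $\dot q(x)=\dot q(y)=\dot q(z)=0$ forces $\phi_{xw}(\dot q)=\phi_{yw}(\dot q)=\phi_{zw}(\dot q)=0$, hence $\dot q\in T$; combined with $\dot q\in L$ and $T\cap L=0$ this yields $\dot q=0$. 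Thus $\rho$ is injective, and since $\dim(K\cap L)=3=\dim\RR^3$ it is an isomorphism.

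Finally, since $d\psi_p=\rho$ is an isomorphism, the inverse function theorem makes $\psi$ a diffeomorphism from a neighbourhood of $p$ in $M$ onto a neighbourhood of $p(w)$ in $\RR^3$; choosing $\varepsilon>0$ so that $B(p(w),\varepsilon)$ lies in this neighbourhood yields, for each $q_w$ with $\norm{p(w)-q_w}<\varepsilon$, a framework $(H,q)$ with $q\in M$ (hence equivalent to $(H,p)$ and fixing $x,y,z$) and $q(w)=q_w$. I expect the main obstacle to be the manifold bookkeeping rather than any single hard idea: one must justify that $M$ is genuinely a $3$-manifold near $p$ (the constant-rank step, which is where the fact that $x,y,z$ span a clique is used essentially), and keep straight the interplay of the three subspaces $K$, $T$, $L$. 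The two geometric facts doing the real work are that $x,y,z$ form a rigid triangle in $H$ and that adjoining $xw,yw,zw$ rigidifies $H$; everything else is the standard differential-topological machinery surrounding the rigidity matrix.
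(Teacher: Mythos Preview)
Your proposal is correct and is precisely the argument the paper has in mind: the paper does not give a proof but only remarks that the lemma ``can be shown using standard techniques from differential topology and the observation that the graph $H' = H+xw+yw+zw$ is rigid in $\RR^3$ (and in particular $r_3(H') = r_3(H) + 3$)''. Your write-up is a faithful and careful execution of exactly this plan, using the rigidity of $H'$ to get the rank of $d\psi_p$ and the triangle on $x,y,z$ in $H_1$ to handle the pinning via the constant rank theorem.
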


Let $K$ be obtained from $G$ and $H$ by identifying the vertices $a_1,a_2, s_1$ and $b_1$ with $x,y,z$ and $w$, respectively. We shall label the identified vertices according to the labeling in $G$. See \cref{figure:Mconnectedexample}.
Since $G$ and $H$ are both $\mathcal{R}_3$-connected, so is $K$. 
We shall construct two generic realizations $(K,p)$ and $(K,p')$ such that $\{a_1,b_1\}$ is globally linked in $(K,p)$, but not in $(K,p')$. 

Let us first consider an arbitrary generic realization $(H,p_H)$ of $H$ in $\RR^3$, and fix a positive number $\varepsilon$ with the properties described by \cref{lemma:openball}. Let us extend $(H,p_H)$ to a generic realization $(K,p)$ in such a way that the distance of $p(b_1)$ from the plane determined by $\{p(s_1),p(s_2),p(s_3)\}$ is less than $1/2\varepsilon$, and let $p_G$ denote the restriction of $p$ to the vertex set of $G$. We claim that $\{a_1,b_1\}$ is not globally linked in $(K,p)$. Indeed, we can take the framework $(G,q_G)$ obtained by the partial
reflection in the plane spanned by $\{p(s_1),p(s_2),p(s_3)\}$. By construction, we have $\norm{p_G(b_1) - q_G(b_1)} < \varepsilon$, so \cref{lemma:openball} guarantees the existence of a framework $(H,q_H)$ that is equivalent to $(H,p_H)$ and agrees with $(G,q_G)$ on the set of identified vertices. By identifying the common vertices of $(G,q_G)$ and $(H,q_H)$, we obtain a framework $(K,q)$ that is equivalent to $(K,p)$ but $\norm{q(a_1) - q(b_1)} \neq \norm{p(a_1) - p(b_1)}$, as claimed. 

Now let us fix an $a_1,b_1$-path $P$ in $K$ whose vertices are contained in $V(H) - \{s_1\}$. Consider a generic framework $(K,p')$ in $\RR^3$. Let $\delta$ denote the sum of the Euclidean lengths in $(K,p')$ of the edges in $P$, and let $\nu$ denote the distance of $p'(b_1)$ from the plane determined by $\{p'(s_1),p'(s_2),p'(s_3)\}$. Note that by (repeated applications of) the triangle inequality, in every framework $(K,q')$ that is equivalent to $(K,p')$ and for which the two frameworks agree on $\{a_1,a_2,s_1\}$, we have $\norm{p'(b_1) - q'(b_1)} \leq 2\delta$. On the other hand, if $\norm{q'(a_1) - q'(b_1)} \neq \norm{p'(a_1) - p'(b_1)}$ for such a realization $(K,q')$, then the restriction of $(K,q')$ to $G$ must necessarily be the framework obtained from the restriction of $(K,p')$ to $G$ by a partial reflection in the plane spanned by $\{p'(s_1),p'(s_2),p'(s_3)\}$. In particular, $\norm{p'(b_1) - q'(b_1)} = 2\nu$. It follows that if we choose $(K,p')$ in such a way that $\delta < \nu$ (which is clearly possible),
then $\{a_1,b_1\}$ will be globally linked in $(K,p')$, as desired.

We note that this example also shows that a graph obtained by gluing two graphs $G$ and $H$ that satisfy \cref{eq:kappa} need not satisfy \cref{eq:kappa}, even if we glue along at least $d+1$ vertices. This is in contrast with \cref{theorem:gluingkappa}, which shows that if $G$ and $H$ are also rigid, then the resulting graph does satisfy \cref{eq:kappa}.

\subsection{Globally linked pairs and gluing}

A key step in the proof of \cref{theorem:jacksonjordanszabadka} in \cite{jackson.etal_2006} is the following result, which implies \cref{theorem:linkedgluing} below.

\begin{theorem}\label{theorem:Mcircuit2separator}\cite[Corollary 5.5]{jackson.etal_2006} Let $G$ be an $\mathcal{R}_2$-circuit and let $\{u,v\}$ be a $2$-separator of $G$. Then $\{u,v\}$ is globally linked in $G$ in $\RR^2$.
\end{theorem}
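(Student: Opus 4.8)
The plan is to reduce everything to \cref{theorem:jacksonjordanszabadka}. First I would record the basic structure of $G$. Since $G$ is an $\mathcal{R}_2$-circuit, its whole edge set is a single circuit of the rigidity matroid, so any two edges lie on a common circuit and $\mathcal{R}_2(G)$ is connected; thus $G$ is $\mathcal{R}_2$-connected, and in particular it is (redundantly) rigid in $\RR^2$. It is also $2$-connected: a cut vertex would express $\mathcal{R}_2(G)$ as a nontrivial direct sum (frameworks glued at a single vertex flex independently), so no circuit could meet both sides, contradicting that $E(G)$ is one circuit. If $uv \in E$, then $\{u,v\}$ is trivially globally linked, since its distance equals a fixed edge length; so I would assume $uv \notin E$. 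In view of \cref{theorem:jacksonjordanszabadka} applied to the $\mathcal{R}_2$-connected graph $G$, it then suffices to prove that $\kappa(G,u,v) \geq 3$.

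To bound the local connectivity, let $C_1,\dots,C_k$ be the components of $G-\{u,v\}$ (so $k \geq 2$) and put $B_i = G[C_i \cup \{u,v\}]$. As $G$ is $2$-connected, $\{u,v\}$ is a minimum vertex cut, so each of $u,v$ has a neighbour in every $C_i$; since $C_i$ is connected this produces a $u$--$v$ path inside $B_i$, giving $\kappa(B_i,u,v) \geq 1$. Because $uv \notin E$, every $u$--$v$ path of $G$ has all its internal vertices in a single component $C_i$; hence internally disjoint $u$--$v$ paths chosen separately in the distinct pieces remain internally disjoint in $G$, and therefore $\kappa(G,u,v) \geq \sum_{i=1}^{k} \kappa(B_i,u,v) \geq k$. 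If $k \geq 3$ this already gives $\kappa(G,u,v) \geq 3$ and we are done.

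The remaining, and crucial, case is $k=2$, where I expect the main difficulty to lie. Here I would argue by contradiction: if $\kappa(G,u,v) \leq 2$, the bound $\kappa(G,u,v) \geq \sum_i \kappa(B_i,u,v)$ forces $\kappa(B_1,u,v) = \kappa(B_2,u,v) = 1$. Since $u,v$ are nonadjacent in each $B_i$, Menger's theorem supplies a single separating vertex $w_i$, which must lie in $C_i$. Every $u$--$v$ path of $G$ passes through some $B_i$ and hence through $w_i$, so $T := \{w_1,w_2\}$ is a $(u,v)$-separating $2$-separator of $G$. Now $w_1 \in C_1$ and $w_2 \in C_2$, so $T$ contains a vertex from each component of $G-\{u,v\}$; by \cref{lemma:crossingcuts} this means that the $2$-separators $\{u,v\}$ and $T$ are crossing. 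But $G$ is rigid, so \cref{theorem:rigidnoncrossing} forbids crossing $2$-separators, a contradiction. Hence $\kappa(G,u,v) \geq 3$, and \cref{theorem:jacksonjordanszabadka} yields that $\{u,v\}$ is globally linked in $G$ in $\RR^2$.

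The conceptual heart of the argument is that the rigidity of the circuit, through the absence of crossing $2$-separators, is exactly what prevents the separator pair $\{u,v\}$ from itself being split by a $2$-cut; everything else is bookkeeping with Menger's theorem. One caveat worth flagging is that this route leans on \cref{theorem:jacksonjordanszabadka} as a black box; in the original source that characterization is in fact derived with the help of the present statement, so a fully self-contained treatment would instead establish global linkedness directly---for instance, by showing that every realization equivalent to a generic one arises from partial reflections across $2$-separators of $G$, none of which is $(u,v)$-separating once $\kappa(G,u,v)\geq 3$ is known.
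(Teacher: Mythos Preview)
The paper does not give its own proof of this statement; it is quoted from \cite{jackson.etal_2006} and used as input to \cref{theorem:linkedgluing}. So there is no argument in the paper to compare against directly.

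Your derivation is internally correct: the combinatorial step showing $\kappa(G,u,v)\geq 3$ is clean and valid. In the case $k=2$ you produce a second $2$-separator $\{w_1,w_2\}$ meeting both components of $G-\{u,v\}$, which by \cref{lemma:crossingcuts} crosses $\{u,v\}$, and then \cref{theorem:rigidnoncrossing} (whose $d=2$ case is already in \cite{jackson.jordan_2005}) gives the contradiction. That part stands on its own and uses only rigidity and $2$-connectedness of $G$, not the full circuit hypothesis.

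The substantive issue is the one you already flag: the appeal to \cref{theorem:jacksonjordanszabadka} is circular with respect to the original source. The paper says so explicitly just before stating \cref{theorem:Mcircuit2separator}: ``A key step in the proof of \cref{theorem:jacksonjordanszabadka} in \cite{jackson.etal_2006} is the following result''. So while your write-up is a legitimate observation inside the present paper---where both theorems are imported as black boxes---it is not an independent proof of the statement. If you want a self-contained argument, your closing suggestion is the right direction: once $\kappa(G,u,v)\geq 3$ is in hand, one can invoke \cref{theorem:partialreflectionchar} for $\mathcal{R}_2$-connected graphs (which are \mainclass[2] by \cref{theorem:jacksonjordanszabadka}---but again this loops), or, better, follow the route of \cite{jackson.etal_2006} directly.
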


\begin{theorem}\label{theorem:linkedgluing} Let $G = (V,E)$ be the union of the graphs $G_1 = (V_1,E_1), G_2 = (V_2,E_2)$ with $V_1 \cap V_2 = \{u,v\}$. If $\{u,v\}$ is linked in $G_i$ in $\RR^2$ for $i \in \{1,2\}$, then $\{u,v\}$ is globally linked in $G$ in $\RR^2$.
\end{theorem}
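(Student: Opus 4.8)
The plan is to reduce to \cref{theorem:Mcircuit2separator} by building an $\mathcal{R}_2$-circuit inside $G$ that has $\{u,v\}$ as a $2$-separator, and then pushing global linkedness from this circuit up to $G$. Since linkedness is only defined for nonadjacent pairs, the hypothesis gives that $u,v$ are nonadjacent in each $G_i$, and as $V_1 \cap V_2 = \{u,v\}$ this yields $uv \notin E$. By definition of a linked pair, for each $i \in \{1,2\}$ there is an $\mathcal{R}_2$-circuit $C_i \subseteq G_i + uv$ that contains the edge $uv$. I take the \emph{$2$-sum}
\[ C = (C_1 \cup C_2) - uv, \]
which is a subgraph of $G$: indeed $C_i - uv \subseteq G_i$, and $E(C_1) \cap E(C_2) = \{uv\}$ since $u,v$ are the only vertices shared by $G_1$ and $G_2$. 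The goal is to show that $C$ is an $\mathcal{R}_2$-circuit in which $\{u,v\}$ is a $2$-separator.

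The heart of the argument is that this $2$-sum is again an $\mathcal{R}_2$-circuit. Writing $n_i = |V(C_i)|$, every $\mathcal{R}_2$-circuit satisfies $|E(C_i)| = 2n_i - 2$, so $C_i - uv$ is independent with $2n_i - 3$ edges on $n_i$ vertices and is therefore rigid by \cref{theorem:gluck}. A direct count gives $|V(C)| = n_1 + n_2 - 2$ and $|E(C)| = 2|V(C)| - 2$, so $C$ is dependent. Moreover $C = (C_1 - uv) \cup (C_2 - uv)$ is the union of two rigid graphs meeting in exactly the two vertices $u,v$, so \cref{lemma:rigidgluing} shows $C$ is rigid and $r_2(C) = 2|V(C)| - 3$. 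To see that $C$ is \emph{minimally} dependent, it is enough to verify that $C - e$ is independent for every edge $e$ (then every proper edge subset is independent); by symmetry assume $e \in E(C_1) - uv$. The key identity is
\[ (C - e) + uv = (C_1 - e) \cup (C_2 - uv), \]
whose right-hand side is a union of two rigid graphs glued along $\{u,v\}$, hence rigid by \cref{lemma:rigidgluing}; thus $r_2\bigl((C-e)+uv\bigr) = 2|V(C)| - 3$. On the other hand $C_2 - uv \subseteq C - e$, and $uv$ lies in the rank-closure of $C_2 - uv$ because $C_2$ is a circuit through $uv$; hence adding $uv$ to $C - e$ does not increase the rank, and $r_2(C - e) = 2|V(C)| - 3 = |E(C - e)|$. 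So $C - e$ is independent, and $C$ is an $\mathcal{R}_2$-circuit.

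It remains to see that $\{u,v\}$ is a $2$-separator of $C$ and to transfer the conclusion. Every $\mathcal{R}_2$-circuit has at least four vertices, so $V(C_1) - \{u,v\}$ and $V(C_2) - \{u,v\}$ are both nonempty; since $V_1 \cap V_2 = \{u,v\}$, no edge of $C$ runs between them, and therefore $C - \{u,v\}$ is disconnected. Now \cref{theorem:Mcircuit2separator} gives that $\{u,v\}$ is globally linked in $C$ in $\RR^2$. Finally I use monotonicity of global linkedness under passing to supergraphs: for a generic realization $(G,p)$ and any equivalent $(G,q)$, the restrictions to $V(C)$ give a generic realization $(C, p|_{V(C)})$ and an equivalent realization $(C, q|_{V(C)})$ (a subset of an algebraically independent set of coordinates is algebraically independent, and every edge of $C$ is an edge of $G$). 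Global linkedness of $\{u,v\}$ in $C$ then forces $\norm{p(u) - p(v)} = \norm{q(u) - q(v)}$, so $\{u,v\}$ is globally linked in $G$ in $\RR^2$.

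The only genuinely nontrivial point is the claim that the $2$-sum $C$ is an $\mathcal{R}_2$-circuit; the remaining steps are either counting or direct appeals to \cref{lemma:rigidgluing}, \cref{theorem:gluck}, and \cref{theorem:Mcircuit2separator}. Within that claim, dependence and the edge count are immediate, and the crux is minimality, handled through the identity $(C-e)+uv = (C_1-e) \cup (C_2-uv)$ and the fact that $uv$ is spanned by $C_2 - uv$ — which is precisely where the hypothesis that $G_1$ and $G_2$ share exactly the two vertices $u,v$ enters.
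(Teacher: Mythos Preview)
Your proof is correct and follows essentially the same route as the paper: reduce to \cref{theorem:Mcircuit2separator} by exhibiting the $2$-sum $C = (C_1 \cup C_2) - uv$ as an $\mathcal{R}_2$-circuit with $\{u,v\}$ a $2$-separator, then push global linkedness up to $G$. The only difference is that the paper outsources the fact that the $2$-sum of two $\mathcal{R}_2$-circuits is again an $\mathcal{R}_2$-circuit to \cite[Lemma 5.2]{jackson.etal_2006}, whereas you give a self-contained argument via \cref{lemma:rigidgluing} and the closure property of $uv$ over $C_2 - uv$; your argument for this step is clean and correct.
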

\begin{proof}
If $u$ and $v$ are adjacent in $G$, then the conclusion is immediate, so we may suppose that $u$ and $v$ are nonadjacent. Since $\{u,v\}$ is linked in $G_i$ in $\RR^2$, $G_i + uv$ contains an $\mathcal{R}_2$-circuit $C_i$ that spans $u$ and $v$, for $i \in \{1,2\}$. It is well-known that in this case $C = C_1 \cup C_2 - uv$ (the \emph{2-sum} of $C_1$ and $C_2$) is also an $\mathcal{R}_2$-circuit (see, e.g., \cite[Lemma 5.2]{jackson.etal_2006}). Since $\{u,v\}$ is a $2$-separator in $C$, \cref{theorem:Mcircuit2separator} implies that $\{u,v\}$ is globally linked in $C$ in $\RR^2$, and thus it is also globally linked in $G$ in $\RR^2$. 
\end{proof}

There is a similarly short proof of \cref{theorem:Mcircuit2separator} that uses \cref{theorem:linkedgluing}; in this sense, the two theorems are equivalent. 
We conjecture that this ``gluing theorem'' can be extended in two directions: one, to higher dimensions, and two, to graphs that are glued along larger vertex sets.

\begin{conjecture}\label{conjecture:gluing}
Let $G$ be the union of the graphs $G_1 = (V_1,E_1), G_2 = (V_2,E_2)$ with $|V_1 \cap V_2| \leq d+1$, and let $u,v \in V_1 \cap V_2$. If $\{u,v\}$ is linked in $G_i$ in $\RR^d$ for $i \in \{1,2\}$, then $\{u,v\}$ is globally linked in $G$ in $\RR^d$.
\end{conjecture}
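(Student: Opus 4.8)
The plan is to mirror the proof of \cref{theorem:linkedgluing} (the $d=2$ case). If $u$ and $v$ are adjacent in $G$ the claim is trivial, so assume they are nonadjacent. Since $\{u,v\}$ is linked in $G_i$ in $\RR^d$, the graph $G_i + uv$ contains an $\mathcal{R}_d$-circuit $C_i$ with $uv \in E(C_i)$; set $C_i' = C_i - uv \subseteq G_i$. The first step is a reduction to these circuits: global linkedness of a fixed pair is preserved under passing to a supergraph (even on a larger vertex set), since a generic realization of the larger graph restricts to a generic realization of the smaller one and equivalence is inherited by restriction. Hence it suffices to prove that $\{u,v\}$ is globally linked in $C := C_1' \cup C_2' \subseteq G$. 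Observe that $V(C_1') \cap V(C_2') \subseteq V_1 \cap V_2$ still has at most $d+1$ vertices and contains $u$ and $v$, so we have reduced the conjecture to the case where each $G_i + uv$ is an $\mathcal{R}_d$-circuit.

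The graph $C = C_1 \cup C_2 - uv$ is the \emph{$W$-sum} of the two circuits along $W := V(C_1') \cap V(C_2')$, and $W$ separates the two sides of $C$, has size at most $d+1$, and contains $\{u,v\}$. The natural target is an analogue of \cref{theorem:Mcircuit2separator}: a small separator containing $u,v$ in a $W$-sum of $\mathcal{R}_d$-circuits is globally linked. One must be careful here, because in contrast to the $d=2$ case the graph $C$ need be neither rigid nor a single $\mathcal{R}_d$-circuit. The simplest instance already shows this, namely $d=3$, $W=\{u,v\}$, $C_1 = C_2 = K_5$, where $C$ is the double banana of \cref{figure:doublebanana}(a); there $\{u,v\}$ is globally linked even though $C$ is nonrigid. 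Thus the argument cannot proceed by classifying $C$ structurally, but must directly control the distance of $u$ and $v$ across all equivalent realizations.

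The mechanism I would try to exploit is a finiteness-plus-genericity principle. Fix a generic $(C,p)$ and an equivalent $(C,q)$. Restricting to each $C_i'$, linkedness of $\{u,v\}$ forces $\norm{q(u)-q(v)}$ to lie in a finite set $D_i(p) = \{d_{i,1},\ldots,d_{i,m_i}\}$ — the values taken by the $u$--$v$ distance over the (possibly positive-dimensional) variety of frameworks of $C_i'$ equivalent to $(C_i',p)$ — with $d_{i,1} = \norm{p(u)-p(v)}$ the trivial value coming from $p$ itself. The desired conclusion is exactly that $D_1(p) \cap D_2(p) = \{\norm{p(u)-p(v)}\}$. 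Each $d_{i,j}$ is an algebraic function of $p$ defined over $\QQ$, and one would like to argue that for generic $p$ no nontrivial value $d_{1,j}$ with $j \ge 2$ can coincide with any $d_{2,k}$: heuristically the two sides are placed ``independently'' and their alternative distances are algebraically unrelated. In the \mainclass[d]\ special case this is precisely what \cref{theorem:gluingkappa} achieves: there the equivalent realizations of each piece are governed by reduced sequences of partial $d$-reflections (\cref{theorem:partialreflectionchar}), the values $d_{i,j}$ are realized by such sequences, and the compatibility/strongly-separating dichotomy, together with \cref{lemma:mainsimple}, rules out spurious coincidences.

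The main obstacle is exactly that ``linked'' is far weaker than \mainclass[d]. When the pieces are not \mainclass[d], their equivalent realizations need not arise from partial reflections — they may flex continuously — so the clean bookkeeping of \cref{section:partialreflections} is unavailable, and the variety of frameworks of $C_i'$ equivalent to $(C_i',p)$ may be reducible and positive-dimensional. Turning the ``independently placed'' heuristic into a proof then requires understanding how the shared vertices of $W$ correlate the two sides (a difficulty that grows with $|W|$), and, underlying everything, a structural description of $\mathcal{R}_d$-circuits for $d \ge 3$ that would yield the sought-after generalization of \cref{theorem:Mcircuit2separator}. No such description is known, which is the essential reason the statement remains a conjecture; the case $d=2$, where the combinatorial theory of $\mathcal{R}_2$-circuits is well developed, is exactly \cref{theorem:linkedgluing}.
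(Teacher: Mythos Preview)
The statement is a \emph{conjecture} in the paper, not a theorem: the paper does not prove it, and explicitly leaves it open for $d \geq 3$. Your proposal is not a proof either, and you say so yourself in the final paragraph. What you have written is a sound discussion of the natural approach and its obstacles, and it aligns closely with the paper's own commentary following the conjecture: the paper notes that the $d=1$ case is easy, that the $d=2$ case is precisely \cref{theorem:linkedgluing}, and that \cref{theorem:gluingkappa} handles the special case where both $G_i$ are \mainclass[d]. Your reduction to $\mathcal{R}_d$-circuits via monotonicity of global linkedness is valid, and your identification of the double-banana example as the first nontrivial instance (nonrigid, yet $\{u,v\}$ globally linked) matches the paper's discussion at the end of \cref{subsection:examples}.

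The one place where your heuristic deserves a caveat is the finiteness of the sets $D_i(p)$: it is true that linkedness of $\{u,v\}$ in $C_i'$ forces the $u$--$v$ distance to take only finitely many values over frameworks equivalent to a generic $(C_i',p)$, but this is a nontrivial fact (it follows from a fiber-dimension argument for the edge-length map, using that $r_d(C_i') = r_d(C_i'+uv)$) and would need to be stated carefully in an actual proof. Beyond that, your diagnosis of the core difficulty---that for $d \geq 3$ the equivalent realizations of $C_i'$ need not be generated by partial reflections, so the machinery of \cref{section:partialreflections} is unavailable, and no structural theory of $\mathcal{R}_d$-circuits exists to replace it---is exactly right, and is why the statement remains a conjecture.
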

\cref{theorem:gluingkappa} implies that \cref{conjecture:gluing} is true if $G_1$ and $G_2$ are both \mainclass[d]. The $d = 1$ case is also true:  this follows from the fact that a pair of vertices $\{u,v\}$ is linked in $G$ in $\RR^1$ if and only if $\kappa(G,u,v) \geq 1$, and globally linked in $G$ in $\RR^1$ if and only if $\kappa(G,u,v) \geq 2$.
We can also show that the conjecture holds when $d = 2$. Finally, in \cite{garamvolgyi_2023} the first author recently gave an affirmative answer for \cref{conjecture:gluing} in the special case when every vertex pair in $V_1 \cap V_2$ is linked in both $G_1$ and $G_2$ in $\RR^d$. We note that if we allow intersections of size larger than $d+1$, then the conclusion in \cref{conjecture:gluing} is not true in general. For the simplest counterexample, let $G = G_1 = G_2$ be a path of length $2$. In this case, the endvertices are linked, but not globally linked, in $G$ in $\RR^1$.

\subsection{Algorithmic aspects}

As we noted before, giving a combinatorial characterization (and a polynomial-time recognition algorithm) for (globally) rigid graphs in $\RR^d$ is a major open problem for $d \geq 3$, although probabilistic polynomial-time algorithms are known both in the case of rigidity \cite{asimow.roth_1978} and global rigidity \cite{gortler.etal_2010}. The situation is even worse in the case of globally linked pairs: no polynomial-time algorithm (probabilistic or otherwise) is known for testing whether a pair of vertices is globally linked in a graph in $\RR^d$, even for $d=2$.

For \mainclass[d] graphs, many questions related to global rigidity and globally linked pairs reduce to vertex connectivity problems, which are generally well-understood. In particular, we can check whether such a graph is globally rigid in $\RR^d$ by checking $(d+1)$-connectivity, and whether a pair of vertices is globally linked in the graph $\RR^d$ by checking local connectivity. We can also find an optimal augmentation into a globally rigid graph (i.e., a minimum cardinality set of edges whose addition makes the graph globally rigid in $\RR^d$) by finding an optimal augmentation into a $(d+1)$-connected graph, a problem for which there is an efficient algorithm, see \cite{vegh_2011}. We note that the optimal augmentation of a rigid graph into a globally rigid graph is known to be polynomial-time solvable in the $d = 1$ and $d=2$ cases, see \cite{kiraly.mihalyko_2022a}.

However, testing whether a graph is \mainclass[d] might be difficult even in the $d = 2$ case. (Recall that a graph on at least $2$ vertices is \mainclass[1] if and only if it is connected.) In this direction, we pose the following counterpart of \cref{theorem:linkedgluing} as a conjecture and describe how it leads to a polynomial-time algorithm for recognizing \mainclass[2] graphs.

\begin{conjecture}\label{conjecture:gluing2}
    Let $G = (V,E)$ be the union of the graphs $G_1 = (V_1,E_1), G_2 = (V_2,E_2)$ with $V_1 \cap V_2 = \{u,v\}$. If $\{u,v\}$ is not linked in $G_2$ in $\RR^d$, then $\{u,v\}$ is globally linked in $G$ in $\RR^d$ if and only if it is globally linked in $G_1$ in $\RR^d$.
\end{conjecture}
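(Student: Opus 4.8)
The plan is to reduce the statement to an analysis of the sets of achievable $u$--$v$ distances in $G_1$ and $G_2$ separately. For a graph $H$ spanning $u$ and $v$ and a realization $(H,r)$, write $D_H(r) = \{\,\norm{w(u)-w(v)} : (H,w) \text{ is equivalent to } (H,r)\,\}$. The crucial observation, which drives the whole argument, is a \emph{gluing identity}: for every generic realization $(G,p)$ with restrictions $(G_1,p_1)$ and $(G_2,p_2)$ we have $D_G(p) = D_1(p_1) \cap D_2(p_2)$, where $D_i := D_{G_i}$. The inclusion $\subseteq$ is immediate by restricting an equivalent framework of $G$ to each side. For $\supseteq$, note that $V_1 \cap V_2 = \{u,v\}$ consists of only two vertices, so given equivalent frameworks $(G_1,q_1)$ of $(G_1,p_1)$ and $(G_2,q_2)$ of $(G_2,p_2)$ with $\norm{q_1(u)-q_1(v)} = \norm{q_2(u)-q_2(v)}$, one may apply a congruence to $(G_2,q_2)$ taking the ordered pair $(q_2(u),q_2(v))$ onto $(q_1(u),q_1(v))$ and then glue, obtaining an equivalent framework of $G$. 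Consequently $\{u,v\}$ is globally linked in $(G,p)$ if and only if $D_1(p_1)\cap D_2(p_2) = \{\,\norm{p(u)-p(v)}\,\}$.

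With this in hand the \emph{``if''} direction is immediate: restrictions of generic realizations are generic and restrictions of equivalent frameworks are equivalent, so if $\{u,v\}$ is globally linked in $G_1$ then $D_1(p_1)$ is a singleton for every generic $p$, whence $D_1(p_1)\cap D_2(p_2)$ is a singleton and $\{u,v\}$ is globally linked in $G$. For the \emph{``only if''} direction I would argue the contrapositive: assuming $\{u,v\}$ is not globally linked in $G_1$, produce a generic $(G,p)$ with $|D_1(p_1)\cap D_2(p_2)| > 1$. Here the hypothesis that $\{u,v\}$ is \emph{not linked} in $G_2$ enters through the standard fact that a generic $(G_2,p_2)$ admits a continuous flex changing the $u$--$v$ distance; thus $D_2(p_2)$ contains an open interval $I$ about $s:=\norm{p(u)-p(v)}$. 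If, in addition, $\{u,v\}$ is not linked in $G_1$, then $D_1(p_1)$ likewise contains an open interval about $s$, the two intervals overlap in more than one point, and we are done.

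The remaining --- and genuinely hard --- case is when $\{u,v\}$ is \emph{linked but not globally linked} in $G_1$. Then a witnessing generic realization $p_1^{\ast}$ has $s \in D_1(p_1^{\ast})$ together with some \emph{discrete} alternative value $s' \neq s$ that is bounded away from $s$ (local rigidity at $\{u,v\}$ forbids nearby alternatives). Extending $p_1^{\ast}$ to a generic $p$ on $V$ forces the shared pair, and hence $(G_2,p_2)$, to sit at $u$--$v$ distance exactly $s$, so to glue the alternative $G_1$-realization we must realize $G_2$, equivalently to $(G_2,p_2)$, at distance $s'$, i.e.\ we need $s' \in D_2(p_2)$. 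Since ``not linked'' only guarantees the local interval $I$ around $s$, and $D_2(p_2)$ may well be a proper subinterval of $(0,\infty)$ (frameworks can lock up along a flex), reaching the far-away value $s'$ is exactly the obstacle. I expect the resolution to require genuinely global control of the configuration space of $G_2$ --- for instance, showing that the set of $u$--$v$ distances realized along the relevant component of the flex is an interval reaching $s'$ --- or, alternatively, exploiting the freedom in the choice of the witnessing realization of $G_1$ to drive the (scale-invariant) ratio $s'/s$ into the flex range of $G_2$. Making one of these ideas work, and in particular controlling how the discrete alternative distances of $G_1$ interact with the accessible range of $G_2$, is where the main difficulty lies.
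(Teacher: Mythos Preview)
The statement you are attempting to prove is \emph{Conjecture~\ref{conjecture:gluing2}} in the paper: it is explicitly posed as an open problem, and the paper gives no proof. The paper only explains what would follow algorithmically (a polynomial-time recognition algorithm for \mainclass[2] graphs) \emph{if} the conjecture were true.

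Your setup is sound. The gluing identity $D_G(p) = D_1(p_1)\cap D_2(p_2)$ is correct for $|V_1\cap V_2|=2$, and your ``if'' direction is valid and standard. Your case split for the contrapositive is also the natural one, and you have correctly isolated the genuine obstruction: when $\{u,v\}$ is linked but not globally linked in $G_1$, the alternative distance $s'$ is an isolated point of $D_1(p_1^\ast)$, while the hypothesis on $G_2$ only yields a \emph{local} interval around $s$ in $D_2(p_2)$, and there is no a~priori reason these should meet.

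Your two suggested escapes --- global control of the flex range of $G_2$, or adjusting the witnessing realization of $G_1$ to push $s'/s$ toward $1$ --- are exactly the kinds of arguments one would want, but neither is known to work in general. In particular, the ratio $s'/s$ need not approach $1$ as $p_1^\ast$ varies (local rigidity of the pair in $G_1$ keeps alternatives bounded away), and $D_2(p_2)$ can genuinely be a proper subinterval. So what you have written is an accurate diagnosis of why the statement is hard, not a proof; the paper is in the same position and leaves it as a conjecture.
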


If \cref{conjecture:gluing2} is true, then we can decide in polynomial time whether a graph is \mainclass[2], as follows. Since the following discussion only concerns the $2$-dimensional case, we shall drop the suffix ``in $\RR^2$'' throughout the paragraph.  
Let $G = (V,E)$ be a graph on at least $3$ vertices. If $G$ is not rigid, then it is not \mainclass[2], while if it is $3$-connected, then it is \mainclass[2] if and only if it is globally rigid; each of these cases can be checked in polynomial time. If $G$ is rigid but not $3$-connected, then consider an inclusion-wise minimal $2$-fragment $X$, which can be found recursively using the fact that there are no crossing $2$-separators in $G$ (\cref{theorem:rigidnoncrossing}). Let us label the vertices in $N_G(X)$ as $\{u,v\}$ and consider $G_1 = G[X \cup N_G(X)] +uv$ and $G_2 = G[V-X] + uv$. 
Now \cref{theorem:closurechar} and the minimal choice of $X$ imply that $G$ is \mainclass[2] if and only if $G_1$ is globally rigid (which we can check in polynomial time), $G_2$ is \mainclass[2] (which we can check recursively), and $\{u,v\}$ is globally linked in $G$. 
Thus we only need to decide whether $\{u,v\}$ is globally linked in $G$, given that $G_1$ is globally rigid. If $G_1-uv$ is also globally rigid, then $\{u,v\}$ is globally linked in $G_1-uv,$ and thus also in $G$. Let us thus assume that $G_1 - uv$ is not globally rigid. It is still rigid by Hendrickson's Theorem \cite{hendrickson_1992}, so $\{u,v\}$ is linked, but not globally linked in $G$. It follows from \cref{theorem:linkedgluing} and (if true) the $2$-dimensional case of \cref{conjecture:gluing2} that $\{u,v\}$ is globally linked in $G$ if and only if it is linked in $G_2-uv$, which we can check in polynomial time.


In closing, we remark that given a polynomial-time algorithm for deciding whether a graph is globally rigid in $\RR^d$, it is also possible to check in polynomial time whether a graph has a globally rigid gluing construction in $\RR^d$, using a similar (but simpler) recursive approach as in the previous paragraph.

\section{Acknowledgements}
We thank Jan Legerský for drawing our attention to Darboux' porism and to the reference \cite{izmestiev_2020}. We also thank the anonymus referee for carefully reading the manuscript.

This work was supported by the Hungarian Scientific Research Fund provided by the National Research, Development and Innovation Office, grant Nos. K135421 and PD138102. 
The first author was supported by the ÚNKP-22-3 New National Excellence Program of the Ministry for Culture and Innovation from the source of the National Research, Development and Innovation Fund.
The second author was supported in part by the MTA-ELTE Momentum Matroid Optimization Research Group and the
National Research, Development and Innovation Fund of Hungary, financed under the ELTE
TKP 2021‐NKTA‐62 funding scheme.

\appendix
\crefalias{subsection}{appsec}
\section*{Appendix}
\renewcommand{\thesubsection}{\Alph{subsection}}

\subsection{Proofs for \texorpdfstring{\cref{subsection:fragments}}{Section 2.1}}
\label{appendix:fragments}

\begin{proof}[Proof of \cref{lemma:crossingcuts}]
The implications \textit{b)} $\Rightarrow$ \textit{c)} and \textit{d)} $\Rightarrow$ \textit{a)} are immediate from the definitions. By symmetry, it suffices to prove \textit{a)} $\Rightarrow$ \textit{b)}. Let $s,s' \in S$ be a pair of vertices lying in different components of $G-T$ and let $C$ be a component of $G-S$. Since both $s$ and $s'$ have a neighbour in $C$, there is a path in $G$ from $s$ to $s'$ in which the internal vertices all lie in $C$. Since $T$ separates $s$ and $s'$, it must contain an internal vertex of this path and thus $T \cap C$ is nonempty, as desired.
\end{proof}

\begin{proof}[Proof of \cref{minimumvertexcutorder}]
We need to show that $\preceq_{u,v}$ is reflexive, antisymmetric and transitive. Reflexivity is immediate. To show that $\preceq_{u,v}$ is antisymmetric, suppose that $S \preceq_{u,v} T \preceq_{u,v} S$ for some $(u,v)$-separating $d$-separators $S$ and $T$. The first relation implies that $T-S$ and $v$ lie in the same component of $G-S$, while from the second relation we obtain that $T-S$ and $u$ lie in the same component of $G-S$. Since $u$ and $v$ lie in different components of $G-S$ this can only happen if $T - S$ is empty, so that $T \subseteq S$. Since $|T| = |S|$, this implies $T = S$, as desired.

To show transitivity, suppose that $S \preceq_{u,v} T \preceq_{u,v} U$ for some $(u,v)$-separating $d$-separators $S,T,U$. This means that there are components $C_S$ and $C_U$ of $G-T$ such that $S - T \subseteq C_S \ni u$ and $U - T \subseteq C_U \ni v$. Every vertex of $T$ is adjacent to some vertex from $C_S$, so $(T - U) \cup C_S$ induces a connected subgraph in $G - U$. It follows that there is some component $C$ of $G-U$ for which $(T - U) \subseteq C$ and $u \in C_S \subseteq C$. This shows that $S \preceq_{u,v} U.$ 

If $S$ and $T$ are $(u,v)$-separating $d$-separators, then $S \preceq_{u,v} T$ implies that they are noncrossing. Finally, if $S$ and $T$ are noncrossing, then $T-S$ lies in a single component of $G-S$. This component must contain either $u$ or $v$, since otherwise $T$ could not be $(u,v)$-separating. It follows that either $S \preceq_{u,v} T$ or $T \preceq_{u,v} S$ holds, as required. 
\end{proof}

\begin{proof}[Proof of \cref{lemma:chainofvertexcuts}]
Since $S_1,\ldots,S_k$ form a descending chain, we have that $S_k - S_{k-1}$ lies in the same component of $G-S_{k-1}$ as $u$, while for $i < k-1$, $S_i - S_{k-1}$ lies in the same component of $G - {S_{k-1}}$ as $v$. In particular, $S_k - S_{k-1}$ and $\cup_{i=1}^{k-2} S_i - S_{k-1}$ are disjoint. It follows that $\varnothing \neq S_k - S_{k-1} = S_k - \cup_{i=1}^{k-1}S_i$, so the latter set is indeed nonempty.
\end{proof}

\begin{proof}[Proof of \cref{lemma:uxseparator}]
    Let $S = N_G(X)$ and $S' = N_G(X')$. Since $X$ and $X'$ are noncrossing, so are $S$ and $S'$, and hence we have $S' -S \subseteq C$ for some component $C$ of $G - S$. Since $S$ is a minimum vertex cut, every vertex in it has a neighbour in every component of $G-S$. It follows that $G - (S' \cup C)$ is connected, so there is some component $C'$ of $G - S'$ with $V - (S' \cup C) \subseteq C'$. By symmetry we also have $V - (S \cup C') \subseteq C$.
    
    We have $x \in C'$, and thus $u \notin C'$, since $X'$ is $(u,x)$-separating. Since $C'$ contains every component of $G-S$ apart from $C$, we must have $u \in C$, and thus $v \notin C$, since $X$ is $(u,v)$-separating. But $C$ contains every component of $G-S'$ apart from $C'$, so we must have $v \in C'$. This shows that $X'$ is $(u,v)$-separating. 
\end{proof}

\begin{proof}[Proof of \cref{lemma:dblock}]
\textit{(a)} Let us fix $u,v \in X$. It is clear that $\kappa(G',u,v) \leq \kappa(G,u,v)$, so we only need to show that $\kappa(G,u,v) \leq \kappa(G',u,v)$. Let $k = \kappa(G,u,v)$ and let $P_1,\ldots,P_k$ be internally vertex-disjoint $u,v$-paths in $G$. Now if $V(P_i) \cap \overline{X}$ is nonempty for some $i \in \{1,\ldots,k\}$, then $P_i$ must contain two vertices $x,y \in N_G(X)$. Since $N_G(X)$ a clique, we can shortcut $P_i$ along $xy$ to obtain a shorter path. After making every possible shortcut of this form, the resulting paths $P_1,\ldots,P_k$ will lie in $G'$. This shows that $k \leq \kappa(G',u,v)$.

\textit{(b)} First consider a $d$-separator $S$ of $G$ that intersects $X$. If $S$ also intersected $\overline{X}$, then $S$ and $N_G(X)$ would be crossing, which is impossible since $N_G(X)$ is a clique. It follows that $S \subseteq X \cup N_G(X)$. Moreover, $\overline{X} \cup (N_G(X) - S)$ induces a connected subgraph of $G$, so it belongs to some component $C$ of $G-S$. If $C'$ is another component of $G-S$, then we have $C' \subseteq X$, so $C \cap V(G')$ and $C' \cap V(G')$ are both nonempty, which shows that $S$ is also a $d$-separator in $G'$, as required. Conversely, if $S$ is a $d$-separator of $G'$ with $S \neq N_G(X)$, then clearly $S \cap X$ is nonempty and $S$ is also a $d$-separator of $G$.

\textit{(c)} First consider a $(d+1)$-block $Y'$ of $G'$. If $Y = N_G(X)$, then we are done; otherwise there is at least one vertex $u' \in Y' \cap X$. We claim that in this case $Y'$ is a $(d+1)$-block of $G$. Note that for every $u,v \in Y'$, either $u$ and $v$ are adjacent in $G'$ (and thus in $G$ as well), or $d +1 \leq \kappa(G',u,v) \leq \kappa(G,u,v)$. Thus $Y'$ is contained in some $(d+1)$-block $Y$ of $G$. If $v \in \overline{X}$, then $\kappa(G,u',v) \leq d$, which shows that $x \notin Y$. If $v \in (X \cup N_G(X)) - Y'$, then there is some vertex $u \in Y'$ such that $u$ and $v$ are nonadjacent and $\kappa(G',u,v) \leq d$. It follows that at most one of $u$ and $v$ is in $N_G(X)$, and hence by part \textit{(a)} we have $\kappa(G,u,v) = \kappa(G',u,v) \leq d$, so $v \notin Y$. This shows that $Y = Y'$, so $Y'$ is indeed a $(d+1)$-block of $G$.

Now let $Y$ be a $(d+1)$-block of $G$ that intersects $X$, and let us fix $u' \in Y \cap X$. We must have $Y \subseteq X \cup N_G(X)$, since if $v \in \overline{X}$, then $\kappa(G,u',v) \leq d$. For every nonadjacent pair $u,v \in Y$, at most one of $u$ and $v$ can be in $N_G(X)$, and thus by part \textit{(a)} we have $\kappa(G',u,v) = \kappa(G,u,v) \geq d + 1$. It follows that $Y$ is contained in some $(d+1)$-block $Y'$ of $G'$. If $v \in (X \cup N_G(X)) - Y$, then there is some vertex $u \in Y$ such that $u$ and $v$ are nonadjacent and $\kappa(G,u,v) \leq d$. It follows that at most one of $u$ and $v$ is in $N_G(X)$, and hence by part \textit{(a)} we have $\kappa(G',u,v) = \kappa(G,u,v) \leq d$, so $v \notin Y'$. This shows that $Y' = Y$, so $Y$ is indeed a $(d+1)$-block of $G'$.
\end{proof} 

\begin{proof}[Proof of \cref{lemma:inclusionwiseminimalfragment}]
Let us use the notation $S = N_G(X)$. Since $S$ is a $d$-separator, we have $\kappa(G,u,v) \leq d$ for any pair of vertices $u \in X$ and $v \in \overline{X}$. Thus, we only need to show that $\kappa(G,u,v) \geq d+1$ holds for any nonadjacent pair $u,v \in X \cup S$. Suppose for a contradiction that some nonadjacent pair $u,v \in X \cup S$ is separated by some $d$-separator $S'$. By its minimal choice, $X$  must be a single component of $G - S$. Since $S'$ separates $u$ and $v$, it has a nonempty intersection with $X$. But by assumption $S$ and $S'$ are noncrossing, so we must have $S' - S \subseteq X$. Since $S$ is a minimum vertex cut, every vertex in it has a neighbour in each component of $G - S$. Hence $G - (S' \cup X)$ is connected, so there is some component $C'$ of $G - S'$ with $V - (S' \cup X) \subseteq C'$. But then the rest of the components of $G - S'$ are all properly contained in $X$, contradicting the minimal choice of $X$.
\end{proof}

\subsection{Proofs of \texorpdfstring{\cref{subsection:gluing}}{Section 3.3}}
\label{appendix:gluing}

\begin{proof}[Proof of \cref{lemma:compatiblefragmentchar}]
    First, let us suppose that $X$ and $Y$ are compatible. By possibly taking complements we may assume that there is a $d$-fragment $Z$ of $G$ such that $X = Z \cap V_1$ and $Y = Z \cap V_2$. It follows from the first condition that $N_{G_1}(X) \subseteq N_G(Z)$. Since both sets have cardinality $d$, we have $N_{G_1}(X) = N_G(Z)$, and by an analogous reasoning $N_{G_2}(Y) = N_G(Z)$. It is immediate from this that for any pair of vertices $u,v \in V_1 \cap V_2$, $X$ is $(u,v)$-separating in $G_1$ if and only if $Z$ is $(u,v)$-separating in $G$ if and only if $Y$ is $(u,v)$-separating in $G_2$. This proves necessity.

    For sufficiency, let us suppose that $N_{G_1}(X) = N_{G_2}(Y)$ and that for every pair of vertices $u,v \in V_1 \cap V_2$, $X$ is $(u,v)$-separating in $G_1$ if and only if $Y$ is $(u,v)$-separating in $G_2$. It follows that, by possibly replacing $Y$ with $\overline{Y}$, we have $X \cap (V_1 \cap V_2) = Y \cap (V_1 \cap V_2)$. This implies, by a routine inspection, that $N_G(X \cup Y) = N_{G_1}(X) \cup N_{G_2}(Y) = N_{G_1}(X)$. It follows that $Z = X \cup Y$ is a $d$-fragment of $G$. Moreover, we have $Z \cap V_1 = X$ and $Z \cap V_2 = Y$, so $X$ and $Y$ are compatible, as claimed.
\end{proof}

\begin{proof}[Proof of \cref{lemma:compatiblesequencechar}]
    Let $S_i = N_{G_1}(X_i), i \in \{1,\ldots,k\}$ and $T_j = N_{G_2}(Y_j), j \in \{1,\ldots,l\}$.
     Let us suppose, first, that there is some pair $u,v \in V_1 \cap V_2$ of vertices and some index $i \in \{1,\ldots,k\}$ such that $X_i$ is $(u,v)$-separating and $S_i \neq T_j$ for every $j \in \{1,\ldots,l\}$ for which $Y_j$ is $(u,v)$-separating. Among the vertex pairs having these properties let us choose $u$ and $v$ so that the number of $(u,v)$-separating fragments among $X_1,\ldots,X_k$ is minimized. We claim that with this choice, the pair $\{u,v\}$ strongly separates $F_1$ and $F_2$. 
     
     Suppose for contradiction that there are indices $i_0$ and $j_0$ such that $S_{i_0} = T_{j_0}$, where $S_{i_0}$ is $(u,v)$-separating in $G_1$ and $T_{j_0}$ is $(u,v)$-separating in $G_2$. By the choice of $i$ we have $i_0 \neq i$, so (since $F_1$ is a reduced sequence) $S_{i_0} \neq S_{i}$. Let us fix $x \in S_{i_0} - S_i$. Now $X_i$ is either $(u,x)$-separating or $(x,v)$-separating; by symmetry we may suppose that it is the former. We claim that the pair $\{u,x\}$ contradicts the minimal choice of $\{u,v\}$. Indeed, if $Y_j$ is $(u,x)$-separating for some $j \in \{1,\ldots,l\}$, then noting that $x \in T_{j_0}$ and applying \cref{lemma:uxseparator} to $Y_{j_0}$ and $Y_j$ gives that $Y_{j}$ is also $(u,v)$-separating. By the choice of $X_i$ we get that $S_i \neq T_j$. Moreover, by another application of \cref{lemma:uxseparator} we have that every $(u,x)$-separating fragment among $X_1,\ldots,X_k$ is also $(u,v)$-separating. This shows that there are fewer $(u,x)$-separating fragments among $X_1,\ldots,X_k$ than $(u,v)$-separating fragments, since $X_{i_0}$ is $(u,v)$-separating but not $(u,x)$-separating. Thus, $\{u,x\}$ is a better choice than $\{u,v\}$, a contradiction. This shows that $\{u,v\}$ strongly separates $F_1$ and $F_2$. 

    We can find a strongly separating vertex pair by the same argument if there is some pair $u,v \in V_1 \cap V_2$ and some index $j \in \{1,\ldots,l\}$ such that $Y_j$ is $(u,v)$-separating and $T_j \neq S_i$ for every $i \in \{1,\ldots,k\}$ for which $S_i$ is $(u,v)$-separating. Thus, we may assume that neither of these cases hold. In other words, for every pair $u,v \in V_1 \cap V_2$,  if $X_i$ is $(u,v)$-separating, then there is some $(u,v)$-separating $d$-fragment $Y_j$ such that $S_i = T_j$, and similarly, if $Y_j$ is $(u,v)$-separating, then there is some $(u,v)$-separating $d$-fragment $X_i$ such that $S_i = T_j$. We show that in this case $F_1$ and $F_2$ are compatible. 
    
    By symmetry we may assume that $k \leq l$. Let us reorder $X_1,\ldots,X_k$ in such a way that each of $X_1,\ldots,X_r$ separate some pair of vertices from $V_1 \cap V_2$, while $X_{r+1},\ldots,X_k$ do not, for some $r \in \{0, \ldots, k\}$. Let us fix $i \in \{1,\ldots,r\}$. Since $X_i$ is $(u',v')$-separating for some pair $u',v' \in V_1 \cap V_2$, there is some $d$-fragment $Y_j$ that is also $(u',v')$-separating and for which $S_i = T_j$; by reordering $Y_1,\ldots,Y_l$, we may assume that $i = j$. We claim that for any pair of vertices $u,v \in V_1 \cap V_2$, $X_i$ is $(u,v)$-separating if and only if $Y_i$ is $(u,v)$-separating. Indeed, if $X_i$ is $(u,v)$-separating, then by our assumption there is some $d$-fragment $Y_{j'}$ such that $Y_{j'}$ is $(u,v)$-separating and $S_i = T_{j'}$. Since $S_i = T_i$, we have $T_i = T_{j'}$, but $F_2$ is reduced, so $i = j'$, and hence $T_i$ is $(u,v)$-separating, as required. An analogous argument shows that if $Y_i$ is $(u,v)$-separating, then so is $X_i$. It follows from \cref{lemma:compatiblefragmentchar} that $X_i$ and $Y_i$ are compatible.

    Repeating this argument for each $i \in \{1,\ldots,r\}$ we obtain that $X_i$ and $Y_i$ are compatible for each such $i$ (after suitably reordering $Y_1,\ldots,Y_l$). It also follows that for $j \in \{r+1,\ldots,l\}$, $Y_{j}$ does not separate any pair of vertices from $V_1 \cap V_2$. Indeed, if $Y_j$ is $(u,v)$-separating, then there is some $(u,v)$-separating $d$-fragment $X_i$ with $S_i = T_j$. This implies $i \leq r$ and thus $S_i = T_i$, so since $F_2$ is reduced we have $j = i \leq r$. Since $X_{r+1},\ldots,X_k$ do not separate any pair of vertices from $V_1\cap V_2$, by possibly taking complements (in $G_1$) we may assume that each of them is disjoint from $V_2$. Similarly, by possibly taking complements (in $G_2$) we may assume that each of $Y_{r+1},\ldots,Y_l$ is disjoint from $V_1$. This shows that $F_1$ and $F_2$ are indeed compatible. 
\end{proof}

\begin{proof}[Proof of \cref{lemma:compatiblesequences}]
    Note that by \cref{lemma:equivnoncrossing} we may freely reorder $X_1,\ldots,X_k$  ($Y_1,\ldots,Y_l$, respectively) and by \cref{lemma:equivcomplement} we may take complements (in $G_1$ and $G_2$, respectively); in this way we obtain sequences of partial $d$-reflections that are equivalent to $F_1$ (resp.\ $F_2$), and thus these operations do not change the congruence class of $(G_1,F_1(p_1))$ (resp.\ $(G_2,F_2(p_2))$). Using this observation and the definition of compatible sequences we may assume that the following is satisfied, for some integer $r$ with $0 \leq r \leq \min(k,l)$.
    \begin{itemize}
        \item There exists a $d$-fragment $Z_i$ of $G$ such that $X_i = Z_i \cap V_1$ and $Y_i = Z_i \cap V_2$, for every $i \in \{1,\ldots,r\}$.
    \item $X_i$ is disjoint from $V_2$ for every $i \in \{r+1,\ldots,k\}$.
    \item $Y_j$ is disjoint from $V_1$ for every $j \in \{r+1,\ldots,l\}$.
    \end{itemize}
    In particular, for every $i \in \{r+1,\ldots,k\}$ we have $N_{G_1}(X_i) = N_G(X_i)$, and thus $X_i$ is a $d$-fragment in $G$ as well. Similarly, $Y_j$ is a $d$-fragment in $G$ for each $j \in \{r+1,\ldots,l\}$.
    
    We define a sequence of partial $d$-reflections $F$ of $G$ by $$F = (\sep{Z_1},\ldots,\sep{Z_r},\sep{X_{r+1}},\ldots,\sep{X_k},\sep{Y_{r+1}},\ldots,\sep{Y_l}).$$ As noted in \cref{lemma:compatiblefragmentchar}, we have $N_{G}(Z_i) = N_{G_1}(X_i) = N_{G_2}(Y_i)$, and hence $Z_i$ is $(u,v)$-separating in $G$ if and only if $X_i$ is $(u,v)$-separating in $G_1$, for every pair $u,v \in V_1$ and every $i \in \{1,\ldots,r\}$. It follows that $\sep{Z_i}$ acts on the points $\{p(v), v \in V_1\}$ in exactly the same way as $\sep{X_i}$. Moreover, for each $j \in \{r+1,\ldots,l\}$ we have that $\sep{Y_j}$ is not $(u,v)$-separating for any $u,v \in V_1$, so $\sep{Y_j}$ acts as a congruence on $\{p(v), v \in V_1\}$. This shows that the restriction of $(G,F(p))$ to $G_1$ is congruent to $(G_1,F_1(p_1))$, and by an analogous reasoning the restriction of $(G,F(p))$ to $G_2$ is congruent to $(G_2,F_2(p_2))$.
\end{proof}

Before giving the proof of \cref{lemma:stronglyseparating}, we set some notation. 
Recall the notation introduced in the statement of \cref{lemma:stronglyseparating}. In the proof we shall use the partial ordering $\preceq_{u,v}$ on $(u,v)$-separating $d$-separators of $G_1$ and $G_2$. To avoid ambiguity, we shall use the notation $\preceq_{u,v}^{G_1}$ and $\preceq_{u,v}^{G_2}$. For a $d$-dimensional framework $(G,q)$ we shall let $(G_1,q_1)$ and $(G_2,q_2)$ denote the respective subframeworks, and for a subset of vertices $X$ we shall use the notation $q(X) = \{q(x): x\in X\}$. Let $F_1$ and $F_2$ be reduced sequences of partial $d$-reflections of $G_1$ and $G_2$, respectively. We shall say that $(G,q)$ is \emph{doubly admissible} if $(G_1,q_1)$ is $F_1$-admissible and $(G_2,q_2)$ is $F_2$-admissible.
 Note that since $F_1$ and $F_2$ are both reduced, a framework $(G,q)$ is doubly admissible if and only if both $q(N_{G_1}(X_i))$ and $q(N_{G_2}(Y_j))$ are affinely independent for each $i \in \{1,\ldots,k\}$ and $j \in \{1,\ldots,l\}$ (see the discussion after \cref{corollary:reducedsequence}).
\begin{proof}[Proof of \cref{lemma:stronglyseparating}]
We proceed in three stages, as in the proof of \cref{lemma:mainsimple}. First, we simplify and reorder the sequences $F_1$ and $F_2$ in a way that will be convenient later. Then we use a genericity argument to show that if some generic framework contradicts the statement of the lemma, then for every doubly admissible framework $(G,q)$ in $\RR^d$ 
we have $\norm{F_1(q_1)(u) - F_1(q_1)(v)} = \norm{F_2(q_2)(u) - F_2(q_2)(v)}$. Finally, we show that this is impossible by constructing a specific (non-generic) framework for which equality does not hold.

By the definition of being strongly separating, there is at least one fragment among $X_1,\ldots,X_k,Y_1,\ldots,Y_l$ that is $(u,v)$-separating. By symmetry, we may assume that this fragment is among $X_1,\ldots,X_k$. 
We first show that we may assume that $X_1,\ldots,X_k$ are all $(u,v)$-separating in $G_1$. Indeed, if not, then by \cref{lemma:equivnoncrossing} we may reorder $\sep{X_1},\ldots,\sep{X_k}$ so that $X_1,\ldots,X_{k_0}$ are $(u,v)$-separating in $G_1$ and $X_{k_0+1},\ldots,X_k$ are not $(u,v)$-separating in $G_1$ for some $k_0 \in \{1, \ldots, k\}$. Consider $F_1' = (\sep{X_{k_0+1}},\ldots,\sep{X_k})$. Since none of the fragments in $F_1'$ are $(u,v)$-separating, the distance between $u$ and $v$ is equal in $(G_1,q_1)$ and $(G_1,F_1'(q_1))$ for any $F_1'$-admissible framework $(G_1,q_1)$. It follows that we may disregard these partial reflections altogether and assume that $X_1,\ldots,X_k$ are all $(u,v)$-separating in $G_1$. Similarly, we may assume that either $F_2$ is trivial or $Y_1,\ldots,Y_l$ are all $(u,v)$-separating in $G_2$. 

Let $S_i = N_{G_1}(X_i), i \in \{1,\ldots,k\}$ and $T_j = N_{G_2}(Y_j),j \in \{1,\ldots,l\}$
. Since $X_1,\ldots,X_k$ and $Y_1,\ldots,Y_l$ are all $(u,v)$-separating and $\{u,v\}$ strongly separates $F_1$ and $F_2$, we have $S_i \neq T_j$ for every $i \in \{1,\ldots,k\},j \in \{1,\ldots,l\}$.
We may assume, by using \cref{lemma:equivnoncrossing} again, that 
\[S_k \preceq^{G_1}_{u,v} \ldots \preceq^{G_1}_{u,v} S_1 \hspace{2em} \text{ and that } \hspace{2em} T_l \preceq^{G_2}_{u,v} \ldots \preceq^{G_2}_{u,v} T_1.\] Similarly, by \cref{lemma:equivcomplement} we can assume that $u \in X_i$ for $i \in \{1,\ldots,k\}$ and $u \in Y_j$ for $j \in \{1,\ldots,l\}$. Note that these assumptions ensure that for any pair of indices $1 \leq i < j \leq k$ we have $S_j \subseteq X_i \cup S_i$, so that  $\sep{X_i}$ fixes the image of $S_j$ in any $F_1$-admissible framework. Similarly $\sep{Y_i}$ fixes the image of $T_j$ in any $F_2$-admissible framework, for any pair of indices $1 \leq i < j \leq l$.

Suppose for a contradiction that 
\[\norm{F_1(p_1)(u) - F_1(p_1)(v)} = \norm{F_2(p_2)(u) - F_2(p_2)(v)}\] holds for some generic realization $(G,p)$. 
By squaring each side, they become rational maps with rational coefficients in the coordinates of $p$. By \cref{lemma:rationalmapsgeneric}, if equality holds for the generic configuration $p$, then it holds for any configuration that is in the domain of both sides. This implies that for any doubly admissible framework $(G,q)$ we have 
\[\norm{F_1(q_1)(u) - F_1(q_1)(v)} = \norm{F_2(q_2)(u) - F_2(q_2)(v)}.\]

We contradict this by constructing a counterexample. It follows from \cref{lemma:chainofvertexcuts} that $S_k - \cup_{i=1}^{k-1}S_i$ is nonempty; let us fix a vertex $x$ from it. We consider two cases: either $x$ appears in $T_j$ for some $j \in \{1,\ldots,l\}$, or it does not. We give a different construction in each case.

First, suppose that $x \notin T_j$ for every $j \in \{1,\ldots,l\}$. In this case our construction is essentially the same as in the proof of \cref{lemma:mainsimple}; see \cref{fig:case1}. We define a doubly admissible framework $(G,q)$ as follows. Define $q(u) = q(v) = (0,\ldots,0), q(x) = (1,0,\ldots,0),$ and let us choose the values $q(z), z \in V - \{u,v,x\}$ so that each point lies in the $x_1 = 0$ hyperplane but the placement is relatively generic.\footnote{Recall that by ``relatively generic'' we mean that the coordinates of $q$ that are unspecified form an algebraically independent set over $\QQ$.} Now for every $j \in \{1,\ldots,l\}$, $q(T_j)$ is affinely independent and contained in the $x_1 = 0$ hyperplane, so the affine hyperplane spanned by it is precisely the $x_1 = 0$ hyperplane. It follows that $F_2(q_2)(u) = F_2(q_2)(v) = (0,\ldots,0)$. Similarly, $q(S_i)$ is contained in the $x_1 = 0$ hyperplane for $i \in \{1,\ldots,k-1\}$, but $q(S_k)$ is not, and by the choice of coordinates the affine hyperplane spanned by $q(S_k)$ does not contain the origin. It follows that $F_1(q_1)(v) \neq (0,\ldots,0)$, while $F_1(q_1)(u) = (0,\ldots,0)$. This shows that \[\norm{F_1(q_1)(u) - F_1(q_1)(v)} \neq \norm{F_2(q_2)(u) - F_2(q_2)(v)},\]a contradiction.

Finally, let us assume that there is some index $j \in \{1,\ldots,l\}$ such that $x \in T_j$. Let $j_0$ be the maximal such index. In this case we cannot ensure that all but one of $q(S_i)$ and $q(T_j)$ lie in the $x_1 = 0$ hyperplane, so we give a different construction; see \cref{fig:case2}. We assumed that $S_k \neq T_{j_0}$, so there exists a vertex $w \in S_k - T_{j_0}$ (this also shows that in this case $d$ is necessarily at least $2$). For $j \in \{1,\ldots,l\}$, let $F_2^j$ denote the subsequence $(\sep{Y_1},\ldots,\sep{Y_j})$ of $F_2$ and let $F_2^0 = \text{id}$ be the trivial sequence. We define a doubly admissible framework $(G,q)$ as follows. First, let $q(u) = q(v) = q(w) = (0,\ldots,0)$ and $q(x) = (1,0,\ldots,0)$. Next, by \cref{lemma:chainofvertexcuts}  $T_{j_0} - \cup_{j < j_{0}}T_j$ is nonempty; let us fix a vertex $t^*$ from this set. Choose the values $q(t), t \in \cup_{j \leq j_0} T_j - \{x,w,t^*\}$ so that each point lies in the $x_1 = 0$ hyperplane but the placement is relatively generic. 
For placing $q(t^*)$, we consider three subcases. If $t^* = x$, then we have no choice in placing $q(t^*)$. However, note that in this case $x$ is not contained in $T_j$ for any $j<j_0$. It follows that $q(T_j)$ is contained in the $x_1 = 0$ hyperplane for each $j < j_0$, and consequently $F^{j_0 - 1}_2(q_2)(v) = (0,\ldots,0)$. Next, if $t^* \neq x$ but $F^{j_0 - 1}_2(q_2)(v) = (0,\ldots,0)$ still holds, then choose $q(t^*)$ in a relatively generic way in the $x_1 = 0$ hyperplane; in particular, $q(T_{j_0})$ then does not contain the origin in its affine span. Otherwise choose $q(t^*)$ in the $x_{1} = 0$ hyperplane in a relatively generic way and such that the distance of $F^{j_0 - 1}_2(q_2)(v)$ and the affine hyperplane spanned by $q(T_{j_0})$ is less than half of $\norm{F^{j_0 - 1}_2(q_2)(v)}$. We can ensure this by choosing $q(t^*)$ in such a way that the line defined by $q(t^*)$ and $q(x)$ is sufficiently close to $F^{j_0 - 1}_2(q_2)(v)$. In each of these cases we have that $F^{j_0}_2(q_2)(v) \neq (0,\ldots,0)$. Finally, choose the rest of the values $q(s)$ in the $x_1 = 0$ hyperplane in a relatively generic way. Now each of $q(S_i), i \in \{1,\ldots,k\}$ and $q(T_j), j \in \{1,\ldots,j\}$ is affinely independent, so $(G,q)$ is doubly admissible. Moreover, by the maximal choice of $j_0$, every set $q(T_j), j \in \{j_0 + 1,\ldots,l\}$ is contained in the $x_1 = 0$ hyperplane, and thus its affine span is precisely this hyperplane. It follows that $F_2(q_2)(v)$ is either $F^{j_0}_2(q_2)(v)$ or its reflection in this hyperplane, which shows that $F_2(q_2)(v) \neq (0,\ldots,0).$ On the other hand, each of $q(S_i), i \in \{1,\ldots,k\}$ contains the origin in its affine span, so $F_1$ does not move $q(v)$, and thus $F_1(q_1)(v) = F_1(q_1)(u) = F_2(q_2)(u) = (0,\ldots,0)$ holds. It follows that \[\norm{F_1(q_1)(u) - F_1(q_1)(v)} \neq \norm{F_2(q_2)(u) - F_2(q_2)(v)}.\] This contradiction finishes the proof.
\end{proof}

\begin{figure}[th]
    \centering
    \begin{subfigure}[b]{0.32\linewidth}
    \centering
                \includegraphics[]{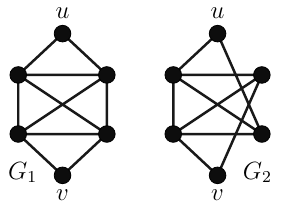}
        \caption{}
    \end{subfigure}
    \begin{subfigure}[b]{0.34\linewidth}
        \centering
                \includegraphics[]{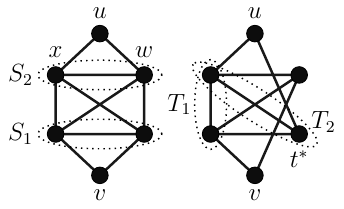}
        \caption{}
    \end{subfigure}
    \begin{subfigure}[b]{.31\linewidth}
    \centering
            \includegraphics[]{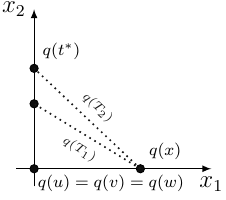}
    \caption{}
    \end{subfigure}
    \caption{The construction in the proof of \cref{lemma:stronglyseparating} in the case when $x \in T_j$ for some $j \in \{1,\ldots l\}$. \textit{(a)} Here $G = G_1 \cup G_2$, where $G_1$ and $G_2$ have the same vertex set. \textit{(b)} The minimum vertex cuts $S_1, S_2, T_1$ and $T_2$. Note that in general, it can also happen that $w \in T_j$ for some index $j$. \textit{(c)} The framework $(G,q)$ is constructed so that each of $q(S_1),\ldots,q(S_{k-1})$ lies in the $x_1 = 0$ hyperplane, and $q(S_k)$ contains the origin. In contrast, it can happen that the affine hyperplane spanned by $q(T_j)$ does not contain the origin for multiple choices of $j$. We place $q(t^*)$ in such a way as to ensure that $F_2^{j_0}$ moves $q(v)$ from the origin.}
    \label{fig:case2}
\end{figure}

\end{document}